\documentclass[letterpaper,11pt,reqno]{amsart}

\usepackage[utf8]{inputenc}
\usepackage{mathcommand}
\usepackage{xpatch}
\usepackage{xcolor}
\usepackage{xstring}
\definecolor{verydarkblue}{rgb}{0,0,0.4}
\usepackage[
    breaklinks,
    colorlinks,
    citecolor=verydarkblue,
    linkcolor=verydarkblue,
    urlcolor=verydarkblue,
    pagebackref=true,
    bookmarksdepth=3,
    hyperindex,
]{hyperref}
\usepackage[T1]{fontenc}
\usepackage{fancyhdr}
\usepackage[
    hscale=0.7,
    vscale=0.75,
    headheight=13pt,
    marginpar=2cm,
    centering,
]{geometry}
\usepackage{amsmath}
\usepackage{amsthm}
\usepackage{thmtools}
\usepackage{amssymb}
\usepackage{mathtools}
\mathtoolsset{centercolon}
\usepackage{stmaryrd}
\SetSymbolFont{stmry}{bold}{U}{stmry}{m}{n}
\usepackage{mathdots}
\usepackage{mleftright}
\mleftright
\usepackage{physics}
\usepackage{embrac}
\usepackage{graphicx}
\usepackage{framed}
\usepackage{multicol}
\usepackage{paracol}
\usepackage{array}
\usepackage[inline]{enumitem}
\usepackage[alphabetic,abbrev]{amsrefs}
\usepackage{cleveref}
\usepackage{tikz}
\usetikzlibrary{cd}
\usepackage[final,nopatch=footnote]{microtype}

\makeatletter

\let\lmtt@use@light@as@normal\@empty

\patchcmd{\tocsection}{\indentlabel}{\hspace{-1pc}}{}{}

\patchcmd{\section}{\normalfont\scshape}{\large\bfseries\boldmath}{}{}
\patchcmd{\subsection}{\normalfont}{\bfseries\boldmath}{}{}
\patchcmd{\subsubsection}{\normalfont}{\bfseries\boldmath}{}{}
\def\@secnumfont{\@empty}

\AddToHook{cmd/appendix/before}{%
    \crefalias{section}{appendix}%
    \crefalias{subsection}{appendix}%
}

\patchcmd{\@settitle}{\uppercasenonmath\@title}{\LARGE}{}{}
\patchcmd{\@setauthors}{\MakeUppercase}{\large}{}{}
\patchcmd{\HyOrg@maketitle}{\uppercasenonmath\shorttitle}{}{}{}
\patchcmd{\HyOrg@maketitle}{\thispagestyle{firstpage}}{\thispagestyle{empty}}{}{}
\patchcmd{\@maketitle}{\@adminfootnotes}{{\def\@makefntext{\noindent\@makefnmark}\@adminfootnotes}}{}{}
\patchcmd{\@adminfootnotes}{\@setthanks}{\setlength\parindent{0pt}\@setthanks}{}{}

\patchcmd{\@setaddresses}{\def\email}{\def\email##1##2{\href{mailto:##2}{\texttt{##2}}}\def\pepe}{}{}
\patchcmd{\@setaddresses}{\\}{\pepe}{}{}
\patchcmd{\@setaddresses}{\par\addvspace\bigskipamount\indent}{}{}{}
\patchcmd{\@setaddresses}{(\ignorespaces}{\switchcolumn\vspace{1em}\begingroup\bfseries\ignorespaces}{}{}
\patchcmd{\@setaddresses}{\unskip) }{\unskip\endgroup\\}{}{}
\patchcmd{\@setaddresses}{\scshape}{}{}{}
\patchcmd{\@setaddresses}{\addresses}{{\vspace{2em}\begin{paracol}{2}\setlength{\parindent}{0pt}\switchcolumn\addresses\end{paracol}}}{}{}

\DefineAdditiveKey{bib}{secondauthor}{\name}
\DefineSimpleKey{bib}{archiveprefix}
\DefineSimpleKey{bib}{eprinttype}
\DefineSimpleKey{bib}{eprintclass}
\DefineSimpleKey{bib}{primaryclass}
\NewDocumentCommand{\my@ifeq}{mmm}{%
    \edef\my@a{#1}%
    \edef\my@b{#2}%
    \ifx\my@a\my@b#3\fi%
}
\apptocmd{\bib@resolve@xrefs}{\postprocess@entry}{}{}
\NewDocumentCommand{\postprocess@entry}{}{%
    \IfEmptyBibField{eprint}{}{%
        \def\eprint@url{https://arxiv.org/abs/\BibField{eprint}}%
        \def\eprint@txt{%
            \PackageWarning{roirefs}{Unrecognized eprint type `\my@etype`  given for entry \current@citekey.}%
            {\color{red}\my@etype:\BibField{eprint}}%
        }%
        \def\parse@arxiv{%
            \def\eprint@txt{arXiv:\BibField{eprint}}%
        }%
        \def\parse@hal{%
            \def\eprint@url{https://hal.archives-ouvertes.fr/\BibField{eprint}}%
            \def\eprint@txt{\BibField{eprint}}%
        }%
        \edef\my@etype{???}%
        \IfEmptyBibField{eprinttype}{%
            \IfEmptyBibField{archiveprefix}{%
                \PackageWarning{roirefs}{No eprint type found for entry \current@citekey, assuming arXiv.}%
                \edef\my@etype{arXiv}%
            }{%
                \edef\my@etype{\BibField{archiveprefix}}%
            }%
        }{%
            \edef\my@etype{\BibField{eprinttype}}%
        }%
        \my@ifeq{\my@etype}{arxiv}{\parse@arxiv}%
        \my@ifeq{\my@etype}{arXiv}{\parse@arxiv}%
        \my@ifeq{\my@etype}{ArXiv}{\parse@arxiv}%
        \my@ifeq{\my@etype}{hal}{\parse@hal}%
        \my@ifeq{\my@etype}{HAL}{\parse@hal}%
        \my@ifeq{\my@etype}{Hal}{\parse@hal}%
        \IfEmptyBibField{url}{}{%
            \def\eprint@url{\BibField{url}}%
        }%
    }%
    \IfEmptyBibField{doi}{%
        \IfEmptyBibField{url}{%
            \IfEmptyBibField{review}{%
                \IfEmptyBibField{eprint}{%
                    \def\title@url{}%
                }{%
                    \def\title@url{\eprint@url}%
                }%
            }{%
                \def\MR##1{##1}%
                \def\fld@elt##1{##1}%
                \edef\title@url{http://www.ams.org/mathscinet-getitem?mr=\BibField{review}}%
            }%
        }{%
            \def\title@url{\BibField{url}}%
        }%
    }{%
        \edef\title@url{https://doi.org/\BibField{doi}}%
    }%
    \IfEmptyBibField{doi}{}{%
        \expandafter\def\csname bib'doi\endcsname{}%
    }%
    \IfEmptyBibField{review}{}{%
        \expandafter\def\csname bib'review\endcsname{}%
    }%
    \IfEmptyBibField{language}{}{%
        \expandafter\def\csname bib'language\endcsname{}%
    }%
}
\let\original@textit\textit
\let\original@emph\emph
\apptocmd{\bib@start}{%
    \def\textit{\PrintTitleWithLink}%
    \def\emph{\PrintTitleWithLink}%
}{}{}
\NewDocumentCommand\PrintTitleWithLink{m}{%
    \ifx\title@url\empty%
        \PackageWarning{roirefs}{No link found for entry \current@citekey.}%
        \original@textit{#1}%
    \else%
        \href{\title@url}{\original@textit{#1}}%
    \fi%
    \def\texit{\original@textit}%
    \def\emph{\original@emph}%
}
\let\AmsRefsPrintAuthors\PrintAuthors
\RenewDocumentCommand\PrintAuthors{m}{%
    \AmsRefsPrintAuthors{#1}%
    \IfEmptyBibField{secondauthor}{}{, with \AmsRefsPrintAuthors{\BibField{secondauthor}}}%
}
\RenewDocumentCommand\eprint{m}{%
    \href{\eprint@url}{\tt \eprint@txt}%
}
\global\BR@BackrefAlttrue
\RenewDocumentCommand\print@backrefs{m}{%
    ~
    \begingroup%
        \expandafter\providecommand\csname brc@#1\endcsname{0}%
        \expandafter\ifcase\csname brc@#1\endcsname%
            \PackageWarning{roirefs}{Reference never cited: \current@citekey.}%
            {\color{purple} Not cited.}%
        \or%
            Cited on page \expandafter\csname brl@#1\endcsname.%
        \else%
            Cited on pages \expandafter\csname brl@#1\endcsname.%
        \fi%
    \endgroup
}

\makeatother

\NewDocumentCommand\dclthm{ m O{sibling=thm} m m O{#4s} }{%
    \declaretheorem[
        style=#1, #2, name=#4,
        refname={#4,#5}, Refname={#4,#5},
    ]{#3}
}

\dclthm{plain}[numberwithin=subsection]{thm}{Theorem}
\dclthm{plain}{mainthm}{Theorem}

\renewcommand{\thethm}{%
    \ifnum\value{subsection}>0\relax%
        \thesubsection%
    \else
        \thesection%
    \fi%
    .\arabic{thm}%
}

\RenewCommandCopy{\theHthm}{\thethm}

\dclthm {plain}      {cor}      {Corollary} [Corollaries]
\dclthm {plain}      {lem}      {Lemma}
\dclthm {plain}      {prop}     {Proposition}
\dclthm {plain}      {que}      {Question}
\dclthm {plain}      {conj}     {Conjecture}
\dclthm {definition} {dff}      {Definition}
\dclthm {definition} {notation} {Notation}
\dclthm {definition} {stt}      {Setting}
\dclthm {remark}     {rmk}      {Remark}
\dclthm {remark}     {xmp}      {Example}

\crefname{section}{Section}{Sections}
\Crefname{section}{Section}{Sections}

\numberwithin{equation}{section}

\crefname{diagram}{diagram}{diagrams}
\Crefname{diagram}{Diagram}{Diagrams}
\creflabelformat{diagram}{(#2#1#3)}

\makeatletter
\NewDocumentCommand{\Label}{ O{} m }{\label@optarg[#1]{#2}}
\makeatother

\DeclareDocumentCommand\mdef{mO{}m}{%
    \DeclareDocumentMathCommand{#1}{#2}{#3}
}

\mdef \AA       {\mathbb A}
\mdef \CC       {\mathbb C}
\mdef \FF       {\mathbb F}
\mdef \NN       {\mathbb N}
\mdef \PP       {\mathbb P}
\mdef \QQ       {\mathbb Q}
\mdef \RR       {\mathbb R}
\mdef \ZZ       {\mathbb Z}

\mdef \fa       {\mathfrak a}
\mdef \fm       {\mathfrak m}
\mdef \fn       {\mathfrak n}
\mdef \fN       {\mathfrak N}
\mdef \fp       {\mathfrak p}
\mdef \fq       {\mathfrak q}

\mdef \uf       {\underline f}
\mdef \ux       {\underline x}
\mdef \uy       {\underline y}

\mdef \bA       {\mathbf A}
\mdef \bJ       {\mathbf J}
\mdef \bL       {\mathbf L}
\mdef \bR       {\mathbf R}
\mdef \two      {\mathbf 2}

\mdef \cA       {\mathcal A}
\mdef \cB       {\mathcal B}
\mdef \cC       {\mathcal C}
\mdef \cD       {\mathcal D}
\mdef \cE       {\mathcal E}
\mdef \cF       {\mathcal F}
\mdef \cG       {\mathcal G}
\mdef \cH       {\mathcal H}
\mdef \cI       {\mathcal I}
\mdef \cJ       {\mathcal J}
\mdef \cM       {\mathcal M}
\mdef \cN       {\mathcal N}
\mdef \cO       {\mathcal O}
\mdef \cP       {\mathcal P}
\mdef \cV       {\mathcal V}
\mdef \cW       {\mathcal W}
\mdef \cX       {\mathcal X}
\mdef \cY       {\mathcal Y}
\mdef \cZ       {\mathcal Z}

\mdef \catA     {\mathsf A}
\mdef \catB     {\mathsf B}
\mdef \catC     {\mathsf C}
\mdef \catD     {\mathsf D}
\mdef \catE     {\mathsf E}
\mdef \catF     {\mathsf F}
\mdef \catG     {\mathsf G}
\mdef \catH     {\mathsf H}
\mdef \catI     {\mathsf I}
\mdef \catJ     {\mathsf J}

\mdef \aDer     {\operatorname{aDer}}
\mdef \ann      {\operatorname{ann}}
\mdef \AQ       {\operatorname{AQ}}
\mdef \bHom     {\operatorname{\mathbf{Hom}}}
\mdef \codim    {\operatorname{codim}}
\mdef \cofib    {\operatorname{cofib}}
\mdef \coker    {\operatorname{coker}}
\mdef \cone     {\operatorname{cone}}
\mdef \Csi      {\operatorname{Csi}}
\mdef \Der      {\operatorname{Der}}
\mdef \diff     {\operatorname{diff}}
\mdef \dJet     {\operatorname{dJet}}
\mdef \End      {\operatorname{End}}
\mdef \Exp      {\operatorname{Exp}}
\mdef \Ext      {\operatorname{Ext}}
\mdef \fd       {\operatorname{fd}}
\mdef \fib      {\operatorname{fib}}
\mdef \Fitt     {\operatorname{Fitt}}
\mdef \Frac     {\operatorname{Frac}}
\mdef \frk      {\operatorname{frk}}
\mdef \Gr       {\operatorname{Gr}}
\mdef \Hom      {\operatorname{Hom}}
\mdef \Hull     {\operatorname{Hull}}
\mdef \id       {\operatorname{id}}
\mdef \Im       {\operatorname{Im}}
\mdef \Jac      {\operatorname{Jac}}
\mdef \Jet      {\operatorname{Jet}}
\mdef \Kosz     {\operatorname{Kosz}}
\mdef \Lan      {\operatorname{Lan}}
\mdef \LSym     {\operatorname{{\mathbf L}Sym}}
\mdef \Maps     {\operatorname{Maps}}
\mdef \Ner      {\operatorname{Ner}}
\mdef \NL       {\operatorname{NL}}
\mdef \Ob       {\operatorname{Ob}}
\mdef \ord      {\operatorname{ord}}
\mdef \Ran      {\operatorname{Ran}}
\mdef \rank     {\operatorname{rank}}
\mdef \RHom     {\operatorname{\mathbf{R}Hom}}
\mdef \rk       {\operatorname{rk}}
\mdef \Sing     {\operatorname{Sing}}
\mdef \Spec     {\operatorname{Spec}}
\mdef \Spf      {\operatorname{Spf}}
\mdef \Supp     {\operatorname{Supp}}
\mdef \supp     {\operatorname{supp}}
\mdef \Sym      {\operatorname{Sym}}
\mdef \Tor      {\operatorname{Tor}}
\mdef \Tot      {\operatorname{Tot}}
\mdef \vol      {\operatorname{vol}}
\mdef \coeq     {\operatorname{coeq}}
\mdef \dom      {\operatorname{dom}}
\mdef \codom    {\operatorname{codom}}
\mdef \trdeg    {\operatorname{tr.deg}}

\mdef \aAlg     {\mathsf{aAlg}}
\mdef \aAlgMod  {\mathsf{aAlgMod}}
\mdef \aCRing   {\mathsf{aCRing}}
\mdef \Aff      {\mathsf{Aff}}
\mdef \Alg      {\mathsf{Alg}}
\mdef \AlgMod   {\mathsf{AlgMod}}
\mdef \aMod     {\mathsf{aMod}}
\mdef \Ani      {\mathsf{Ani}}
\mdef \Anima    {\mathsf{Anima}}
\mdef \aPsh     {\mathsf{aPsh}}
\mdef \aSind    {\mathsf{aSind}}
\mdef \Ch       {\mathsf{Ch}}
\mdef \CRing    {\mathsf{CRing}}
\mdef \dAff     {\mathsf{dAff}}
\mdef \DGAlg    {\mathsf{DGAlg}}
\mdef \dSch     {\mathsf{dSch}}
\mdef \finprod  {\mathsf{FP}}
\mdef \FinSet   {\mathsf{FinSet}}
\mdef \FreeMod  {\mathsf{FreeMod}}
\mdef \Fun      {\mathsf{Fun}}
\mdef \Ind      {\mathsf{Ind}}
\mdef \Mod      {\mathsf{Mod}}
\mdef \Mor      {\mathsf{Mor}}
\mdef \Poly     {\mathsf{Poly}}
\mdef \Psh      {\mathsf{Psh}}
\mdef \sAb      {\mathsf{sAb}}
\mdef \sAlg     {\mathsf{sAlg}}
\mdef \Sch      {\mathsf{Sch}}
\mdef \SchMod   {\mathsf{SchMod}}
\mdef \sCRing   {\mathsf{sCRing}}
\mdef \Set      {\mathsf{Set}}
\mdef \sif      {\mathsf{SIF}}
\mdef \Sind     {\mathsf{Sind}}
\mdef \sMod     {\mathsf{sMod}}
\mdef \Spaces   {\mathsf{Spaces}}
\mdef \sSet     {\mathsf{sSet}}
\mdef \vir      {\mathsf{SIF}}

\mdef \cl       {\mathrm{cl}}
\mdef \cn       {\mathrm{cn}}
\mdef \der      {\mathrm{der}}
\mdef \fg       {\mathrm{fg}}
\mdef \fpr      {\mathrm{fp}}
\mdef \free     {\mathrm{free}}
\mdef \ft       {\mathrm{ft}}
\mdef \op       {\mathrm{op}}
\mdef \red      {\mathrm{red}}
\mdef \sfp      {\mathrm{sfp}}

\mdef \embcodim {\mathrm{emb.codim}}
\mdef \embdim   {\mathrm{emb.dim}}
\mdef \jetcodim {\mathrm{jet.codim}}
\mdef \vembdim  {\mathrm{vir.emb.dim}}

\mdef \Arg      {\rule{1.5ex}{.4pt}}
\mdef \Char     {char}

\mdef \by       {{\times}}

\mdef \llparen  {(\!(}
\mdef \rrparen  {)\!)}
\mdef \sslash   {/\!\!/}

\mdef \into     {\hookrightarrow}
\mdef \onto     {\twoheadedrightarrow}
\mdef \xra      {\xrightarrow}
\mdef \xla      {\xleftarrow}
\mdef \lra      [O{\quad}]{\xrightarrow{\,#1\,}}

\mdef \Lotimes  {\mathop\otimes\limits^{\scriptscriptstyle\bL}}

\mdef \sbt      {{\mspace{1mu}\vcenter{\hbox{\scalebox{.5}{$\bullet$}}}}}

\makeatletter

\DeclareDocumentMathCommand{\mylim@}{mmm}{%
  \vtop{\m@th\ialign{##\cr
    \hfil$#2\operator@font #1$\hfil\cr
    \noalign{\nointerlineskip\kern1.5\ex@}#3\cr
    \noalign{\nointerlineskip\kern-\ex@}\cr}}%
}
\DeclareDocumentMathCommand{\invlim}{}{%
  \mathop{\mathpalette{\mylim@{lim}}{\leftarrowfill@\scriptscriptstyle}}\nmlimits@
}
\DeclareDocumentMathCommand{\colim}{}{%
  \mathop{\mathpalette{\mylim@{colim}}{\rightarrowfill@\scriptscriptstyle}}\nmlimits@
}
\DeclareDocumentMathCommand{\hocolim}{}{%
  \mathop{\mathpalette{\mylim@{hocolim}}{\rightarrowfill@\scriptscriptstyle}}\nmlimits@
}
\DeclareDocumentMathCommand{\holim}{}{%
  \mathop{\mathpalette{\mylim@{holim}}{\leftarrowfill@\scriptscriptstyle}}\nmlimits@
}

\makeatother

\DeclareDocumentTextCommand{\SPcite}{mO{Tag}}{%
    \cite[\href%
        {https://stacks.math.columbia.edu/tag/#1}%
        {#2~#1}%
    ]{SP}%
}

\DeclareDocumentTextCommand{\Kercite}{mO{Tag}}{%
    \cite[\href%
        {https://kerodon.net/tag/#1}%
        {#2~#1}%
    ]{Ker}%
}

\begin{document}


\title{Derived jet and arc spaces}

\author{Roi Docampo}

\address[R.\ Docampo]{%
Department of Mathematics\\
University of Oklahoma\\
601 Elm Avenue, Room 423\\
Norman, OK 73019 (USA)%
}

\email{roi@ou.edu}

\author{Lance Edward Miller}

\address[L.\ E.\ Miller]{%
Department of Mathematical Sciences\\
University of Arkansas\\
850 West Dickson Street, Room 309\\
Fayetteville, AR 72701 (USA)%
}

\email{lem016@uark.edu}

\author{C.\ Eric Overton-Walker}

\address[C.\ E.\ Overton-Walker]{%
~\vspace{-1em}
}

\email{eric@locallyringed.space}

\subjclass[2020]{Primary 14E18; Secondary 14A30, 13D03, 14B05.}

\keywords{%
    Arc space,
    jet scheme,
    derived algebraic geometry,
    animation,
    cotangent complex,
    embedding dimension,
    stable point.
}


\begin{abstract}
We study jet schemes and arc spaces in the context of derived algebraic
geometry. Explicitly, we consider the jet and arc functors in the category of
schemes and study their animations to the category of derived schemes---what we
call the derived jet and arc spaces. We show that the derived constructions
agree with the classical versions when the base scheme is smooth, or more
generally for local complete intersection log canonical singularities, giving a
derived interpretation to a theorem of Mustaţă. For more singular spaces we get
new singularity invariants in the form of higher homotopy groups. We also study
cotangent complexes for derived jet and arc spaces, generalizing previous
formulas for sheaves of differentials of classical jet and arc spaces. Several
applications are obtained. Specifically, we revisit recent results on the local
structure of arc spaces from the lens of cotangent complexes, giving more
unified proofs and removing unnecessary hypotheses. In particular, we extend a
version of Reguera's curve selection lemma for arc spaces to the case of
non-perfect base fields.
\end{abstract}

\maketitle

\tableofcontents

\section{Introduction}

\label{sec:introduction}

Arc spaces and jet schemes, i.e., parameter spaces of arc and jets, form a
basic tool in the study of singularities of algebraic varieties and their
birational geometry. Classically, one studies invariants of resolution of
singularities by relating them to topological invariants of jet and arc spaces.
For example, discrepancies and log canonical thresholds are computed in terms
of dimensions of jet schemes or asymptotic codimensions in arc spaces. Aiming
to capture finer invariants, there has been a recent surge in the study of jet
and arc spaces from the point of view of their \emph{scheme-theoretic}
structure \cite{dFD20,CdFD22,CdFD23}. This leads to more natural
characterizations and more effective computational tools. For example, Mather
discrepancies are computed directly as embedding dimensions of certain local
rings \cite{MR18}. Fundamental for these developments is the main theorem of
\cite{dFD20}, which provides explicit formulas to compute and understand
sheaves differentials on jet and arc spaces.

One is naturally motivated to extend the main result of \cite{dFD20} to get
formulas for the \emph{cotangent complexes} of jet and arc spaces. Since
cotangent complexes are inherently the derived analogues of sheaves of
differentials, we are led to a much more ambitious goal: laying down the
foundations of the theory of jet and arc spaces in the context of \emph{derived
algebraic geometry}.

Before going into technical details, we give a quick summary of the main
contributions of this paper. Throughout we refer to a scheme that is not
derived as \emph{classical}.

\begin{itemize}[left=.5em,itemsep=.5em]

\item
We give a detailed construction for the derived arc space $\bL J_\infty (X)$
and the derived jet schemes $\bL J_n(X)$ associated to a derived scheme $X$. We
show these derived schemes represent natural moduli problems. Derived jet and
arc spaces are classical when $X$ is smooth, thus leading to a natural role for
them in the study of singularities.

\item
We show that when $X$ is a classical local complete intersection (lci), then
$\bL J_n(X)$ is classical if and only if $J_n(X)$ is lci of the expected
dimension. By a theorem of Mustaţă, this is in turn equivalent to log
canonicity, and we get a derived interpretation of this phenomenon.

\item
We give a formula for the cotangent complex $\bL_{\bL J_\infty (X)}$ of the
derived arc space generalizing the main result of \cite{dFD20}, and similarly
for derived jet schemes. We show that a formula of the same type cannot hold
for the classical arc/jet spaces, justifying the introduction of derived
techniques.

\item
We revisit the main theorems in \cite{dFD20,CdFD22,CdFD23} and their proofs
from the lens of the cotangent complex. We remove unnecessary hypotheses and
provide more streamlined and unified arguments. As a main application, we
obtain a general version of Reguera's \emph{curve selection lemma} for arc
spaces \cite{Reg06,dFD20,Reg21} without any conditions on the ground field.
Previous versions required perfect base fields.

\end{itemize}

Apart from the applications proposed in this paper, we see the non-trivial
higher derived structure of derived arc and jet spaces as a fertile new ground
to control subtle invariants of singularities, but to keep the article within a
manageable size and scope, we must leave a more robust study of these ideas to
future investigations.

We point out that ours is not the first attempt at using arc and jet spaces in
a derived setting. As far as we are aware, derived versions of jet and arc
schemes are first mentioned in \cite[Sec.~9.2]{GR12} as part of a study of loop
spaces and affine Grassmannians from the point of view derived algebraic
geometry. Further investigations appear in \cite{Hen15}, now with applications
to higher-dimensional loop spaces. All of these works use differential graded
algebras in the foundations for derived algebraic geometry \cite{TV08,GR12},
which has the shortcoming of only providing a robust theory in characteristic
0. Since our intended applications are in arbitrary characteristic, e.g.\ we
are interested in non-perfect base fields, and the technical details are not
significantly harder, we have chosen to use simplicial algebras and the theory
of animation. To aid the interested reader less familiar with the background
and foundations of derived algebraic geometry, as well as to set notation and
conventions for this paper, we include in \cref{sec:preliminaries} an expansive
and detailed summary of the needed constituent parts of derived algebraic
geometry via animation.


In the remainder of the introduction we give a more detailed account of the
main results of the paper.

\subsection*{Derived jet and arc schemes}

Fix a base ring $k$ and a $k$-scheme $X$. Recall that the $n$-th jet
scheme $J_n(X)$ of $X$ is the parameter space of $n$-jets. In other words, if
we consider the functor of $n$-jets on $X$,
\[
    \Jet_n^X \colon \Sch_k \to \Set,
    \qquad
    \Jet_n^X(Z) = \Hom_{\Sch_k}(Z \times_{k} \Spec k[t]/(t^{n+1}), X),
\]
then $J_n(X)$ is the $k$-scheme representing $\Jet_n^X$. One immediately checks
that this construction gives rise to a functor $J_n \colon \Sch_k \to \Sch_k$.

There are two possibilities for constructing a derived analogue. First, one can
naturally extend the functor $\Jet^X_n$ to the $\infty$-category $\dSch_k$ of
derived $k$-schemes:
\[
    \dJet_n^X \colon \dSch_k \to \Spaces,
    \qquad
    \dJet_n^X(Z)
    =
    \Maps_{\dSch_k}(Z \times^\bL_k \Spec k[t]/(t^{n+1}), X).
\]
Second, we can consider the animation $\bL J_n \colon \dSch_k \to \dSch_k$,
which informally should be thought as the ``left derived'' functor of $J_n$. We
call $\bL J_n$ the \emph{derived $n$-jet functor}. Our first theorem is that
these two approaches give rise to the same answer.

\begin{mainthm}[\cref{thm:allagree,thm:globalizejets}]
\label{main-theorem-A}
For any a derived $k$-scheme $X$, the derived $n$-jet scheme $\bL J_n(X)$
represents the functor $\dJet^X_n$.
\end{mainthm}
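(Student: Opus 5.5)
The plan is to reduce to the affine case, where everything is explicit, and then glue. For affine $X = \Spec A$ with $A$ an animated $k$-algebra, I describe $\bL J_n$ as the left Kan extension (animation) of the classical functor $J_n$ restricted to polynomial $k$-algebras. Concretely, on the algebra side $J_n$ corresponds to a functor $P \mapsto P_n$, where for $P = k[x_1,\dots,x_r]$ we have $P_n = k[x_i^{(j)} : 1\le i\le r,\ 0\le j\le n]$. This functor preserves polynomial algebras and finite coproducts, since $J_n(\AA^r\times\AA^s) = J_n(\AA^r)\times J_n(\AA^s)$. Hence its animation $A \mapsto \bL(A)_n$ preserves sifted colimits and is determined by its values on polynomial algebras.

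The key observation is that the classical functor $P\mapsto P_n$ is \emph{left adjoint} to the restriction-of-scalars-type functor
\[
    B \mapsto B\otimes_k k[t]/(t^{n+1}).
\]
This is just the classical universal property of jet schemes on affines:
\[
    \Hom(P_n,B)=\Hom(P,B[t]/(t^{n+1})).
\]
So the first step is to establish, for an animated $k$-algebra $B$ and a polynomial algebra $P$, a natural equivalence
\[
    \Maps(\bL(P)_n,B)\simeq \Maps(P, B\otimes^\bL_k k[t]/(t^{n+1})).
\]
Since $k[t]/(t^{n+1})$ is free of rank $n+1$ over $k$, the derived tensor product is underlying-module-wise just $B^{\oplus(n+1)}$. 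Both sides reduce to products of copies of $\Omega^\infty B$: the left side is $(\Omega^\infty B)^{r(n+1)}$ by freeness, and the right side is $(\Omega^\infty(B\otimes k[t]/(t^{n+1})))^{r}$. Matching these requires checking that the identification is compatible with the polynomial structure. I would argue this by noting that both sides are functors of $B$ that preserve sifted colimits and agree on polynomial $B$ (classical case), so they agree by left Kan extension.

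Next I extend to arbitrary $A$. Write $A=\colim P_\alpha$ as a sifted colimit of polynomial algebras. Then $\bL(A)_n=\colim \bL(P_\alpha)_n$, and $\Maps(\Arg,B)$ turns this colimit into a limit. On the other side, $\Maps(A, B[t]/t^{n+1})=\lim \Maps(P_\alpha,B[t]/t^{n+1})$. This gives the adjunction on all animated algebras, i.e.\ $\bL J_n(\Spec A)$ represents $\dJet_n^{\Spec A}$ restricted to derived affines. Since $\Spec(B)\times^\bL_k\Spec k[t]/(t^{n+1})=\Spec(B\otimes^\bL k[t]/(t^{n+1}))$, this is exactly the statement on affine test objects. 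It extends to all derived schemes $Z$ because both functors are Zariski sheaves in $Z$: note that $Z\times^\bL_k \Spec k[t]/(t^{n+1})$ has the same underlying space as $Z$.

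The main obstacle is globalization to non-affine $X$, i.e.\ making sense of $\bL J_n$ on $\dSch_k$ and showing it is compatible with open immersions. The fact to use is the same as classically: since nilpotent thickenings do not change topology, a map $Z\times\Spec k[t]/(t^{n+1})\to X$ factors through an open $U\subseteq X$ exactly when its restriction to $Z$ does. I would first show that for an open immersion $U\to X$ of derived affines, the square formed by $\bL J_n(U)\to\bL J_n(X)$ and the truncation maps to $U\to X$ is cartesian. Using the affine adjunction, this reduces to the topological factorization property on test objects. It follows that $\dJet_n^X$ is covered by the open subfunctors $\dJet_n^{U_i}$ for an affine open cover $\{U_i\}$ of $X$. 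These subfunctors are representable by $\bL J_n(U_i)$ with compatible overlaps, so $\dJet_n^X$ is representable by the glued derived scheme. By construction this glued scheme coincides with the Zariski-local definition of $\bL J_n(X)$.
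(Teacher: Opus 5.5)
Your proposal is correct. The affine part is essentially the paper's argument in \cref{thm:ani_adj,thm:allagree}, and the globalization takes a somewhat different route.

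\textbf{Affine case.} The inputs are the same as the paper's: $J_n$ sends $\Poly_k^\ft$ into $\Poly_k^\ft$, so $J_n(P)$ is compact projective; $(\Arg)[t]/(t^{n+1})$ preserves sifted colimits; $\Alg_k\subset\aAlg_k$ is fully faithful; and the classical adjunction holds. You Kan-extend in the target variable first and then pass to colimits in the source. The paper commutes both colimits out at once. For your second step, run the Kan-extension uniqueness for the bifunctor, i.e.\ for functors $\aAlg_k\to\Fun(\Poly_k^{\ft,\op},\Anima)$, so that the equivalence is natural in $P$. The paper glosses over the same coherence by taking canonical colimits.

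\textbf{Globalization.} This is where you genuinely diverge. The paper glues using \cref{thm:etalelocalization}. There the identification $\bL J_n(R_\sbt)\otimes^\bL_{R_\sbt}R_\sbt[f^{-1}]\simeq\bL J_n(R_\sbt[f^{-1}])$ comes from formal \'etaleness of localizations together with the fact that $A_\sbt[t]/(t^{n+1})\to A_\sbt[t]/(t^n)$ is a small extension. Only afterwards does the paper use the topological factorization of maps out of $\Spec A_\sbt[t]/(t^{n+1})$ to check representability. You get the cartesian square directly from the functor of points. You use that open immersions are monomorphisms detected on underlying spaces, and that $\Spec B\to\Spec B[t]/(t^{n+1})$ is a homeomorphism. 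You then represent $\dJet_n^X$ as a union of representable open subfunctors.

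\textbf{Comparison.} Your route is more elementary, since it needs no small extensions. It also handles arbitrary test objects in $\dSch_k$, matching the introduction's formulation, whereas \cref{thm:globalizejets} only treats affine test objects. The paper's route yields the stronger \'etale base change statement, which it needs later. For example, in \cref{thm:smoothobjectshavediscretejetsandarcs} the paper compares $\bL J_n(U)$ with $\bL J_n(\AA^d)$ along an \'etale map that is not an open immersion.
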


The arc space $J_\infty(X)$ is defined as the limit of the jet schemes under
the truncation maps, which one shows is always represented by a scheme. For
derived arc spaces we have three possible notions: the limit $\dJet_\infty^X$
of the functor of points; the animation $\bL J_\infty$ of the functor
$J_\infty$; and the homotopy limit $\holim \bL J_n$. They all give the same
answer:

\begin{mainthm}[\cref{thm:ani_arc_hocolim,thm:allagree,thm:globalizejets}]
\label{main-theorem-B}
For any a derived $k$-scheme $X$, the derived arc space $\bL J_\infty(X)$
represents the functor $\dJet^X_\infty$ and is naturally weakly equivalent to
the homotopy limit $\holim \bL J_n(X)$.
\end{mainthm}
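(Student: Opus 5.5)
Since the statement is local on $X$ and all three constructions are compatible with Zariski gluing, I will first reduce to the affine case $X = \Spec A$ with $A$ an animated $k$-algebra; the globalization step is then the content of \cref{thm:globalizejets}. In the affine case I will compare three objects: the animated algebra $\bL J_\infty(A)$, the homotopy limit of the $\bL J_n(X)$ (a colimit on the algebra side), and the functor $\dJet_\infty^X$.

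The first step is to show that $\bL J_\infty$ is the colimit of the $\bL J_n$, i.e.\ \cref{thm:ani_arc_hocolim}. On polynomial algebras $P = k[x_i]$ the classical arc algebra is $J_\infty(P) = k[x_i^{(j)} : j \ge 0]$, and this is the filtered colimit of the jet algebras $J_n(P) = k[x_i^{(j)} : j \le n]$ along the truncation maps. Animation is the left Kan extension from polynomial algebras along sifted colimits, and filtered colimits commute with sifted colimits. Hence $\bL J_\infty(A) \simeq \colim_n \bL J_n(A)$ for every animated $A$, which gives $\bL J_\infty(X) \simeq \holim \bL J_n(X)$ on affines.

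The second step is representability of $\dJet_\infty^X$, which is defined as the limit $\invlim_n \dJet_n^X$. By \cref{main-theorem-A}, each $\dJet_n^X$ is represented by $\bL J_n(X)$. Hence for $Z = \Spec B$ we get
\[
\dJet_\infty^X(Z) \simeq \invlim_n \Maps(\bL J_n(A), B) \simeq \Maps\bigl(\colim_n \bL J_n(A), B\bigr).
\]
Combining this with the first step identifies $\dJet_\infty^X$ with the functor of points of $\bL J_\infty(X)$. This is the content of \cref{thm:allagree}.

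The part of \cref{main-theorem-A} I would actually have to justify is that $\Maps(A, B[t]/(t^{n+1}))$ is corepresented by $\bL J_n(A)$. Both sides, viewed as functors of $A$, send sifted colimits to limits:
\begin{itemize}
\item for the right-hand side this is automatic;
\item for the left-hand side it holds because $\bL J_n$ preserves sifted colimits by construction.
\end{itemize}
So it suffices to check the case $A$ polynomial. Both sides are then products of copies of $B[t]/(t^{n+1}) \simeq B^{n+1}$, because $k[t]/(t^{n+1})$ is free over $k$ and therefore the derived tensor product is underived.

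The main obstacle is producing a natural comparison map, rather than checking that it is an equivalence objectwise. I would construct the universal $n$-jet on the polynomial subcategory as a natural transformation $P \to J_n(P)[t]/(t^{n+1})$ of functors into animated rings. I would then extend it by left Kan extension, using that $(-)[t]/(t^{n+1}) = (-)\otimes_k k[t]/(t^{n+1})$ preserves sifted colimits.
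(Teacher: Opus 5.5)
Your proposal is correct. On the arc-specific content it follows the paper:

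\begin{itemize}
\item your first step is the proof of \cref{thm:ani_arc_hocolim} (the paper uses a cofibrant replacement with terms in $\Poly_k$ instead of the canonical colimit, which makes no difference);
\item your second step is the $n=\infty$ half of the proof of \cref{thm:allagree};
\item as in the paper, the non-affine case is handed off to \cref{thm:globalizejets}.
\end{itemize}

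The one genuinely different ingredient is how you justify the finite-level corepresentability of \cref{main-theorem-A}. The paper derives it from the general \cref{thm:ani_adj}. There, for an adjunction whose left adjoint preserves the chosen compact projective generators, one writes both arguments as canonical sifted colimits of generators and pulls the colimits out of the mapping spaces by compact projectivity. One then applies the $1$-categorical adjunction termwise, with naturality coming from the functoriality of canonical colimits.

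You instead build the unit $\id \to W_n \circ \bL J_n$ directly, by extending the classical universal jet from $\Poly_k^\ft$. This is legitimate because both functors preserve sifted colimits, and restriction to $\Poly_k^\ft$ is an equivalence on such functors. You then test the induced map $\Maps(\bL J_n(A),B) \to \Maps(A,B[t]/(t^{n+1}))$ only on polynomial $A$, where both sides are $B^{(n+1)d}$. This is the Yoneda version of the same idea. It is less general than \cref{thm:ani_adj}, but it makes the comparison map and its naturality in $A$, $B$ and $n$ explicit. Naturality in $n$ is exactly what your second step needs in order to pass to the limit.

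Three small points to tighten.
\begin{itemize}
\item In the first step, the commutation of filtered with sifted colimits is automatic and is not what does the work. The relevant fact is that a filtered colimit of discrete animated algebras is discrete, so that $\colim_n J_n(P)$ computed in $\aAlg_k$ is $J_\infty(P)$.
\item In your bullet list the labels are swapped. It is $\Maps(A,B[t]/(t^{n+1}))$ that sends colimits in $A$ to limits automatically, and $\Maps(\bL J_n(A),B)$ that does so because $\bL J_n$ preserves sifted colimits.
\item The phrase ``compatible with Zariski gluing'' hides the main real input of the global case. That input is $\bL J_n(R[f^{-1}]) \simeq \bL J_n(R)\otimes^\bL_R R[f^{-1}]$, together with the fact that $\Spec B[t]/(t^{n+1})$ and $\Spec B$ have the same underlying space. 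The paper gets the localization statement from formal \'etaleness in \cref{thm:etalelocalization}. With your affine representability it also follows directly, since an element of $\pi_0(B)[t]/(t^{n+1})$ is a unit if and only if its reduction modulo $t$ is.
\end{itemize}
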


For a derived scheme $Z$, we denote by $\pi_0(Z)$ the underlying classical
scheme.
%

\begin{mainthm}[\cref{thm:SpecSym}]
\label{main-theorem-C}
Let $n \in \NN \cup \{ \infty \}$, and let $X$ be a derived $k$-scheme.
\begin{enumerate}
\item\label[part]{main-theorem-C.1}
    $\pi_0 (\bL J_n (X)) = J_n(\pi_0(X))$.
\item\label[part]{main-theorem-C.2}
    $\bL J_1 (X) = \Spec_X (\LSym_{\cO_X} (\bL_{X/k}))$.
\end{enumerate}
Here $\bL_{X/k}$ is the cotangent complex of $X$ over $k$.
\end{mainthm}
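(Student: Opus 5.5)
Both parts reduce to the affine case: by \cref{thm:globalizejets}, $\bL J_n$ is compatible with Zariski gluing, and $\pi_0$, $\bL_{X/k}$ and relative $\Spec$ of quasi-coherent algebras are all Zariski local. So we may take $X = \Spec A$ with $A$ an animated $k$-algebra. Write $A$ as a sifted colimit (geometric realization) of polynomial $k$-algebras $P_\bullet$. Since $\bL J_n$ is the animation of $J_n$, on affines it is the left Kan extension of $J_n$ from polynomial algebras. Hence $\bL J_n(\Spec A)$ is the sifted colimit of the affine schemes $J_n(\Spec P_\bullet)$ in animated rings. For $n=\infty$ we use the same presentation, via the animation $\bL J_\infty$ or via $\holim \bL J_n$ (\cref{thm:ani_arc_hocolim}).

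\textbf{Part (1).} The functor $\pi_0$ on animated rings preserves colimits, being a left adjoint, and sends a simplicial resolution to the coequalizer of its first two terms. So $\pi_0$ of the coordinate ring of $\bL J_n(\Spec A)$ is the coequalizer of the two maps from the coordinate ring of $J_n(\Spec P_1)$ to that of $J_n(\Spec P_0)$.

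I would then show that, on the level of coordinate rings, the classical functor $J_n$ sends coequalizers to coequalizers, i.e.\ that it is right exact. This follows from its explicit description: for $P = k[x_i]$, $J_n(\Spec P)$ is the spectrum of $k[x_i^{(j)}]$, and for $B = P/I$ one adjoins the $t$-coefficients of $f(\sum_j x^{(j)}t^j)$ for $f \in I$. Equivalently, $J_n$ on coordinate rings is left adjoint to $R \mapsto R[t]/(t^{n+1})$, hence preserves colimits. The coequalizer therefore gives $J_n(\pi_0 X)$.

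For $n=\infty$, $J_\infty$ on coordinate rings is again a left adjoint, now to $R \mapsto R[[t]]$, so the same argument works. Alternatively, we argue via the identification of \cref{main-theorem-B}. In that case we must check that $\pi_0$ commutes with $\holim$ here. This holds because the truncation maps are surjective on $\pi_0$ of a tower of polynomial-type jets; Milnor $\lim^1$ terms vanish and the $\pi_0$ of the colimit of rings is the colimit.

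\textbf{Part (2).} Both sides are animations of functors that agree on polynomial algebras. For $P = k[x_i]$ we have $J_1(\Spec P) = \Spec P[dx_i] = \Spec \Sym_P \Omega_{P/k}$, naturally in $P$. The functor $P \mapsto \Sym_P \Omega_P$ has animation $A \mapsto \LSym_A \bL_{A/k}$, since $\bL_{A/k}$ is by definition the animation of $\Omega$ and $\LSym$ commutes with sifted colimits. This gives the identification affine-locally, and the pieces glue to $\Spec_X \LSym_{\cO_X}(\bL_{X/k})$.

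\textbf{Main obstacle.} The key technical point is making sure that the animation of the pair-valued functor $P \mapsto (P, \Sym_P \Omega_P)$ is indeed computed termwise on a simplicial resolution, i.e.\ that the relevant forgetful functors commute with sifted colimits. For $n=\infty$ in part (1), the key point is the $\pi_0$/limit interchange.
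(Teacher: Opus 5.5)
Your proposal is correct and follows the paper's route. Part (1) is the general fact that $\pi_0$ of the left derived functor of a colimit-preserving functor (here the left adjoints $J_n$, including $n=\infty$) is that functor applied to $\pi_0$. Part (2) is the identification $\Ani(\Sym\circ\Omega)\cong\LSym\circ\bL\Omega$, which the paper obtains from \cite[Prop.~5.1.5(b)]{CS20}, using that $(P,\Omega_{P/k})$ is compact projective in $\AlgMod_k$, or equivalently by your termwise computation on a cofibrant replacement.

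Your alternative $n=\infty$ argument via $\holim$ is not needed, and the Milnor $\lim^1$ remark there is beside the point. On the ring side this is a filtered colimit, and $\pi_0$ commutes with it because $\pi_0$ is a left adjoint.
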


\Cref{main-theorem-C.1} of \cref{main-theorem-C} asserts that $\bL J_n(X)$
should be thought as a derived enrichment of the classical $J_n(X)$ and
\Cref{main-theorem-C.2} of \cref{main-theorem-C} states that the first derived
jet scheme should be thought as the total space of the cotangent complex, just
as the first underived jet scheme is the total space of the sheaf of
differentials.

Classical jet and arc spaces have a predictable structure for smooth (or even
formally smooth) schemes, but become interesting in the presence of
singularities. Subtle invariants of singularities can be defined or controlled
using the topological and scheme structures on jet and arc spaces. As mentioned
above, it is our hope that derived jet and arc spaces can capture deeper
invariants in the form of higher homotopy groups. This is interesting even when
the underlying scheme $X$ is not necessarily derived, i.e., classical.
Specifically, we will see that if $X$ is a smooth $k$-scheme, then $\bL
J_\infty(X)$ and $\bL J_n(X)$ are classical for all $n$. So we can think of
$\pi_i(\cO_{\bL J_\infty(X)})$ and $\pi_i(\cO_{\bL J_n(X)})$ as new singularity
invariants.

\subsection*{Quasi-smoothness and local complete intersections}

The construction of the derived arc and jet spaces is particularly explicit in
what is known as the \emph{quasi-smooth} case. Consider a polynomial ring
$k[\ux]$ for some fixed set of variables $\ux = x_1, \ldots, x_d$, and let $\uf
= f_1, \ldots, f_c$ be a sequence of polynomials in $k[\ux]$. Classically it is
natural to consider the quotient $k[\ux]/(\uf)$ and the corresponding affine
scheme. In derived algebraic geometry it is more natural to consider instead
the \emph{derived quotient} $k[\ux]\sslash(\uf)$, which is defined using an
appropriate pushout diagram in the $\infty$-category of animated algebras, as
we elaborate in detail in \cref{sec:intro-koszul-regular}. The derived
$k$-schemes locally constructed in this way are called \emph{quasi-smooth}.

\begin{mainthm}[\cref{thm:LJderived-quotients,lem:LJquasi-smooth}]
\label{main-theorem-D}
The derived jet schemes of a quasi-smooth scheme are quasi-smooth.
Furthermore, for $n \in \NN \cup \{\infty\}$
\[
    \text{if}
    \qquad
    X = \Spec
    \begin{array}{@{\,}c@{\,}}
        k[\ux]
        \\\hline\\[-1.1em]\hline
        (\uf)
    \end{array}
    \qquad
    \text{then}
    \qquad
    \bL J_n(X) = \Spec
    \begin{array}{@{\,}c@{\,}}
        k[\ux, \ux^{(1)}, \ldots, \ux^{(n)}]
        \\\hline\\[-1.1em]\hline
        (\uf, \uf^{(1)}, \ldots \uf^{(n)})
    \end{array},
\]
where $f_i^{(q)}$ denotes the $q$-th Hasse-Schmidt differential of $f_i$.
\end{mainthm}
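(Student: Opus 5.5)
The proof rests on two facts: the animated functor $\bL J_n$ commutes with sifted colimits, and by \cref{main-theorem-A,main-theorem-B} it is determined by the functor of points $\dJet_n^X$. The plan is to write the derived quotient as a pushout of polynomial algebras, check that $\bL J_n$ carries this pushout to a pushout, and compute $\bL J_n$ on each piece.

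First, by definition $k[\ux]\sslash(\uf)$ is the pushout in animated $k$-algebras
\[
    k[\ux] \otimes^{\bL}_{k[\underline y]} k, \qquad \underline y = y_1,\dots,y_c,
\]
where $y_i \mapsto f_i$ on the left and $y_i\mapsto 0$ on the right. Dually, $X$ is the derived fiber product $\AA^d \times^{\bL}_{\AA^c} \{0\}$ along the map $\uf\colon \AA^d\to\AA^c$.

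Second, I claim $\bL J_n$ preserves derived fiber products of affine derived schemes. The functor-of-points description of \cref{main-theorem-A,main-theorem-B} gives this directly:
\[
    \dJet^X_n(Z) = \Maps(Z\times^{\bL}_k \Spec k[t]/(t^{n+1}), X).
\]
Mapping into a fiber product is the fiber product of mapping spaces, so $\dJet^{X}_n = \dJet^{\AA^d}_n \times_{\dJet^{\AA^c}_n} \dJet^{\{0\}}_n$. Since $\bL J_n(X)$ represents $\dJet^X_n$, the Yoneda lemma gives
\[
    \bL J_n(X) = \bL J_n(\AA^d)\times^{\bL}_{\bL J_n(\AA^c)} \bL J_n(\{0\}).
\]
For $n=\infty$ I use the same argument with $\dJet_\infty$, or equivalently the identification with $\holim \bL J_n$, since homotopy limits commute with fiber products.

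Third, I compute each piece. Affine space is smooth, hence classical, and $\bL J_n(\AA^d)=J_n(\AA^d)=\Spec k[\ux,\ux^{(1)},\dots,\ux^{(n)}]$; for $n=\infty$ this is a polynomial ring in countably many variables. Also $\bL J_n(\{0\}) = \{0\}$ is the origin of $J_n(\AA^c)$, cut out by all $y_i^{(q)}$. The induced map $J_n(\AA^d)\to J_n(\AA^c)$ sends $y_i^{(q)}\mapsto f_i^{(q)}$; this is the defining property of Hasse–Schmidt derivatives, obtained by substituting $x_j=\sum_q x_j^{(q)}t^q$ into $f_i$ and reading off the coefficient of $t^q$. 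The fiber product is therefore the pushout of $k[\ux,\dots,\ux^{(n)}] \leftarrow k[y_i^{(q)}] \to k$, which is by definition the derived quotient by $\uf,\uf^{(1)},\dots,\uf^{(n)}$. That gives the displayed formula.

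Quasi-smoothness of $\bL J_n(X)$ for a general quasi-smooth $X$ then follows by locality. I would invoke \cref{thm:globalizejets} to say that $\bL J_n$ is compatible with Zariski open immersions, namely $\bL J_n(U)=\bL J_n(X)\times_X U$. Then $\bL J_n(X)$ is covered by the derived jet schemes of local charts of the form above.

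I expect the main obstacle to be the identification of the map $\bL J_n(\AA^d)\to\bL J_n(\AA^c)$ with the Hasse–Schmidt map in the derived setting. This is handled by noting that the map lives between classical smooth schemes, where $\bL J_n$ agrees with $J_n$ functorially. A second issue is the $n=\infty$ case: "quasi-smooth" involves infinitely many variables and equations there, so I would interpret it accordingly, or deduce it from the finite levels using $\holim$.
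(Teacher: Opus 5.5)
Your proof is correct. For the displayed formula it is the paper's argument in dual form; for quasi-smoothness your route is slightly different.

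\textbf{The formula.} Your functor-of-points and Yoneda computation in the second step is just an unwinding of the adjunction $\bL J_n \dashv W_n$ from \cref{thm:allagree}. The paper states it directly: $\bL J_n$ is a left adjoint, so it preserves the pushout of $k \leftarrow k[\uy] \to R_\sbt$. The evaluation on polynomial algebras is then identical. Note that the first of your ``two facts'', preservation of sifted colimits, plays no role: pushouts are not sifted colimits, and the adjunction does all the work.

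\textbf{Quasi-smoothness.} You reduce to charts $\Spec k[\ux]\sslash(\uf)$. This tacitly uses that a quasi-smooth $k$-scheme is Zariski-locally a derived zero locus of finitely many functions on affine space. That is true, and the introduction takes it as the description of quasi-smoothness. But the working definition (a factorization $X \to Z$ through a quasi-smooth closed immersion into a smooth $Z$) only gives charts $\Spec A\sslash(\underline a)$ with $A$ smooth. Replacing $A$ by a polynomial ring needs two more steps: smooth algebras are Zariski-locally standard smooth, i.e.\ cut out of affine space by a Koszul-regular sequence, and iterated derived quotients combine into one.

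The paper avoids both steps. Its pushout argument works over an arbitrary animated base $R_\sbt$ (\cref{thm:LJderived-quotients}), so for finite $n$ the functor $\bL J_n$ carries quasi-smooth closed immersions to quasi-smooth closed immersions. Then $\bL J_n(Z)=J_n(Z)$ is smooth by \cref{thm:smoothobjectshavediscretejetsandarcs}. Your second step already proves the general-base version, so you could finish the same way.

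\textbf{The case $n=\infty$.} Your worry here is moot. Quasi-smoothness is only claimed for jet schemes ($n\in\NN$ in \cref{lem:LJquasi-smooth}), and $\bL J_\infty(X)$ is not locally of finite presentation. A $\holim$ argument would not yield quasi-smoothness in any case.
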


In other words, the familiar procedure to generate the equations of the jet
schemes via Hasse-Schmidt differentials also works for derived jet schemes, as
long as quotients are replaced by derived quotients.

The usual quotient is the classical algebra underlying the derived quotient,
that is, $\pi_0(k[\ux]\sslash(\uf)) = k[\ux]/(\uf)$. The higher homotopy groups
$\pi_i(k[\ux]\sslash(\uf))$ are naturally identified with the Koszul homology
of the sequence $\uf$. In particular, we have an equivalence
$k[\ux]\sslash(\uf) = k[\ux]/(\uf)$ precisely when $\uf$ is a regular sequence.
In other words, \cref{main-theorem-D} gives an explicit presentation of the
derived jet schemes in the \emph{local complete intersection case}, and in this
situation it interprets the homotopy groups $\pi_i(\cO_{\bL J_n(X)})$ in terms of
Koszul homology of the natural defining equations of the jet schemes. As an
interesting application we prove the following result.

\begin{mainthm}[\cref{thm:mjlc-discrete-jets}]
\label{main-theorem-E}
Let $X$ be a classical reduced scheme of finite type over a field $k$ of
characteristic zero. If $X$ is lci, then $X$ is log MJ-canonical if and only if
$\bL J_n(X)$ is classical for all $n \in \NN$.
\end{mainthm}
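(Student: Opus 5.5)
The plan is to reduce the statement to Koszul regularity of the Hasse--Schmidt equations, and then to apply the known characterizations of log MJ-canonicity via jet schemes (Mustaţă; Ein--Mustaţă for the MJ/lci setting).

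\emph{Step 1: reduce to the affine quasi-smooth case.} Being classical is local, and by \cref{thm:globalizejets} the derived jet schemes glue. So we may assume $X = \Spec k[\ux]/(\uf)$ with $\uf = f_1,\dots,f_c$ a regular sequence in $k[\ux]$, $\ux = x_1,\dots,x_d$. Since $\uf$ is regular, $X$ is equivalent to the derived quotient $k[\ux]\sslash(\uf)$, hence is quasi-smooth. By \cref{main-theorem-D},
\[
\bL J_n(X) = \Spec k[\ux,\dots,\ux^{(n)}]\sslash(\uf,\dots,\uf^{(n)}).
\]
Its homotopy groups are the Koszul homology of the $c(n+1)$ Hasse--Schmidt equations in a polynomial ring in $d(n+1)$ variables. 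Therefore $\bL J_n(X)$ is classical if and only if these equations form a Koszul-regular sequence. In a polynomial ring, localizing at points of $J_n(X)$, this holds if and only if the equations form a regular sequence locally. That in turn is equivalent to $J_n(X) = \pi_0(\bL J_n(X))$ (by \cref{main-theorem-C}) being lci of the expected dimension $(n+1)(d-c) = (n+1)\dim X$, i.e.\ every irreducible component of $J_n(X)$ having dimension exactly $(n+1)\dim X$.

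The local equivalence needs a little care, because the rings are graded only in a weak sense. Koszul homology vanishes iff it vanishes at all maximal ideals. In a Cohen--Macaulay local ring, a sequence is regular iff the height of the ideal it generates equals its length, and Koszul acyclicity of a sequence in a Noetherian local ring is equivalent to regularity. Applying this at each point of $J_n(X)$ gives the pure-dimensionality criterion.

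\emph{Step 2: invoke the known characterization.} Mustaţă's theorem, in the lci form of Ein--Mustaţă (characteristic zero, $X$ reduced lci), says that $J_n(X)$ is of pure dimension $(n+1)\dim X$ for all $n$ if and only if $X$ is log canonical. For lci $X$, log canonical coincides with log MJ-canonical, since the Mather and usual discrepancies agree up to the Jacobian ideal term, which is exactly the MJ correction. Equivalently, one can cite the MJ version directly: $X$ is log MJ-canonical iff $\dim J_n(X) = (n+1)\dim X$ for all $n$. We also need $\dim J_n(X) \ge (n+1)\dim X$, so that an upper bound forces purity. This holds because $J_n(X)$ is cut out by $c(n+1)$ equations in $\AA^{d(n+1)}$, so by Krull every component has dimension at least $(n+1)(d-c)$. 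Combining Steps 1 and 2 gives the theorem.

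The main obstacle is Step 2: matching the precise hypotheses of the cited jet-scheme characterization (reducedness, the MJ versus usual notion, irreducibility versus pure dimension). I would first quote Ein--Mustaţă's lci statement verbatim, and then derive the dimension-equality form using the Krull lower bound above. The local-algebra equivalence between Koszul-regularity and the dimension count is routine by comparison.
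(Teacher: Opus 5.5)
Your proposal is correct and follows the paper's route: you use the derived-quotient presentation of \cref{main-theorem-D} and Koszul homology to show that $\bL J_n(X)$ is classical iff $J_n(X)$ is lci of pure dimension $(n+1)\dim X$ (this is \cref{thm:lci-jets-quasi-smooth}), and then the de Fernex--Docampo criterion that $X$ is log MJ-canonical iff $\dim J_n(X) = (n+1)\dim X$ for all $n$ (\cref{thm:mj-log-canonical}) finishes the argument. Cite that MJ criterion directly rather than going through ordinary log canonicity, because that notion and its identification with log MJ-canonicity require $X$ to be normal, whereas the statement only assumes $X$ reduced.
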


Here \emph{log MJ-canonical} refers to the framework of singularities provided
by the theory of \emph{Mather-Jacobian discrepancies} \cite{dFD14,EIM16,EI15}.
If $X$ is embedded in a smooth variety $Z$ with codimension $c$, then $X$ is
log MJ-canonical precisely when the pair $(Z, cX)$ is log canonical in the
usual sense. In particular, we can see \cref{main-theorem-E} as a derived
interpretation of a theorem of Mustaţă \cite{Mus01,EMY03,EM04}, as we discuss
in \cref{sec:lci_non_discreteness} in more detail. Notice the presence of the
lci hypothesis in \cref{main-theorem-E}. It would be interesting to study the
restrictions that the classicality of the derived jet schemes imposes on the
singularities of a variety beyond the lci case.

\subsection*{Cotangent complexes of jets and arcs}

For clarity of exposition, we focus on the affine case. An affine derived
$k$-scheme is of the form $X = \Spec(A_\sbt)$, where $A_\sbt$ is an animated
$k$-algebra, i.e., a simplicial $k$-algebra considered up to weak equivalence.
For $n \in \NN \cup \{\infty\}$ the derived jet/arc schemes $\bL J_n(X)$ are
also affine, and we write $\bL J_n(A_\sbt)$ for the corresponding animated
$k$-algebras. From their modular interpretation, we obtain the \emph{universal}
jets and arcs, which algebraically are given by
\[
    A_\sbt \lra \bL J_n(A_\sbt)[t]/(t^{n+1})
    \qquad\text{and}\qquad
    A_\sbt \lra \bL J_\infty(A_\sbt)\llbracket t \rrbracket.
\]
We consider the following:
\[
    P_n^\der(A_\sbt)
    =
    \frac{
        t^{-n} \, \bL J_n(A_\sbt)[t]
    }{
        t \, \bL J_n(A_\sbt)[t]
    }
    \qquad\text{and}\qquad
    P_\infty^\der(A_\sbt)
    =
    \frac{
        \bL J_\infty(A_\sbt)\llparen t \rrparen
    }{
        t \, \bL J_\infty(A_\sbt)\llbracket t \rrbracket
    }.
\]
These are naturally animated modules over $\bL J_n(A_\sbt)$ or $\bL
J_\infty(A_\sbt)$, that is, connective objects in the corresponding derived
categories. Via the universal jet/arc they also gain a structure of animated
$A_\sbt$-modules, and we will consider them as animated bimodules.

\begin{mainthm}[\cref{thm:affinederiveddFD,thm:deriveddFD}]
\label{main-theorem-F}
For $n\in \NN \cup \{\infty\}$,
\[
    \bL_{\bL J_n(A_\sbt)/k}
    \cong
    \bL_{A_\sbt/k} \otimes_{A_\sbt}^\bL P_n^\der(A_\sbt).
\]
\end{mainthm}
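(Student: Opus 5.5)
Both sides of the claimed equivalence are functors of $A_\sbt$ that commute with sifted colimits. So the plan is to reduce to the case where $A_\sbt = P$ is a polynomial $k$-algebra, build a natural comparison map, check it is an equivalence on polynomial algebras, and then extend by animation.

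\textbf{Sifted colimits.} The functor $\bL J_n$ is by definition the animation of $J_n$, hence preserves sifted colimits. The cotangent complex $\bL_{-/k}$ preserves colimits of animated algebras. On the right-hand side:
\begin{itemize}
\item $A_\sbt \mapsto \bL_{A_\sbt/k}$ preserves sifted colimits as a functor to the $\infty$-category of pairs (algebra, module);
\item the module $P_n^\der(A_\sbt)$ is built from $\bL J_n(A_\sbt)$ by base change along $k \to k[t]/(t^{n+1})$, with a twist by $t^{-n}$. For $n$ finite this is a finite free operation, so it commutes with sifted colimits;
\item the base change $\otimes^\bL_{A_\sbt}$ also commutes with sifted colimits.
\end{itemize}

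\textbf{Comparison map.} It comes from the universal jet $\gamma\colon A_\sbt \to \bL J_n(A_\sbt)[t]/(t^{n+1})$. A derivation of $\bL J_n(A_\sbt)$ into a module $M$ corresponds, via the modular description of \cref{main-theorem-A}, to a deformation of the universal jet. That deformation is a derivation of $A_\sbt$ into $M[t]/(t^{n+1})$, viewed as an $A_\sbt$-module via $\gamma$. Adjunction then gives
\[
\Maps(\bL_{\bL J_n(A_\sbt)},M)\simeq \Maps_{A_\sbt}(\bL_{A_\sbt}, M\otimes_k k[t]/(t^{n+1})).
\]
For $n$ finite, the $\bL J_n(A_\sbt)$-linear dual of $k[t]/(t^{n+1})$ is identified with $t^{-n}k[t]/tk[t]$ through the residue pairing. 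This rewrites the right-hand side as $\Maps(\bL_{A_\sbt}\otimes^\bL_{A_\sbt} P_n^\der(A_\sbt), M)$. Yoneda then gives the natural equivalence directly, without reducing to polynomials.

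If this functor-of-points argument is hard to make rigorous at the $\infty$-categorical level, I will instead write down the map on polynomial algebras and animate it. For $P = k[\ux]$ we have $\bL J_n(P) = k[\ux,\ldots,\ux^{(n)}]$. The module $\bL_P = \Omega_P$ is free on the $dx_i$, and $P_n^\der(P)$ is free over $\bL J_n(P)$ on $t^{-n},\ldots,t^0$. The two sides match by sending $dx_i\otimes t^{-q}$ to $dx_i^{(q)}$, which is exactly the classical formula of \cite{dFD20}, and this identification is natural in $P$.

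\textbf{Arc space.} I expect $n=\infty$ to be the main obstacle. There $P_\infty^\der$ involves $\llparen t\rrparen$, which is an infinite colimit of infinite products, and so does not obviously commute with sifted colimits. My plan is to use \cref{main-theorem-B}, $\bL J_\infty = \holim \bL J_n$, together with the fact that the truncation maps are affine and, locally on polynomial presentations, polynomial. I will write
\[
\bL_{\bL J_\infty(A_\sbt)} = \colim_n \bL_{\bL J_n(A_\sbt)}\otimes \bL J_\infty(A_\sbt),
\]
which holds because for polynomial $P$ we have $\bL J_\infty(P)=\colim_n \bL J_n(P)$ as algebras, and this passes to sifted colimits. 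On the other side, $\colim_n t^{-n}R[t]/tR[t] = R\llparen t\rrparen/tR\llbracket t\rrbracket$ for $R=\bL J_\infty(A_\sbt)$, which is a filtered colimit of finite free modules. The $n=\infty$ case therefore follows from the finite cases by passing to the colimit. The delicate point is checking that the transition maps correspond under the identifications; these are the maps $t^{-n}\mapsto t^{-n}$, induced by the truncation maps.
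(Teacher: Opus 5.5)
Your ``comparison map'' paragraph is exactly the paper's proof. The paper obtains $\aDer_k(\bL J_n(A_\sbt),M_\sbt)\simeq\aDer_k(A_\sbt,W_n(M_\sbt))$ from the adjunction $\bL J_n\dashv W_n$. It then rewrites $W_n(M_\sbt)\simeq\RHom_{\bL J_n(A_\sbt)}(P_n^\der(A_\sbt),M_\sbt)$ via the residue pairing, and concludes by tensor--Hom adjunction and Yoneda.

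The step you fear is hard to make rigorous is handled by Lurie's result on adjunctions between overcategories (\cref{thm:lurie-overadjunctions}). The right adjoint of $\aAlg_k/A_\sbt\to\aAlg_k/\bL J_n(A_\sbt)$ is ``apply $W_n$, then pull back along the unit.'' Pulling back $W_n(\bL J_n(A_\sbt))\oplus\varepsilon W_n(M_\sbt)$ along the universal jet gives $A_\sbt\oplus\varepsilon W_n(M_\sbt)$.

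You genuinely diverge only at $n=\infty$, and there the detour is unnecessary. The paper runs the identical argument for $n=\infty$. The adjunction $\bL J_\infty\dashv W_\infty$ is already provided by \cref{thm:allagree}. Moreover, $V_\infty(C_\sbt)=C_\sbt\llparen t\rrparen/tC_\sbt\llbracket t\rrbracket$ is simply $\bigoplus_{i\ge0}C_\sbt\, t^{-i}$, so no products occur on this side. Hence $\RHom_{C_\sbt}(V_\infty(C_\sbt),M_\sbt)\simeq\prod_i M_\sbt\, t^i=W_\infty(M_\sbt)$. The products live only in $W_\infty$, and the $\RHom$ absorbs them.

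Your colimit route via \cref{thm:ani_arc_hocolim} is still valid, and cotangent complexes do commute with filtered colimits. However, it forces you to check that the finite-level equivalences are compatible with the transition maps $t^{-q}\mapsto t^{-q}$. The uniform argument does not need this; the paper only records that compatibility afterwards, in \cref{rmk:truncation-ugly-details}. Likewise, your polynomial-then-animate fallback would handle $n=\infty$ directly. This is because $J_\infty(k[\ux])$ is again polynomial, and $P_\infty^\der$ commutes with sifted colimits for the same direct-sum reason.
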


When we take $0$-th homotopy we recover the main formula of \cite{dFD20}.
Concretely, $A = \pi_0(A_\sbt)$ is a classical $k$-algebra. We define $P_n(A)$
and $P_\infty(A)$ by the same formulas as above but replacing the derived
jet/arc functors by the classical jet/arc functors. Then by applying $\pi_0$ to
the formula in \cref{main-theorem-F} we obtain
\begin{equation}
    \label{eq:diff-formula}
    \Omega_{J_n(A)/k}
    \cong
    \Omega_{A/k}
    \otimes_A
    P_n(A),
\end{equation}
which is exactly \cite[Theorems A and B]{dFD20}.

Notice that our formula is for the cotangent complex of the derived jet/arc
schemes, not of the classical counterparts. In fact we prove the following.

\begin{mainthm}[\cref{thm:we_need_derived_jets}]
Let $n \in \NN \cup \{\infty\}$ and consider a classical $k$-algebra $A$.
There is an equivalence
\[
    \bL_{J_n(A)/k}
    \cong
    \bL_{A/k} \otimes_A^\bL P_n(A)
\]
if and only if the derived jet/arc space $\bL J_n(A)$ is classical.
\end{mainthm}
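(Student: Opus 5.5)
The plan is to compare the classical jet algebra $J_n(A)$ with the derived one $\bL J_n(A)$ through the natural truncation map $\tau\colon \bL J_n(A) \to \pi_0(\bL J_n(A)) = J_n(A)$, which exists by \cref{main-theorem-C}. The "if" direction is immediate. If $\bL J_n(A)$ is classical, then $\tau$ is an equivalence, so $\bL J_n(A) \simeq J_n(A)$. Consequently $P_n^\der(A) \simeq P_n(A)$ as bimodules, because both are built from the same algebra by the same formula (finite sums of shifted copies for finite $n$, and Laurent tails for $n=\infty$). \Cref{main-theorem-F}, applied with $A_\sbt = A$, then gives $\bL_{J_n(A)/k} \cong \bL_{A/k}\otimes^\bL_A P_n(A)$.

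For the converse, assume such an equivalence exists and suppose $\bL J_n(A)$ is not classical. Write $B = \bL J_n(A)$ and $B_0 = \pi_0 B = J_n(A)$. First I would check that $P_n^\der(A)\otimes^\bL_B B_0 \simeq P_n(A)$. This holds because $P_n^\der$ is a direct sum (or product) of copies of $B$ indexed by powers of $t$, and base change commutes with this; for $n=\infty$ one needs care with the infinite product, but $t^{-N}B\llbracket t\rrbracket/tB\llbracket t\rrbracket$ is a colimit of finite free modules. Combining this with \cref{main-theorem-F}, the hypothesis becomes
\[
\bL_{B/k}\otimes^\bL_B B_0 \;\cong\; \bL_{B_0/k},
\]
which I read as saying that the transitivity triangle for $k\to B\to B_0$ forces $\bL_{B_0/B}\simeq 0$. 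One subtlety is that the abstract equivalence need not be the canonical map. I would handle this either by showing that the identification is natural, so that the given equivalence can be taken to be the canonical one, or by comparing homotopy groups degreewise in a way that is robust under non-canonical isomorphisms.

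The key step is then the standard fact that if $B\to\pi_0B$ has vanishing relative cotangent complex, then $B$ is discrete. Let $m$ be the smallest integer with $\pi_m B\neq 0$ and $m\ge1$. Then $\tau$ is $(m+1)$-connective, so by the Hurewicz-type connectivity estimate for the cotangent complex, $\pi_{m+1}\bL_{B_0/B}\cong \pi_m(\mathrm{fib}\,\tau)\otimes\ldots \cong \pi_m B \neq 0$. This contradicts $\bL_{B_0/B}=0$.

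I expect the main obstacle to be the non-canonical nature of the isomorphism in the statement. The degreewise version of the argument goes as follows. Compare $\pi_{m+1}$ of the transitivity sequence; it involves $\pi_{m+1}(\bL_{B/k}\otimes B_0)$, $\pi_{m+1}\bL_{B_0/k}$ and $\pi_m B$. Showing that $\pi_m B$ contributes to one side and not the other needs a finiteness or length argument, which can fail for infinite-type modules, especially for arcs. So I would first try to prove that the equivalence in the statement is meant as the natural map induced from the universal jet $A\to J_n(A)[t]/(t^{n+1})$. With that reading, the argument above goes through cleanly.
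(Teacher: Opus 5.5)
Your proposal is correct and is essentially the paper's proof. The paper also uses the triangle for $k \to \bL J_n(A) \to J_n(A)$, identifies its first term with $\bL_{A/k}\otimes^\bL_A P_n(A)$ via \cref{thm:deriveddFD} and the fact that $P_n^\der(A)$ is a direct sum of copies of $\bL J_n(A)$ (this holds for $n=\infty$ too, so no product issue arises), and then cites Lurie's criterion \cite[Cor.~25.3.6.6]{SAG}. Your argument via \cref{thm:lurie-hurewicz} reproves that criterion: in the paper's conventions the truncation map is $m$-connective rather than $(m+1)$-connective, but your conclusion $\pi_{m+1}(\bL_{J_n(A)/\bL J_n(A)})\cong\pi_m(\bL J_n(A))\neq 0$ is right. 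Like you, the paper tacitly reads the equivalence in the statement as the canonical comparison map coming from this triangle, so your way of resolving that subtlety matches its argument.
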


In light of \cref{main-theorem-E} there are many examples of classical schemes
whose derived jet schemes are not classical. For any of these we do not know
how to compute the cotangent complex of the classical jet schemes. This is the
main reason we decided to use derived algebraic geometry in our study.

As in the classical case, the formula in \cref{main-theorem-F} allows us to get
explicit descriptions of $\bL_{\bL J_n(X)/k}$ whenever $\bL_{X/k}$ is computable.
In the spirit of \cite[Sec.~6]{dFD20} we are also able to control fibers of
cotangent complexes, that is, certain André-Quillen homology groups. To achieve
this, we were led to investigate a derived variant of the notion of Fitting
ideals. Specifically, using the theory of \emph{cohomological support loci}
developed in \cite{GL87}, we consider \emph{cohomological support ideals}
$\Csi_{(i,p)}(M_\sbt)$ associated certain complexes $M_\sbt$ in the derived
category of $X$. These lead in turn to the \emph{higher Jacobian ideals},
$\Jac_X^{(i,p)} = \Csi_{(i,p)}(\bL_{X/k})$, which are guaranteed to be defined
for schemes essentially of finite type over $k$. These ideals control the
fibers of the cotangent complex of derived jet/arc schemes, as discussed in
detail in \cref{sec:fibers-fitting}. A sample result is the following.

\begin{mainthm}[\cref{cor:cotangent-fibers-non-deg}]
Let $X$ be a classical scheme essentially of finite type over a field $k$, and
let $\alpha$ be a non-degenerate arc on $X$ with residue field $k_\alpha$.
Then:
\[
    \dim_{k_\alpha}
    (
    \pi_i(
    \bL_{\bL J_\infty(X)/k}
    \otimes^\bL
    k_\alpha
    )
    )
    =
    \ord_\alpha\left(
        \Jac_X^{(i-1,0)}
    \right)
    \qquad
    \text{for $i \ge 2$}.
\]
\end{mainthm}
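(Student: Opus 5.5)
The plan is to combine \cref{main-theorem-F} for $n=\infty$ with a local computation of $P^\der_\infty$ along a non-degenerate arc. Write $\alpha\colon \Spec k_\alpha\llbracket t\rrbracket \to X$ and set $A=\cO_{X,\alpha(0)}$. Base change along the point $\alpha$ of $\bL J_\infty(X)$ gives
\[
    \bL_{\bL J_\infty(X)/k}\otimes^\bL k_\alpha
    \cong
    \bL_{X/k}\otimes^\bL_{\cO_X} \bigl(P^\der_\infty\otimes^\bL_{\bL J_\infty} k_\alpha\bigr).
\]
The first step is to identify $P^\der_\infty\otimes^\bL k_\alpha$ with $k_\alpha\llparen t\rrparen/t\,k_\alpha\llbracket t\rrbracket$. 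This module is viewed as an $A$-module via $\alpha^\sharp\colon A\to k_\alpha\llbracket t\rrbracket$. The identification should follow because the construction of $P^\der_\infty$ commutes with base change along $\bL J_\infty(A)\to k_\alpha$. Indeed, $\bL J_\infty(A)\llbracket t\rrbracket$ is a product of copies of $\bL J_\infty(A)$, and $\llparen t\rrparen$ is a colimit of these. After base change, the fiber is
\[
    \bL_{X/k}\otimes^\bL_{A} \bigl(k_\alpha\llparen t\rrparen/t\,k_\alpha\llbracket t\rrbracket\bigr).
\]
Only completeness and passage to the $t$-adic limit need care here; I would handle them with the finite-type hypothesis, which makes $\bL_{X/k}$ pseudo-coherent.

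The second step reduces the computation to the DVR $R=k_\alpha\llbracket t\rrbracket$. Set $M=\bL_{X/k}\otimes^\bL_A R$, which is pseudo-coherent over the PID $R$. Then
\[
    M\otimes^\bL_R \bigl(R\llparen t\rrparen/tR\bigr) = \operatorname{cofib}\bigl(M\otimes tR\to M\otimes R\llparen t\rrparen\bigr).
\]
Over a PID, each homotopy group of $M$ splits as $\pi_j(M)\cong R^{r_j}\oplus\bigoplus R/(t^{e})$. Tensoring with $R\llparen t\rrparen$ kills the torsion. Tensoring the quotient $K=R\llparen t\rrparen/tR$ with $R/(t^e)$ gives $\Tor_1^R(R/t^e,K)\cong k_\alpha^{e}$ in degree one, and it gives no $\Tor_0$, because $K$ is divisible. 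So $\pi_i$ of the fiber picks up $\dim_{k_\alpha}$ equal to the total length of the torsion in $\pi_{i-1}(M)$, plus a contribution from free parts in degree $i$. The free part of $\pi_i(M)\otimes K$ is infinite-dimensional, which would make the left side of the statement infinite.

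The third step, and the main obstacle, is therefore to show that non-degeneracy forces $\pi_j(M)$ to be torsion for $j\ge 1$. Equivalently, the generic fiber of $M$ at $\Frac R$ must be concentrated in degree $0$. Since $\alpha$ is non-degenerate, its generic point lands in the smooth locus of $X$ (for $X$ reduced; in general, outside $V(\Jac_X)$). Over that locus $\bL_{X/k}$ is a projective module in degree $0$, even for imperfect $k$, once "non-degenerate" is taken in the paper's sense of not lying in the vanishing of the Jacobian ideals. Hence $\pi_j(M)\otimes\Frac R = \pi_j(\bL_{X/k}\otimes\Frac R)=0$ for $j\ge1$, and for $i\ge 2$ only the torsion of $\pi_{i-1}(M)$ contributes. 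It then remains to show that the total torsion length of $\pi_{i-1}(\bL_{X/k}\otimes^\bL_A R)$ equals $\ord_\alpha \Jac_X^{(i-1,0)}$.

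This is where the cohomological support ideals enter. By their construction, following \cite{GL87}, $\Csi_{(i,p)}$ commutes with base change. The cohomological support ideal $\Csi_{(j,0)}$ of a complex over a DVR whose $j$-th homology is torsion should be $t^{\ell}$, with $\ell$ the length of that torsion. This is the derived analogue of the statement that the $0$-th Fitting ideal of $\bigoplus R/t^{e}$ is $t^{\sum e}$. I would verify this from the definition in \cref{sec:fibers-fitting} by splitting $M$ into shifts of $R$ and $R/t^e$ and computing directly.
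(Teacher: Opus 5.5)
Up to your last step you follow the paper's route exactly. First, \cref{thm:affinederiveddFD} plus base change gives $\bL_{X/k}\otimes^\bL_A k_\alpha\llbracket t\rrbracket\otimes^\bL V_\infty(k_\alpha)$. No completeness issue arises here, since $V_\infty(C)\cong\bigoplus_{q\ge0}C\,t^{-q}$ is a direct sum. Second, Künneth over the PID is \cref{prop:picotangent}. Third, non-degeneracy gives $b_i(\alpha)=0$ for $i>0$. So for $i\ge2$ you correctly obtain $\dim_{k_\alpha}\pi_i=\ell_{i-1}$, where $\ell_j=a_{j,1}(\alpha)+\cdots+a_{j,c_j}(\alpha)$ is the torsion length of $\pi_j(M)$.

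The gap is your final claim that $\Csi_{(j,0)}(M)=(t^{\ell_j})$ over the DVR. By definition $\Csi_{(j,p)}=\sum_{u+v=r_j-p}I_u(\partial_{j+1})I_v(\partial_j)$ involves \emph{both} differentials adjacent to degree $j$. This is forced: the ideal cuts out the locus where $\pi_j(M\otimes^\bL k_x)$ jumps, and that group contains $\Tor_1(\pi_{j-1}(M),k_x)$ as well as $\pi_j(M)\otimes k_x$. If you actually carry out your splitting, a summand $R/(t^e)[j]$ is the cone of $t^e$ sitting in degrees $j+1,j$. Its entry $t^e$ enters $I_u(\partial_{j+1})$ in $\Csi_{(j,\cdot)}$, and enters again as $I_v(\partial_{j+1})$ in $\Csi_{(j+1,\cdot)}$. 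Hence $\ord_\alpha\Csi_{(j,b_j)}=\ell_j+\ell_{j-1}$, which is exactly the formula in \cref{sec:pull-backs-complexes-arcs}. The Fitting-ideal analogy holds only for $j=0$, where $\ell_{-1}=0$; that is why the $i=1$ case is fine. Note the asymmetry with jets: $\pi_i$ of the fiber over $k_{\alpha_n}[t]/(t^{n+1})$ sees both $\ell_i$ and $\ell_{i-1}$, matching $\Jac_X^{(i,0)}$. By contrast, the divisible module $V_\infty(k_\alpha)$ kills the torsion of $\pi_i(M)$ and keeps only $\ell_{i-1}$.

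This cannot be patched, because the displayed identity is false for $i\ge2$. What the computation actually proves (yours, and the one the paper calls immediate) is
\[
\dim_{k_\alpha}\pi_i\bigl(\bL_{\bL J_\infty(X)/k}\otimes^\bL k_\alpha\bigr)=\ell_{i-1}=\sum_{j=0}^{i-1}(-1)^j\ord_\alpha\bigl(\Jac_X^{(i-1-j,\,b_{i-1-j}(\alpha))}\bigr).
\]
For example, when $i=2$ this is $\ord_\alpha(\Jac_X^{(1,0)})-\ord_\alpha(\Jac_X^{(0,d)})$, in agreement with the remark after \cref{lem:embdimofjets-non-deg}.

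Here is a counterexample to the statement as written. Take the cusp $A=k[x,y]/(y^2-x^3)$ in characteristic zero with the non-degenerate arc $\alpha=(t^2,t^3)$. Since $X$ is a hypersurface, $\bL_{A/k}=[A\xrightarrow{\;df\;}A^2]$ in degrees $1,0$, with $df=-3x^2\,dx+2y\,dy$. So $\bL_{A/k}\otimes^\bL_A V_\infty(k_\alpha)$ is concentrated in degrees $0,1$ and its $\pi_2$ vanishes. On the other hand, $\Jac_X^{(1,0)}=I_1(\partial_1)=(x^2,y)$ has order $3$ along $\alpha$. Consistently, $\dim\pi_1=3=\ord_\alpha(\Jac_X^{(0,1)})$.
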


\subsection*{Cotangent maps}

In \cite{dFD20,CdFD22,CdFD23} the formula \eqref{eq:diff-formula} was used
effectively to establish a series of results about the local structure of arc
spaces and of maps between arc spaces, with applications to the study of
invariants of singularities. But sheaves of differentials turn out to be only
partially adequate for this analysis, and many of the obtained results had
extraneous hypotheses and somewhat unintuitive proofs. Most notably, the ground
field was often required to be perfect.

In the current paper we use \cref{main-theorem-F} instead of formula
\eqref{eq:diff-formula} to study several of the results of
\cite{dFD20,CdFD22,CdFD23}. As a main consequence of the new approach we remove
the perfectness assumption on the ground field in all results. Moreover, our
arguments involve directly the homotopy groups of cotangent complexes and their
associated fundamental triangles, which we believe is more natural and
clarifies the proofs.

The main setup is as follows. We consider maps $f \colon X \to Y$ between
classical schemes essentially of finite type over a field $k$, as well as the
induced maps $f_\infty \colon J_\infty(X) \to J_\infty(Y)$ between the
corresponding classical arc spaces. We pick an arc $\alpha \in J_\infty(X)$ and
its image $\beta = f_\infty(\alpha) \in J_\infty(Y)$. Writing $k_\alpha$ and
$k_\beta$ for the residue fields of $\alpha$ and $\beta$, and $\fm_\alpha$ and
$\fm_\beta$ for their defining ideals, we are interested in understanding the
Zariski cotangent spaces $\fm_\alpha/\fm_\alpha^2$ and $\fm_\beta/\fm_\beta^2$,
as well as the cotangent map
\[
    T_\alpha^{*}f_\infty
    \colon
    \fm_\beta/\fm_\beta^{2}\otimes_{k_\beta}k_\alpha
    \to
    \fm_\alpha/\fm_\alpha^{2}
\]
induced by $f_\infty$. We let $\eta \in \Spec k_\alpha\llbracket t \rrbracket$ be the non-closed
point and consider $\alpha(\eta) \in X$, what we call the \emph{generic point}
of the arc $\alpha$. We introduce the following two quantities:
\[
    d = \dim_{k_{\alpha(\eta)}}(\Omega_{X/k} \otimes_{\cO_X} k_{\alpha(\eta)})
    \quad\text{and}\quad
    r = \dim_{k_{\alpha(\eta)}}(\Omega_{X/Y} \otimes_{\cO_X} k_{\alpha(\eta)}).
\]
Notice that $d = \dim_{\alpha(\eta)}(X)$ when $X$ is smooth at $\alpha(\eta)$,
but it could be bigger in general. We write $\Jac_f = \Fitt^0(\Omega_{X/Y})$
for the Jacobian ideal of $f$. Recall that the \emph{embedding dimension} of
the local ring $\cO_{J_\infty(X),\alpha}$ is the dimension of
$\fm_\alpha/\fm_\alpha^2$, and similarly for $\beta$. The following theorem
features general linear projections---see
\cref{thm:generic_projection_properties} for a precise articulation.

\begin{mainthm}[\cref{%
    thm:gen_etale_implies_embedding_dimension_bounds%
    ,%
    thm:smooth_at_generic_point_implies_bound_on_kernel_of_cotangent_map%
    ,%
    thm:unramified_at_alpha_eta_implies_surjective_cotangent_map%
    ,%
    thm:general_linear_projection_implies_cotangent_map_is_an_isomorphism%
}]

With no assumption on the ground field, the following claims hold:

\begin{enumerate}
    \item
        If $f$ is unramified at $\alpha(\eta)$, then $T^*_\alpha f_\infty$ is
        surjective and $k_\alpha/k_\beta$ is finite separable.
    \item
        If $f$ is smooth at $\alpha(\eta)$, then
        \[
            \dim_{k_\alpha}(\ker (T_\alpha^{*}f_\infty))
            \leq
            \ord_\alpha(\Fitt^{r}(\Omega_{X/Y})).
        \]
    \item
        If $f$ is generically étale, then
        \[
            \quad\qquad
            \embdim(\cO_{J_\infty (X),\alpha})
            \leq
            \embdim(\cO_{J_\infty (Y),\beta})
            \leq
            \embdim(\cO_{J_\infty (X),\alpha})+\ord_\alpha(\Jac_{f}).
        \]
    \item
        If $f$ is generically étale and $X$ is smooth, then
        \[
            \embdim(\cO_{J_\infty (Y),\beta})
            =
            \embdim(\cO_{J_\infty (X),\alpha})+\ord_\alpha(\Jac_{f}).
        \]
    \item
        If $X \subset \AA^n$ and $f \colon X \to \AA^d$ is the morphism induced
        by a general linear projection $\AA^n \to \AA^d$, then $T^*_\alpha
        f_\infty$ is an isomorphism and therefore
        \[
            \embdim(\cO_{J_\infty(X),\alpha})
            =
            \embdim(\cO_{J_\infty(Y),\beta}).
        \]
\end{enumerate}
\end{mainthm}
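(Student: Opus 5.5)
The plan is to deduce all five statements from \cref{main-theorem-F}, applied to the derived arc spaces and then base changed to the arc, and combined with the transitivity triangles of cotangent complexes.

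\textbf{Step 1 (relative formula at an arc).} Apply \cref{main-theorem-F} to $X$ and to $Y$. Naturality of the universal arcs gives a relative version
\[
\bL_{\bL J_\infty(X)/\bL J_\infty(Y)} \cong \bL_{X/Y}\otimes^\bL P^\der_\infty .
\]
Base change along $\alpha$ turns $P^\der_\infty$ into $k_\alpha\llparen t\rrparen/t\,k_\alpha\llbracket t\rrbracket$, and this module is equivalent to $E=k_\alpha\llparen t\rrparen/k_\alpha\llbracket t\rrbracket$, which is divisible. So the fiber of the relative cotangent complex at $\alpha$ is
\[
(\alpha^*\bL_{X/Y})\otimes^\bL_{k_\alpha\llbracket t\rrbracket} E .
\]
Its homotopy is controlled by the homology modules $H_i=\pi_i(\alpha^*\bL_{X/Y})$ over the DVR $k_\alpha\llbracket t\rrbracket$. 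Each $H_i$ splits as a free part of rank equal to the generic fiber dimension at $\alpha(\eta)$ plus a torsion part. Tensoring with $E$ kills torsion in degree $0$ and shifts it to degree $1$ via $\Tor_1$.

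\textbf{Step 2 (Zariski cotangent spaces).} Relate $\fm_\alpha/\fm_\alpha^2$ to $\pi_0$ of the fiber of $\bL_{\bL J_\infty(X)/k}$ at $\alpha$. For a local ring $(R,\fm,\kappa)$ over $k$ there is the exact sequence
\[
\pi_1\bL_{\kappa/k}\to \fm/\fm^2 \to \Omega_{R/k}\otimes\kappa\to\Omega_{\kappa/k}\to 0 ,
\]
coming from $\pi_1\bL_{\kappa/R}=\fm/\fm^2$. \cref{main-theorem-C} gives $\pi_0\bL J_\infty = J_\infty\pi_0$, so the derived and classical arc spaces have the same $\pi_0$, and I will argue that their Zariski cotangent spaces at $\alpha$ agree. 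Comparing this sequence for $\alpha$ and $\beta$ via the residue-field triangle $k\to k_\beta\to k_\alpha$ reduces questions about $T^*_\alpha f_\infty$ to $\pi_0$ and $\pi_1$ of the relative fiber from Step 1, together with $\pi_0,\pi_1$ of $\bL_{k_\alpha/k_\beta}$.

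\textbf{Step 3 (the five parts).} This step is an exact-sequence chase in the triangle
\[
\beta^*\bL_{\bL J_\infty(Y)}\to\alpha^*\bL_{\bL J_\infty(X)}\to\alpha^*\bL_{\bL J_\infty(X)/\bL J_\infty(Y)} ,
\]
fed by Step 1.
\begin{enumerate}
\item If $f$ is unramified at $\alpha(\eta)$, then $H_0$ is torsion, so $\pi_0$ of the relative fiber vanishes. This gives surjectivity of $T^*_\alpha f_\infty$, and also $\Omega_{k_\alpha/k_\beta}=0$. Finiteness of $k_\alpha/k_\beta$ should follow from a $\pi_1$ argument, forcing $\bL_{k_\alpha/k_\beta}\simeq 0$ and hence finite separable. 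This is precisely where perfectness was used before and is no longer needed, because we never pass through $\Omega$ alone.
\item If $f$ is smooth at $\alpha(\eta)$, then $H_0$ has generic rank $r$. Its torsion length is $\ord_\alpha\Fitt^r(\Omega_{X/Y})$, and the kernel of $T^*_\alpha f_\infty$ is bounded by the image of $\pi_1$ of the relative fiber. That image is $\Tor_1(H_0,E)$, which has this length. The contribution of $H_1\otimes E$ vanishes because $H_1$ is generically zero, smoothness making $\bL_{X/Y}$ generically concentrated in degree $0$.
\item The case where $f$ is generically étale is the special case $r=0$ of (1) and (2).
\item If moreover $X$ is smooth, then $\bL J_\infty(X)$ is classical with vanishing higher homotopy. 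The sequence in Step 3 then becomes short exact and gives equality.
\item For a general linear projection, \cref{thm:generic_projection_properties} should give the needed vanishing of torsion along $\alpha$ in the relevant degrees, making both maps in Step 3 isomorphisms.
\end{enumerate}

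\textbf{Main obstacle.} I expect the hardest step to be Step 2: identifying $\fm_\alpha/\fm_\alpha^2$ using the derived rather than the classical arc space, and handling the residue-field terms $\pi_1\bL_{k_\alpha/k}$ in the non-perfect case. My first approach is to use that $\pi_0\to\pi_1$ comparisons for the truncation $\bL J_\infty\to J_\infty$ only affect $\pi_{\ge 1}$ of the cotangent complex starting from $\pi_2$ of the ring. I would then deal with the residue-field terms by carrying $\bL_{k_\alpha/k_\beta}$ along in the diagram rather than assuming it is zero.
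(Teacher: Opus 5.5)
Your framework is the paper's. It consists of the relative formula $\bL_{\bL J_\infty X/\bL J_\infty Y}\cong\bL_{X/Y}\otimes^{\bL}_{\cO_X}\cP_\infty^{\der}(X)$, its fiber over the DVR $k_\alpha\llbracket t\rrbracket$ (your divisible module $E\cong V_\infty(k_\alpha)$ reproduces \cref{prop:picotangent}), the identification $\fm_\alpha/\fm_\alpha^2=\pi_1(\bL_{k_\alpha/\bL J_\infty(X)})$ of \cref{prop:embdim-via-pi1}, and a chase through the residue-field triangles. The paper organizes that chase as the $3\times3$ grid \cref{eq:3_times_3_grid_of_complexes}. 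There $\coker T^*_\alpha f_\infty\cong\pi_1(C_\sbt)$, and $\ker T^*_\alpha f_\infty$ is a quotient of $\pi_2(C_\sbt)$, which embeds in $\pi_1$ of the relative fiber because $\pi_2(\bL_{k_\alpha/k_\beta})=0$. Your part (2) is exactly this argument.

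\textbf{Part (1) has a genuine gap.} Vanishing of $\pi_0$ of the relative fiber gives $\Omega_{k_\alpha/k_\beta}=0$. But by the third row of the grid it only shows that $\pi_1(C_\sbt)$ is a quotient of the module of imperfections $\pi_1(\bL_{k_\alpha/k_\beta})$, and $\Omega_{k_\alpha/k_\beta}=0$ alone does not kill that module (think of $\FF_p(s^{1/p^\infty})/\FF_p(s)$). Nothing in the grid forces it to vanish: it receives $\pi_1$ of the relative fiber, which contains the generally nonzero $T^0_{X/Y}$. Moreover, even $\bL_{k_\alpha/k_\beta}\simeq0$ would only give a separable algebraic extension, not a finite one. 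The missing input is geometric. Unramifiedness makes $k_{\alpha(\eta)}/k_{\beta(\eta)}$ finite, hence $k_\alpha/k_\beta$ is finite by \cite[Prop.~3.7]{CdFD23}. A finite extension with vanishing differentials is separable, so $\bL_{k_\alpha/k_\beta}=0$, and only then does $\pi_1(C_\sbt)=0$ follow.

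Your (3) inherits this gap. Also, for $f$ only generically étale, arcs with $\alpha(\eta)$ in the ramification locus are not covered by (1) and (2); they need the separate thinness argument in the remark after \cref{thm:gen_etale_implies_embedding_dimension_bounds}. In (4), the relevant input is not classicality of $\bL J_\infty(X)$. What you need is $\pi_1(\bL_{\bL J_\infty(X)/k}\otimes^{\bL} k_\alpha)=G^1_X\oplus T^0_X=0$, which gives $\ker\varphi=T^0_{X/Y}$.

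\textbf{Part (5) rests on a false premise.} \Cref{thm:generic_projection_properties} does not make any torsion vanish. The modules $T^0_X$ and $T^0_{X/Y}$ have the same, typically positive, length $\ord_\alpha\Fitt^d(\Omega_{X/k})=\ord_\alpha\Fitt^0(\Omega_{X/Y})$. The first map of your triangle is also not an isomorphism on $\pi_1$, where it reads $0\to G^1_X\oplus T^0_X$.

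The missing idea is the paper's. Pulling back $\bL_{Y/k}\otimes\cO_X\to\bL_{X/k}\to\bL_{X/Y}$ along $\alpha$ gives
$0\to F^0_Y\xrightarrow{\psi}F^0_X\oplus T^0_X\xrightarrow{\rho}T^0_{X/Y}\to0$,
with $\psi$ injective because it is generically an isomorphism. Hence $\rho$ is injective on $T^0_X$, and equality of lengths makes $\rho|_{T^0_X}$ an isomorphism. So $\psi(F^0_Y)$ is a complement of $T^0_X$, and $\varphi=\psi\otimes V_\infty(k_\alpha)\colon G^0_Y\to G^0_X$ is an isomorphism.

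Be aware, though, that passing from $\ker\varphi=0$ to $\ker T^*_\alpha f_\infty=0$ (the paper's ``$\ker T^*_\alpha f_\infty\cong\ker\varphi$'') is not automatic. With $Y=\AA^d$ and $\bL_{k_\alpha/k_\beta}=0$, a chase in the grid gives an exact sequence
$0\to\Im\big(\pi_1(\bL_{\bL J_\infty(X)/k}\otimes^{\bL} k_\alpha)\to\pi_1(\bL_{k_\alpha/k})\big)\to\ker T^*_\alpha f_\infty\to\ker\varphi\to0$.
Its first term vanishes over a perfect field but not in general. For example, take $a\in k\setminus k^p$, $X=\Spec k[x]/(x^p-a)\subset\AA^1$, and $y=x$. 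The hypotheses of \cref{thm:generic_projection_properties} hold with $d=1$. Here $J_\infty(X)=\Spec L[x^{(1)},x^{(2)},\ldots]/\big((x^{(i)})^p\big)$ with $L=k(a^{1/p})$ is a single point $\alpha$. The map $T^*_\alpha f_\infty$ kills the basis vector $y^p-a$ of $\fm_\beta/\fm_\beta^2$. So injectivity in (5) cannot be proved over an imperfect field without an extra hypothesis; in this example the equality of embedding dimensions survives only because both are infinite.
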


Notice that all the statements involve only classical arc spaces, and not their
derived counterparts. The crucial observation is that the Zariski cotangent
space can be identified with $\fm_\alpha/\fm_\alpha^2 = \pi_1(\bL_{k_\alpha/\bL
J_\infty(X)})$, and this can be controlled using \cref{main-theorem-F}.

\subsection*{The curve selection lemma}

Our final application relates to \emph{Reguera's curve selection lemma}
\cite{Reg06,Reg21}. This is a foundational result in the theory of arc spaces,
and it is a key ingredient in all known approaches to the Nash problem
\cite{Nas95,FdBPP12,dFD16}, see \cite{dF18} for an introduction. In the version
that we will present, the lemma is formulated as a finiteness statement
regarding completions of local rings on arc spaces. The previously known
version \cite[Thm.~10.7]{dFD20} required a perfect ground field, and our main
contribution, as in the case of the study of cotangent maps, is the removal of
this hypothesis.

To state the result, we need the notions of \emph{stable arc} and of \emph{jet
codimension} of an arc. The jet codimension of an arc $\alpha \in J_\infty(X)$
is the quantity
\[
        \jetcodim(\alpha, J_\infty(X))
        =
        \lim_{n \to \infty} \left(
            (n+1) \, d
            - \dim(\overline{\{\alpha_{n}\}})
        \right)
        \in \NN \cup \{ \infty \}
\]
where $\alpha_n \in J_n(X)$ denotes the truncation of $\alpha$ to the $n$-th
jet level. Stable arcs are the generic points of irreducible constructible
subsets not contained in the non-smooth locus, see \cref{sec:stable_arcs} for a
detailed discussion. Important examples of stable arcs are the \emph{maximal
divisorial arcs} $\alpha_E$ associated to divisorial valuations $E$, whose
geometry can be used to control discrepancies and therefore invariants of
singularities in the context of the minimal model program.

\begin{mainthm}[\cref{thm:edim-equals-jetcodim,thm:stable-via-edim}]
Let $k$ be a field, let $X$ be a classical scheme and assume that it is
essentially of finite type and generically smooth over $k$. For $\alpha\in
J_\infty(X)$ an arc,
\[
    \embdim(\cO_{J_\infty(X),\alpha})
    =
    \jetcodim(\alpha,J_\infty(X)).
\]
Moreover, the following are equivalent:
\begin{enumerate}
    \item $\alpha$ is stable.
    \item $\jetcodim(\alpha,J_\infty(X)) < \infty$.
    \item $\embdim(\cO_{J_\infty(X),\alpha}) < \infty$.
    \item The completed local ring $\widehat\cO_{J_\infty(X),\alpha}$ is
    Noetherian.
\end{enumerate}
\end{mainthm}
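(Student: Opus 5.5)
We plan to compute both sides through the truncations $\alpha_n \in J_n(X)$ and the cotangent complex formula of \cref{main-theorem-F}. The starting point is the identification $\fm_\alpha/\fm_\alpha^2 = \pi_1(\bL_{k_\alpha/\bL J_\infty(X)})$, which holds because $\pi_0(\bL J_\infty(X)) = J_\infty(X)$ by \cref{main-theorem-C}. We then use the transitivity triangle for $k \to \bL J_\infty(X) \to k_\alpha$:
\[
\bL_{\bL J_\infty(X)/k}\otimes^\bL k_\alpha \to \bL_{k_\alpha/k} \to \bL_{k_\alpha/\bL J_\infty(X)}.
\]
Since $\bL_{k_\alpha/k}$ has $\pi_1 = \Gamma_{k_\alpha/k}$ (imperfection module) and $\pi_0 = \Omega_{k_\alpha/k}$, we get a long exact sequence. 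It expresses $\embdim(\cO_{J_\infty(X),\alpha})$ as the dimension of the cokernel of $\pi_1(\bL_{\bL J_\infty/k}\otimes k_\alpha)\to \Gamma_{k_\alpha/k}$, plus the kernel of $\pi_0(\bL_{\bL J_\infty/k}\otimes k_\alpha) = \Omega_{J_\infty(X)/k}\otimes k_\alpha \to \Omega_{k_\alpha/k}$. Working with the cotangent complex rather than $\Omega$ is exactly what lets us avoid the perfectness hypothesis: the $\Gamma$ term is carried along honestly instead of being assumed zero.

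Next we run the same analysis at each finite level $n$, for the point $\alpha_n$ on $\bL J_n(X)$. By \cref{main-theorem-F}, $\bL_{\bL J_n(X)/k}\otimes k_{\alpha_n}\cong \bL_{X/k}\otimes^\bL_{A} P_n\otimes k_{\alpha_n}$. Here $P_n\otimes k_\alpha$ is $k_\alpha[t]/(t^{n+1})$ viewed as a module via the arc. Since $X$ is generically smooth and, for the finite-codimension case, $\alpha$ is not contained in the singular locus, $\bL_{X/k}$ is locally perfect. After base change to $k_\alpha\llbracket t\rrbracket$ it becomes quasi-isomorphic to a free module of rank $d$ plus torsion terms of total length governed by the orders of the Jacobian ideals, namely $\ord_\alpha(\Jac_X^{(i,p)})$. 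Tensoring with $k_\alpha[t]/(t^{n+1})$ and taking Euler characteristics, we expect
\[
\dim_{k_\alpha} \pi_0 - \dim_{k_\alpha}\pi_1 = (n+1)d + \text{const}
\]
for large $n$, with the constant stabilizing. On the other side, $\dim \overline{\{\alpha_n\}} = \trdeg(k_{\alpha_n}/k)$ is computed from $\dim \Omega_{k_{\alpha_n}/k}-\dim\Gamma_{k_{\alpha_n}/k}$ (the Cartier equality, valid over any field for finitely generated extensions). Combining this with the level-$n$ exact sequence gives
\[
(n+1)d - \dim\overline{\{\alpha_n\}} = \embdim(\cO_{\pi_0 \bL J_n(X),\alpha_n}) - \dim\pi_1(\ldots)\text{-correction}.
\]
We then pass to the limit using $\bL J_\infty = \holim \bL J_n$ (\cref{main-theorem-B}); the cotangent complexes form a colimit, and Mittag-Leffler/stabilization of the torsion contributions identifies the limits. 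The equality $\embdim = \jetcodim$ then follows with both sides possibly infinite. If $\alpha$ lies in the non-smooth locus, $\ord_\alpha(\Jac)=\infty$ and we must show both sides are infinite. The torsion part of $\bL_{X/k}\otimes k_\alpha\llbracket t\rrbracket$ then has a free summand beyond rank $d$, which forces growth faster than $(n+1)d$ on both sides.

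For the equivalences, (2)$\Leftrightarrow$(3) is the first part. For (1)$\Rightarrow$(2) we use that a stable arc is the generic point of a cylinder $\psi_n^{-1}(C)$ with $C$ constructible at level $n$. Such an arc has $\dim\overline{\{\alpha_m\}}$ growing exactly like $(m+1)d$ minus a constant, by the fibration structure of truncation maps away from the singular locus (Denef–Loeser). For (2)$\Rightarrow$(1) we take the closure of $\alpha_n$ and its cylinder: finite jet codimension and stabilization force $\alpha$ to be the generic point of $\psi_n^{-1}(\overline{\{\alpha_n\}})$ for large $n$. For (3)$\Leftrightarrow$(4), a complete local ring with finite-dimensional $\fm/\fm^2$ is Noetherian (Cohen), and the converse is standard.

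The main obstacle is the level-by-level bookkeeping of $\pi_1$ terms, both from $\Gamma_{k_{\alpha_n}/k}$ and from $\pi_1(\bL_{\bL J_n}\otimes k)$, and showing these cancel or stabilize in the limit. I would first try to prove that the maps $\Gamma_{k_{\alpha_n}/k}\to\Gamma_{k_\alpha/k}$ stabilize, since $k_\alpha$ is a stabilizing union of the $k_{\alpha_n}$ for a stable arc.
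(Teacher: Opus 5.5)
Your outline follows the first half of the paper's argument: the six-term sequence for $k\to\bL J_n(X)\to k_{\alpha_n}$, the Cartier equality, the fiber computation via $\bL_{\bL J_n(X)/k}\cong\bL_{X/k}\otimes^\bL P^\der_n$, and the colimit $\fm_\alpha/\fm_\alpha^2=\colim_n\fm_{\alpha_n}/\fm_{\alpha_n}^2\otimes k_\alpha$. But it stops exactly at the real difficulty, and the step you propose for it does not touch it.

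Done carefully, your bookkeeping gives $\embdim(\cO_{J_n(X),\alpha_n})=(n+1)d-\dim(\overline{\{\alpha_n\}})+\ord_\alpha(\Jac_X^{(0,d)})-\dim K_n$. Here $K_n$ is the image of $\pi_1(\bL_{\bL J_n(X)/k}\otimes^\bL k_{\alpha_n})$ in $\Gamma_{k_{\alpha_n}/k}$. In the colimit the term $\ord_\alpha(\Jac_X^{(0,d)})$ disappears, because it is exactly $\dim\ker\pi_0(\psi_{n,m})$ for $m\gg n$. Once you know $k_{\alpha_m}/k_{\alpha_n}$ is separable (which requires knowing first that $\alpha$ is stable, so the order of the argument matters), you get $\embdim(\cO_{J_\infty(X),\alpha})=\lim_n\bigl((n+1)d-\dim(\overline{\{\alpha_n\}})-\dim K_n\bigr)$.

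For a non-degenerate arc, $0\le\dim K_n\le\ord_\alpha(\Jac_X^{(1,0)})$. So your approach can give $\jetcodim-\ord_\alpha(\Jac_X^{(1,0)})\le\embdim\le\jetcodim+\ord_\alpha(\Jac_X^{(0,d)})$, and for stable arcs $\embdim\le\jetcodim$. It cannot give equality, which needs $\dim K_n\to 0$, and nothing in your plan proves that. Stabilization of $\Gamma_{k_{\alpha_n}/k}\to\Gamma_{k_\alpha/k}$ is automatic for stable arcs: the extensions $k_{\alpha_m}/k_{\alpha_n}$ are separable, so $\Gamma_{k_{\alpha_m}/k}\cong\Gamma_{k_{\alpha_n}/k}\otimes k_{\alpha_m}$. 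It says nothing about whether the torsion classes in $\pi_1(\bL_{\bL J_n(X)/k}\otimes^\bL k_{\alpha_n})$ map to zero in the imperfection module.

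The missing idea is the paper's reduction to affine space via a general linear projection $f\colon X\to Y=\AA^d$ defined over $k$. It is chosen unramified at $\alpha(\eta)$ and with $\ord_\alpha(\Fitt^d(\Omega_{X/k}))=\ord_\alpha(\Fitt^0(\Omega_{X/Y}))$. The chain is:
\begin{itemize}
\item $k_\alpha/k_\beta$ is finite separable, so $\bL_{k_\alpha/k_\beta}=0$, and the $3\times3$ grid of cotangent complexes shows $T^*_\alpha f_\infty$ is an isomorphism. Hence $\embdim$ is preserved.
\item On the smooth target $K_n=0$, so $\embdim(\cO_{J_\infty(Y),\beta})=\jetcodim(\beta,J_\infty(Y))$.
\item A primitive element of $k_\alpha/k_\beta$ gives $\trdeg(k_{\beta_n}/k)\le\trdeg(k_{\alpha_n}/k)$, that is, $\jetcodim(\alpha,J_\infty(X))\le\jetcodim(\beta,J_\infty(Y))$.
\end{itemize}
Chaining these gives $\jetcodim\le\embdim$.

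The same gap breaks your degenerate case. Thinness does give $\jetcodim=\infty$. For $\embdim$, however, your method only yields the lower bound $\limsup_n\bigl((n+1)b_0(\alpha)-\dim(\overline{\{\alpha_n\}})-\dim K_n\bigr)$. When $X$ is singular at $\alpha(\eta)$, the Betti number $b_1(\alpha)$ can be positive (e.g.\ a hypersurface singular along the generic point of the arc). Then the available bounds on $\dim K_n$ grow linearly in $n$, and this lower bound need not go to infinity. The paper instead projects to $\AA^{b_0(\alpha)}$, where the image arc is thin. Since (3)$\Rightarrow$(2) for degenerate arcs depends on this, the equivalences are affected too.

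Minor point: $\bL_{X/k}$ is not locally perfect unless $X$ is lci near $\alpha(0)$; only pseudo-coherence is needed.
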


Notice that again the final statement is purely classical. But derived
techniques appear in the proofs in a crucial way, again via the identification
$\fm_\alpha/\fm_\alpha^2 = \pi_1(\bL_{k_\alpha/\bL J_\infty(X)})$ and
\cref{main-theorem-F}, and we believe are unavoidable in our approach when
dealing with the non-perfect case. Particularly interesting is the technical
\cref{lem:boundonimagelambdan-non-deg}, which features a higher Jacobian ideal
and is fundamental for relating embedding dimensions with jet codimensions.

\subsection*{Structure of the paper}

In \cref{sec:preliminaries} we overview the basics of derived algebraic
geometry, using the language of animation. This also serves as an introduction
to the foundations for those readers interested in the purely underived
applications of our work, an expanded and roughly self-contained summary of the
needed background to understand the methods. \Cref{sec:DerJets} lays the
foundations for derived jets and arcs, starting in the affine case in
\cref{sec:AffCase} and continuing with \cref{sec:Globcase} for the global
considerations. Classicality results, lci considerations, and connections to
Mustaţă's theorem are covered in \cref{sec:discrete-non-discrete}.
\Cref{sec:DerJetsCotCplx} builds the statement and proof of
\cref{main-theorem-F}. Finally, the rest of the article contains a series of
applications. \Cref{sec:fibers-fitting} outlines the theory of cohomological
support ideals and higher Jacobians, and \cref{sec:emb-dim} contains technical
results regarding the computation of embedding dimensions. The applications to
the study of cotangent maps appear in \cref{sec:cotangent_maps}, and our
version of the curve selection lemma is proved in \cref{sec:curveselection}.

\subsection*{Acknowledgments}

We were first inspired to look at derived techniques for the study of arc
spaces at a conference on derived algebraic geometry at SLMath (formerly MSRI)
in 2019. During the development of this article, and during the process of
learning the techniques of derived algebraic geometry, we benefited greatly
from discussions with many colleagues and from interactions in the homotopy
theory Discord server. We are particularly thankful to Bhargav Bhatt, Emile
Bouaziz, Christopher Chiu, Tommaso de Fernex, Adeel Khan, Mircea Mustaţă, Denis
Nardin, and Srikanth Iyengar.

\section{Preliminaries: derived schemes and animation}

\label{sec:preliminaries}

We use the foundations for derived algebraic geometry laid out by Lurie. The
main references are \cite{Lur09,DAG}, but see also \cite{Toe14} and the
excellent survey \cite{Cis19}. We use the theory of animation as presented in
\cite[Sec.\ 5]{CS20} as the primary vehicle for describing categories of
derived schemes and functors between them. The goal of this section is to
summarize the derived setting we are using, fix some standard notation, and
provide an overview for non-experts. A reader who is familiar with the basics
of derived algebraic geometry and animation might want to move directly to
\cref{sec:DerJets}.

\subsection{Ground ring}

Throughout, all rings are considered to be commutative and unital. We fix a
base ring, which we denote by $k$. For many applications, $k$ can
be taken to be a field, possibly algebraically closed, but these assumptions
are not necessary for our developments and must be avoided in some applications.
Most of our theory can be extended to be fully relative, where $k$ is replaced
by an arbitrary scheme $S$, possibly derived. But for clarity of exposition and
to avoid heavy technicalities, we have chosen to present our results only for
an affine underived base.

\subsection{1-categories}

To emphasize the distinction with $\infty$-categories, we use the term
$1$-category to refer to a category in the usual sense. We denote by $\Alg_k$ and
$\Sch_k$ the $1$-categories of $k$-algebras and $k$-schemes respectively. We
write $\Aff_k \cong \Alg_k^\op$ for the $1$-category of affine $k$-schemes. The
$1$-categories of polynomial $k$-algebras, finite-type polynomial $k$-algebras,
finitely presentable $k$-algebras, and finitely presentable $k$-schemes will be
denoted by $\Poly_k$, $\Poly_k^\ft$, $\Alg_k^\fpr$, and $\Sch_k^\fpr$
respectively.

For a $1$-category $\catC$, we denote by $\Ob(\catC)$ the class of objects and
we denote by $\Psh(\catC)$ its category of presheaves of sets. Recall that this
is the functor category $\Psh(\catC) = \Fun(\catC^\op,\Set)$.

\subsection{Projectively generated 1-categories}

For the process of animation described below, a key feature of $\Alg_k$ is the
fact that it is an example of a finitely presentable $1$-category \cite{AR94}.
It is also a projectively generated $1$-category in the sense of Lurie; see
\cite[Def.~5.5.8.23]{Lur09}, where $\Poly_k^\ft$ gives an essentially small
collection of compact projective generators. To get an intuition for what this
means, first observe that every finitely presentable $k$-algebra $A$ can be
written as a reflexive coequalizer of objects in $\Poly_k^\ft$:
\[
\begin{tikzcd}
    k[x_1, \ldots, x_n, y_1, \ldots, y_m]
    \arrow[rr, bend left=15,  "f",
        start anchor={[yshift=1ex]east}, end anchor={[yshift=1ex]west}]
    \arrow[rr, bend right=15, "g"',
        start anchor={[yshift=-1ex]east}, end anchor={[yshift=-1ex]west}]
    &&
    k[x_1, \ldots, x_n]
    \arrow[ll,"s" description]
    \arrow[r,"p"]
    &
    \coeq(f,g) = A.
\end{tikzcd}
\]
Here $p$ sends the $x_i$ to $k$-algebra generators of $A$, the three maps
$f,g,s$ fix all the $x_i$, $f$ sends the $y_j$ to the relations among the
algebra generators of $A$, i.e., to a choice of generators of the kernel of
$p$, and $g$ sends the $y_j$ to $0$. Second, notice that an arbitrary
$k$-algebra $A$ is a filtered colimit of finitely presentable $k$-algebras
because it is a union of its finitely generated subalgebras. Combining these
two observations, we see that every $k$-algebra is a colimit of finite-type
polynomial algebras:
\[
    A = \colim P_i
    \qquad
    P_i \in \Ob(\Poly_k^\ft).
\]
Moreover, this colimit is of a special type as it was obtained by combining
reflexive coequalizers and filtered colimits. It is called a \emph{sifted}
colimit. We will not discuss the notion of sifted colimit further, and will
not need it, but we refer the reader to \cite{ARV10, CS20} for details.
Summarizing, every $k$-algebra is canonically a sifted colimit of finite-type
polynomial algebras.

This can be taken further, and we can show that $\Alg_k$ can be reconstructed
from $\Poly_k^\ft$ in the following way \cite{AR94, ARV10}. The Yoneda
embedding
\[
    \Alg_k \hookrightarrow \Psh(\Alg_k) = \Fun(\Alg_k^\op, \Set)
\]
induces by restriction a canonical functor $\Alg_k \to \Psh(\Poly_k^\ft)$,
which can be shown to be fully faithful and to preserve sifted colimits. Up to
equivalence, this identifies $\Alg_k$ with the free completion of $\Poly_k^\ft$
under sifted colimits, which we denote $\Sind(\Poly_k^\ft)$. We get the
following diagram:
\[
    \Poly_k^\ft
    \hookrightarrow
    \Alg_k
    \cong
    \Sind(\Poly_k^\ft)
    \hookrightarrow
    \Psh(\Poly_k^\ft).
\]
Two properties of $\Sind(\Poly_k^\ft)$ are particularly useful when making
categorical constructions. First, we have an equivalence
\[
    \Sind(\Poly_k^\ft) \cong \Fun_\finprod(\Poly_k^{\ft,\op}, \Set)
    \hookrightarrow
    \Psh(\Poly_k^\ft) = \Fun(\Poly_k^{\ft,\op}, \Set),
\]
where $\Fun_\finprod$ denotes the sub-category of $\Fun$ spanned by the
functors that preserve finite products. In words, the objects of
$\Sind(\Poly_k^\ft) \cong \Alg_k$ can be described as the
finite-product-preserving functors $\Poly_k^{\ft,\op} \to \Set$. Second, we
have the universal property of free completions under sifted colimits: if
$\catD$ is any $1$-category admitting all sifted colimits, then
\[
    \Fun_\sif(\Sind(\Poly_k^\ft), \catD)  \to \Fun(\Poly_k^\ft, \catD)
    \qquad
    F \mapsto F|_{\Poly_k^\ft}
\]
is an equivalence of categories. Here $\Fun_\sif$ denotes the full subcategory
of $\Fun$ spanned by the functors that preserve sifted colimits. In other words, a
sifted-colimit-preserving functor $\Alg_k \to \catD$ is determined by its
values on $\Poly_k^\ft$.

For this last property, we want to recall one of the main results of
\cite{ARV10}: since $\Alg_k$ admits all finite colimits, a functor $\Alg_k \to
\catD$ preserves all sifted colimits if and only if it preserves all filtered
limits and all reflexive coequalizers. It is for this reason that we will not
need to use the notion of sifted colimit directly.

These considerations generalize to a larger class of $1$-categories. An object
$X$ in a $1$-category $\catC$ is said to be \emph{strongly finitely
presentable} or a \emph{compact projective} object, provided the functor
$\Hom_{\catC}(X, \Arg)$ preserves sifted colimits. The $1$-category $\catC$ is
\emph{projectively generated} if $\catC$ admits all small colimits and there
exists a subcategory $\catC_0 \subset \catC$ such that: $\catC_0$ is
essentially small, i.e., equivalent to a small $1$-category; $\catC_0$ admits
all finite coproducts; the objects of $\catC_0$ are all strongly finitely
presentable in $\catC$; and every object of $\catC$ is a sifted colimit of
objects in $\catC_0$. In this situation we get:
\[
    \catC_0
    \hookrightarrow
    \catC
    \cong
    \Sind(\catC_0)
    \hookrightarrow
    \Psh(\catC_0).
\]
As above, the objects of $\catC$ get identified with the functors $\catC_0^\op
\to \Set$ that preserve finite products. If $\catD$ admits all sifted
colimits, a sifted-colimit-preserving functor $F \colon \catC \to \catD$ is
determined by its restriction $F|_{\catC_0}$. Moreover, a functor $\catC \to
\catD$ preserves all sifted colimits if and only if it preserves filtered
colimits and reflexive coequalizers.

For a $1$-category $\catC$, we denote by $\catC^\sfp$ the subcategory spanned
by the strongly finitely presentable objects. When $\catC^\sfp$ is essentially
small and generates $\catC$ under sifted colimits, it is the most natural
choice for $\catC_0$ in the definition of projectively generated category. In
the case of algebras, the objects of $\Alg_k^\sfp$ are retracts of finite-type
polynomial $k$-algebras.

\subsection{Simplicial sets and algebras}

Before discussing animation, we establish notation for simplicial objects. For
$n \in \NN$ we denote by $[n]$ the set $\{ 0,\ldots,n\}$ and by $\Delta$ the
category with objects $\{ [n] \}_{n \in \NN}$ and morphisms non-decreasing
order-preserving maps. A simplicial object in a $1$-category $\catC$ is a
functor $\Delta^\op \to \catC$. We denote by $\sSet$ the $1$-category of
simplicial sets and by $\sAlg_k$ the $1$-category of simplicial $k$-algebras.
We denote an object of $\sSet$ by $X_\sbt$, and by $X_n$ the corresponding
image of $[n]$. These come equipped with face maps $d^n_i \colon X_n \to
X_{n-1}$ and degeneracy maps $s_i^n \colon X_n \to X_{n+1}$ for $0 \leq i \leq
n$ satisfying the familiar relations.

For simplicial sets $X_\sbt$ and $Y_\sbt$, a morphism of simplicial sets
$X_\sbt \to Y_\sbt$ is just a natural transformation and can be described as a
collection of functions $\varphi = (\varphi_n \colon X_n \to Y_n)_{n \in \NN}$,
satisfying the usual compatibilities with the face and degeneracy maps. We can
form a mapping simplicial set $\bHom(X_\sbt, Y_\sbt)$ with the $n$-simplices
$\bHom(X_\sbt, Y_\sbt)_n$ given by $\Hom_\sSet(X_\sbt \times \Delta^n,
Y_\sbt)$. There is a natural forgetful functor $\sAlg_k \to \sSet$ and
morphisms $\varphi \colon R_\sbt \to S_\sbt$ in $\sAlg_k$ are characterized as
morphisms of simplicial sets $\varphi$ such that each $\varphi_n \colon R_n \to
S_n$ is a morphism of $k$-algebras.

To each simplicial set $X_\sbt$ we can associate its geometric realization
$|X_\sbt|$, and we get a functor from $\sSet$ to the $1$-category of
topological spaces. Its right adjoint sends a topological space to the
simplicial set of its singular chains.

For a simplicial set $X_\sbt$, we denote by $\pi_i(X_\sbt)$ the $i$-th
\emph{homotopy group} of $|X_\sbt|$.  One calls $X_\sbt$ \emph{discrete} if
$\pi_i(X_\sbt) = 0$ for $i > 0$. A morphism $X_\sbt \to Y_\sbt$ is called a
\emph{weak equivalence} if the induced group homomorphisms $\pi_i(X_\sbt) \to
\pi_i(Y_\sbt)$ are isomorphisms for all $i$.

For a simplicial $k$-algebra $R_\sbt$, define $\pi_i(R_\sbt)$ as the homotopy
group of the underlying simplicial set. We get the notions of a discrete
simplicial $k$-algebra and of a weak equivalence between simplicial
$k$-algebras. Notice that the underlying simplicial set of a simplicial
$k$-algebra is always a Kan complex, so its homotopy groups can be computed
directly from the data of the simplicial set, see \cite[Sec.~8.3]{Wei94}.
Alternatively, via the Dold-Kan correspondence the $1$-category of simplicial
$k$-modules is equivalent to the $1$-category of non-negatively graded chain
complexes of $k$-modules, and one can see that the homotopy groups
$\pi_i(R_\sbt)$ agree with homology groups $H_i(NR_\sbt)$ of the normalized
chain complex $NR_\sbt$ naturally associated to $R_\sbt$
\cite[Thm.~8.3.8]{Wei94}. In particular, the homotopy groups $\pi_i(R_\sbt)$ of
a simplicial $k$-algebra are $k$-modules and $\pi_0(R_\sbt)$ is a $k$-algebra.

\subsection{\(\infty\)-categories}

For a full treatment of $\infty$-categories we refer the reader to
\cite{Lur09}. All references to $\infty$-categories are to be taken as
$(\infty,1)$-categories. There are several possible formalizations of
$\infty$-categories, including quasi-categories (our preferred model),
topological categories, and simplicial categories. Our discussion will not
depend on the model used. For objects $X$ and $Y$ in an $\infty$-category
$\catC$, we denote by $\Maps_\catC(X,Y)$ the space of morphism from $X$ to $Y$.
We think of it as an $\infty$-groupoid and while its concrete realization might
depend on the model used, its homotopy type is unambiguous. We remind the
reader that most categorical constructions generalize to the
$\infty$-categorical setting: we have notions of equivalences between
$\infty$-categories, the opposite of an $\infty$-category, functors between
$\infty$-categories, $\infty$-categories of functors, subcategories, initial
and final objects, limits and colimits, adjunctions, and Kan extensions. Every
$1$-category can be thought of as an $\infty$-category with discrete mapping
spaces.

It is worth remarking that when the involved $\infty$-categories are not
discrete, the $\infty$-categorical notions of limits and colimits resemble
homotopy limits and colimits. For example, their relation to mapping spaces is
as follows:
\[
    \Maps_\catC(X, \invlim_i Y_i)
    \cong
    \holim_i \Maps_\catC(X, Y_i),
    \quad
    \Maps_\catC(\colim_i X_i, Y)
    \cong
    \holim_i \Maps_\catC(X_i, Y).
\]
For this reason, we sometimes use the symbols $\holim X_i$ or $\hocolim X_i$
for $\infty$-categorical limits and colimits. Recall that geometric
realizations of simplicial sets can be constructed as homotopy colimits.
Because of the above remarks, when $X_\sbt \colon \Delta^\op \to \catC$ is a
simplicial object in an $\infty$-category $\catC$, the colimit $\colim_n X_n$,
if it exists, is called the ($\infty$-categorical) geometric realization of
$X_\sbt$ in $\catC$ \cite[Not.~6.1.2.12]{Lur09}.

\subsection{Animation of categories}

Consider a projectively generated $1$-category $\catC$, with $\catC_0$ a choice
of compact projective generators. Our main example will be $\catC = \Alg_k$ and
the choice $\catC_0 = \Poly_k^\ft$. As discussed above, $\catC$ can be
reconstructed up to equivalence as the $1$-categorical free completion of
$\catC_0$ under sifted colimits: $\catC \cong \Sind(\catC_0)$. The
$\infty$-categorical version of this construction was carefully developed in
\cite[Sec.~5.5]{Lur09} and was later popularized under the name
\emph{animation} in \cite[Sec.~5.1]{CS20}. Concretely, it is shown in
\cite[Sec.~5.1.4]{CS20}, following \cite[Sec.~5.5.8]{Lur09}, that there is an
$\infty$-category $\aSind(\catC_0)$ satisfying the following universal
property: if $\catD$ is any $\infty$-category admitting all sifted colimits,
then the natural restriction functor
\begin{equation}
\label{eq:sind-univ-prop}
    \Fun_\sif(\aSind(\catC_0), \catD)  \to \Fun(\catC_0, \catD)
    \qquad
    F \mapsto F|_{\catC_0}
\end{equation}
is an equivalence of $\infty$-categories. As before, $\Fun_\sif$ denotes the
full subcategory of $\Fun$ spanned by the functors that preserve sifted
colimits. A small difference appears in the $\infty$-categorical setting: a
functor $\aSind(\catC_0) \to \catD$ preserves sifted colimits if and only it
preserves filtered colimits and $\infty$-categorical geometric realizations.
The $\infty$-category $\aSind(\catC_0)$ is uniquely determined up to
equivalence, and is independent of the choice of $\catC_0$. Following
\cite[Sec.~5.1.4]{CS20} we use the notation $\Ani(\catC) =
\aSind(\catC_0)$ and call this $\infty$-category the \emph{animation} of $\catC$.

As in the $1$-categorical setting, $\Ani(\catC)$ admits an alternative
description in terms of presheaves.
As explained in \cite[Sec.~5.5]{Lur09}, there is an $\infty$-category
of $\infty$-presheaves on $\catC_0$, that is, an $\infty$-category of functors
from $\catC_0^\op$ to the $\infty$-category of spaces. We denote it by
$\aPsh(\catC_0)$, and we have the following equivalence:
\[
    \Ani(\catC) =
    \aSind(\catC_0) \cong \aPsh_\finprod(\catC_0)
    \hookrightarrow
    \aPsh(\catC_0),
\]
where $\aPsh_\finprod$ denotes the sub-category of $\aPsh$ spanned by the
functors that preserve finite products.
We get the following diagram:
\[
    \catC_0
    \hookrightarrow
    \Ani(\catC)
    \hookrightarrow
    \aPsh(\catC_0).
\]
The objects of $\catC_0$ remain strongly finitely presentable in $\Ani(\catC)$,
in the sense that when $P$ is an object of $\catC_0$, the functor
$\Maps_{\Ani(\catC)}(P, \Arg)$ preserves $\infty$-categorical sifted colimits.

Using that $\catC \cong \Sind(\catC_0)$ and $\Ani(\catC) = \aSind(\catC_0)$,
one can identify $\catC$ with $\tau_{\le 0}(\Ani(\catC))$, the subcategory
spanned by the $0$-truncated objects in $\Ani(\catC)$, see
\cite[Def.~5.5.6.1, Rem.~5.5.8.26]{Lur09}.
Moreover, there is a $0$-truncation functor $\pi_0 \colon \Ani(\catC) \to
\catC$, see \cite[Prop.~5.5.6.18]{Lur09},
which is seen to be left adjoint to the inclusion
\[
    \catC \hookrightarrow \Ani(\catC).
\]
As a consequence $\catC$ is a full subcategory of $\Ani(\catC)$,
meaning that we have
\[
    \Maps_{\Ani(\catC)}(X, Y)
    \cong
    \Hom_{\catC}(X, Y)
    \qquad
    \text{when $X,Y$ are in $\catC$.}
\]
We emphasize that this says that the mapping spaces on the left have the
homotopy type of the discrete hom-sets on the right.

In \cite[Sec.~5.5.9]{Lur09} there is a third description for $\Ani(\catC)$. It
can be obtained from the $1$-category of simplicial objects in $\catC$ by
inverting weak equivalences with respect to an appropriate model structure. The
objects of $\Ani(\catC)$ can be represented by simplicial objects in $\catC$ and
the inclusion $\catC \hookrightarrow \Ani(\catC$) identifies the objects of
$\catC$ with the discrete (i.e., $0$-truncated) objects of $\Ani(\catC)$. If an
object $\cX$ of $\Ani(\catC)$ is written as a simplicial object $\cX = X_\sbt$
in $\catC$, it can also be thought as simplicial object in $\Ani(\catC)$, and
in this case the corresponding $\infty$-categorical geometric realization in
$\Ani(\catC)$ recovers the original object: $\cX = \hocolim_n X_n$.

\subsection{Anima and animated algebras}

In special cases, the animation of a $1$-category is familiar. For example, the
animation of the $1$-category of sets $\Set$ is precisely the $\infty$-category
of spaces in the sense of Lurie. Following \cite[Sec.\ 5]{CS20}, objects of
this $\infty$-category will be called \emph{anima}, and we use the notation
\[
\Anima = \Ani(\Set).
\]
In this case $\Set^\sfp = \FinSet$ is the $1$-category of finite sets, and we
have identifications
$\Anima \cong \aSind(\FinSet) \cong \aPsh_\finprod(\FinSet)$. Objects of
$\Anima$ can be represented by simplicial sets, and $\Anima$ can be obtained from
the $1$-category of simplicial sets $\sSet$ by inverting weak equivalences.

The animation of the $1$-category of $k$-algebras is denoted
\[
    \aAlg_k = \Ani(\Alg_k).
\]
In this case we have two natural choices for a collection of compact projective
generators: either $\Poly_k^\ft$ (finite-type polynomial algebras, our
preferred choice) or $\Alg_k^\sfp$ (retracts of finite-type polynomial
algebras, the choice in \cite{CS20}). These choices give equivalent
$\infty$-categories. Objects of $\aAlg_k$ will be called \emph{animated
$k$-algebras}.

We can look at objects in $\aAlg_k$ from different points of view. First, as
discussed above $\aAlg_k$ is obtained from the $1$-category of simplicial
$k$-algebras $\sAlg_k$ by inverting weak equivalences. We still use the
notation $R_\sbt$ for an animated algebra, with the understanding that it is
morally only considered up to weak equivalence. In particular, we have homotopy
groups $\pi_i(R_\sbt)$ for animated algebras. Throughout, we refer to discrete
animated $k$-algebras as \emph{classical}. This choice is made to emphasize
that animated algebras which have no higher homotopy come from the world of
classical algebraic geometry, but might not correspond to a ``discrete space''
in any geometric sense. Thus the inclusion $\Alg_k \hookrightarrow \aAlg_k$
identifies the (non-animated) $k$-algebras with classical $k$-algebras. Among
the objects of the $1$-category $\sAlg_k$ there is a distinguished class of
\emph{cofibrant simplicial algebras} which includes all simplicial algebras
$P_\sbt$ such that $P_n$ is a polynomial $k$-algebra for all $n$. Any animated
$k$-algebra $R_\sbt$ is weakly equivalent (i.e., equivalent as objects of
$\aAlg_k$) to a cofibrant simplicial algebra $P_\sbt$ (which can be taken to be
of the above special form). Such equivalences $P_\sbt \to R_\sbt$ are called
\emph{cofibrant replacements}. These replacements can be used to compute
mapping spaces. Specifically, given a second animated $k$-algebra $S_\sbt$ and
a cofibrant replacement $Q_\sbt \to S_\sbt$, then the mapping space
$\Maps_{\aAlg_k}(R_\sbt, S_\sbt)$ is homotopy equivalent to the geometric
realization of the simplicial set $\bHom_{\sAlg_k}(P_\sbt, Q_\sbt)$.

Alternatively, we can use the identification of $\aAlg_k$ with $\aSind(\Poly_k^\ft)$.
From this point of view, every animated $k$-algebra $R_\sbt$ can be expressed
as an $\infty$-categorical sifted colimit
\[
    R_\sbt \cong \hocolim_{d \in \cD} P_d
\]
where each $P_d$ is an object in $\Poly_k^\ft$. Notice that
each $P_d$ is classical, but the colimit need not be. If the colimit were taken
in the $1$-category $\Alg_k$ one would recover the $0$-truncation $\pi_0(R_\sbt)
\cong \colim P_d \in \Alg_k$. An animated algebra $R_\sbt$ can be expressed as
a colimit as above in many different ways, but there is a distinguished choice.
Consider the comma $\infty$-category $(\Poly_k^\ft \downarrow R_\sbt)$, whose
objects are morphisms $\varphi \colon P_\varphi \to R_\sbt$ with $P_\varphi \in
\Poly_k^\ft$. The $\infty$-category $(\Poly_k^\ft \downarrow R_\sbt)$ is sifted and
\[
    R_\sbt \cong \hocolim_{\varphi \in (\Poly_k^\ft \downarrow R_\sbt)} P_\varphi.
\]
This colimit is called the \emph{canonical colimit} for $R_\sbt$ with respect
to $\Poly_k^\ft$, and we say that every animated algebra \emph{is a canonical
colimit} of $\Poly_k^\ft$-objects. Notice that when $R$ is a classical
$k$-algebra the canonical colimit is classical, so the $1$-categorical canonical
colimit gives the same answer: $R \cong \colim_\varphi P_\varphi \cong
\hocolim_\varphi P_\varphi$.

The two ways of looking at animated algebras are related via
$\infty$-categorical geometric realizations. Explicitly, given an equivalence
$P_\sbt \to R_\sbt$ in $\aAlg_k$, for example a cofibrant replacement, we
always have that the $\infty$-categorical geometric realization of $P_\sbt$ as
a simplicial object in $\aAlg_k$ recovers $R_\sbt \cong \hocolim_n P_n$. But
notice that in general we can only guarantee that each $P_n$ is in $\Poly_k$,
and not necessarily in $\Poly_k^\ft$.

\subsection{Animation of functors}

Let $\catC$ and $\catD$ be projectively generated $1$-categories, and consider
a functor $F \colon \catC \to \catD$. By restriction to $\catC_0$ and
composition with $\catD \hookrightarrow \Ani(\catD)$ we get a functor $\catC_0
\to \Ani(\catD)$. Since $\Ani(\catC) = \aSind(\catC_0)$, the
universal property expressed by \cref{eq:sind-univ-prop} gives a functor
\[
    \Ani(F) \colon \Ani(\catC) \to \Ani(\catD)
\]
called the \emph{animation} of $F$. By construction, the restriction of
$\Ani(F)$ to $\catC_0$ (and even to $\catC^\sfp$) agrees with $F$.

From \cref{eq:sind-univ-prop}, $\Ani(F)$ preserves $\infty$-categorical sifted
colimits, and this gives a way of computing its values. Assume $\catC =
\Alg_k$, our main case of interest. Given an animated $k$-algebra $R_\sbt$,
write it as a sifted colimit $R_\sbt \cong \hocolim_n P_n$, where each $P_n \in
\Poly_k^\ft$. For example, consider the canonical colimit with respect to
$\Poly_k^\ft$, or consider a cofibrant replacement with terms in $\Poly_k^\ft$
when it exists. We have
\[
    \Ani(F)(R_\sbt) \cong \hocolim_n F(P_n).
\]
When the colimit is induced by by a cofibrant replacement $P_\sbt \to R_\sbt$
with terms $P_n$ in $\Poly_k^\ft$ we get an equivalence $F(P_\sbt) \to
\Ani(F)(R_\sbt)$, where $F(P_\sbt)$ is the simplicial object obtained by
composing $P_\sbt$ with $F$.

\subsection{Left derived functors}

If $F \colon \catC \to \catD$ preserves $1$-categorical sifted colimits we
often denote $\Ani(F)$ also by $\bL F$ and call it the \emph{left derived
functor} of $F$. In this case, for any object $X_\sbt$ of $\Ani(\catC)$,
written as a sifted colimit $X_\sbt \cong \hocolim_n P_n$ with terms in
$\catC_0$, we have:
\[
    \pi_0 (\bL F(X_\sbt))
    \cong \colim_n F(P_n)
    \cong F(\colim_n P_n)
    \cong F(\pi_0(X_\sbt))
    \qquad
    \text{in $\catC$}.
\]
Furthermore, consider the subcategory $\Ind(\catC_0) \subseteq \catC$ spanned
by the objects in $\catC$ which are filtered limits of objects in $\catC_0$. By
\cite[Prop.~5.5.8.10(6), Cor.~5.5.7.4(1)]{Lur09} the categories $\catC$,
$\catD$ are stable under $\infty$-categorical filtered colimits in the
corresponding animations $\Ani(\catC)$, $\Ani(\catD)$. Given an object $X \in
\Ind(\catC_0)$, write it as a filtered colimit $X = \colim_j Q_j$ in $\catC$
with terms in $\catC_0$. Then we also have $X = \hocolim_j Q_j$ in
$\Ani(\catC)$, and therefore $\bL F(X) = \hocolim_j F(Q_j)$. Since this
homotopy colimit is filtered and the terms are $0$-truncated, the result must
be $0$-truncated, so $\bL F(X) = \colim_j F(Q_j) = F(X)$, using that $F$
preserves sifted (and in particular filtered) colimits. Summarizing:
\[
    \bL F(X) = F(X)
    \qquad
    \text{when $X$ is in $\Ind(\catC_0)$}.
\]
When $\catC = \Set$ we have that $\Ind(\catC_0) = \Ind(\FinSet) = \Set$. We see
that any left derived functor $\bL F \colon \Anima \to \Ani(\catD)$ agrees with $F$
on all of $\Set$.

If $\catC = \Alg_k$ then $\Ind(\catC_0) = \Ind(\Poly_k^\ft)$ contains
$\Poly_k$. In this case, any animated algebra $R_\sbt$ admits a cofibrant
replacement $P_\sbt \to R_\sbt$ where each $P_n \in \Poly_k$. We have that
\[
    \bL F(R_\sbt) \cong
    \hocolim_n \bL F(P_n) \cong \hocolim_n F(P_n),
\]
and we see that we have an equivalence $F(P_\sbt) \to \bL F(R_\sbt)$.
Notice in this situation we can
use less restrictive cofibrant replacements which exist for all animated
algebras, as opposed to the case of general animations $\Ani(F)$ that required
cofibrant replacements by simplicial objects with terms of finite type.

\subsection{Animated modules and derived categories}

\label{sec:animated-modules}

For any ring $R$, the $1$-category of $R$-modules $\Mod_R$ is projectively
generated, and a natural choice of compact projective generators is given by
$\FreeMod_R^\fg$, the $1$-category of finitely-generated free $R$-modules. We
will write $\aMod_R = \Ani(\Mod_R)$, and refer to its objects as \emph{animated
$R$-modules}. As discussed above, $\aMod_R$ is obtained from the $1$-category
of simplicial $R$-modules by inverting weak equivalences. Via the Dold-Kan
correspondence \cite[Ex.~5.1.6(2)]{CS20} we have an identification $\aMod_R
\cong D^{\ge 0}(R) \subseteq D(R)$, where $D(R)$ denotes the
($\infty$-categorical incarnation of the) \emph{derived category} of
$R$-modules, and $D^{\ge 0}(R)$ is the subcategory spanned by the
\emph{connective complexes}, those with vanishing homology groups in negative
degrees. Notice that we use \emph{homological indexing} throughout, so a
complex $M_\sbt$ is connective when $H_i(M_\sbt) = 0 $ for $i<0$, and the
translation functor is given by $H_i(M_\sbt[j]) = H_{i-j}(M_\sbt)$. Also recall
that the homotopy of an animated module can be computed as the homology of the
corresponding complex, that is $\pi_i(M_\sbt) = H_i(M_\sbt)$. See
\cref{sec:der-emb-dim} below for more more comments on connectivity of
complexes.

For right-exact functors $F$ between categories of modules, the above
construction of the animation $\bL F$ agrees with the classical construction of
the left derived functor. Right derived functors require the full derived
$\infty$-category $D(R)$ and are not obtained directly via animation.

For an animated $k$-algebra $R_\sbt$, Lurie describes in
\cite[Sect.~25.2.1]{SAG} the construction of the $\infty$-category of
$R_\sbt$-modules, which we denote $D(R_\sbt)$. Lurie uses the notation
$\Mod_{R_\sbt}$, but we prefer to use to $D(R_\sbt)$ to avoid potential
confusion when $R_\sbt$ is classical. The subcategory of connective objects in
$D(R_\sbt)$ can be constructed via animation as follows.

Write $\AlgMod_k$ for the $1$-category of ``modules on $k$-algebras''. Its
objects are pairs $(R, M)$, where $R \in \Alg_k$ and $M \in \Mod_R$, and its
arrows are given by pairs $(\phi, \psi)$ where $\phi \colon R \to S$ is a
$k$-algebra morphism and $\psi \colon M \to N$ is an $R$-module morphism. The
$1$-category $\AlgMod_k$ is projectively generated and projective compact
generators can be chosen to be of the form $(P, P^m)$, where $P$ is a
finite-type polynomial $k$-algebra, and $P^m$ is a finitely generated free
$P$-module. Consider the animation $\aAlgMod_k = \Ani(\AlgMod_k)$; its objects
can be represented by pairs of the form $(R_\sbt, M_\sbt)$, where $R_\sbt$ is
an animated $k$-algebra, and $M_\sbt$ is a (weak homotopy class of a)
simplicial module over $R_\sbt$. Notice that we have a natural projection
$\AlgMod_k \to \Alg_k$, which induces via animation a functor $\aAlgMod_k \to
\aAlg_k$. We use the notation $\aMod_{R_\sbt}$ for the fiber
$\infty$-category $\aAlgMod_k \times_{\aAlg_k} \{ R_\sbt \}$, and call its
objects \emph{animated $R_\sbt$-modules}. When $R$ is classical, this fiber is
canonically equivalent to $\aMod_R = \Ani(\Mod_R)$. As before, the Dold-Kan
correspondence gives an identification between $\aMod_{R_\sbt}$ and $D^{\ge
0}(R_\sbt)$, the connective part of the derived category $D(R_\sbt)$.

\subsection{Derived tensor products and extension of scalars}

The tensor product of modules gives a functor of three variables: $(R, M, N)
\mapsto M \otimes_R N$. Its animation is called the \emph{derived tensor
product}, and denoted $M_\sbt \otimes_{R_\sbt}^\bL N_\sbt$, where now $R_\sbt$
is an animated $k$-algebra, and $M_\sbt$, $N_\sbt$ are animated
$R_\sbt$-modules. As usual, we have that $M_\sbt \otimes_{R_\sbt}^\bL N_\sbt
\cong F_\sbt \otimes_{P_\sbt} G_\sbt$, where $(P_\sbt, F_\sbt, G_\sbt)$ is a
cofibrant replacement of $(R_\sbt, M_\sbt, N_\sbt)$ and $F_\sbt
\otimes_{P_\sbt} G_\sbt$ denotes the \emph{simplicial tensor product} (i.e.,
the term-wise tensor product).

As in the classical setting of $1$-categorical homological algebra, there are
several possible variations of this construction, all of which give the same
answer. For example, if $R$ and $N$ are classical, one has equivalences $M_\sbt
\otimes_R^\bL N \cong \Ani(\Arg\otimes_R N)(M_\sbt) \cong \bar F_\sbt \otimes_R
N$, where $\bar F_\sbt$ is equivalent to $M_\sbt$ and has terms that are free
$R$-modules (for example $\bar F_\sbt = F_\sbt \otimes_{P_\sbt} R$). More
generally, we have the following \emph{simplicial version of balancing of Tor}:
\[
    M_\sbt \otimes_{R_\sbt}^\bL N_\sbt
    \cong F_\sbt \otimes_{P_\sbt} G_\sbt
    \cong \bar F_\sbt \otimes_{R_\sbt} N_\sbt
    \cong M_\sbt \otimes_{R_\sbt} \bar G_\sbt.
\]

Derived tensor products give the correct notion of \emph{extension of scalars}
between categories of animated modules. More precisely, given a morphism of
animated $k$-algebras $R_\sbt \to S_\sbt$, it can be shown that the above
construction gives a functor $\Arg \otimes_{R_\sbt}^\bL S_\sbt \colon
\aMod_{R_\sbt} \to \aMod_{S_\sbt}$.
With the above notation we have
$
    M_\sbt \otimes_{R_\sbt}^\bL S_\sbt
    \cong F_\sbt \otimes_{P_\sbt} S_\sbt
    \cong \bar F_\sbt \otimes_{R_\sbt} S_\sbt
$.
Notice that when $R_\sbt \to S_\sbt$ is an equivalence, so is $\Arg
\otimes_{R_\sbt}^\bL S_\sbt$. This notion of extension of scalars, together
with the obvious notion of restriction of scalars, give the projection
$\aAlgMod_k \to \aAlg_k$ the structure of a Cartesian and co-Cartesian
fibration.

\subsection{The cotangent complex}

The cotangent complex arises via animation, see for example
\cite[Ex.~2.2]{BMS19}. Explicitly, consider the functor $F \colon \Alg_k \to
\Mod_k$ given by the module of differentials $F(R) = \Omega_{R/k}$, and observe
that $F$ preserves colimits. Its left derived functor $\bL F \colon
\aAlg_k \to \aMod_k \subseteq D(k)$ gives the cotangent complex, $\bL F(R_\sbt)
= \bL_{R_\sbt/k}$. Indeed, by this construction, if $P$ is a classical
polynomial algebra, not necessarily of finite type, then $\bL_{P/k} =
\Omega_{P/k}$ is also classical. Furthermore, given a cofibrant replacement
$P_\sbt \to R_\sbt$ with terms $P_n$ in $\Poly_k$, then $\Omega_{P_\sbt/k} \to
\bL_{R_\sbt/k}$ is also a cofibrant replacement. Since $P_\sbt \to R_\sbt$ is
an equivalence in $\aAlg_k$ and $\Omega_{P_\sbt/k}$ is degree-wise free over
$P_\sbt$, we get
\[
    \bL_{R_\sbt/k}
    \cong
    \Omega_{P_\sbt/k} \otimes_{P_\sbt} R_\sbt
\]
in $\aMod_k$, recovering the classical definition of the cotangent
complex.

A slightly more sophisticated version of this construction
\cite[Sect.~25.3]{SAG} gives $\bL_{R_\sbt/k}$ as an element inside of
$\aMod_{R_\sbt} \subseteq D(R_\sbt)$, instead of just in $\aMod_k$. This
clarifies the presence of the extension of scalars functor
$\Arg\otimes_{P_\sbt}^\bL R_\sbt$ in the classical definition. For this,
consider the functor $\Omega \colon \Alg_k \to \AlgMod_k$ given by $\Omega(R) =
(R, \Omega_{R/k})$. The functor $\Omega$ preserves colimits as it is a left adjoint and
it admits a left derived functor $\bL\Omega \colon \aAlg_k \to \aAlgMod_k$ of
the form $\bL \Omega(R_\sbt) = (R_\sbt, \bL_{R_\sbt/k})$. In particular,
$$\bL_{R_\sbt/k} \in \aAlgMod_k \times_{\aAlg_k} \{ R_\sbt \} = \aMod_{R_\sbt}
\subseteq D(R_\sbt).$$ As above, if $P_\sbt \to R_\sbt$ is a cofibrant
replacement, then $\bL \Omega(R_\sbt)$ is naturally equivalent to $(P_\sbt,
\Omega_{P_\sbt/k})$, but the latter belongs to $\aMod_{P_\sbt}$ instead of the
equivalent $\aMod_{R_\sbt}$.

Given a morphism $S_\sbt \to R_\sbt$ of animated algebras there is also an
associated \emph{relative cotangent complex} $\bL_{R_\sbt/S_\sbt}$. It can be
introduced via animation by considering the arrow category of $\Alg_k$, but it
can also be constructed more directly at the same time as the fundamental
triangle \cite[Sect.~25.3]{SAG}. By functoriality of $\Omega$ one gets a
natural morphism $(S_\sbt, \bL_{S_\sbt/k}) \to (R_\sbt, \bL_{R_\sbt/k})$, and
hence a morphism $\bL_{S_\sbt/k} \otimes_{S_\sbt}^\bL R_\sbt \to
\bL_{R_\sbt/k}$ in $\aMod_{R_\sbt} \subseteq D(R_\sbt)$. We let
$\bL_{R_\sbt/S_\sbt}$ be the mapping cone of this map. Notice that
$\bL_{R_\sbt/S_\sbt}$ is clearly connective, so it gives an object of
$\aMod_{R_\sbt}$. By construction we have a natural triangle
\[
    \bL_{S_\sbt/k} \otimes_{S_\sbt}^\bL R_\sbt
    \lra
    \bL_{R_\sbt/k}
    \lra
    \bL_{R_\sbt/S_\sbt}
    \lra
    (\bL_{S_\sbt/k} \otimes_{S_\sbt}^\bL R_\sbt)[1]
\]
called the \emph{fundamental triangle}. Taking zeroth homotopy we get the
\emph{first fundamental sequence of modules of differentials}
\[
    \Omega_{\pi_0(S_\sbt)/k} \otimes_{\pi_0(S_\sbt)} \pi_0(R_\sbt)
    \lra
    \Omega_{\pi_0(R_\sbt)/k}
    \lra
    \Omega_{\pi_0(R_\sbt)/\pi_0(S_\sbt)}
    \lra
    0.
\]
When $R$ and $S$ are classical and $R = S/I$, the \emph{second fundamental
sequence of modules of differentials} agrees with
\[
    \pi_1(\bL_{R/S}) = I/I^2
    \lra
    \Omega_{S/k} \otimes_{S} R
    \lra
    \Omega_{R/k}
    \lra
    0.
\]

\subsection{Animated derivations}

\label{sec:anim-ders}

Instead of using animation, the cotangent complex can also be defined via a
universal property \cite[Con.~25.3.1.6]{SAG}. This is analogous to the
construction of modules of differentials via derivations, and will play a key role
in our proofs in \Cref{sec:DerJetsCotCplx}. Recall that classically we have the
following natural isomorphism
\[
    \Hom_{\Mod_A}(\Omega_{A/k}, M)
    \cong
    \Der_k(A, M),
\]
and that we can construct the module of derivations on the right in the
following way:
\[
    \Der_k(A, M) = \Hom_{\Alg_k/A}(A, A \oplus \varepsilon M).
\]
Here $A \oplus \varepsilon M$ denotes the \emph{split square-zero extension} of
$A$ via the $A$-module $M$, which has multiplication given by
\[
    (a,\varepsilon m)(a',\varepsilon m')
    =
    (aa',\varepsilon(am'+a'm)).
\]
Viewed as objects of $\Alg_k/A$ one takes the identity for the map $A \to A$
and the first projection for the map $A\oplus\varepsilon M \to A$. The slice
category construction encodes the fact that derivations can be identified with
sections of the natural projection $A\oplus\varepsilon M\to A$.

For the cotangent complex, we have an ``animated'' version of the above
description \cite[Con.~25.3.1.6]{SAG}. Namely:
\[
    \Maps_{\Mod_{A_\sbt}}(\bL_{A_\sbt/k},M_\sbt)
    \cong
    \aDer_{k}(A_\sbt,M_\sbt)
    =
    \Maps_{\aAlg_{k}/A_\sbt}(
        A_\sbt,
        A_\sbt\oplus\varepsilon M_\sbt
    ).
\]
The equality on the right can be taken to be the definition of the space of
\emph{animated derivations} appearing in the middle. The animated
$A_\sbt$-module $A_\sbt\oplus\varepsilon M_\sbt$ is given the structure of
an animated $k$-algebra by animating the split square-zero extension functor
$(A,M) \mapsto A \oplus\varepsilon M$.

We choose to write $\aDer(\Arg,\Arg)$ instead of simply $\Der(\Arg,\Arg)$ to
emphasize the fact that this is a mapping space calculated in the animated
setting, and to avoid potential confusion. For example, for a classical algebra
$A$ the cotangent complex $\bL_{A/k}$ corepresents the functor
$\aDer_{k}(A,\Arg)$, whereas the module of differentials $\Omega_{A/k}$
corepresents the functor $\Der_{k}(A,\Arg)$.

\subsection{Derived quotients, Koszul complexes, and regular sequences}

\label{sec:intro-koszul-regular}

Let $R_\sbt$ be an animated $k$-algebra. By an \emph{element} $a \in R_\sbt$ we
mean a map $k[x] \to R_\sbt$ in $\aAlg_k$, which can be identified with an
element of the $k$-algebra $\pi_0(R_\sbt)$. Similarly, a \emph{sequence of
elements} $a_1, \ldots, a_c \in R_\sbt$ corresponds to a map
$k[x_1,\ldots,x_c] \to R_\sbt$.
Given such a sequence $a_1, \ldots, a_c \in R_\sbt$, we define the
corresponding \emph{derived quotient} via
\[
    R_\sbt \sslash (a_1, \ldots, a_c)
    =
    R_\sbt \otimes_{k[x_1, \ldots, x_c]}^\bL k,
\]
where $k[x_1, \ldots, x_c] \to k$ is the map which sends each $x_i$ to zero.
Notice that the underlying classical algebra is the standard quotient:
\[
    \pi_0(R_\sbt \sslash (a_1, \ldots, a_c))
    =
    \pi_0(R_\sbt) \otimes_{k[x_1, \ldots, x_c]} k
    \cong
    \pi_0(R_\sbt) / (a_1, \ldots, a_c).
\]

When $R$ is classical, the higher homotopy groups of derived quotients are
related to Koszul complexes. More precisely, we can compute homotopy by
forgetting the algebra structure and thinking of $R \sslash (a_1, \ldots, a_c)$
as an object in $\aMod_R \subseteq D(R)$. When $c=1$, a free resolution of $k =
k[x]/(x)$ as a $k[x]$-module is the two-term complex $0 \to k[x] \xra{\cdot x}
k[x] \to 0$, so $R\sslash(a)$ is quasi-isomorphic to the complex $K_\sbt(a)$
given by $0 \to R \xra{\cdot a} R \to 0$. In general $R \sslash (a_1, \ldots, a_c)$
is quasi-isomorphic to $K_\sbt(a_1, \ldots, a_c) = K_\sbt(a_1) \otimes_R \cdots
\otimes_R K_\sbt(a_c)$, which is known as the \emph{Koszul complex} of the
sequence $(a_1, \ldots, a_c)$. Summarizing:
\[
    \pi_i(R \sslash (a_1, \ldots, a_c))
    =
    H_i(K_\sbt(a_1, \ldots, a_c)).
\]

Recall that a sequence $a_1, \ldots, a_c \in R$  is said to be \emph{Koszul
regular} if the corresponding Koszul complex $K_\sbt(a_1, \ldots, a_c)$ has
vanishing homology in degrees $i > 0$. In other words, if the natural map $R
\sslash (a_1, \ldots, a_c) \to R / (a_1, \ldots, a_c)$ is an equivalence.

When computing with arc spaces we will need to consider quotients by infinitely
generated ideals, and it will be convenient to have notation in place for this
type of derived quotients. We identify a \emph{sequence of elements $\{a_i
\,|\, i\in I\}$ in $R_\sbt$ indexed by a set $I$} with a $k$-algebra map
$k[x_i \,|\, i\in I] \to R_\sbt$ and write
\[
    R_\sbt \sslash (a_i \,|\, i \in I)
    =
    R_\sbt \otimes_{k[x_i \,|\, i \in I]}^\bL k.
\]
Notice that Koszul complexes are not defined in this generality.

\subsection{Derived schemes}

Following \cite[Ch.~25]{SAG} and \cite{Toe14}, our approach to derived schemes
is based on animated algebras. Concretely, a \emph{derived $k$-scheme} is a
locally ringed space $(X, \cO_X)$ consisting of a topological space $X$ and a
sheaf of animated $k$-algebras $\cO_X$ such that $(X, \pi_0(\cO_X))$ is a
$k$-scheme and $\pi_i(\cO_X)$ is a quasi-coherent sheaf of
$\pi_0(\cO_X)$-modules for each $i \geq 0$.

We denote by $\dSch_k$ the $\infty$-category of derived $k$-schemes. The
\emph{truncation} of a derived scheme $(X, \cO_X)$ is the classical scheme $(X,
\pi_0(\cO_X))$. As it is common, we will often suppress $\cO_X$ from the
notation, and simply denote derived schemes by $X$. When doing this, we will
write $\pi_0(X)$ for the truncation of $X$.

The category of classical $k$-schemes sits inside $\dSch_k$ in the natural way,
\[
    \Sch_k \hookrightarrow \dSch_k,
\]
with truncation giving a right adjoint \cite[pg. 31]{Toe14}. A derived
$k$-scheme $X$ is \emph{affine} if $\pi_0(X)$ is, and the $\infty$-category of
derived affine $k$-schemes $\dAff_k$ may be identified with the opposite of the
$\infty$-category of animated $k$-algebras \cite[pg. 32]{Toe14}:
\[
    \dAff_k = \aAlg_k^\op.
\]
For $R_\sbt$ an animated $k$-algebra the corresponding object of $\dAff_k$ is
denoted $\Spec R_\sbt$. The truncation of $X = \Spec R_\sbt$ is $\pi_0(X) =
\Spec \pi_0(R_\sbt)$.

To each derived $k$-scheme $X$ we can associate a derived $\infty$-category
$D(X)$, as well as its connective part $\aMod_{\cO_X} \cong D^{\ge 0}(X)$.
Objects of $\aMod_{\cO_X}$ are given by sheaves of animated modules over
$\cO_X$ with quasi-coherent homotopy groups, or, equivalently, by connective
complexes of sheaves with quasi-coherent homology and an appropriate action by
$\cO_X$. If $X = \Spec R_\sbt$ is affine we get canonical equivalences
$\aMod_{\cO_X}
\cong \aMod_{R_\sbt}$ and $D(X) \cong D(R_\sbt)$. Functorial constructions on
schemes with good gluing properties can often be animated without surprises.
For example, we can define by gluing the \emph{cotangent complex} $\bL_{X/k}$
for any derived $k$-scheme $X$, and we get an animated $\cO_X$-module
$\bL_{X/k} \in \aMod_{\cO_X} \subseteq D(X)$. If $U = \Spec R_\sbt \subseteq X$
is an open affine, then the restriction of $\bL_{X/k}$ to $U$ is given by
$\bL_{R_\sbt/k}$. For a morphism $f \colon X \to Y$ of derived schemes we have
the \emph{relative cotangent complex} $\bL_{X/Y}$, which fits in the
corresponding fundamental triangle:
\[
    \bL_{Y/k} \otimes_{\cO_Y}^\bL \cO_X
    \lra
    \bL_{X/k}
    \lra
    \bL_{X/Y}
    \lra
    (\bL_{Y/k} \times_{\cO_Y}^\bL \cO_X)[1]
\]

\subsection{Smoothness, quasi-smoothness, and local complete intersections}

\label{sec:smooth-quasi-smooth-lci}

There are natural notions of \emph{smooth}, \emph{étale}, and \emph{unramified}
morphisms between derived schemes, as well as the usual ``formal variations'';
see \cite[Sec.~3.4]{DAG} and \cite[Sec.~1.2.6, 1.2.7, 2.2.2]{TV08} for details.
For our purposes, it will be enough to note that these notions can be
characterized in terms of cotangent complexes, as follows. Let $f \colon X \to
Y$ be a map locally of finite presentation between derived schemes. Then $f$ is
unramified if $\bL_{X/Y} = 0$. It is étale if $\bL_{Y/k} \otimes^\bL_{\cO_Y}
\cO_X \to \bL_{X/k}$ is an equivalence. And it is smooth if $\bL_{X/Y}$ is
equivalent to a locally free $\cO_X$-module of finite rank. Over a classical
base ring $k$, smooth derived $k$-schemes are the same as classical smooth
$k$-schemes.

In the derived setting we also have a weaker notion: a map $f \colon X \to Y$
of derived schemes is \emph{quasi-smooth} if it is locally of finite
presentation and $\bL_{X/Y}$ is of Tor amplitude at most~$1$; see
\cite[Def.~3.4.15]{DAG} and \cite[2.3.13]{KR25}. A quasi-smooth map always admits a
factorization
\[
    X \xra{i} Z \xra{p} Y,
\]
where $i$ is a quasi-smooth closed immersion and $p$ is smooth. In the derived
setting \emph{closed immersion} just means that the underlying classical map of
schemes $\pi_0(X) \to \pi_0(Z)$ is a closed immersion. As explained in detail
in \cite[Sec.~2]{KR25}, quasi-smooth closed immersions are given affine-locally
by derived quotients $\Spec(A_\sbt\sslash(a_1, \ldots, a_n)) \to
\Spec(A_\sbt)$, and can be characterized as the closed immersions for which the
shifted cotangent complex $\bL_{X/Z}[-1]$ is a locally free $\cO_X$-module of
finite rank.
%

When $X$ and $Y$ are classical, $Z$ can also be taken to be classical, and the
closed immersion $i \colon X \hookrightarrow Z$ is affine-locally of the form
$\Spec(A/I) \to \Spec(A)$ where $A$ is smooth over $\cO_Y$ and the ideal $I
\subset A$ is generated by a (Koszul) regular sequence $a_1, \ldots, a_c \in
A$.
%
%
In other words, a quasi-smooth map between classical schemes is the same as a
\emph{local complete intersection map of schemes}. And a quasi-smooth closed
immersion between classical schemes is the same as a \emph{(Koszul) regular
immersion}.

Specializing to the case $Y = \Spec k$, we get the notion of a
\emph{quasi-smooth $k$-scheme}. Classical quasi-smooth $k$-schemes are the same
as classical local complete intersection $k$-schemes.

\subsection{Embedding dimension and the cotangent complex}

\label{sec:der-emb-dim}

We finish our preliminaries on derived algebraic geometry by discussing the
connection between embedding dimension and cotangent complexes. The material
here is likely well-known to experts, but since we could not find a suitable
reference in the literature, we include full details.


\begin{dff}
\label{def:embdim}
Let $X$ be a derived scheme, consider its classical truncation $X_\cl =
\pi_0(X)$, and a point $x \in X_\cl$ with maximal ideal $\fm_x \subset
\cO_{X_\cl,x}$ and residue field $k_x = \cO_{X_\cl,x}/\fm_x$. The quantity
\[
    \embdim(\cO_{X,x}) :=
    \embdim(\cO_{X_\cl,x}) :=
    \dim_{k_x}(\fm_x/\fm_x^2)
    \in \NN \cup \{\infty\}
\]
is called the \emph{embedding dimension} of $X$ at the point $x$.
\end{dff}

It is a standard result that the embedding dimension on classical schemes can
be computed using cotangent complexes. This fact extends to derived schemes,
as the next result shows.

\begin{prop}
\label{prop:embdim-via-pi1}
In the situation of \cref{def:embdim}, the point $x$ gives a map of derived
schemes $\Spec(k_x) \to X$. We use the shorthand $\bL_{k_x/X} =
\bL_{\Spec(k_x)/X}$. We have that
\[
    \pi_1(\bL_{k_x/X}) \cong \fm_x/\fm_x^2,
\qquad\text{and hence}\qquad
    \embdim(\cO_{X,x})
    =
    \dim_{k_x}(\pi_1(\bL_{k_x/X})).
\]
\end{prop}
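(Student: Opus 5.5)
The plan is to reduce to a local classical computation and then read $\pi_1$ off the fundamental triangle. Write $X_\cl = \pi_0(X)$ and factor the point as
\[
    \Spec(k_x) \lra \Spec(\cO_{X,x}) \lra X ,
\]
where $\cO_{X,x}$ is the (animated) stalk.

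\emph{Reduction to the local ring.} The cotangent complex is compatible with localization: the map $\Spec(\cO_{X,x}) \to X$ is a filtered limit of open immersions, and $\bL$ commutes with filtered colimits of algebras. Hence $\bL_{\cO_{X,x}/X} = 0$. The transitivity triangle for $k_x \leftarrow \cO_{X,x} \leftarrow X$ then gives $\bL_{k_x/X} \cong \bL_{k_x/\cO_{X,x}}$. So we may assume $X = \Spec A_\sbt$ with $A_\sbt$ local, in the sense that $\pi_0(A_\sbt) = \cO_{X_\cl,x}$ has maximal ideal $\fm_x$.

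\emph{Reduction to the classical truncation.} Factor $A_\sbt \to \pi_0(A_\sbt) = A \to k_x$ and use the fundamental triangle
\[
    \bL_{A/A_\sbt}\otimes^\bL_A k_x \lra \bL_{k_x/A_\sbt} \lra \bL_{k_x/A} \lra .
\]
The map $A_\sbt \to A$ is an isomorphism on $\pi_0$, so it is $1$-connective. By the standard connectivity estimate for the cotangent complex (Hurewicz-type: if $R\to S$ is $n$-connective then $\bL_{S/R}$ is $(n+1)$-connective, \cite[Sect.~25.3]{SAG}), $\bL_{A/A_\sbt}$ is $2$-connective, i.e.\ $\pi_i = 0$ for $i\le 1$. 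Therefore the long exact sequence gives $\pi_1(\bL_{k_x/A_\sbt}) \cong \pi_1(\bL_{k_x/A})$. This connectivity input is the only genuinely derived step. If needed I would justify it directly: take a cofibrant model of $A_\sbt \to A$ obtained by attaching cells in degrees $\ge 2$ to kill $\pi_{\ge1}$, so that $\Omega$ of the relative free algebra is generated in degrees $\ge 2$.

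\emph{Classical computation.} Now $A$ is a classical local ring and $k_x = A/\fm_x$. As recalled in the preliminaries for a quotient $R = S/I$, we have $\pi_1(\bL_{(A/\fm_x)/A}) = \fm_x/\fm_x^2$. To see this, $\bL_{k_x/A}$ is $1$-connective because $A\to k_x$ is surjective, so its $\pi_0$ vanishes. Its $\pi_1$ is then identified with $I/I^2$ by the second fundamental sequence, or directly by computing with a simplicial resolution $A[y_j] \to k_x$ whose generators $y_j$ map to generators of $\fm_x$. Combining the three steps yields $\pi_1(\bL_{k_x/X}) \cong \fm_x/\fm_x^2$, and taking $k_x$-dimensions gives the formula for $\embdim(\cO_{X,x})$.

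The main obstacle is the middle step, namely making the connectivity statement for $\bL_{A/A_\sbt}$ precise in the animated setting. The localization step is routine but should be stated carefully for stalks of a sheaf of animated rings.
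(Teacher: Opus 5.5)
Your proposal is correct and is essentially the paper's proof. After working locally, the paper uses that the truncation $A_\sbt \to \pi_0(A_\sbt)$ is $1$-connective and applies Lurie's Hurewicz-type estimate (\cref{thm:lurie-hurewicz}) to see that $\bL_{\pi_0(A_\sbt)/A_\sbt}\otimes^\bL k_x$ is $2$-connective; it then reads off $\pi_1(\bL_{k_x/A_\sbt}) \cong \pi_1(\bL_{k_x/\pi_0(A_\sbt)})$ from the transitivity triangle and concludes with $\pi_1(\bL_{k_x/\pi_0(A_\sbt)}) \cong \fm_x/\fm_x^2$ (\cref{prop:embdim-via-pi1-classical}, itself deduced from the same estimate), and your connectivity claim for $\bL_{A/A_\sbt}$, the step you flagged as the main obstacle, is exactly the consequence of \cref{thm:lurie-hurewicz} that this requires.
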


Before presenting the proof, we need to recall some terminology and a result of
Lurie. Fix an animated $k$-algebra $A_\sbt$ and consider its derived
$\infty$-category $D(A_\sbt)$. Recall that we use homological indexing, that
homotopy and homology agree, $\pi_i(M_\sbt) = H_i(M_\sbt)$, and that the
translation functor satisfies $\pi_i(M_\sbt[j]) = \pi_{i-j}(M_\sbt)$. Given a
morphism $\varphi \colon M_\sbt \to N_\sbt$ in $D(A_\sbt)$ we can always form
the mapping cone $\cone(\varphi)$. Following Lurie's terminology we also call
it the (homotopy) \emph{cofiber} of $\varphi$, written $\cofib(\varphi) =
\cone(\varphi)$. The translate $\fib(\varphi) = \cone(\varphi)[-1]$ is called
the (homotopy) \emph{fiber} of $\varphi$. They fit in the following
distinguished triangles:
\[
    M_\sbt
    \xra{\ \varphi\ }
    N_\sbt
    \to
    \cofib(\varphi)
    \to
    M_\sbt[1]
\qquad
    \text{and}
\qquad
    \fib(\varphi)
    \to
    M_\sbt
    \xra{\ \varphi\ }
    N_\sbt
    \to
    \fib(\varphi)[1].
\]
An object $M_\sbt$ of $D(A_\sbt)$ is said to be \emph{$i$-connective} if
$\pi_j(M_\sbt) = 0$ for all $j<i$, and a map $\varphi$ is \emph{$i$-connective}
if its fiber $\fib(\varphi)$ is $i$-connective (equivalently, if its cofiber
$\cofib(\varphi)$ is $(i+1)$-connective). For maps, this implies that the
induced maps on homotopy $\pi_j(\varphi)$ are isomorphisms for $j < i$. We use
the term \emph{connective} to mean $0$-connective. Via the Dold-Kan
correspondence, the subcategory $D(A_\sbt)_{\ge 0}$ of connective complexes is
naturally equivalent to the category of animated $A_\sbt$-modules
$\aMod_{A_\sbt}$. When $i \ge 0$, an $i$-connective object $M_\sbt$ admits a
cofibrant replacement $P_\sbt \to M_\sbt$ where the simplicial module $P_\sbt$
satisfies $P_j = 0$ for $j < i$. In particular, if $K$ is a classical
$A_\sbt$-algebra and $M_\sbt$ is an $i$-connective animated $A_\sbt$-module,
then $M_\sbt \otimes_{A_\sbt}^\bL K$ is an $i$-connective animated $K$-module.
A morphism $\varphi \colon A_\sbt \to B_\sbt$ of animated $k$-algebras induces,
after forgetting the algebra structure, a morphism in $\aMod_{A_\sbt}$. As
such, we can consider its fiber $\fib(\varphi)$ and cofiber $\cofib(\varphi)$,
and we have a notion of $i$-connective morphism of animated $k$-algebras.
Notice that the fiber and cofiber are only animated $A_\sbt$-modules, not
algebras. Every morphism of animated $k$-algebras is $(-1)$-connective, and it
is connective if it is surjective on $\pi_0$. With this terminology in place,
we can state the following fundamental result of Lurie.

\begin{thm}[{\cite[Prop.~25.3.6.1]{SAG}}]
\label{thm:lurie-hurewicz}
Let $\varphi \colon A_\sbt \to B_\sbt$ be a morphism of animated $k$-algebras.
There exists a natural map $\varepsilon_\varphi \colon \cofib(\varphi)
\otimes^\bL_{A_\sbt} B_\sbt \to \bL_{B_\sbt/A_\sbt}$. If $\varphi$ is
connective, then $\varepsilon_\varphi$ is $2$-connective. If $\varphi$ is
$i$-connective for $i>0$, then $\varepsilon_\varphi$ is $(i+3)$-connective.
\end{thm}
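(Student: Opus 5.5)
The plan is to build $\varepsilon_\varphi$ out of the universal derivation, and then get the connectivity estimates by reducing to free algebras, where everything can be computed with symmetric powers.

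\emph{Construction.} By \cref{sec:anim-ders}, the identity of $\bL_{B_\sbt/A_\sbt}$ corresponds to a universal animated derivation $d\colon B_\sbt\to \bL_{B_\sbt/A_\sbt}$. Since it is a derivation relative to $A_\sbt$, it is $A_\sbt$-linear and the composite $d\circ\varphi$ comes with a canonical nullhomotopy. This gives an $A_\sbt$-linear map $\cofib(\varphi)\to\bL_{B_\sbt/A_\sbt}$. Because the target is a $B_\sbt$-module, this map extends uniquely to a $B_\sbt$-linear map
\[
\varepsilon_\varphi\colon \cofib(\varphi)\otimes^\bL_{A_\sbt}B_\sbt\to\bL_{B_\sbt/A_\sbt}.
\]
Naturality in $\varphi$ is automatic from the functoriality of $\bL\Omega$ on the arrow category.

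\emph{Free case.} Take $B_\sbt=\LSym_{A_\sbt}(M_\sbt)$ for a connective animated $A_\sbt$-module $M_\sbt$. Then
\[
\bL_{B_\sbt/A_\sbt}\cong M_\sbt\otimes^\bL_{A_\sbt}B_\sbt
\qquad\text{and}\qquad
\cofib(\varphi)\cong\bigoplus_{n\ge1}\LSym^n_{A_\sbt}(M_\sbt).
\]
On the summand $n=1$, the map $\varepsilon_\varphi$ is the identity after base change. Its fiber is therefore controlled by $\bigoplus_{n\ge2}\LSym^n(M_\sbt)\otimes B_\sbt$, up to a shift coming from the multiplication maps. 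The key estimate is that if $M_\sbt$ is $m$-connective then $\LSym^n(M_\sbt)$ is $nm$-connective; this follows by animating, since it holds for free modules concentrated in degree $m$. The relevant shifts of free modules are of the form $\Sigma^{i+1}$.

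\emph{Reduction to cells.} For general $\varphi$, both sides of $\varepsilon_\varphi$ commute with base change along $A_\sbt\to A'_\sbt$ and with sifted colimits in $B_\sbt$. I will present $B_\sbt$ as a colimit of cell attachments, that is, pushouts along $\LSym(\Sigma^{j-1}A)\to A$ in the spirit of the derived quotients of \cref{sec:intro-koszul-regular}. If $\varphi$ is $i$-connective, the cells can be chosen with $j\ge i+1$, so the generating modules $M_\sbt$ are $(i+1)$-connective. Here I use that $\varphi$ is surjective on $\pi_0$ when $i\ge 0$, so no $0$-cells are needed. The nonlinear error terms then lie in $\LSym^{\ge2}$ of $(i+1)$-connective modules, which are $2(i+1)$-connective. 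For $i=0$ this gives $2$-connectivity of $\varepsilon_\varphi$. For $i\ge1$, a more careful count gives $i+3\le 2i+2$: the shift of $\LSym^2(\Sigma^{i+1}N)$ contributes roughly $\Sigma^{2i+2}$, and the fiber of $\varepsilon_\varphi$ sits one degree lower.

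\emph{Main obstacle.} The hard step is the bookkeeping in the cell-attachment induction. The comparison maps must be compatible with each pushout, and the connectivity bound must survive gluing, which requires applying the five lemma to the fundamental triangles for composites $A\to B'\to B$. It also requires checking the sharp off-by-one between $2$ and $i+3$. If that step resists, I would first try to prove the free case directly, and then deduce the general case by Postnikov induction on $\cofib(\varphi)$.
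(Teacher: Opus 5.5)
The paper does not prove this statement; it only quotes Lurie. Your architecture is sensible: construct $\varepsilon_\varphi$ from the universal derivation, reduce to free algebras and cell attachments, and identify the error term with the $\LSym^{\ge 2}$ part. The gap is that the step carrying all of the numerical content is false.

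The estimate ``$M$ $m$-connective $\Rightarrow$ $\LSym^n(M)$ $nm$-connective'' is an $E_\infty$ (or characteristic zero) phenomenon. For animated modules over a general base it holds for $m\le 2$, but it can fail once $m\ge 3$. By d\'ecalage, $\LSym^2_{\ZZ}(\ZZ[3])\simeq\bL\Gamma^2_{\ZZ}(\ZZ[1])[4]$. Animating the sequence $0\to\Sym^2(P)\to\Gamma^2(P)\to P\otimes\FF_2\to 0$ (for free $P$), together with $\LSym^2(\ZZ[1])\simeq\Lambda^2(\ZZ)[2]=0$, gives $\bL\Gamma^2(\ZZ[1])\simeq\FF_2[1]$. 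Hence $\LSym^2_{\ZZ}(\ZZ[3])\simeq\FF_2[5]$, which is not $6$-connective.

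Your justification fails for the same reason. The compact projective generators live in degree $0$, and applying $\Sym^n$ degreewise to a simplicial model vanishing below degree $m$ only yields $m$-connectivity.

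In the free case there is also no shift. Since $\varepsilon_\varphi$ is an equivalence on the summand $M\otimes^\bL_A B$, one gets $\fib(\varepsilon_\varphi)\simeq\bigoplus_{n\ge 2}\LSym^n_A(M)\otimes^\bL_A B$. So your estimate would prove that $\varepsilon_\varphi$ is $(2i+2)$-connective, which is false. For $k=A=\ZZ$ and $B=\LSym_{\ZZ}(\ZZ[3])$, the map $\varphi$ is $2$-connective, yet $\pi_5(\fib(\varepsilon_\varphi))\cong\FF_2\neq 0$. This is exactly why the statement has $i+3$ rather than $2i+2$. Your ``one degree lower'' adjustment is not an actual mechanism, and it gives the wrong number anyway ($2i+1<i+3$ for $i=1$).

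The missing input is the correct estimate, which is where the constants $2$ and $i+3$ come from. Use the d\'ecalage equivalences $\LSym^n(N[1])\simeq\bL\Lambda^n(N)[n]$ and $\LSym^n(N[2])\simeq\bL\Gamma^n(N)[2n]$ for connective $N$. Also use that the animation of a functor vanishing at $0$ preserves $c$-connectivity. Then:
\begin{itemize}
\item for $M$ $1$-connective, $\LSym^n(M)$ is $n$-connective;
\item for $M$ $m$-connective with $m\ge 2$, $\LSym^n(M)\simeq\bL\Gamma^n(M[-2])[2n]$ is $(m+2n-2)$-connective.
\end{itemize}
For $n\ge 2$ and $m=i+1$ this gives exactly $2$ when $i=0$ and $i+3$ when $i\ge 1$. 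It is sharp by the example above; in particular your estimate happens to be correct for the cases $i=0,1$ that the paper actually uses.

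With this input, your free-case computation and the one-cell case go through. For the augmentation $\LSym_A(N)\to A$, the fiber of $\varepsilon$ is $\bigoplus_{n\ge2}\LSym^n_A(N[1])$.

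In the gluing step, note that the octahedral triangle of cofibers for $A\to B'\to B$, tensored with $B$ over $A$, has third term $\cofib(B'\to B)\otimes^\bL_A B$. You need $\cofib(B'\to B)\otimes^\bL_{B'}B$ instead. The discrepancy is $\cofib(B'\to B)\otimes^\bL_{B'}(\cofib(A\to B')\otimes^\bL_A B)$, which is $(2i+2)$-connective and hence harmless. It still has to be estimated for the five-lemma argument to close.
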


As an immediate consequence, we obtain the following well-known result
\SPcite{08RA}.

\begin{prop}
\label{prop:embdim-via-pi1-classical}
If $B$ is a classical $k$-algebra and $\psi \colon B \to K$ is a surjection
with kernel $\fm$, then $\pi_1(\bL_{K/B}) \cong \fm/\fm^2$.
\end{prop}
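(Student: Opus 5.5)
The plan is to apply \cref{thm:lurie-hurewicz} to $\psi \colon B \to K$ and read off $\pi_1$. Since $\psi$ is surjective on $\pi_0$, it is connective. Because $B$ and $K$ are classical and $\psi$ is surjective, the long exact sequence of the fiber triangle $\fib(\psi) \to B \to K$ shows that $\fib(\psi)$ is the discrete module $\fm$. Hence $\cofib(\psi) \cong \fm[1]$, which is $1$-connective with $\pi_1 = \fm$.

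By \cref{thm:lurie-hurewicz}, the natural map
\[
    \varepsilon_\psi \colon \fm[1] \otimes^\bL_B K \lra \bL_{K/B}
\]
is $2$-connective. Its fiber is therefore $2$-connective, so from the long exact sequence the map $\varepsilon_\psi$ induces isomorphisms on $\pi_j$ for $j < 2$. In particular
\[
    \pi_1(\bL_{K/B}) \cong \pi_1(\fm[1] \otimes^\bL_B K) \cong \pi_0(\fm \otimes^\bL_B K).
\]

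The remaining step is the computation $\pi_0(\fm \otimes^\bL_B K) = \fm \otimes_B K = \fm \otimes_B B/\fm \cong \fm/\fm^2$. This uses the fact that $\pi_0$ of a derived tensor product of connective modules is the ordinary tensor product, which follows from right exactness: take a free resolution of $\fm$ and apply $\pi_0$.

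I expect no real obstacle. The only point needing care is the connectivity bookkeeping, namely that a $2$-connective map is an isomorphism on $\pi_1$ and not just surjective. This holds because the fiber has $\pi_0 = \pi_1 = 0$, and the long exact sequence in homotopy then gives the isomorphism on $\pi_1$.
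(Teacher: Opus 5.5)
Your proposal is correct and is essentially the paper's own proof. Both identify $\fib(\psi)=\fm$ and $\cofib(\psi)=\fm[1]$, then use \cref{thm:lurie-hurewicz} to get that $\varepsilon_\psi$ is $2$-connective, and read off $\pi_1(\bL_{K/B}) \cong \pi_0(\fm \otimes^\bL_B K) = \fm/\fm^2$. Your extra connectivity bookkeeping (iso on $\pi_1$ because the fiber has vanishing $\pi_0$ and $\pi_1$) and the right-exactness remark just spell out steps the paper leaves implicit.
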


\begin{proof}
The distinguished triangle $\fib(\psi)\to B \to K \to \fib(\psi)[1]$ in $D(A)$
gives that the fiber is $\fib(\psi) = \fm$, so $\psi$ is connective and
$\cofib(\psi) = \fm[1]$. From \cref{thm:lurie-hurewicz} we see that
$\varepsilon_\psi$ is $2$-connective and therefore $ \pi_1(\bL_{K/B}) \cong
\pi_1(\fm[1] \otimes_B^\bL K) = \pi_0(\fm \otimes_B^\bL K) = \fm/\fm^2 $.
\end{proof}

\begin{lem}
\label{lem:truncation-1-connective}
Let $A_\sbt$ be an animated $k$-algebra. Its truncation map $\varphi \colon
A_\sbt \to \pi_0(A_\sbt)$ is $1$-connective and $\pi_i(A_\sbt) =
\pi_i(\fib(\varphi)) = \pi_{i+1}(\cofib(\varphi))$ for $i > 0$.
\end{lem}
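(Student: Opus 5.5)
The plan is to read everything off the long exact sequence in homotopy attached to the fiber triangle of $\varphi$, using only the definition of connectivity recalled above. We work in $D(A_\sbt)$ and consider the distinguished triangle
\[
    \fib(\varphi) \lra A_\sbt \xra{\ \varphi\ } \pi_0(A_\sbt) \lra \fib(\varphi)[1].
\]
Its long exact sequence in homotopy reads
\[
    \cdots \to \pi_{i+1}(\pi_0(A_\sbt)) \to \pi_i(\fib(\varphi)) \to \pi_i(A_\sbt) \xra{\pi_i(\varphi)} \pi_i(\pi_0(A_\sbt)) \to \pi_{i-1}(\fib(\varphi)) \to \cdots.
\]
Since $\pi_0(A_\sbt)$ is classical, it has vanishing homotopy in all degrees $i \neq 0$. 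Moreover, $\pi_0(\varphi)$ is the identity of $\pi_0(A_\sbt)$, because the truncation is the unit of the adjunction $\pi_0 \dashv$ inclusion, and applying $\pi_0$ to the unit gives the identity.

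From the sequence, for $i > 0$ the terms $\pi_{i+1}(\pi_0(A_\sbt))$ and $\pi_i(\pi_0(A_\sbt))$ vanish. Hence $\pi_i(\fib(\varphi)) \cong \pi_i(A_\sbt)$ for $i>0$. In degree $0$ we have the exact segment
\[
    \pi_1(\pi_0 A_\sbt)=0 \to \pi_0(\fib\varphi) \to \pi_0(A_\sbt) \xra{\ \id\ } \pi_0(A_\sbt) \to \pi_{-1}(\fib\varphi) \to \pi_{-1}(A_\sbt)=0.
\]
The middle map is an isomorphism, so $\pi_0(\fib\varphi)=0$ and $\pi_{-1}(\fib\varphi)=0$. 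For $j<-1$, the groups $\pi_j(\fib\varphi)$ sit between $\pi_{j+1}(\pi_0 A_\sbt)=0$ and $\pi_j(A_\sbt)=0$, so they vanish as well. Therefore $\fib(\varphi)$ is $1$-connective, which is exactly the statement that $\varphi$ is $1$-connective.

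Finally, $\cofib(\varphi)=\fib(\varphi)[1]$ gives $\pi_{i+1}(\cofib\varphi)=\pi_i(\fib\varphi)$, which completes the chain of identifications in the statement.

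There is no real obstacle here. The only point requiring care is the justification that $\pi_0(\varphi)=\id$, which follows from the description of $\pi_0$ as the left adjoint of $\Alg_k \hookrightarrow \aAlg_k$ recalled in the preliminaries.
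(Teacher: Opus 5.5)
Your proof is correct and follows the paper's argument: the paper's proof just cites the triangle $\fib(\varphi) \to A_\sbt \to \pi_0(A_\sbt) \to \fib(\varphi)[1]$ and the vanishing of the higher homotopy of $\pi_0(A_\sbt)$. You also spell out what the paper leaves implicit, namely that $\pi_0(\varphi)$ is an isomorphism. This is what forces $\pi_0(\fib\varphi)=\pi_{-1}(\fib\varphi)=0$, and an isomorphism, rather than literally the identity, is all the argument needs.
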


\begin{proof}
Immediate from the triangle $\fib(\varphi) \to A_\sbt \to \pi_0(A_\sbt) \to
\fib(\varphi)[1]$ and the fact that the classical $k$-algebra $\pi_0(A_\sbt)$
has vanishing higher homotopy.
\end{proof}

\begin{proof}[Proof of \cref{prop:embdim-via-pi1}]
Work affine-locally and let $A_\sbt = \cO_{X,x}$, $B = \pi_0(A_\sbt)$, and $K =
k_x$. From \cref{lem:truncation-1-connective} we know that $\varphi \colon
A_\sbt \to B$ is $1$-connective. From \cref{thm:lurie-hurewicz} we get a
$4$-connective morphism $\cofib(\varphi)\otimes_{A_\sbt}^\bL B \to
\bL_{B/A_\sbt}$. After tensoring with the classical algebra $K$ we get a
$4$-connective morphism $\cofib(\varphi) \otimes_{A_\sbt}^\bL K \to
\bL_{B/A_\sbt} \otimes_B^\bL K$. Again using \cref{lem:truncation-1-connective}
we see that $\cofib(\varphi)$ is $2$-connective, hence $\cofib(\varphi)
\otimes_{A_\sbt}^\bL K$ is $2$-connective, and therefore $\bL_{B/A_\sbt}
\otimes_B^\bL K$ is $2$-connective. That is, $\pi_i(\bL_{B/A_\sbt} \otimes_B^\bL
K) = 0$ for $i = 0, 1$. From the triangle $\bL_{B/A_\sbt} \otimes_B^\bL K \to
\bL_{K/A_\sbt} \to \bL_{K/B}$ we see that $\pi_1(\bL_{K/A_\sbt}) =
\pi_1(\bL_{K/B})$, and we conclude by \cref{prop:embdim-via-pi1-classical}.
\end{proof}

\section{Derived jet and arc spaces}

\label{sec:DerJets}

In this section we construct derived jet and arc spaces. There are two
competing methods. The first is to start as in the classical case with the
functor of points. The second is to extend the classical functorial
construction as a derived functor via animation. We will show that these two
approaches yield the same answer.

\subsection{Affine case}

\label{sec:AffCase}

We start with the affine setting. The first approach to derived jets and arcs,
perhaps the most natural, is to consider points with values on the derived
version of truncated polynomials and power series, namely $A_\sbt[t]/(t^{n+1})$
for jets and $A_\sbt\llbracket t \rrbracket$ for arcs.

\begin{dff}
\label{dff:Wm}
Consider the functors $(\Arg)[t]/(t^{n+1})$ and $(\Arg)\llbracket t \rrbracket$
on $\Alg_k$. Degree-wise application induces functors on simplicial
$k$-algebras which preserve weak equivalences. We will use the same notation
for the corresponding functors on $\aAlg_k$. That is, for an animated algebra
$A_\sbt$ we will write $A_\sbt[t]/(t^{n+1})$ and $A_\sbt\llbracket t
\rrbracket$ for the values of these functors.
\end{dff}

\begin{rmk}
\label{rmk:animated-power-series-functors}
When $n$ is finite we have $$A_\sbt[t]/(t^{n+1}) = A_\sbt \otimes_k
k[t]/(t^{n+1}) = A_\sbt \otimes_k^\bL k[t]/(t^{n+1}),$$ since $k[t]/(t^{n+1})$
is a free $k$-algebra. In particular, the functor $(\Arg)[t]/(t^{n+1})$ on
$\aAlg_k$ is the animation (in fact the left derived functor) of the
corresponding functor on $\Alg_k$.

When $n = \infty$, the situation is more subtle and the natural map $A
\otimes_k k\llbracket t \rrbracket \to A\llbracket t \rrbracket$ can fail to be
an isomorphism. As the functor $(\Arg)\llbracket t \rrbracket$ does not
preserve sifted (or even filtered) colimits, we do not consider its animation.
\end{rmk}

\begin{dff}
In the classical setting, for a classical $k$-algebra $R$, we define the
classical \emph{$n$-th jet functors} $\Jet_n^{R} \colon \Alg_k \to \Set$,
where $n$ is a non-negative integer, and the classical \emph{arc functor}
$\Jet_\infty^{R} \colon \Alg_k \to \Set$ by
\[
    A \mapsto \Hom_{\Alg_k}(R, A[t]/(t^{n+1}))
    \qquad\text{and}\qquad
    A \mapsto \Hom_{\Alg_k}(R, A\llbracket t \rrbracket).
\]
In the animated setting, for an animated $k$-algebra $R_\sbt$, we define the
\emph{derived $n$-th jet functors} $\dJet_n^{R_\sbt} \colon \aAlg_k \to
\Anima$ and the \emph{derived arc functor} $\dJet_\infty^{R_\sbt} \colon
\aAlg_k \to \Anima$ by
\[
    A_\sbt \mapsto \Maps_{\aAlg_k}(R_\sbt, A_\sbt[t]/(t^{n+1}))
    \qquad\text{and}\qquad
    A_\sbt \mapsto \Maps_{\aAlg_k}(R_\sbt, A_\sbt\llbracket t \rrbracket).
\]
We abuse notation and use the same symbols to denote the corresponding
contravariant functors on affine schemes and derived affine schemes. That is,
for $n \in \NN \cup \{\infty\}$ we have:
\begin{align*}
    \Jet_n^X = \Jet_n^R &\colon \Aff_k \to \Set,
    && \text{if $X = \Spec R$},\\
    \dJet_n^{\cX} = \dJet_n^{R_\sbt} &\colon \dAff_k \to \Anima,
    && \text{if $\cX = \Spec R_\sbt$}.
\end{align*}

\end{dff}

A second candidate is simply the animation of the functorial jet/arc space
construction. Recall that in the classic setting for a $k$-algebra $R$ and $n
\in \NN \cup \{\infty\}$, the functor $\Jet_n^R$ is corepresentable by an
algebra $J_n(R)$. The construction of $J_n(R)$ is functorial and admits an
explicit description involving algebras of Hasse-Schmidt differentials
\cite{Voj07}.

\begin{dff}
Let $n \in \NN \cup \{\infty\}$. We denote by $\bL J_n \colon \aAlg_k \to
\aAlg_k$ the animation of the functor $J_n \colon \Alg_k \to \Alg_k$. As above, we
use the same notation for the corresponding functor $\bL J_n \colon \dAff_k \to
\dAff_k$ on affine derived schemes. When $n < \infty$, we call $\bL J_n$ the
\emph{affine derived $n$-jet space functor}, and $\bL J_\infty$ the
\emph{affine derived arc space functor}.
\end{dff}

In the classical setting, the functors $J_n$ all preserve $1$-categorical
colimits (they are left adjoints), justifying our notation $\bL J_n$ instead of
$\Ani(J_n)$. As for any left derived functor, we have functorial isomorphisms
\[
    \pi_0(\bL J_n(R_\sbt)) \cong J_n( \pi_0(R_\sbt))
    \qquad
    \text{for any $n \in \NN \cup \{\infty\}$}.
\]
In particular, for a classical algebra $A$, the space $\Maps_{\aAlg_k}( \bL
J_n(R_\sbt), A)$ is homotopy equivalent to the discrete set
$\Hom_{\Alg_k}(J_n(\pi_0(R_\sbt)),A)$. To compute $\bL J_n (R_\sbt)$ for an
animated algebra $R_\sbt$, one considers a cofibrant replacement $P_\sbt \to
R_\sbt$ where all the terms $P_m$ are in $\Poly_k$, not necessarily of finite
type, so these replacements always exist. Then $J_n(P_\sbt) \to \bL
J_n(R_\sbt)$ is a cofibrant replacement. Here we have used that the classical
jet/arc space functors satisfy $J_n(\Poly_k) \subseteq \Poly_k$ (see
\cite{Voj07}). If replacements $P_\sbt \to R_\sbt$ have been chosen
functorially for all animated algebras $R_\sbt$, for example via the standard
resolution, we can use them to construct $\bL J_n$ as:
\[
    \bL J_n(R_\sbt) = J_n(P_\sbt) \otimes_{P_\sbt} R_\sbt.
\]
The extension of scalars, while in principle unnecessary, is standard and
guarantees that $\bL J_n(R_\sbt)$ is an animated $R_\sbt$-algebra (and not just
weakly equivalent to one). Also, notice that we can assume that we have an
equality $\bL J_0(R_\sbt) = R_\sbt$ and not just an equivalence.

If $n \le m \le \infty$ we have natural transformations $J_n(R) \to J_m(R)$
between the classical jet/arc space functors called \emph{truncation maps}.
They are obtained via adjunction from the functorial truncation maps
$A[t]/(t^{m+1}) \to A[t]/(t^{n+1})$ and $A\llbracket t \rrbracket \to
A[t]/(t^{n+1})$. Also, we have a natural isomorphism:
\[
    J_\infty(R) = \colim_n J_n(R).
\]
From the definition of derived jet/arc space functors, we immediately see that
we have induced \emph{derived truncation maps} $\bL J_n(R_\sbt) \to
\bL J_m(R_\sbt)$, natural for $R_\sbt$ in $\aAlg_k$.

\begin{thm}
\label{thm:ani_arc_hocolim}
The natural map $\hocolim_n \bL J_n(R_\sbt) \to \bL J_\infty(R_\sbt)$ is an
equivalence for any animated algebra $R_\sbt$.
\end{thm}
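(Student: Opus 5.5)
Both sides of the comparison map preserve sifted colimits in $R_\sbt$, so it suffices to check that the map is an equivalence on generators, and there everything is classical.

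The functors $\bL J_n$ for $n \in \NN \cup \{\infty\}$ are animations, so by \cref{eq:sind-univ-prop} each preserves $\infty$-categorical sifted colimits. The functor $R_\sbt \mapsto \hocolim_n \bL J_n(R_\sbt)$ is a sequential (hence filtered) colimit of sifted-colimit-preserving functors, and colimits commute with colimits, so it also preserves sifted colimits. The comparison map $\hocolim_n \bL J_n \to \bL J_\infty$ is built from the derived truncation maps and is natural in $R_\sbt$. It is therefore a natural transformation between two functors in $\Fun_\sif(\aAlg_k, \aAlg_k)$. By the equivalence \cref{eq:sind-univ-prop}, such a transformation is an equivalence as soon as its restriction to $\Poly_k^\ft$ is. Equivalently, writing $R_\sbt$ as its canonical colimit $\hocolim_\varphi P_\varphi$, both sides become $\hocolim_\varphi$ of the corresponding values on $P_\varphi$.

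So we reduce to the case $R = P$, a finite-type polynomial algebra. Here $\bL J_n(P) = J_n(P)$ and $\bL J_\infty(P) = J_\infty(P)$, since animations agree with the original functors on $\catC_0$. Each $J_n(P)$ is classical, in fact a polynomial algebra: if $P = k[x_1, \ldots, x_d]$ then $J_n(P) = k[x_i^{(j)} \mid j \le n]$ and $J_\infty(P) = k[x_i^{(j)} \mid j \ge 0]$. The transition maps $J_n(P) \to J_{n+1}(P)$ are the obvious inclusions of polynomial rings.

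We need $\hocolim_n J_n(P) \cong \colim_n J_n(P) = J_\infty(P)$, where the second equality is the classical isomorphism $J_\infty(R) = \colim_n J_n(R)$. For the first, the text preceding the theorem notes that $\Alg_k$ is stable under $\infty$-categorical filtered colimits in $\aAlg_k$ (via \cite[Prop.~5.5.8.10(6)]{Lur09}). Hence a filtered homotopy colimit of classical algebras is classical and agrees with the $1$-categorical colimit. One can also see this directly: homotopy groups commute with filtered colimits, so all higher $\pi_i$ vanish. It remains to check that the identification is the comparison map, which holds because both are induced by the same truncation maps.

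The main obstacle is justifying that the comparison map is a natural transformation of functors $\aAlg_k \to \aAlg_k$, rather than merely a family of maps, so that the universal property applies. I would construct it first at the level of $\Poly_k^\ft$ as the natural map $\colim_n J_n(P) \to J_\infty(P)$ of $1$-categorical functors. Then I would animate: the map is a natural transformation between functors $\Poly_k^\ft \to \aAlg_k$, and \cref{eq:sind-univ-prop} extends it uniquely to sifted-colimit-preserving functors. The extension of $P \mapsto \colim_n J_n(P)$ is identified with $\hocolim_n \bL J_n$ because the restriction functor in \cref{eq:sind-univ-prop} commutes with colimits of functors. The extended map is an equivalence, since it is one on generators.
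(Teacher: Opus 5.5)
Your proposal is correct and follows essentially the same route as the paper. The paper writes $R_\sbt$ via a cofibrant replacement $P_\sbt$ with terms in $\Poly_k$, interchanges $\hocolim_n$ with $\hocolim_m$, and uses that the filtered colimit $\hocolim_n J_n(P_m)$ of classical algebras is classical and equal to $J_\infty(P_m)$; your reduction to generators via the universal property is the same argument, and you additionally make explicit how the comparison map is built as a natural transformation, which the paper leaves implicit.
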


\begin{proof}
Consider a cofibrant replacement $P_\sbt \to R_\sbt$ with terms $P_m$ in
$\Poly_k$. Then:
\[
    \hocolim_n \bL J_n(R_\sbt)
    \cong \hocolim_n \hocolim_m J_n(P_m)
    \cong \hocolim_m \hocolim_n J_n(P_m).
\]
Since the innermost homotopy colimit in $n$ is filtered, and the terms
$J_n(P_m)$ are classical, this innermost colimit is classical and agrees with the
colimit in $\Alg_k$. We see:
\[
    \hocolim_m \hocolim_n J_n(P_m)
    \cong \hocolim_m \colim_n J_n(P_m)
    \cong \hocolim_m J_\infty(P_m)
    \cong \bL J_\infty(R_\sbt).
    \qedhere
\]
\end{proof}

For an animated $k$-algebra $R_\sbt$ we
can also take the functor of points of the derived jet/arc scheme $h^{\bL J_n
(R_\sbt)} \colon \aAlg_k \to \Anima$, which is given by
\[
    h^{\bL J_n (R_\sbt)}(A_\sbt)
    =
    \Maps_{\aAlg_k}( \bL J_n (R_\sbt), A_\sbt).
\]

For each fixed animated $k$-algebra $R_\sbt$ and $n \in \NN \cup \{\infty\}$
we have constructed two covariant functors $\aAlg_k \to \Anima$, namely $h^{\bL
J_n (R_\sbt)}$ and $\dJet_n^{R_\sbt}$. We will show that these functors
agree. The case of jets is a corollary to the following general argument about
adjunctions and animations.

\begin{thm}
\label{thm:ani_adj}
Let $\catC$, $\catD$ be projectively generated $1$-categories, with $\catC_0$
and $\catD_0$ choices of projective compact generators. Let
$F\colon\catC\to\catD$ and $G\colon\catD\to\catC$ form an adjoint pair $F
\dashv G$, and suppose $F(\catC_0)\subseteq\catD_0$. Then there is an
$\infty$-categorical adjunction $\Ani(F) \dashv \Ani(G)$.
\end{thm}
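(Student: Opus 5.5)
The approach is to build the adjunction by hand: first produce a unit transformation $\eta \colon \id_{\Ani(\catC)} \to \Ani(G)\Ani(F)$, then check that for every $X \in \Ani(\catC)$ and $Y \in \Ani(\catD)$ the composite
\[
    \Maps_{\Ani(\catD)}(\Ani(F)X, Y)
    \xra{\Ani(G)}
    \Maps_{\Ani(\catC)}(\Ani(G)\Ani(F)X, \Ani(G)Y)
    \xra{\eta_X^*}
    \Maps_{\Ani(\catC)}(X, \Ani(G)Y)
\]
is an equivalence. By the standard criterion \cite[Prop.~5.2.2.8]{Lur09}, this exhibits $\Ani(F)$ as left adjoint to $\Ani(G)$. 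Throughout I use only the universal property \eqref{eq:sind-univ-prop}, the fact that $\Ani(F)$ and $\Ani(G)$ preserve sifted colimits, and the full faithfulness of $\catC \hookrightarrow \Ani(\catC)$ and $\catD \hookrightarrow \Ani(\catD)$.

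\emph{Step 1: the unit.} Both $\id$ and $\Ani(G)\Ani(F)$ preserve sifted colimits, so by \eqref{eq:sind-univ-prop} a natural transformation between them is the same as one between their restrictions to $\catC_0$. On $P \in \catC_0$ we have $\Ani(F)(P) = F(P)$, and $F(P) \in \catD_0$ by the hypothesis $F(\catC_0) \subseteq \catD_0$. Hence $\Ani(G)\Ani(F)(P) = \Ani(G)(F(P)) = GF(P)$, since $\Ani(G)$ agrees with $G$ on $\catD_0$. The classical unit $P \to GF(P)$ is therefore a natural transformation of functors $\catC_0 \to \Ani(\catC)$, and it extends uniquely to $\eta$. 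This is exactly where the hypothesis $F(\catC_0)\subseteq\catD_0$ is used. Without it, $\Ani(F)(P)$ would still equal $F(P)$, but $\Ani(G)(F(P))$ need not be the discrete object $GF(P)$.

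\emph{Step 2: reduce to generators.} Fix $Y$ and let $X$ vary. Both sides of the displayed map send sifted colimits in $X$ to limits. For the left side this holds because $\Ani(F)$ preserves sifted colimits; for the right side it is automatic. Every $X$ is a sifted colimit of objects of $\catC_0$, so it suffices to treat $X = P \in \catC_0$. Now fix $P$ and let $Y$ vary. Since $F(P) \in \catD_0$ and $P \in \catC_0$ are compact projective, both $\Maps(F(P), -)$ and $\Maps(P, \Ani(G)(-))$ preserve sifted colimits in $Y$; for the second this uses again that $\Ani(G)$ preserves sifted colimits. Writing $Y$ as a sifted colimit of objects of $\catD_0$, we reduce to the case $Y = Q \in \catD_0$.

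\emph{Step 3: the discrete case.} For $P \in \catC_0$ and $Q \in \catD_0$, full faithfulness of the inclusions identifies the map with
\[
    \Hom_\catD(F(P), Q) \to \Hom_\catC(P, G(Q)),
\]
which is the classical adjunction bijection, hence an equivalence.

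The main obstacle is Step 2. Naturality in $Y$ is unproblematic, but the map is defined through $\eta_X$, so I must check that the comparison map is compatible with the colimit decompositions in both variables, and that the mapping-space functors commute with the relevant colimits. In particular, compactness of $F(P)$ in $\Ani(\catD)$ rests entirely on the hypothesis $F(\catC_0)\subseteq\catD_0$. An alternative route to the same reduction would be to apply \eqref{eq:sind-univ-prop} to the functor category $\Fun(\Ani(\catD), \Anima)$ and compare the two corepresented functors there, but I expect the direct two-variable reduction above to be cleaner.
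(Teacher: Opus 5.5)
Your proof is correct, but it is organized differently from the paper's. The paper never constructs a unit. It writes $C=\hocolim_i P_i$ and $D=\hocolim_j Q_j$ as canonical colimits and identifies the two mapping spaces directly,
\[
\Maps_{\Ani(\catD)}(\Ani(F)C,D)\cong\holim_i\hocolim_j\Hom_\catD(F(P_i),Q_j)\cong\holim_i\hocolim_j\Hom_\catC(P_i,G(Q_j))\cong\Maps_{\Ani(\catC)}(C,\Ani(G)D).
\]
It uses the same ingredients as you: sifted-colimit preservation of the animated functors, compact projectivity of $P_i$ and of $F(P_i)\in\catD_0$, full faithfulness of $\catC\subset\Ani(\catC)$ and $\catD\subset\Ani(\catD)$, and the classical bijection. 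Naturality in $C$ and $D$ is obtained only by appealing to the functoriality of canonical colimits.

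Your route instead builds the unit $\eta$ from the universal property of animation and then applies Lurie's unit criterion. What remains is to check a \emph{property} of one natural map, rather than to construct a natural equivalence of bifunctors. This avoids the coherence issues that the paper's chain of equivalences leaves implicit. The cost is the extra Step~1, which is where you use the hypothesis $F(\catC_0)\subseteq\catD_0$ a second time, to know that $\Ani(G)(F(P))\simeq GF(P)$; the paper needs it only for compactness of $F(P_i)$.

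The obstacle you flag in Step~2 is not really one. Because $\eta$ is a natural transformation, your comparison map is natural in $X$ as well as in $Y$. So for $X=\hocolim_i P_i$ it is automatically the limit of the comparison maps at the $P_i$, and the same holds in $Y$.
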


\begin{proof}
Let $C\in\Ani(\catC)$ and $D\in\Ani(\catD)$. Write $C$ as a sifted colimit
$C=\hocolim_i P_i$, and similarly write $D=\hocolim_j Q_j$, where $P_i
\in\catC_0$ and $Q_j \in\catD_0$. Assume the colimits are canonical, so the
construction below is functorial. Notice that the objects $P_i$ and $Q_j$ are
strongly finitely presentable for all $i$ and $j$. As the animation $\Ani(F)$
preserves sifted colimits and restricts to $F$ on $\catC_0$, we have:
\begin{align*}
    \Maps_{\Ani(\catD)}(\Ani(F)(C),D)
    &\cong
    \Maps_{\Ani(\catD)}(\Ani(F)(\hocolim_{i}P_{i}),\hocolim_{j}Q_{j})
    \\&\cong
    \Maps_{\Ani(\catD)}(\hocolim_{i}F(P_{i}),\hocolim_{j}Q_{j}).
\intertext{
    Now commute the colimit out of the first factor. Since by hypothesis
    $F(P_{i})\in\catD_0$, we see that $F(P_i)$ is strongly finitely
    presentable, so we can also commute the colimit out of the second factor.
    Then:
}
    \Maps_{\Ani(\catD)}(\hocolim_{i}F(P_{i}),\hocolim_{j}Q_{j})
    &\cong
    \holim_{i}\Maps_{\Ani(\catD)}(F(P_{i}),\hocolim_{j}Q_{j})\\
    &\cong
    \holim_{i}\hocolim_{j}\Maps_{\Ani(\catD)}(F(P_{i}),Q_{j}).
\intertext{
    Since $F(P_i)$ and $Q_j$ are in $\catD_0 \subseteq \catD$, and $\catD$ is a
    full subcategory of $\Ani(\catD)$, these mapping spaces are discrete and
    agree with the corresponding hom sets in $\catD$. We get:
}
    \holim_{i}\hocolim_{j}\Maps_{\Ani(\catD)}(F(P_{i}),Q_{j})
    &\cong
    \holim_{i}\hocolim_{j}\Hom_\catD(F(P_{i}),Q_{j}).
\intertext{
    Next we may apply the adjunction between $F$ and $G$:
}
    \holim_{i}\hocolim_{j}\Hom_\catD(F(P_{i}),Q_{j}).
    &\cong
    \holim_{i}\hocolim_{j}\Hom_\catC(P_{i},G(Q_{j})).
\intertext{
    Finally, reversing all the previous steps allows us to conclude the
    adjunction between $\Ani(F)$ and $\Ani(G)$, as desired.
}
    \holim_{i}\hocolim_{j}\Hom_\catC(P_{i},G(Q_{j})).
    &\cong
    \holim_{i}\hocolim_{j}\Maps_{\Ani(\catC)}(P_{i},G(Q_{j}))\\
    &\cong
    \holim_{i}\Maps_{\Ani(\catC)}(P_{i},\hocolim_{j}G(Q_{j}))\\
    &\cong
    \Maps_{\Ani(\catC)}(\hocolim_{i}P_{i},\hocolim_{j}G(Q_{j}))\\
    &\cong
    \Maps_{\Ani(\catC)}(\hocolim_{i}P_{i},\Ani(G)(\hocolim_{j}Q_{j}))\\
    &\cong
    \Maps_{\Ani(\catC)}(C,\Ani(G)(D)).
    \qedhere
\end{align*}
\end{proof}

\begin{thm}
\label{thm:allagree}
Let $n \in \NN \cup \{\infty\}$. For any animated $k$-algebra $R_\sbt$, the
derived jet/arc functor $\dJet_n^{R_\sbt}$ is corepresentable by the derived
jet/arc space $\bL J_n (R_\sbt)$.
\end{thm}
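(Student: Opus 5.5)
For finite $n$ I will apply \cref{thm:ani_adj} with $\catC = \catD = \Alg_k$, $\catC_0 = \catD_0 = \Poly_k^\ft$, $F = J_n$ and $G = (\Arg)[t]/(t^{n+1})$. Classically $J_n \dashv G$; this is the corepresentability of $\Jet_n^R$. To apply the theorem I must check that $J_n(\Poly_k^\ft) \subseteq \Poly_k^\ft$. This holds because $J_n(k[x_1,\dots,x_d]) = k[x_i^{(j)} \mid 1\le i\le d,\ 0\le j\le n]$, which is again a polynomial ring in finitely many variables. The theorem then gives $\Maps_{\aAlg_k}(\bL J_n(R_\sbt), A_\sbt) \cong \Maps_{\aAlg_k}(R_\sbt, \Ani(G)(A_\sbt))$. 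By \cref{rmk:animated-power-series-functors}, $\Ani(G)(A_\sbt) = A_\sbt[t]/(t^{n+1})$, so the right-hand side is $\dJet_n^{R_\sbt}(A_\sbt)$. The identification is natural in both variables because the colimits used in \cref{thm:ani_adj} are canonical.

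For $n=\infty$ I cannot use \cref{thm:ani_adj} directly. The obstruction is that $J_\infty(k[x])$ is a polynomial ring in infinitely many variables, so it lies in $\Poly_k$ but not in $\Poly_k^\ft$; moreover $(\Arg)\llbracket t\rrbracket$ is not an animated functor. Instead I will combine \cref{thm:ani_arc_hocolim} with the finite case:
\begin{align*}
    \Maps(\bL J_\infty(R_\sbt), A_\sbt)
    &\cong \Maps(\hocolim_n \bL J_n(R_\sbt), A_\sbt)
    \cong \holim_n \Maps(\bL J_n(R_\sbt), A_\sbt)\\
    &\cong \holim_n \Maps(R_\sbt, A_\sbt[t]/(t^{n+1}))
    \cong \Maps(R_\sbt, \holim_n A_\sbt[t]/(t^{n+1})).
\end{align*}
For the holim in the third step to be taken along the right diagram, the finite-level equivalences must be compatible with the truncation maps. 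This follows from naturality of the adjunction equivalence with respect to the natural transformations $J_n \to J_m$ and $(\Arg)[t]/(t^{m+1}) \to (\Arg)[t]/(t^{n+1})$; these are mates of each other classically, and animation preserves the correspondence.

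The main obstacle is the remaining identification $\holim_n A_\sbt[t]/(t^{n+1}) \cong A_\sbt\llbracket t\rrbracket$ in $\aAlg_k$. Since the forgetful functor $\aAlg_k \to \aMod_k \subseteq D(k)$ preserves limits and is conservative, it suffices to prove this on underlying modules. Represent $A_\sbt$ by a simplicial algebra. Degreewise the tower $A_m[t]/(t^{n+1})$ consists of surjections with limit $A_m\llbracket t\rrbracket$. The tower of simplicial abelian groups $\{A_\sbt[t]/(t^{n+1})\}_n$ therefore consists of levelwise surjections, hence fibrations, and so its $1$-categorical limit $A_\sbt\llbracket t\rrbracket$ computes the homotopy limit. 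As a check, the Milnor sequence gives
\[
0 \to \textstyle\lim^1_n \pi_{i+1}(A_\sbt)[t]/(t^{n+1}) \to \pi_i(\holim) \to \lim_n \pi_i(A_\sbt)[t]/(t^{n+1}) \to 0.
\]
The $\lim^1$ term vanishes because the transition maps are surjective, so $\pi_i(\holim) = \pi_i(A_\sbt)\llbracket t\rrbracket = \pi_i(A_\sbt\llbracket t\rrbracket)$. This last equality holds because taking $\pi_i$ commutes with the degreewise product $\prod_{j\ge 0} A_\sbt\,t^j$. Finally, the comparison maps from $A_\sbt\llbracket t\rrbracket$ are maps of simplicial algebras, so the equivalence holds in $\aAlg_k$, which completes the case $n=\infty$.
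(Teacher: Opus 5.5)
Your proposal is correct and follows the paper's route. The finite case goes through \cref{thm:ani_adj} with $F = J_n$, $G = (\Arg)[t]/(t^{n+1})$ and the check $J_n(\Poly_k^\ft)\subseteq\Poly_k^\ft$, and the case $n=\infty$ uses \cref{thm:ani_arc_hocolim} and the same chain of mapping-space equivalences. Beyond the paper, you also justify two points it uses silently: that the finite-level equivalences are compatible with the truncation maps, and that $\holim_n A_\sbt[t]/(t^{n+1}) \cong A_\sbt\llbracket t\rrbracket$. One small correction in the latter: your argument computes the limit in $\aMod_k$ (simplicial modules), not in all of $D(k)$, since the inclusion $\aMod_k\subseteq D(k)$ need not preserve limits. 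This is harmless here because surjectivity kills $\lim^1$, so the $D(k)$-limit is connective anyway.
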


\begin{proof}
Assume first that $n < \infty$. Let $\catC = \catD = \Alg_k$, with $\catC_0 =
\catD_0 = \Poly_k^\ft$. If we apply \cref{thm:ani_adj} with
$F(\Arg)=J_n(\Arg)$ and $G(\Arg)=(\Arg)[t]/(t^{n+1})$ we see that
\begin{align*}
    \dJet_n^{R_\sbt}(A_\sbt)
    &\cong \Maps_{\aAlg_k}(R_\sbt, A_\sbt[t]/(t^{n+1})) \\
    &= \Maps_{\aAlg_k}(R_\sbt, \Ani(G)(A_\sbt)) \\
    &\cong \Maps_{\aAlg_k}(\Ani(F)(R_\sbt), A_\sbt) \\
    &= \Maps_{\aAlg_k}(\bL J_n(R_\sbt), A_\sbt) \\
    &= h^{\bL J_n(R_\sbt)}(A_\sbt).
\end{align*}
To apply \cref{thm:ani_adj} we need to show that $J_n(\Poly_k^\ft) \subseteq
\Poly_k^\ft$. This is well-known:
\[
    J_n(k[x_1, \ldots, x_d])
    =
    k[
        x_1^{(0)},
        x_1^{(1)},
        \ldots
        x_1^{(n)},
        \ldots,
        x_d^{(0)},
        x_d^{(1)},
        \ldots
        x_d^{(n)}
    ],
\]
where $x_{i}^{(q)}$ represents the $q$-th Hasse-Schmidt differential of
$x_i$ (see \cite{Voj07}).

Corepresentability when $n = \infty$ is now formal; it follows from
commutativity of limits with mapping spaces, the corepresentability when $m <
\infty$, and \cref{thm:ani_arc_hocolim}:
\begin{align*}
\Maps_{\aAlg_{k}}(\bL J_\infty(R_\sbt), A_\sbt)
& \cong \Maps_{\aAlg_{k}}(\hocolim_n \bL J_n(R_\sbt), A_\sbt) \\
& \cong \holim_n \Maps_{\aAlg_{k}}(\bL J_n(R_\sbt), A_\sbt)\\
& \cong \holim_n \Maps_{\aAlg_{k}}(R_\sbt, A_\sbt[t]/(t^{n+1}))\\
& \cong \Maps_{\aAlg_{k}}(R_\sbt, \holim_n A_\sbt[t]/(t^{n+1}))\\
& \cong \Maps_{\aAlg_{k}}(R_\sbt, A_\sbt\llbracket t\rrbracket).
\qedhere
\end{align*}
\end{proof}

It is also worth remarking that the functors $\Jet_n^{R_\sbt}$ are in some way
also obtained by animation, as the following
\lcnamecref{thm:ani_jet_functor_points} shows.

\begin{prop}
\label{thm:ani_jet_functor_points}
Let $R$ be a classical $k$-algebra and $n \in \NN \cup \{\infty\}$. Then the
derived jet/arc functor $\dJet_n^R \colon \aAlg_k \to \Anima$ is the animation
of the classical jet/arc functor $\Jet_n^R \colon \Alg_k \to \Set$. Therefore
the ($\infty$-categorical) functor of points of the derived jet/arc space $\bL
J_n(R)$ is the animation of the functor of points of the classical jet/arc
space $J_n(R)$.
\end{prop}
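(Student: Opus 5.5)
The plan is to use the universal property \eqref{eq:sind-univ-prop} in the variable $A_\sbt$, with $R$ held fixed. Recall that $\Ani(\Jet_n^R)\colon \aAlg_k \to \Anima$ is characterized as the unique sifted-colimit-preserving functor whose restriction to $\Poly_k^\ft$ agrees with $\Jet_n^R|_{\Poly_k^\ft}$, viewed in $\Anima$ through $\Set \hookrightarrow \Anima$. It therefore suffices to prove two things about $\dJet_n^R$:
\begin{enumerate}
\item[(a)] $\dJet_n^R(P) \cong \Jet_n^R(P)$, naturally in $P \in \Poly_k^\ft$;
\item[(b)] $\dJet_n^R$ preserves filtered colimits and geometric realizations.
\end{enumerate}
The statement about functors of points then follows immediately from \cref{thm:allagree}, which gives $\dJet_n^R \cong h^{\bL J_n(R)}$. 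The classical counterpart $\Jet_n^R \cong h^{J_n(R)}$ holds by definition.

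For (a), I will use \cref{thm:allagree} to rewrite $\dJet_n^R(P) \cong \Maps_{\aAlg_k}(\bL J_n(R), P)$. Since $P$ is classical and $\pi_0$ is left adjoint to $\Alg_k \hookrightarrow \aAlg_k$, this space is the discrete set $\Hom_{\Alg_k}(\pi_0 \bL J_n(R), P) = \Hom_{\Alg_k}(J_n(R), P) = \Jet_n^R(P)$. Here I use $\pi_0 \bL J_n(R) \cong J_n(R)$, which holds for any left derived functor. Naturality in $P$ is clear, so (a) is routine.

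Step (b) is the real content. For $n<\infty$, the functor $A_\sbt \mapsto A_\sbt[t]/(t^{n+1}) = A_\sbt \otimes_k k[t]/(t^{n+1})$ preserves all sifted colimits by \cref{rmk:animated-power-series-functors}, and $\Maps(R,-)$ commutes with filtered colimits when $R$ is compact. So the approach I would try first is to write $R$ as a sifted colimit of objects of $\Poly_k^\ft$ and pull the colimit out as a limit:
\[
\dJet_n^R(A_\sbt) \cong \holim_i \Maps(P_i, A_\sbt[t]/(t^{n+1})),
\]
and then commute the sifted colimit in $A_\sbt$ past this limit. For $n=\infty$ I would reduce to the finite case via $\holim_n$ and \cref{thm:ani_arc_hocolim}, exactly as in the proof of \cref{thm:allagree}.

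The main obstacle is precisely this interchange, of a limit over the presentation of $R$ with a sifted colimit in $A_\sbt$. It only works when $\bL J_n(R)$, equivalently $R$, is compact projective, for example when $R \in \Poly_k^\ft$. For general classical $R$ I expect the interchange to fail. Already for $n=0$ and $R = k[x]/(x^2)$, the functor $\Jet_0^R$ is a point on reduced $P$, whereas $\Maps(R,-)$ is not constant. So at this step I would first check carefully which notion of animation of a functor of points is intended. Most likely it is the extension through \cref{thm:ani_adj}, that is, the animation of $R \mapsto J_n(R)$ composed with $h^{(\Arg)}$. If so, I would prove the claim in that form, and I would treat the literal sifted-colimit preservation in (b) as valid only when $R$ is in $\Poly_k^\ft$, or after replacing $R$ by a polynomial resolution.
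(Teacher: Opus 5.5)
Your reduction to (a) and (b) is the same as the paper's, and your step (a) is correct. The paper computes it as $\Hom_{\Alg_k}(R, P[t]/(t^{n+1}))$, using that both algebras are classical, which amounts to the same thing.

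The paper then settles (b) in one line: $\dJet_n^R \cong \Maps_{\aAlg_k}(\bL J_n(R),\Arg)$ is corepresentable and ``therefore preserves all colimits'' by \cite[Prop.~5.5.2.2]{Lur09}. That step is not valid. A corepresentable functor $\Maps(X,\Arg)$ preserves limits. The cited result is about representable presheaves $\Maps(\Arg,X)$, which send colimits to limits. Moreover, $\Maps(X,\Arg)$ preserves sifted colimits exactly when $X$ is compact projective.

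So the obstacle you isolated is genuine, and it is fatal: the proposition is false as stated, and your example shows it. Take $k$ a field, $R=k[x]/(x^2)$ and $n=0$.
\begin{itemize}
\item Since every $P\in\Poly_k^\ft$ is reduced, $\Jet_0^R(P)=\{p\in P\mid p^2=0\}$ is a single point.
\item Hence $\Ani(\Jet_0^R)$ is the constant point-valued functor. That functor preserves sifted colimits, because sifted index categories are weakly contractible, so by uniqueness in \eqref{eq:sind-univ-prop} it is the animation.
\item Yet $\dJet_0^R(R)=\Hom_{\Alg_k}(R,R)$ is in bijection with $\{a\in R\mid a^2=0\}=kx$, which is not a point.
\end{itemize}
You were right not to force (b).

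Your proposed salvage needs two corrections.

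First, for finite $n$ the statement does hold when $R$ is a retract of an object of $\Poly_k^\ft$. Then $\bL J_n(R)=J_n(R)$ is again such a retract, because $J_n(\Poly_k^\ft)\subseteq\Poly_k^\ft$. So $\Maps(\bL J_n(R),\Arg)$ preserves sifted colimits, and (a) finishes the argument.

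Second, for $n=\infty$ the reduction ``via $\holim_n$ exactly as in \cref{thm:allagree}'' cannot transfer sifted-colimit preservation. A limit of functors that preserve filtered colimits need not preserve them. In fact the arc statement fails already for $R=k[x]$, since $J_\infty(k[x])$ is a polynomial ring in countably many variables and is not compact. Concretely, $\Ani(\Jet_\infty^{k[x]})$ commutes with filtered colimits. So at $A=k[y_1,y_2,\ldots]=\colim_m k[y_1,\ldots,y_m]$ it equals $\bigcup_m k[y_1,\ldots,y_m]\llbracket t\rrbracket$. But $\dJet_\infty^{k[x]}(A)=A\llbracket t\rrbracket$ contains $\sum_i y_i t^i$, which lies in no term of that union. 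Thus your claim that the condition holds ``equivalently'' for $R$, and your example $R\in\Poly_k^\ft$, are correct only for finite $n$.

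Finally, the reinterpretation through \cref{thm:ani_adj} (animate $R\mapsto J_n(R)$, then take functors of points) just restates \cref{thm:allagree}. It is not the content of this proposition.
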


\begin{proof}
By the universal property of animation and the fact that $\bL J_n$ is a left
derived functor, to show that $\dJet_n^R$ agrees with $\Ani(\Jet_n^R)$ we only
need to show that $\dJet_n^R$ preserves $\infty$-categorical sifted colimits
and that it agrees with $\Jet_n^R$ on $\Poly_k$. Since $\dJet_n^R$ is
corepresentable by the animated algebra $\bL J_n(R)$ it preserves all colimits
\cite[Prop.~5.5.2.2]{Lur09}, and in particular all sifted colimits. If $P$ is a
polynomial algebra, we get:
\[
\begin{aligned}
    &\dJet_m^R(P) \\
    &\cong \Maps_{\aAlg_k}(R, P[t]/(t^{n+1})) \\
    &\cong \Hom_{\Alg_k}(R, P[t]/(t^{n+1})) \\
    &\cong \Jet_n^R(P),
\end{aligned}
\qquad\text{and}\qquad
\begin{aligned}
    &\dJet_\infty^R(P) \\
    &\cong \Maps_{\aAlg_k}(R, P\llbracket t \rrbracket) \\
    &\cong \Hom_{\Alg_k}(R, P\llbracket t \rrbracket) \\
    &\cong \Jet_\infty^R(P).
\end{aligned}
\]
Here we have used that $\Alg_k$ is a full subcategory of $\aAlg_k$.
\end{proof}

\begin{rmk}
It would be tempting to consider the animation of the two-variable functor
$\Jet_n \colon \Alg_k^\op \times \Alg_k \to \Set$ given by $(R,A) \mapsto
\Jet_n^R(A)$. But notice that the opposite category $\Alg_k^\op$ is not locally
presentable, so the theory of animation does not apply.
\end{rmk}

\subsection{Étale base change}

In the next subsection we globalize the functors $\bL J_n$ beyond the affine
case, and this requires understanding their behavior with respect to
localization. As in the classical setting, it is more natural to consider
general formally étale maps, instead of just localizations.

For a detailed discussion of formally étale maps of animated algebras we refer
the reader to \cite[Sec.~3.4]{DAG}. Recall first the classical notion: a
morphism $R \to S$ of algebras is said to be formally étale if for every
commutative solid diagram
\[
\begin{tikzcd}
    S \ar[r] \ar[rd, dashed]
    & B
    \\
    R \ar[u] \ar[r]
    & \widetilde B \ar[u]
\end{tikzcd}
\]
where $B = \widetilde B/I$ for a square-zero ideal $I \subseteq \widetilde B$,
there exists a dotted arrow making the diagram commute. More functorially,
consider the functors $h_R, h_S \colon \Alg_k \to \Set$ corepresented by $R,
S$, so the map $R \to S$ gives a natural transformation $h_S \to h_R$. Then the
above universal property can be expressed by saying that the natural map
\[
    h_S(\widetilde B) \to h_S(B) \times_{h_R(B)} h_R(\widetilde B)
\]
is bijective. To extend this to the animated setting, Lurie introduces the
notion of \emph{small extension} $\widetilde B_\sbt \to B_\sbt$ of animated
algebras \cite[Def.~3.3.1]{DAG}, generalizing the notion of square-zero
extensions. Then a morphism $R_\sbt \to S_\sbt$ of animated algebras is
\emph{formally étale} if for any small extension $\widetilde B_\sbt \to B_\sbt$
the induced map
\[
    h_{S_\sbt}(\widetilde B_\sbt)
    \to
    h_{S_\sbt}(B_\sbt)
    \times_{h_{R_\sbt}(B_\sbt)}^\bL
    h_{R_\sbt}(\widetilde B_\sbt)
\]
is an equivalence \cite[Rmk.~3.4.4]{DAG}. Here $h_{R_\sbt}, h_{S_\sbt} \colon
\aAlg_k \to \Anima$ corepresent $R_\sbt$, $S_\sbt$. We will not discuss the
notion of small extension in detail, and will simply note that given a
square-zero extension $A \to A/I$ between classical algebras and an animated
$A/I$-algebra $B_\sbt$, the base change $\widetilde B_\sbt = B_\sbt
\otimes_{A/I}^\bL A \to B_\sbt$ is a small extension (for details see
\cite[Prop.~3.3.3]{DAG} and the discussion thereafter). An alternative way of
producing small extensions is by animating the functor $(A, M) \mapsto A \oplus
\varepsilon M$, where $A$ is a $k$-algebra, $M$ is an $A$-module, and $A \oplus
\varepsilon M$ is the corresponding split square-zero extension; see
\cite[Sec.~5.1.8]{CS20} for a discussion of this approach.

\begin{thm}\label{thm:etalelocalization}
Let $R_\sbt \to S_\sbt$ be a formally étale morphism of animated $k$-algebras.
For $n \in \NN \cup \{\infty\}$, the natural morphism
$
    \bL J_n(R_\sbt) \otimes_{R_\sbt}^\bL S_\sbt
    \to
    \bL J_n(S_\sbt)
$
is an equivalence. If $n < \infty$, the natural morphism
$
    \bL J_n(R_\sbt) \otimes_{\bL J_{n-1}(R_\sbt)}^\bL \bL J_{n-1}(S_\sbt)
    \to
    \bL J_n(S_\sbt)
$
is an equivalence.
\end{thm}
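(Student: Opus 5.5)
We argue via functors of points and \cref{thm:allagree}: both sides are animated algebras, so by the Yoneda lemma in $\aAlg_k$ it suffices to show the induced map on corepresented functors $\aAlg_k \to \Anima$ is an equivalence. For an animated algebra $A_\sbt$, the pushout $\bL J_n(R_\sbt) \otimes_{R_\sbt}^\bL S_\sbt$ corepresents
\[
    A_\sbt \mapsto
    \Maps(R_\sbt, A_\sbt[t]/(t^{n+1}))
    \times^\bL_{\Maps(R_\sbt, A_\sbt)}
    \Maps(S_\sbt, A_\sbt),
\]
where the map to $\Maps(R_\sbt,A_\sbt)$ is induced by reduction $t\mapsto 0$ (this uses $\bL J_0(R_\sbt)=R_\sbt$ and that the structure map $R_\sbt \to \bL J_n(R_\sbt)$ is adjoint to that reduction). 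Meanwhile $\bL J_n(S_\sbt)$ corepresents $\Maps(S_\sbt, A_\sbt[t]/(t^{n+1}))$. So the first claim for finite $n$ reduces to showing
\[
    h_{S_\sbt}(A_\sbt[t]/(t^{n+1}))
    \to
    h_{S_\sbt}(A_\sbt)\times^\bL_{h_{R_\sbt}(A_\sbt)} h_{R_\sbt}(A_\sbt[t]/(t^{n+1}))
\]
is an equivalence. Similarly, the second claim reduces to the same statement with $A_\sbt$ on the right replaced by $A_\sbt[t]/(t^{n})$, using the corepresentability of $\bL J_{n-1}$ and the truncation map $A_\sbt[t]/(t^{n+1}) \to A_\sbt[t]/(t^n)$.

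Next, prove the second reduced statement first, directly from formal étaleness. The map $A_\sbt[t]/(t^{n+1})\to A_\sbt[t]/(t^n)$ should be a small extension: it is the base change along $A_\sbt \to$ (classical) of the square-zero extension $k[t]/(t^{n+1}) \to k[t]/(t^n)$, namely $A_\sbt[t]/(t^{n+1}) = A_\sbt\otimes_k k[t]/(t^{n+1}) \cong A_\sbt[t]/(t^n) \otimes^\bL_{k[t]/(t^n)} k[t]/(t^{n+1})$ (valid since $k[t]/(t^{n+1})$ is flat over $k$ and the base change formula is just associativity of tensor products). This fits the example of small extensions recalled before the theorem, with $A=k[t]/(t^{n+1})$, $I=(t^n)$, $B_\sbt = A_\sbt[t]/(t^n)$. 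The defining property of formally étale \cite[Rmk.~3.4.4]{DAG} then gives exactly the needed equivalence. I expect verifying the small extension claim (and matching the animated-algebra structures rather than just modules) to be the main technical point; if the cited example requires the base of the square-zero extension to be the structure ring, I will instead work over $k[t]/(t^{n+1})$ and view $A_\sbt[t]/(t^n)$ as an animated $k[t]/(t^n)$-algebra.

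Then deduce the first claim for finite $n$ by induction: pasting the pullback squares for $A_\sbt[t]/(t^{m+1})\to A_\sbt[t]/(t^m)$, $m=1,\dots,n$, shows the composite square with $A_\sbt[t]/(t^{n+1})\to A_\sbt$ is a pullback. Equivalently, on algebras, iterate $\bL J_m(S_\sbt) \cong \bL J_m(R_\sbt)\otimes^\bL_{\bL J_{m-1}(R_\sbt)} \bL J_{m-1}(S_\sbt)$ and cancel tensor products down to $\bL J_0 = $ identity. Finally, for $n=\infty$ use \cref{thm:ani_arc_hocolim}: $\bL J_\infty(S_\sbt)\cong \hocolim_n \bL J_n(S_\sbt) \cong \hocolim_n \bL J_n(R_\sbt)\otimes^\bL_{R_\sbt} S_\sbt$, and the tensor product commutes with the filtered colimit, giving $\bL J_\infty(R_\sbt)\otimes^\bL_{R_\sbt}S_\sbt$; naturality of the comparison maps ensures this is the natural morphism.
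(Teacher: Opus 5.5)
Your proposal is correct and is essentially the paper's proof: you reduce via corepresentability (\cref{thm:allagree}) to formal étaleness tested against the small extension $A_\sbt[t]/(t^{n+1})\to A_\sbt[t]/(t^n)$, obtain the first claim for finite $n$ by iterating (pasting pullbacks), and pass to $n=\infty$ using \cref{thm:ani_arc_hocolim} and the fact that $(\Arg)\otimes^\bL_{R_\sbt}S_\sbt$ commutes with colimits. One small fix: the base-change identity you wrote does not make sense, because there is no algebra map $k[t]/(t^n)\to k[t]/(t^{n+1})$ sending $t\mapsto t$; it should read $A_\sbt[t]/(t^n)\cong A_\sbt[t]/(t^{n+1})\otimes^\bL_{k[t]/(t^{n+1})}k[t]/(t^n)$, i.e.\ the square-zero extension $k[t]/(t^{n+1})\to k[t]/(t^n)$ base-changed along $k[t]/(t^{n+1})\to A_\sbt[t]/(t^{n+1})$, which is the repair you yourself suggest.
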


\begin{proof}
Notice that the first statement immediately follows from the second one when $n
< \infty$. For the $n=\infty$ case, it is enough to observe that the left
derived functor $(\Arg)\otimes_{R_\sbt}^\bL S_\sbt$ commutes with colimits and
that $\bL J_\infty(\Arg)$ is a filtered colimit of $\bL J_n(\Arg)$.

For the second statement of the \lcnamecref{thm:etalelocalization}, considering
the corresponding ($\infty$-categorical) functors of points we need to show
that
\[
    h_{\bL J_n(S_\sbt)}(A_\sbt)
    \to
    h_{\bL J_n(R_\sbt)}(A_\sbt)
    \times^\bL_{h_{\bL J_{n-1}(R_\sbt)}(A_\sbt)}
    h_{\bL J_{n-1}(S_\sbt)}(A_\sbt)
\]
is an equivalence for any animated algebra $A_\sbt$. From the functor of points
description, this means that
\[
    h_{S_\sbt}(A_\sbt[t]/(t^{n+1}))
    \to
    h_{R_\sbt}(A_\sbt[t]/(t^{n+1}))
    \times^\bL_{h_{R_\sbt}(A_\sbt[t]/(t^n))}
    h_{S_\sbt}(A_\sbt[t]/(t^n))
\]
needs to be an equivalence. This follows directly from the fact that $R_\sbt
\to S_\sbt$ is formally étale and $A_\sbt[t]/(t^{n+1}) \to A_\sbt[t]/(t^n)$ is
a small extension (in fact a base-change of a square-zero extension).
\end{proof}

\subsection{Global case}

\label{sec:Globcase}

We already have all the ingredients for globalizing the functors $\bL
J_n(\Arg)$ to all derived schemes.

\begin{thm}
\label{thm:globalizejets}
For each derived $k$-scheme $X$, and $n \in \NN \cup \{\infty\}$, there
is a derived $k$-scheme $\bL J_n (X)$ which represents the functor $\dJet_n^X
\colon \aAlg_k \to \Anima$ given by
\[
    A_\sbt \mapsto \Maps_{\dSch_k}( \Spec A_\sbt[t]/t^{n+1}, X)
\]
when $n < \infty$, and by
\[
    A_\sbt \mapsto \holim_n \Maps_{\dSch_k}(\Spec A_\sbt[t]/t^{n+1}, X)
\]
when $n = \infty$. There are natural truncation maps $\bL J_m(X) \to \bL
J_n(X)$ when $n \le m \le \infty$, which are compatible with the description
of the functors of points. We have that $\bL J_0(X) \cong X$, $\bL J_\infty(X)
\cong \holim_n \bL J_n(X)$, and $\pi_0(\bL J_n(X)) = J_n(\pi_0(X))$. Moreover,
if $U = \Spec R_\sbt \to X$ is an affine open subscheme of $X$, then
\[
    \bL J_n(X) \times_X^\bL U
    \cong
    \bL J_n(U) \cong \Spec \bL J_n(R_\sbt).
\]
\end{thm}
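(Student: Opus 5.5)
The plan is to build $\bL J_n(X)$ by gluing the affine pieces $\Spec \bL J_n(R_\sbt)$, using \cref{thm:etalelocalization} to see that these pieces are compatible on overlaps. We then identify the glued object with $\dJet_n^X$ through the affine representability of \cref{thm:allagree}. Fix $n<\infty$ first.

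For an affine open $U=\Spec R_\sbt\subseteq X$, take $\bL J_n(U)=\Spec \bL J_n(R_\sbt)$. Its structure map to $U$ comes from the truncation $R_\sbt=\bL J_0(R_\sbt)\to \bL J_n(R_\sbt)$. For a smaller affine open $V=\Spec S_\sbt\subseteq U$, the map $R_\sbt\to S_\sbt$ is an open immersion, hence formally étale. So \cref{thm:etalelocalization} gives
\[
\bL J_n(S_\sbt)\cong \bL J_n(R_\sbt)\otimes^\bL_{R_\sbt}S_\sbt ,
\qquad\text{that is,}\qquad
\bL J_n(V)\cong \bL J_n(U)\times^\bL_U V .
\]
In particular $\bL J_n(V)\to\bL J_n(U)$ is an open immersion onto the preimage of $V$; on underlying spaces it is the classical statement for $J_n(\pi_0)$, via $\pi_0\bL J_n=J_n\pi_0$. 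The truncations also make $\bL J_n(U)$ affine over $U$. We therefore get a functor from the poset of affine opens of $X$ to derived schemes affine over $X$, and it satisfies the base-change condition above.

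To turn this into a global object, I would phrase it as a sheaf of animated $\cO_X$-algebras $\cA_n$ with $\cA_n(U)=\bL J_n(R_\sbt)$ on affines. The Zariski sheaf condition holds because of the base-change formula together with fpqc/Zariski descent for quasi-coherent animated modules, applied to $\bL J_n(R_\sbt)$ as an $R_\sbt$-module. Then set $\bL J_n(X)=\Spec_X\cA_n$, the relative spectrum. I expect this step to be the main technical obstacle, since it requires coherent $\infty$-categorical gluing rather than gluing along a $1$-categorical cocycle. A cleaner alternative, which I would try first, is to work at the level of functors of points. The functor $\dJet_n^X$ is a Zariski sheaf on $\aAlg_k$, because mapping spaces into $X$ satisfy descent and $\Spec A_\sbt[t]/(t^{n+1})$ has the same underlying space as $\Spec A_\sbt$. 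Its restriction over each $U$, namely $\dJet_n^X\times_X U$, equals $\dJet_n^U$. This uses that a jet $\Spec A[t]/t^{n+1}\to X$ lands in $U$ exactly when its reduction $\Spec A\to X$ does, since the two have the same underlying space. By \cref{thm:allagree}, $\dJet_n^U$ is represented by $\Spec\bL J_n(R_\sbt)$, and the maps $\bL J_n(V)\to \bL J_n(U)$ are open immersions. So $\dJet_n^X$ is a Zariski sheaf with a cover by open affine subfunctors, and is therefore a derived scheme (Lurie's representability criterion for Zariski-local gluing). This yields representability together with the formula $\bL J_n(X)\times^\bL_X U\cong\bL J_n(U)$.

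The remaining statements follow quickly. The truncation maps come from $\Spec A_\sbt[t]/t^{n+1}\to\Spec A_\sbt[t]/t^{m+1}$ on functors of points. The identification $\bL J_0(X)\cong X$ holds because $\dJet_0^X=h_X$. The equality $\pi_0\bL J_n(X)=J_n(\pi_0X)$ can be checked on affines using $\pi_0\bL J_n(R_\sbt)=J_n(\pi_0R_\sbt)$ and the compatibility of gluing. For $n=\infty$, all transition maps $\bL J_m(X)\to\bL J_n(X)$ are affine, since they are affine over each $U$. Hence $\holim_n\bL J_n(X)$ exists and is computed affine-locally as $\Spec\hocolim_n\bL J_n(R_\sbt)=\Spec\bL J_\infty(R_\sbt)$ by \cref{thm:ani_arc_hocolim}. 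It represents $\holim_n\dJet_n^X$ because limits of functors are computed pointwise, and the base-change formula over $U$ follows from the finite case by commuting the limit with $\times^\bL_XU$.
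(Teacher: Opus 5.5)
Your proposal is correct and follows essentially the same route as the paper. Both glue the affine pieces $\Spec \bL J_n(R_\sbt)$ using \cref{thm:etalelocalization}, reduce representability to affines via the fact that $\Spec A_\sbt[t]/(t^{n+1})$ and $\Spec A_\sbt$ share an underlying space together with \cref{thm:allagree}, and treat $n=\infty$ locally through \cref{thm:ani_arc_hocolim}. Your functor-of-points variant, which shows directly that $\dJet_n^X$ is a Zariski sheaf covered by open affine subfunctors, is only a repackaging of these same ingredients; it conveniently sidesteps the $\infty$-categorical coherence of the gluing, which the paper leaves implicit.
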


\begin{dff}
We call the derived $k$-scheme $\bL J_n(X)$ given by \cref{thm:globalizejets}
the \emph{derived $n$-th jet space} of $X$ when $n < \infty$, and the
\emph{derived arc space} of $X$ when $n = \infty$.
\end{dff}

\begin{proof}[Proof of \cref{thm:globalizejets}]
For an animated $k$-algebra $R_\sbt$ and an element $f \in \pi_0(R_\sbt)$, the
localization map $R_\sbt \to R_\sbt[f^{-1}]$ is formally étale (see
\cite[Ex.~3.4.8]{DAG}). Therefore, by virtue of \cref{thm:etalelocalization},
we can glue $\bL J_n$ along Zariski affine covers to construct a derived scheme
$\bL J_n (X)$. It is immediate to check the existence of the natural truncation
maps, the formulas $\bL J_0(X) = X$ and $\pi_0(\bL J_n(X)) = J_n(\pi_0(X))$,
and the last claim of the \lcnamecref{thm:globalizejets}.

We start by verifying the representability claim when $n <\infty$. For this,
notice that that the underlying reduced scheme for $\Spec
A_\sbt[t]/(t^{n+1})$ is $\Spec \pi_0(A_\sbt)_\red$, so any map $\Spec
A_\sbt[t]/(t^{n+1}) \to X$ is obtained by gluing maps of the form $\Spec
A_\sbt[f^{-1}][t]/(t^{n+1}) \to U$ where $f \in \pi_0(A_\sbt)$ and $U
\subseteq X$ is a derived affine open (see \cite[Lem.~2.3]{EM09} for a similar
argument in the classical setting). The claim now follows from $\bL J_n(X)
\times_X^\bL U  = \bL J_n(U)$ and \cref{thm:allagree}.

The representability claim when $n=\infty$ follows immediately once we know the
formula $\bL J_\infty(X) \cong \holim_n \bL J_n(X)$. But this can be checked
locally and therefore is a consequence of \cref{thm:ani_arc_hocolim}.
\end{proof}

\begin{rmk}
The proof of \cref{thm:globalizejets} shows that the derived $k$-schemes $\bL
J_n(X)$ can be constructed using a relative spectrum. More precisely,
\cref{thm:etalelocalization} implies that if $\mathcal A_\sbt$ is a
quasi-coherent sheaf of animated $\cO_X$-algebras on $X$, then we can define a
quasi-coherent sheaf of algebras $\bL J_n(\mathcal A_\sbt)$. When $\mathcal
A_\sbt = \cO_X$ we get $\bL J_n(X) = \Spec_X \bL J_n(\cO_X)$. In particular,
all the truncation maps are affine.
\end{rmk}

\subsection{First derived jet schemes and cotangent complexes}

In the classical setting, the first jet scheme verifies the formula
\[
    J_1(X) \cong \Spec_X(\Sym_{\cO_X}(\Omega_{X/k})),
\]
where $\Omega_{X/k}$ is the sheaf of differentials on $X$ and $\Sym$ denotes
the sheafified symmetric algebra functor \cite[Eq.~(1.4)]{Voj07}. One often
summarizes this formula by saying that the first jet scheme $J_1(X)$ is the
total space of the tangent bundle on $X$. The next result we present,
\cref{thm:SpecSym} below, establishes the expected generalization for the first
derived jet scheme, where one just replaces $\Omega_{R/k}$ with the cotangent
complex $\bL_{R/k}$ and the symmetric algebra functor $\Sym$ with its animation
$\LSym$. As it turns out, this generalization is a formal consequence of the
basic properties of animation.

To make sense of the statement of \cref{thm:SpecSym} we need to first discuss
the animation $\LSym$. For this, consider $\Alg_k^\two$, the $1$-category of
arrows on $\Alg_k$; its objects are pairs $(R,S)$ where $R \in \Alg_k$ and $S
\in \Alg_R$ and its arrows are the obvious commutative squares. Also recall the
category $\AlgMod_k$ introduced in \cref{sec:animated-modules}. There is an
obvious forgetful functor $\Alg_k^\two \to \AlgMod_k$ whose left adjoint is the
symmetric algebra functor $\Sym \colon \AlgMod_k \to \Alg_k^\two$ given by
$(R,M) \mapsto (R, \Sym_R(M))$. Since $\Sym$ preserves colimits, we can
consider its left derived functor $\LSym \colon \aAlgMod_k \to \aAlg_k^\two$.
One immediately checks that this functor can be written in the form $(R_\sbt,
M_\sbt) \mapsto (R_\sbt, \LSym_{R_\sbt}(M_\sbt))$, where the second component
is given by a functor $\LSym_{R_\sbt} \colon \aMod_{R_\sbt} \to \aAlg_{R_\sbt}$
(here the $\infty$-category of \emph{animated $R_\sbt$-algebras}
$\aAlg_{R_\sbt}$ is the fiber $\aAlg_k^\two\times_{\aAlg_k}\{R_\sbt\}$).
Explicitly, if $(P_\sbt, F_\sbt) \to (R_\sbt, M_\sbt)$ is a cofibrant
replacement, then
\[
    \LSym_{R_\sbt}(M_\sbt)
    \cong
    \Sym_{P_\sbt}(F_\sbt)
    \otimes_{P_\sbt}
    R_\sbt
    \cong
    \Sym_{R_\sbt}(F_\sbt
    \otimes_{P_\sbt}
    R_\sbt).
\]
This construction can be glued without surprises: given an animated
$\cO_X$-module $\cM_\sbt \in \aMod_X$ we can construct a sheaf of animated
$\cO_X$-algebras $\LSym_{\cO_X}(\cM_\sbt)$. Notice that we can consider the
relative spectrum $\Spec_X(\LSym_{\cO_X}(\cM_\sbt))$.

\begin{thm}
\label{thm:SpecSym}
For $X$ a derived $k$-scheme,  there is a natural equivalence
\[
    \bL J_{1}(X) \cong \Spec_X(\LSym_{\cO_X}(\bL_{X/k}))
\]
where $\bL_{X/k}$ is the cotangent complex of $X$ over $\Spec k$.
\end{thm}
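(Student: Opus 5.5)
We reduce to the affine case and then compare the two sides by animation. Both sides are compatible with restriction to affine opens: for $\bL J_1$ this is the last assertion of \cref{thm:globalizejets}. For the right hand side, the restriction of $\bL_{X/k}$ to $U = \Spec R_\sbt$ is $\bL_{R_\sbt/k}$, and $\LSym$ and the relative spectrum glue. So it suffices to produce, for every animated $k$-algebra $R_\sbt$, an equivalence
\[
    \bL J_1(R_\sbt) \cong \LSym_{R_\sbt}(\bL_{R_\sbt/k}),
\]
natural in $R_\sbt$ and compatible with localizations $R_\sbt \to R_\sbt[f^{-1}]$. Naturality will take care of the gluing.

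For the affine statement we consider the composite functor $\Phi \colon \Alg_k \to \Alg_k^\two$ given by $\Phi = \Sym \circ \Omega$, that is, $R \mapsto (R, \Sym_R(\Omega_{R/k}))$. Postcomposed with the projection to the target, it is $R \mapsto \Sym_R(\Omega_{R/k})$. Classically this is naturally isomorphic to $J_1(R)$ by \cite[Eq.~(1.4)]{Voj07}, as functors $\Alg_k \to \Alg_k$. The key formal point is that the animation of a composite of colimit-preserving functors is the composite of the animations, provided the first functor sends the generators to objects on which the second animation agrees with the classical one. For $P \in \Poly_k^\ft$ we have $\Omega(P) = (P, \Omega_{P/k})$ with $\Omega_{P/k}$ finite free, so $\Omega(P)$ lies among the chosen compact projective generators $(P, P^m)$ of $\AlgMod_k$. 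Hence $\bL\Sym \circ \bL\Omega$ preserves sifted colimits, and its restriction to $\Poly_k^\ft$ is $\Sym \circ \Omega$. By the universal property \eqref{eq:sind-univ-prop}, it therefore agrees with $\bL(\Sym\circ\Omega)$.

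Since $\bL\Omega(R_\sbt) = (R_\sbt, \bL_{R_\sbt/k})$, we get
\[
    \bL(\Sym \circ \Omega)(R_\sbt) \cong (R_\sbt, \LSym_{R_\sbt}(\bL_{R_\sbt/k})).
\]
On the other hand, the classical natural isomorphism $J_1 \cong \Sym\circ\Omega$ (after projection), restricted to $\Poly_k^\ft$, animates by the same universal property to a natural equivalence $\bL J_1 \cong \LSym_{(-)}(\bL_{(-)/k})$ of functors $\aAlg_k \to \aAlg_k$. Concretely, one can choose a cofibrant replacement $P_\sbt \to R_\sbt$ with polynomial terms. Then $J_1(P_\sbt) \cong \Sym_{P_\sbt}(\Omega_{P_\sbt/k})$ degreewise, and this is a cofibrant model of both sides. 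The degreewise identification uses only the classical isomorphism for polynomial rings, which are in $\Ind(\Poly_k^\ft)$, where both left derived functors agree with the classical ones.

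The main subtlety is keeping track of the $R_\sbt$-algebra structure. The equivalence should hold in $\aAlg_{R_\sbt}$ and not merely in $\aAlg_k$, because the relative spectrum needs the structure over $\cO_X$. To handle this, I would work throughout with the arrow-category-valued functor $R \mapsto (R, J_1(R))$, using the truncation map $R = J_0(R) \to J_1(R)$, and with $\Phi$. The classical isomorphism is an isomorphism of functors $\Alg_k \to \Alg_k^\two$ lying over the identity, so the animated equivalence lies over $\id_{\aAlg_k}$ and restricts to an equivalence in the fibers $\aAlg_{R_\sbt}$.

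Naturality in $R_\sbt$ gives compatibility with the étale localizations used in \cref{thm:globalizejets}, via \cref{thm:etalelocalization}. Together with the base change compatibility of $\LSym$ and of the cotangent complex, $\bL_{R_\sbt[f^{-1}]/k} \cong \bL_{R_\sbt/k}\otimes^\bL_{R_\sbt} R_\sbt[f^{-1}]$, the affine equivalences glue to the global statement.
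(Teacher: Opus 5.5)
Your proposal is correct and follows essentially the same route as the paper: reduce to the affine case, view $R \mapsto (R, J_1(R))$ as the arrow-category-valued functor $\Sym\circ\Omega$, and use that animation is compatible with this composite because $\Omega$ sends $P \in \Poly_k^\ft$ to a compact projective generator $(P, \Omega_{P/k})$ of $\AlgMod_k$ (the paper cites \cite[Prop.~5.1.5(b)]{CS20} for this). Your concrete check via a cofibrant replacement $P_\sbt \to R_\sbt$ with $J_1(P_\sbt) \cong \Sym_{P_\sbt}(\Omega_{P_\sbt/k})$ is exactly the paper's alternative argument, and your explicit gluing discussion just spells out the paper's remark that the statement is local.
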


\begin{proof}
The statement is local, so we can assume that $X$ is affine and work in the category
of $k$-algebras. In the algebraic setting, the sought formula reads:
\[
    \bL J_{1}(R_\sbt) \cong \LSym_{R_\sbt}(\bL_{R_\sbt/k}).
\]
Equivalently, consider the functor $F$ given by $F(R) = (R, J_1(R))$, so that
$\bL F(R_\sbt) = (R_\sbt, \bL J_1(R_\sbt))$. Then we need to show that:
\[
    \bL F \cong \LSym \circ\, \bL\Omega.
\]
As explained above, in the classical setting we have $F \cong \Sym \circ\,
\Omega$. Also, notice that if $P \in \Poly_k^\ft$, then $\Omega_{P/k}$ is a
finitely generated free $P$-module, so $(P, \Omega_{P/k})$ is a strongly finitely
presentable object in $\AlgMod_k$. By \cite[Prop.~5.1.5(b)]{CS20}, we have that
$\bL F \cong \Ani(\Sym \circ\, \Omega) \cong \Ani(\Sym) \circ\, \Ani(\Omega) =
\LSym \circ\, \bL\Omega$, as wanted.

Alternatively, we can check the equivalence $\bL F \cong \LSym \circ\,
\bL\Omega$ directly from the constructions. If $P_\sbt \to R_\sbt$ is a
(functorially chosen) cofibrant replacement with terms in $\Poly_k$, then
$(P_\sbt, \Omega_{P_\sbt/k}) \to (R_\sbt, \bL_{R_\sbt/k})$ is a cofibrant
replacement in $\aAlgMod_k$ and therefore
\[
    \LSym_{R_\sbt}(\bL_{R_\sbt/k})
    \cong
    \Sym_{P_\sbt}(\Omega_{P_\sbt/k}) \otimes_{P_\sbt} R_\sbt
    \cong
    J_1(P_\sbt) \otimes_{P_\sbt} R_\sbt
    \cong
    \bL J_1(R_\sbt). \qedhere
\]
\end{proof}

\section{Classicality, quasi-smoothness, and log-canonicity}

\label{sec:discrete-non-discrete}
\label{sec:lci_non_discreteness}

We view the homotopy groups of the derived jet schemes $\bL J_n(X)$ as measures
of the singularities of $X$. This is justified by the following elementary
fact.

\begin{thm}
\label{thm:smoothobjectshavediscretejetsandarcs}
If $X$ is a smooth $k$-scheme, then $\bL J_n(X)$ is classical for all $n \in
\NN \cup \{\infty\}$.
\end{thm}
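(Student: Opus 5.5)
The plan is to reduce to the affine case and then to étale-local coordinates. Classicality of a derived scheme is local: it means $\pi_i(\cO_{\bL J_n(X)}) = 0$ for $i>0$. By the last claim of \cref{thm:globalizejets}, $\bL J_n(X)$ restricted over an affine open $U = \Spec R$ of $X$ is $\Spec \bL J_n(R)$. It therefore suffices to show that $\bL J_n(R)$ is classical for a smooth $k$-algebra $R$. Since the truncation maps are affine, we may shrink $U$ further.

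Smoothness gives, Zariski locally, an étale map $k[x_1,\ldots,x_d] \to R$. Write $P = k[x_1, \ldots, x_d]$. Étale maps of classical algebras are formally étale as maps of animated algebras, because their cotangent complex $\bL_{R/P}$ vanishes. We then invoke \cref{thm:etalelocalization}, which gives
\[
    \bL J_n(R) \cong \bL J_n(P) \otimes_{P}^\bL R .
\]
Since $P$ is a polynomial algebra, $\bL J_n(P) = J_n(P)$, which is again a polynomial algebra in the variables $x_i^{(q)}$. This holds by the general fact that $\bL F(X) = F(X)$ for $X \in \Ind(\Poly_k^\ft)$, together with the explicit formula in the proof of \cref{thm:allagree}.

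The remaining issue, which I expect to be the only real obstacle, is that a derived tensor product of classical algebras is not classical in general. Here it is classical because $R$ is flat over $P$: étale maps are flat, so $\Tor_i^P(J_n(P), R) = 0$ for $i>0$, and $\pi_i(J_n(P)\otimes^\bL_P R) = \Tor_i^P(J_n(P),R)$. Hence $\bL J_n(R)\cong J_n(P)\otimes_P R$ is classical for every finite $n$.

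For $n=\infty$, use \cref{thm:ani_arc_hocolim}: $\bL J_\infty(R) \cong \hocolim_n \bL J_n(R)$. This is a filtered colimit of classical animated algebras, and filtered colimits commute with homotopy groups, so it is classical. Alternatively, apply \cref{thm:etalelocalization} with $n=\infty$ and use flatness as above. Finally, the open affines on which the étale coordinates exist cover $X$, so all the $\bL J_n(X)$ are classical.
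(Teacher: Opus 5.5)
Your proposal is correct and follows essentially the same route as the paper: work on affine opens $U$ admitting an étale map to $\AA^d$, apply \cref{thm:etalelocalization} to get $\bL J_n(U) \cong \bL J_n(\AA^d)\times^\bL_{\AA^d} U$, and conclude from the classicality of $\bL J_n(\AA^d)$ and the flatness of $U$ over $\AA^d$. The paper treats $n=\infty$ in the same step via the $n=\infty$ case of \cref{thm:etalelocalization}, which is the second alternative you mention, so no separate colimit argument is needed.
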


\begin{proof}
Notice that being classical is a local property, and that $\bL J_n(X)$ is
covered by affine opens of the form $\bL J_n(U)$ where $U \subseteq X$ is an
affine open. So the
\lcnamecref{thm:smoothobjectshavediscretejetsandarcs}
follows if we show that any point $x \in X$ is contained in an affine open $U
\subseteq X$ for which $\bL J_n(U)$ is classical.

For this, notice that the smoothness of $X$ implies that $x$ is contained in an
affine open $U \subseteq X$ admitting an étale map $U \to \AA^d$ for some $d
\in \NN$. By \cref{thm:etalelocalization} we have an equivalence $\bL J_n(U)
\cong \bL J_n(\AA^d)\times_{\AA^d}^\bL U$. Since $\bL J_n(\AA^d)$ is classical
and $U$ is flat over $\AA^d$, we see that $\bL J_n(U)$ is also classical, as
wanted.
\end{proof}

Notice that it follows that when $X$ is smooth, the jet schemes $\bL J_n(X) =
J_n(X)$ are also smooth. As we show now, this remains true when smoothness is
replaced with quasi-smoothness. Recall our discussion about quasi-smoothness
and the connection with local complete intersections from
\cref{sec:intro-koszul-regular,sec:smooth-quasi-smooth-lci}.

To present the result, it will be helpful to recall the explicit description of
the classical jet and arc functors in terms of Hasse-Schmidt differentials
\cite{Voj07}. If $k[\ux] = k[x_i \,|\, i \in I]$ is a polynomial $k$-algebra,
possibly not of finite type, then the associated jet/arc space algebra is also
polynomial:
\[
    J_n(k[\ux])
    = k[\ux, \ux^{(1)}, \ldots, \ux^{(n)}]
    = k[\ux][x_i^{(q)} \,|\, i \in I,\ q = 1,\ldots,n ],
    \qquad
    n \in \NN \cup \{\infty\}.
\]
Moreover, this algebra is the target of the universal $m$-th order
Hasse-Schmidt derivation on $k[\ux]$. That is $J_n(k[\ux])$ is characterized by
having a map
\[
    D_{k[\ux]} \colon k[\ux] \lra J_n(k[\ux]),
    \qquad
    f \to (f^{(0)}, f^{(1)}, \ldots, f^{(n)}),
\]
satisfying
\[
    f^{(0)} = f,
    \qquad
    (f+g)^{(q)} = f^{(q)} + g^{(q)},
    \qquad
    (fg)^{(q)} = \sum_{u+v=q} f^{(u)}g^{(v)},
\]
for $f, g \in k[\ux],\ q=0,\ldots,n$, and $a^{(q)} = 0$ for $a\in k,\ q=1,\ldots,n$.
When $k$ is a $\QQ$-algebra, we can replace Hasse-Schmidt derivations by usual
derivations, and $f, f', f'', \ldots$ can be computed with the usual rules of
implicit differentiation. For arbitrary bases one instead uses implicit divided
differentiation. For a general classical $k$-algebra $A$ presented as a
quotient of a polynomial algebra we have:
\[
    \text{if}
    \quad
    A = \frac{
        k[\ux]
    }{
        (\uf)
    }
    \quad
    \text{then}
    \quad
    J_n(A) = \frac{
        k[\ux, \ux^{(1)}, \ldots, \ux^{(n)}]
    }{
        (\uf, \uf^{(1)}, \ldots \uf^{(n)})
    },
\]
where we are using the notation
\[
    (\uf) = (f_j \,|\, j \in J)
    \quad\text{and}\quad
    (\uf, \uf^{(1)}, \ldots \uf^{(n)}) =
    ( f_j^{(q)} \,|\, j \in J,\ q = 0,\ldots,n ).
\]
As before, $J_n(A)$ is the target of $D_A \colon A \to J_n(A)$, the universal
$n$-th order Hasse-Schmidt derivation on $A$.

\begin{thm}
\label{thm:LJderived-quotients}
Let $R_\sbt$ be an animated $k$-algebra, and consider a (possibly infinite)
sequence $(\uf)$ of elements of $R_\sbt$ (see \Cref{sec:intro-koszul-regular}).
Then, for $n\in \NN \cup \{\infty\}$ we have
\[
    \bL J_n(R_\sbt \sslash (\uf))
    \cong
    \bL J_n(R_\sbt) \sslash (\uf, \uf^{(1)}, \ldots, \uf^{(n)}).
\]
In particular, if $R$ is a classical $k$-algebra and $(\uf) = (f_1, \ldots,
f_c)$ is a Koszul regular sequence in $R$ then
\[
    \bL J_n(R / (\uf))
    \cong
    \bL J_n(R) \sslash (\uf, \uf^{(1)}, \ldots, \uf^{(n)}).
\]
\end{thm}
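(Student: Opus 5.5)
The plan is to use that $\bL J_n$ is a left derived functor, hence preserves all $\infty$-categorical colimits, and in particular pushouts. By definition
\[
    R_\sbt \sslash (\uf) = R_\sbt \otimes^\bL_{k[\ux]} k,
    \qquad \ux = (x_j \,|\, j \in J),
\]
where $k[\ux] \to R_\sbt$ sends $x_j \mapsto f_j$ and $k[\ux] \to k$ sends $x_j \mapsto 0$. This is a pushout in $\aAlg_k$.

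To justify colimit preservation I will not rely only on the general claim "left derived functors preserve colimits". Instead I will use \cref{thm:allagree}: for $n<\infty$, $\bL J_n = \Ani(J_n)$ is left adjoint to $A_\sbt \mapsto A_\sbt[t]/(t^{n+1})$, so it preserves all colimits. For $n = \infty$ I use \cref{thm:ani_arc_hocolim}: $\bL J_\infty = \hocolim_n \bL J_n$, and colimits commute with colimits. Applying $\bL J_n$ to the pushout therefore gives
\[
    \bL J_n(R_\sbt \sslash (\uf))
    \cong
    \bL J_n(R_\sbt) \otimes^\bL_{\bL J_n(k[\ux])} \bL J_n(k).
\]

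Next I identify the terms on the right, all of which are classical polynomial objects by the remarks on $\Ind(\Poly_k^\ft)$. We have $\bL J_n(k[\ux]) = J_n(k[\ux]) = k[x_j^{(q)} \,|\, j\in J,\ 0\le q\le n]$ and $\bL J_n(k) = J_n(k) = k$. The map $J_n(k[\ux]) \to k$ is $J_n$ of $x_j \mapsto 0$, which sends every $x_j^{(q)} \mapsto 0$ because Hasse--Schmidt derivatives of constants vanish.

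The key point, and the step needing most care, is identifying the map $J_n(k[\ux]) \to \bL J_n(R_\sbt)$. By naturality of the universal Hasse--Schmidt derivation, $x_j^{(q)}$ should go to $f_j^{(q)}$, and I need to make this precise when $R_\sbt$ is animated. An element $f_j$ is a map $k[x_j]\to R_\sbt$, and its image $f_j^{(q)}$ is \emph{defined} as the composite $k[x_j] \xra{x\mapsto x^{(q)}} J_n(k[x_j]) \to \bL J_n(R_\sbt)$, in the sense of elements of \cref{sec:intro-koszul-regular}. So the map $J_n(k[\ux]) \to \bL J_n(R_\sbt)$ is precisely the sequence $(f_j^{(q)})$, and on $\pi_0$ this agrees with the classical Hasse--Schmidt derivatives in $J_n(\pi_0 R_\sbt)$. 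With this identification the pushout is literally $\bL J_n(R_\sbt) \otimes^\bL_{k[x_j^{(q)}]} k = \bL J_n(R_\sbt)\sslash(\uf,\ldots,\uf^{(n)})$.

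For the second statement, Koszul regularity gives $R\sslash(\uf) \simeq R/(\uf)$, and substituting this into the first formula finishes the proof.
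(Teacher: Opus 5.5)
Your proposal is correct and is essentially the paper's argument: apply $\bL J_n$ to the pushout square defining $R_\sbt\sslash(\uf)$, note that $\bL J_n$ is a left adjoint (by \cref{thm:allagree}) so the square stays a pushout, and read off $\bL J_n(k[\ux]) = k[\ux,\ux^{(1)},\ldots,\ux^{(n)}]$ with the induced elements $f_j^{(q)}$. The only minor variation is that for $n=\infty$ you get colimit preservation from \cref{thm:ani_arc_hocolim} instead of from the adjunction $\bL J_\infty \dashv (\Arg)\llbracket t\rrbracket$, which \cref{thm:allagree} also provides; both routes are valid.
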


\begin{proof}
This is an immediate consequence of $\bL J_n$ being left adjoint. In detail, we
have the following pushout diagram in $\aAlg_k$:
\[
    \begin{tikzcd}
        k[\ux] \ar[r,"\uf"] \ar[d]
        & R_\sbt \ar[d]
    \\
        k \ar[r]
        & R_\sbt \sslash (\uf).
        \ar[ul,phantom,very near start, "\ulcorner"]
    \end{tikzcd}
\]
Notice that $\bL J_n(k[\ux]) = J_n(k[\ux]) = k[\ux, \ux^{(1)}, \ldots,
\ux^{(n)}]$, so the map $\bL J_n(\uf)$ induces a sequence of elements $(\uf,
\uf^{(1)}, \ldots \uf^{(n)})$ in $\bL J_n(R_\sbt)$. Since $\bL J_n$ is left
adjoint, it transforms the above pushout diagram into a pushout diagram, giving
the result.
\end{proof}

\begin{thm}
\label{lem:LJquasi-smooth}
If $\colon X \to Z$ is a quasi-smooth closed immersion, then for all $n \in
\NN$ the induced map $\bL J_n(X)\to \bL J_n(Z)$ is also a quasi-smooth closed
immersion. Moreover, if $X$ is a quasi-smooth $k$-scheme, so are the derived
jet schemes $\bL J_n(X)$ for $n \in \NN$.
\end{thm}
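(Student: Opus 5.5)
The plan is to reduce to the affine local description of quasi-smooth closed immersions and then apply \cref{thm:LJderived-quotients}.

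\textbf{Reduction to the affine case.} Being a closed immersion, and being quasi-smooth, are both local on the target. By \cref{thm:globalizejets}, $\bL J_n(Z)$ is covered by the opens $\bL J_n(V)$ for $V = \Spec A_\sbt \subseteq Z$ affine, and $\bL J_n(X)\times_{\bL J_n(Z)}^\bL \bL J_n(V) \cong \bL J_n(X\times_Z^\bL V)$. Here I use that $\bL J_n(U)\cong \bL J_n(X)\times^\bL_X U$ for opens, and that $X\times_Z V$ is an open of $X$. As recalled in \cref{sec:smooth-quasi-smooth-lci}, after shrinking $V$ we may assume $X\times_Z V = \Spec(A_\sbt\sslash(a_1,\ldots,a_c))$ with finitely many $a_i$.

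\textbf{The affine case.} By \cref{thm:LJderived-quotients},
\[
\bL J_n(A_\sbt\sslash(\ua)) \cong \bL J_n(A_\sbt)\sslash(a_i^{(q)} \mid 1\le i\le c,\ 0\le q\le n),
\]
and this is a derived quotient by the finite sequence of $c(n+1)$ elements. I take as granted the (standard) fact that such a derived quotient map is a quasi-smooth closed immersion. On $\pi_0$ it is a surjection. Its relative cotangent complex is free of rank $c(n+1)$ shifted by $1$: by base change from $k[\underline y]\to k$, one has $\bL_{k/k[\underline y]} = (\underline y)/(\underline y)^2[1]$. Finiteness of $n$ is used here, since for $n=\infty$ the sequence is infinite and finite presentation fails. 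This proves the first claim.

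\textbf{The global statement.} For $X$ a quasi-smooth $k$-scheme, locally $X \to \Spec k$ factors as a quasi-smooth closed immersion $X\supseteq U \hookrightarrow Z$ followed by a smooth $Z\to\Spec k$. Shrinking further, we may take $Z$ affine with an étale map to $\AA^d$. By \cref{thm:smoothobjectshavediscretejetsandarcs} and its consequence, $\bL J_n(Z)=J_n(Z)$ is smooth over $k$; this also follows from the étale base change \cref{thm:etalelocalization}, which gives $\bL J_n(Z)\cong \bL J_n(\AA^d)\times_{\AA^d}U'$. By the first part, $\bL J_n(U)\to \bL J_n(Z)$ is a quasi-smooth closed immersion. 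A composite of a quasi-smooth and a smooth map is quasi-smooth: the fundamental triangle shows that $\bL_{\bL J_n(U)/k}$ is an extension of a Tor-amplitude $\le 1$ complex by a locally free one, and finite presentation is preserved. Since $\bL J_n(X)$ is covered by such $\bL J_n(U)$, it is quasi-smooth.

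The main obstacle is the first step: verifying carefully that base change of jets along opens is compatible with restricting the closed immersion. I expect \cref{thm:globalizejets} to handle this, and no real difficulty beyond bookkeeping.
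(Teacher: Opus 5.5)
Your proposal is correct and follows the paper's route. You work affine-locally so that the closed immersion is $\Spec(R_\sbt\sslash(\uf)) \to \Spec(R_\sbt)$ with $\uf$ finite, then apply \cref{thm:LJderived-quotients}, using that $(\uf,\uf^{(1)},\ldots,\uf^{(n)})$ stays finite for $n<\infty$; for the second claim you factor locally through a quasi-smooth closed immersion into a smooth $Z$ and invoke \cref{thm:smoothobjectshavediscretejetsandarcs}, and your extra details on base change along opens, the relative cotangent complex, and composing with a smooth map only expand what the paper leaves implicit. One small slip: in your étale base change formula, $U'$ should be $Z$.
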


\begin{proof}
The first statement is a consequence of $J_n$ preserving finite type polynomial
algebras, and of \Cref{thm:LJderived-quotients}. In detail, we can work
affine-locally and assume that $Z = \Spec R_\sbt$ and $X = \Spec R_\sbt \sslash
(\uf)$, where $(\uf) = (f_1, \ldots, f_c)$ is a finite sequence of elements in
$R_\sbt$. Then the sequence $(\uf, \uf^{(1)}, \ldots \uf^{(n)})$ of elements of
$\bL J_n(R_\sbt)$ is also finite, as $n <\infty$. Then the result follows from
\Cref{thm:LJderived-quotients}.

For the second statement, recall that a quasi-smooth $k$-scheme $X$ admits a
quasi-smooth closed immersion $X \to Z$ where $Z$ is a smooth $k$-scheme. Now
use the first statement and \Cref{thm:smoothobjectshavediscretejetsandarcs}.
\end{proof}

\begin{thm}
\label{thm:lci-jets-quasi-smooth}
Let $X$ be a classical $k$-scheme, and assume that it is lci. Then $\bL J_n(X)$
is quasi-smooth. Moreover, if $X$ is equidimensional of dimension $d$, then
$\bL J_n(X)$ is classical if and only if $J_n(X)$ is lci of dimension $(n+1)d$.
\end{thm}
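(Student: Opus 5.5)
The first claim needs no new argument. A classical lci $k$-scheme is the same as a classical quasi-smooth $k$-scheme (\cref{sec:smooth-quasi-smooth-lci}), so \cref{lem:LJquasi-smooth} shows directly that $\bL J_n(X)$ is quasi-smooth. For the second claim we may work locally: being classical is local, and so is being lci of dimension $(n+1)d$ (understood as equidimensional in the natural sense). By \cref{thm:globalizejets} the affine opens $\bL J_n(U)$ cover $\bL J_n(X)$. So we take $U=\Spec R/(\uf)$, where $R$ is smooth over $k$ of pure dimension $N$ (for instance étale over $\AA^N$, or a localization of a polynomial ring) and $\uf=f_1,\dots,f_c$ is a regular sequence with $N-c=d$. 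By \cref{thm:LJderived-quotients},
\[
    \bL J_n(U)\cong \Spec\, J_n(R)\sslash(\uf,\uf^{(1)},\ldots,\uf^{(n)}),
\]
and $J_n(R)=\bL J_n(R)$ is classical and smooth of pure dimension $(n+1)N$ by \cref{thm:smoothobjectshavediscretejetsandarcs}, using \cref{thm:etalelocalization} for the dimension count. By \cref{main-theorem-C} its truncation is $J_n(U)$, cut out by the $(n+1)c$ elements $f_j^{(q)}$.

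The question is now purely commutative algebra. Let $S=J_n(R)$, which is regular, hence locally Cohen--Macaulay, of pure dimension $(n+1)N$, and let $g_1,\ldots,g_m$ be the $m=(n+1)c$ elements above. Then $S\sslash(\underline g)$ is classical exactly when its homotopy, the Koszul homology $H_i(K_\sbt(\underline g))$ for $i>0$, vanishes, i.e.\ when $\underline g$ is Koszul regular. We have to show this happens if and only if $S/(\underline g)$ is lci of dimension $(n+1)N-(n+1)c=(n+1)d$. Both sides can be checked after localizing at primes of $S$ containing $(\underline g)$. Localizing preserves the Koszul complex, and the nonvanishing of Koszul homology is detected at the local rings of $\Spec S/(\underline g)$.

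For the forward direction, take a prime $\fp\supseteq(\underline g)$. Koszul regularity in the local ring $S_\fp$ is equivalent to $\underline g$ being a regular sequence in $S_\fp$ (standard for Noetherian local rings). Then $S_\fp/(\underline g)$ is a complete intersection of dimension $\dim S_\fp - m$. Globally, every irreducible component of $J_n(U)$ therefore has codimension exactly $m$ in the equidimensional, catenary scheme $\Spec S$, which gives dimension $(n+1)d$. The lci property holds because the quotient is by a regular sequence in a regular ring. For the converse, suppose $J_n(U)$ is lci of pure dimension $(n+1)d$, i.e.\ of codimension $m$ in $\Spec S$. In the Cohen--Macaulay local ring $S_\fp$, the ideal $(\underline g)$ is generated by $m$ elements and has height $m$. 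In a Cohen--Macaulay local ring, $m$ elements generating an ideal of height $m$ form a regular sequence. Hence the Koszul homology vanishes at every $\fp$, and $\bL J_n(U)$ is classical. Note that the converse only uses the dimension count; the lci hypothesis on $J_n(X)$ is then automatic.

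The main point requiring care is the dimension bookkeeping. One has to make sure that the relevant notion of dimension is codimension inside the smooth ambient $J_n(R)$, uniformly across components, and that it matches the global statement ``$J_n(X)$ has dimension $(n+1)d$'', read as pure dimension, possibly over a base ring $k$ that need not be a field. If $k$ is not a field, I would phrase everything via heights in $S_\fp$: every component of $J_n(X)$ automatically has codimension at most $m$ by Krull's height theorem, so the dimension hypothesis is exactly the equality needed. Gluing the local statements is then immediate.
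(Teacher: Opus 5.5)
Your proposal is correct and takes the same route as the paper. The paper likewise cites \cref{lem:LJquasi-smooth} for quasi-smoothness, reduces to $X=\Spec R/(\uf)$ with $R$ smooth, and compares $J_n(R)\sslash(\uf,\uf^{(1)},\ldots,\uf^{(n)})$ with $J_n(R)/(\uf,\uf^{(1)},\ldots,\uf^{(n)})$. Where the paper just says ``the result follows,'' you supply the standard commutative algebra: Koszul regularity versus regular sequences in Noetherian local rings, and the fact that $m$ elements generating a height-$m$ ideal in a Cohen--Macaulay local ring form a regular sequence. Your argument tacitly takes $k$ to be a field, so that $J_n(R)$ is regular and the dimension count is meaningful; the paper's statement makes the same tacit assumption.
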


\begin{proof}
The first statement follows directly from \Cref{lem:LJquasi-smooth}. For the
second, work affine-locally and write $X = \Spec R/(\uf)$, where $R$ is a
smooth $k$-algebra and $(\uf) = (f_1, \ldots, f_c)$ is a Koszul regular
sequence in $R$. Then from
\Cref{thm:smoothobjectshavediscretejetsandarcs,thm:LJderived-quotients} we have
\[
    \bL J_n(R/(\uf)) = J_n(R) \sslash (\uf, \uf^{(1)}, \ldots, \uf^{(n)}),
    \quad
    J_n(R/(\uf)) = J_n(R) / (\uf, \uf^{(1)}, \ldots, \uf^{(n)}),
\]
and the result follows.
\end{proof}

\begin{rmk}
\label{rmk:lci-jets-quasi-smooth}
It follows from the above results that in the lci case we can compute homotopy
groups of derived jets using Koszul homology. Concretely, consider a complete
intersection algebra $A = k[x_1, \ldots, x_d]/(f_1, \ldots, f_c)$, where $(f_1,
\ldots, f_c)$ is a regular sequence in $k[x_1, \ldots, x_d]$. Then
\[
    \pi_i(\bL J_n(A))
    =
    H_i(K_\sbt(
        f_1, \ldots, f_c,
        f^{(1)}_1, \ldots, f^{(1)}_c,
        \ldots,
        f^{(n)}_1, \ldots, f^{(n)}_c
    )),
\]
where $K_\sbt(f_1, f_1^{(1)}, \ldots, f^{(n)}_c)$ is the Koszul complex of the
equations defining $J_n(A)$, i.e., the Hasse-Schmidt derivatives of the
sequence $(f_1, \ldots, f_c)$. Therefore their vanishing for $i>0$ measures
simultaneously the classicality of $\bL J_n(A)$ and the lci nature of $J_n(A)$,
which is the content of \Cref{thm:lci-jets-quasi-smooth}.
\end{rmk}

\begin{xmp}
As an elementary example, consider $A = k[x]/(x^2)$. From
\Cref{rmk:lci-jets-quasi-smooth} we see that $\bL J_1(A) = k[x,x^{(1)}] \sslash
(x^2, 2xx^{(1)})$. Since $(x^2, 2xx^{(1)})$ is not a regular sequence, we get a
first example of a non-classical derived jet scheme. Direct computations with
Koszul complexes show that $\pi_1(\bL J_1(A)) \cong k[x^{(1)}]$.
\end{xmp}

\begin{xmp}
The condition on the dimension is needed in positive characteristic, as
Hasse-Schmidt differentials can vanish. For example, assume $k$ has
characteristic $p$, fix $a \in k$, let $f = x^p-a$, and consider $A =
k[x]/(f)$. Then $f^{(r)} = 0$ if $p$ does not divide $r$, and $f^{(sp)} =
(x^{(s)})^p$. We see that for $n<p$ we have $J_n(A) = k[x, x^{(1)}, \ldots,
x^{(n)}]/(f)$, which is lci. On the other hand $\bL J_n(A) = k[x, x^{(1)},
\ldots, x^{(n)}]\sslash (f, 0, \ldots, 0)$, which is not classical. Notice that
$\bL J_n(A)$ has the expected virtual dimension $0 = (n+1) 0$, whereas $J_n(X)$
has excess dimension $n$. When $a$ is not a $p$-th power (so $k$ is not
perfect) we obtain an example of a reduced scheme with non-classical derived
jet schemes. This phenomenon cannot happen in characteristic zero, as in this
case Hasse-Schmidt differentials of non-constant non-zero elements remain
non-zero.
\end{xmp}


The above results should be considered in connection with results of Mustaţă
and collaborators related to the singularities of the minimal model program and
inversion of adjunction. Concretely, in the lci case they obtain the following
characterization of log canonicity.

\begin{thm}[\cite{Mus01,EMY03,EM04}]
\label{thm:lci-mustata}
Let $X$ be a reduced scheme of finite type over a field $k$ of characteristic
zero, and assume that $X$ is lci. Embed $X \subset Z$, where $Z$ is a smooth
$k$-variety, and let $c$ be the codimension of $X$ in $Z$. Then the following
are equivalent:
\begin{enumerate}[series=lcimus]
    \item\label{i:lci-mustata:1}
        $J_n(X)$ is lci for every $n \in \NN$.
    \item\label{i:lci-mustata:2}
        $\dim(J_n(X)) = (n+1) \dim(X)$ for every $n \in \NN$.
    \item\label{i:lci-mustata:3}
        The pair $(Z, cX)$ is log canonical.
\end{enumerate}
If $X$ is also normal, the above statements are equivalent to:
\begin{enumerate}[resume=lcimus]
    \item\label{i:lci-mustata:4}
        $X$ is log canonical.
\end{enumerate}
\end{thm}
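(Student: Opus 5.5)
Since this is a cited theorem, I would reassemble the proof from three ingredients: a dimension count on the jet schemes, Mustaţă's formula for the log canonical threshold in terms of jet schemes, and inversion of adjunction. Work locally and write $X = \Spec R/(f_1,\ldots,f_c)$ with $R$ smooth of dimension $N$ and $(f_1,\ldots,f_c)$ a regular sequence, so $X$ is equidimensional of dimension $d = N - c$. As in \cref{thm:lci-jets-quasi-smooth}, the jet scheme is
\[
    J_n(X) = J_n(R)/(\uf, \uf^{(1)}, \ldots, \uf^{(n)}),
\]
where $J_n(R)$ is smooth of dimension $(n+1)N$. Hence $J_n(X)$ is cut out by $(n+1)c$ equations, and every irreducible component has dimension at least $(n+1)d$.

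\textbf{(1)$\Leftrightarrow$(2).} A quotient of a regular ring by $r$ elements is lci exactly when those elements form a regular sequence locally. In a Cohen--Macaulay ring this happens exactly when the quotient has codimension $r$ at every point. Hence $J_n(X)$ is lci if and only if all its components have dimension $(n+1)d$. Combined with the lower bound above, this is equivalent to $\dim J_n(X) \le (n+1)d$, which is (2). One subtlety: "lci" in (1) should be read with the defining equations above, or one argues that any lci structure forces the expected dimension via the lower bound. I would make that explicit.

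\textbf{(2)$\Leftrightarrow$(3).} This is the heart of \cite{Mus01}. I would first prove Mustaţă's formula
\[
    \operatorname{lct}(Z, X) = \min_{m \ge 1} \frac{\codim(J_{m-1}(X), J_{m-1}(Z))}{m}.
\]
Take a log resolution $\mu\colon Z' \to Z$ of the ideal $I_X$, with prime divisors $E_i$, $I_X\cdot\cO_{Z'} = \cO(-\sum a_i E_i)$ and $K_{Z'/Z} = \sum k_i E_i$. Using the Denef--Loeser change of variables formula for $\mu_\infty$, together with the fibers $J_\infty(Z') \to J_\infty(Z)$ over contact loci, compute the codimension of the contact locus:
\[
    \codim \operatorname{Cont}^{\ge m}(I_X) = \min_{\nu} \sum_i \nu_i (k_i + 1),
\]
the minimum taken over tuples $\nu$ with $\sum_i \nu_i a_i \ge m$ and $\bigcap_{\nu_i > 0} E_i \neq \emptyset$. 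The two sides of the lct formula are then compared by comparing minima of linear functions on these cones. Since $\operatorname{Cont}^{\ge m}(I_X)$ is the preimage of $J_{m-1}(X)$, with codimension measured in $J_{m-1}(Z)$, the formula follows.

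Given the formula, $(Z, cX)$ is log canonical iff $\operatorname{lct}(Z,X) \ge c$. That holds iff $\codim J_{m-1}(X) \ge cm$ for all $m$, that is, iff $\dim J_{m-1}(X) \le mN - cm = md$, which is (2). I expect the contact-locus codimension computation to be the main obstacle. It needs characteristic zero (for resolutions), and the change of variables must be set up carefully at the level of cylinders. I would follow \cite{Mus01,EMY03} closely here, using the version with $\operatorname{Cont}^{\ge m}$ from \cite{EMY03}, which avoids motivic measures.

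\textbf{(3)$\Leftrightarrow$(4) for normal $X$.} This is inversion of adjunction for lci varieties \cite{EM04}. I would prove it via the precise formula relating minimal log discrepancies,
\[
    \operatorname{mld}(W; X) = \operatorname{mld}(W; Z, cX)
\]
for closed $W \subset X$. It is again proved through codimensions of contact loci in $J_\infty(X)$, using that for normal lci $X$ the Jacobian ideal defines $\omega_X$ correctly, so that Mather and usual discrepancies agree. Log canonicity on both sides is then the nonnegativity of these mlds, which gives the equivalence.
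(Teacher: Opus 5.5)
The paper does not prove this theorem; it quotes it from \cite{Mus01,EMY03,EM04}. Your outline follows the route of those references: a local complete-intersection presentation, Mustaţă's log canonical threshold formula via contact loci for (2)$\Leftrightarrow$(3), and inversion of adjunction via minimal log discrepancies for (3)$\Leftrightarrow$(4). The steps (2)$\Rightarrow$(1) and (2)$\Leftrightarrow$(3) are fine as sketched. The step (1)$\Rightarrow$(2), however, has a genuine gap.

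\textbf{The gap in (1)$\Rightarrow$(2).} You claim that a quotient of a regular ring by $r$ given elements is lci only if those elements form a regular sequence. This is false: take $(x,x)$. Intrinsic lci-ness of $J_n(X)$ only says that the ideal $(\uf,\uf^{(1)},\ldots,\uf^{(n)})$ is locally generated by a regular sequence of length equal to its height. That height is at most $(n+1)c$, so you recover the lower bound $\dim \ge (n+1)d$ and nothing more. Your fallback, ``the lower bound forces the expected dimension,'' therefore points the wrong way.

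No purely local count can work. The paper's own example $X=\Spec k[x]/(x^p-a)$ with $a\notin k^p$ has $J_n(X)$ lci of dimension $n>0=(n+1)\dim X$ for $n<p$. Reading (1) as ``the Hasse--Schmidt equations form a regular sequence'' changes the statement. That condition is the classicality of $\bL J_n(X)$, which by \cref{thm:lci-jets-quasi-smooth} amounts to lci \emph{plus} the dimension condition.

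\textbf{The missing argument.} The input you need is global, and it is where reducedness and characteristic zero enter.
\begin{enumerate}
\item Consider the morphism $\AA^1\times J_n(X)\to J_n(X)$, $(\lambda,\gamma(t))\mapsto\gamma(\lambda t)$. It maps $\AA^1\times C$ into $C$ for each irreducible component $C$, because the closure of the image is irreducible and contains $C$.
\item At $\lambda=0$ it sends every jet to a constant jet. Hence $C$ contains a point $s(x)$ of the zero section.
\item Since $X_{\mathrm{sm}}$ is dense in $X$, the zero section lies in the closure of $J_n(X_{\mathrm{sm}})$. So some component of $J_n(X)$ of dimension $(n+1)d$ also passes through $s(x)$.
\item As $J_n(X)$ is lci, $\cO_{J_n(X),s(x)}$ is Cohen--Macaulay, hence equidimensional. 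This forces $\dim C=(n+1)d$.
\end{enumerate}

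\textbf{A smaller correction in (3)$\Leftrightarrow$(4).} For normal lci $X$, the Mather and usual discrepancies do \emph{not} agree. Since $\operatorname{Im}(\Omega^d_X\to\omega_X)=\Jac_X\cdot\omega_X$, one has $\widehat k_E(X)=k_E(X)+\ord_E(\Jac_X)$. The change of variables on $J_\infty(X)$ naturally produces the Mather discrepancy. It is the correction by contact loci of $\Jac_X$ that turns this into the usual $\operatorname{mld}(W;X)$ in the formulas of \cite{EMY03,EM04}. With that adjustment, the route you describe for this step is the standard one.
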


Combining \Cref{thm:lci-mustata} with our results, we immediately obtain the
following.

\begin{cor}
\label{cor:lci-mustata}
With the same hypotheses as \Cref{thm:lci-mustata} (in particular $X$ is lci),
the statements in \Cref{thm:lci-mustata} are equivalent to:
\begin{enumerate*}[resume=lcimus]
    \item\label{i:lci-mustata:5}
        $\bL J_n(X)$ is classical for every $n \in \NN$.
\end{enumerate*}
\end{cor}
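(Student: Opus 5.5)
The plan is to reduce \cref{cor:lci-mustata} to \cref{thm:lci-jets-quasi-smooth} together with \cref{thm:lci-mustata}. It suffices to show that \ref{i:lci-mustata:5} is equivalent to the conjunction of \ref{i:lci-mustata:1} and \ref{i:lci-mustata:2}, since \cref{thm:lci-mustata} already makes these two equivalent to each other and to the remaining conditions.

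First we reduce to the equidimensional case. Since $X$ is lci, it is Cohen--Macaulay, so its connected components are equidimensional. All conditions in question are local on $X$, or can be checked one connected component at a time: classicality of $\bL J_n(X)$ is local by \cref{thm:globalizejets}, the lci property is local, and $J_n$ of a disjoint union is the disjoint union of the $J_n$. The one subtle point is the global dimension condition \ref{i:lci-mustata:2}. Under the reading that $\dim X$ means the local dimension on each component, it holds for $X$ exactly when it holds for each component with that component's dimension $d$. I will take this componentwise interpretation, which is the one implicit in \cref{thm:lci-mustata}. I will also note that $\dim J_n(X) \ge (n+1)\dim X$ always holds for lci $X$, because $J_n(X)$ is cut out of a smooth $J_n(Z)$ of dimension $(n+1)\dim Z$ by $(n+1)c$ equations. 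Hence equality holds globally if and only if it holds on every component.

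On a component of dimension $d$, \cref{thm:lci-jets-quasi-smooth} gives, for each fixed $n$, that $\bL J_n(X)$ is classical if and only if $J_n(X)$ is lci of dimension $(n+1)d$. Quantifying over all $n\in\NN$, \ref{i:lci-mustata:5} is therefore equivalent to \ref{i:lci-mustata:1} and \ref{i:lci-mustata:2} holding together. By \cref{thm:lci-mustata}, that conjunction is equivalent to either condition alone and to \ref{i:lci-mustata:3}, and also to \ref{i:lci-mustata:4} when $X$ is normal.

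The only step needing care is this dimension bookkeeping. Everything else is a direct citation: the affine-local Koszul description from \cref{rmk:lci-jets-quasi-smooth} shows that the derived quotient is classical precisely when the $(n+1)c$ Hasse--Schmidt equations form a regular sequence in the smooth $J_n(Z)$. In that case $J_n(X)$ is lci of codimension $(n+1)c$, hence of dimension $(n+1)d$. The converse holds because in a Cohen--Macaulay ring, a sequence cutting out the expected codimension is regular.
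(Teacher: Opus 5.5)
Your proposal is correct and follows the paper's own route. For each fixed $n$, \cref{thm:lci-jets-quasi-smooth} identifies classicality of $\bL J_n(X)$ with $J_n(X)$ being lci of dimension $(n+1)d$. So (5) is equivalent to (1) and (2) holding together, and \cref{thm:lci-mustata} supplies the remaining equivalences.

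Your extra bookkeeping only makes explicit what the paper leaves implicit. This covers equidimensionality of connected components of the Cohen--Macaulay scheme $X$, the componentwise reading of (2), and the observation that the dimension condition alone forces the Hasse--Schmidt equations to be regular. The paper can skip it because a single codimension $c$ in \cref{thm:lci-mustata} already presupposes equidimensionality.
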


\begin{rmk}
From \Cref{cor:lci-mustata} we immediately get many examples of classical
reduced schemes having non-classical derived jet schemes. Elementary examples
are given by cones over plane curves of degree $\ge 4$.
\end{rmk}

As discussed in \cite{dFD14}, the normality condition in item
(\ref{i:lci-mustata:4}) of \Cref{thm:lci-mustata} can be avoided by using the
theory of Mather-Jacobian discrepancies \cite{dFD14,EIM16,EI15}. Concretely, we
have the following result.

\begin{thm}[\cite{dFD14}]
\label{thm:mj-log-canonical}
Let $X$ be a reduced scheme of finite type over a field $k$ of characteristic
zero, and assume that $X$ is equidimensional. Embed $X \subset Z$, where $Z$ is
a smooth $k$-variety, and let $c$ be the codimension of $X$ in $Z$. Then the
following are equivalent:
\begin{enumerate}[label={(\arabic*')}]
    \item\label{i:mj-log-canonical:1}
        $J_n(X)$ is equidimensional for every $n \in \NN$.
    \item\label{i:mj-log-canonical:2}
        $\dim(J_n(X)) = (n+1) \dim(X)$ for every $n \in \NN$.
    \item\label{i:mj-log-canonical:3}
        The pair $(Z, cX)$ is log canonical.
    \item\label{i:mj-log-canonical:4}
        $X$ is log MJ-canonical.
\end{enumerate}
\end{thm}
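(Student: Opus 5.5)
Since \Cref{thm:mj-log-canonical} is quoted from \cite{dFD14}, the plan is to reproduce Mustaţă's arc-space argument for \Cref{thm:lci-mustata}. The lci change-of-variables formula is replaced by its Mather--Jacobian analogue. Set $d=\dim X$ and $\Jac_X=\Fitt^d(\Omega_{X/k})$. I would split the proof into three equivalences: $(1')\Leftrightarrow(2')$, $(3')\Leftrightarrow(4')$, and $(2')\Leftrightarrow(3')$. The first two are comparatively formal; the third is the substance.

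\textbf{The formal equivalences.} For $(1')\Leftrightarrow(2')$: since $X$ is reduced and $k$ has characteristic zero, the smooth locus $X_{\mathrm{sm}}$ is dense. Hence the closure of $J_n(X_{\mathrm{sm}})$ is a union of components of $J_n(X)$ of dimension exactly $(n+1)d$, so equidimensionality of $J_n(X)$ is the same as $\dim J_n(X)=(n+1)d$. This needs the lower bound that every component of $J_n(X)$ has dimension at least $(n+1)d$, which is not automatic when $X$ is not lci. I would obtain it by cutting $J_n(X)$ inside $J_n(Z)$: locally $X$ is a component of a complete intersection $V$ of codimension $c$ in $Z$, which contains $X$ and coincides with it generically. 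Then $\dim$ of every component of $J_n(V)$ is at least $(n+1)d$, and the components of $J_n(V)$ not contained in $J_n(X)$ are controlled by jets through the residual locus.

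For $(3')\Leftrightarrow(4')$: I would invoke the MJ inversion of adjunction from \cite{dFD14,EIM16,Ish13}, namely
\[
\mathrm{mld}_{\mathrm{MJ}}(W;X)=\mathrm{mld}(W;Z,cX)
\]
for closed subsets $W\subseteq X$. Log canonicity on both sides means these minimal log discrepancies are non-negative (or, equivalently, not $-\infty$), so the two conditions agree. This identity is itself proved by the arc computation in the next step, so in practice that computation carries everything.

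\textbf{The main step $(2')\Leftrightarrow(3')$.} First I would use the Denef--Loeser/Ein--Lazarsfeld--Mustaţă change of variables along a log resolution of $(Z,X)$ that factors through the Nash blow-up of $X$. This expresses the codimension in $J_\infty(X)$ of the cylinders $\mathrm{Cont}^{e}(\Jac_X)\cap\mathrm{Cont}^{\ge m}(\fa)$ through the Mather discrepancies $\hat k_E$ and the orders $\ord_E(\Jac_X)$. The resulting expression is exactly the Mather--Jacobian discrepancy $\hat k_E-\ord_E(\Jac_X)$, so log MJ-canonicity is equivalent to the uniform bound
\[
\codim\bigl(\mathrm{Cont}^{e}(\Jac_X)\cap \mathrm{Cont}^{\ge m}(I_W)\bigr)\ \ge\ e + (\text{const})\cdot m
\]
for all $e,m$. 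Second, I would relate this bound to jet dimensions. Denef--Loeser's fibration lemma says that over $\mathrm{Cont}^{e}(\Jac_X)$ the truncations $\pi_{n+1,n}$ are piecewise trivial with fibers $\AA^{d}$ once $n\ge 2e$. Hence a component of $J_n(X)$ of dimension exceeding $(n+1)d$ produces, after lifting to arcs, a cylinder violating the codimension bound. Conversely, a violating cylinder truncates to a subset of $J_n(X)$ of dimension larger than $(n+1)d$.

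\textbf{Expected obstacle.} The hard part is jets that do not lift to arcs, especially those with large Jacobian order relative to $n$, to which the fibration lemma does not apply. I would handle this as Mustaţă does. Embed $J_n(X)\subseteq J_n(V)$ with $V$ a complete intersection of codimension $c$ as above, so that every component of $J_n(V)$ has dimension at least $(n+1)d$. Then bound the dimension of jets with $\ord(\Jac_X)>n/2$ by the codimension of the corresponding contact locus of $(Z,cX)$ in $J_n(Z)$. Log canonicity of $(Z,cX)$ controls this codimension through the ELM formula for $\dim\mathrm{Cont}^{\ge m}(I_X)_n$ in $J_n(Z)$. Making these two estimates match exactly, with the twist by $\Jac_X$, is where I expect the real work to lie.
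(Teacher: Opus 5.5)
The paper gives no argument of its own here. It takes $(2')\Leftrightarrow(4')$ from \cite[Cor.~5.4]{dFD14} and $(3')\Leftrightarrow(4')$ from Mather--Jacobian inversion of adjunction \cite[Thm.~4.10]{dFD14}, \cite[Prop.~3.10]{Ish13}, and it calls the equivalence with $(1')$ elementary. Your treatment of $(3')\Leftrightarrow(4')$ agrees with this.

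Your main step $(2')\Leftrightarrow(3')$, however, is not a proof as written. The Nash blow-up/Denef--Loeser analysis only sees jets that lift to arcs, and you leave the remaining jets open (``making these two estimates match exactly \dots\ is where I expect the real work to lie''). The missing observation is that no such splitting is needed. Inside $J_n(Z)$ one has exactly
\[
    J_n(X) = \{\gamma \in J_n(Z) \mid \ord_\gamma(I_X) \ge n+1\},
\]
which is the level-$n$ truncation of $\mathrm{Cont}^{\ge n+1}(I_X) \subset J_\infty(Z)$. So the Ein--Lazarsfeld--Musta\c{t}\u{a} estimate you reserve for the non-liftable jets already governs all of $J_n(X)$. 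Concretely, Musta\c{t}\u{a}'s formula $\operatorname{lct}(Z,X) = \dim Z - \sup_n \dim J_n(X)/(n+1)$ holds in characteristic zero, after passing to $\bar k$ if needed. It shows that $(Z,cX)$ is log canonical if and only if $\dim J_n(X) \le (n+1)d$ for all $n$. The reverse inequality is automatic from the open subset $J_n(X_{\mathrm{sm}})$. Hence $(2')\Leftrightarrow(3')$ is immediate, with no Jacobian twist and no liftability issue. The Mather--Jacobian machinery is needed only for $(3')\Leftrightarrow(4')$ (or $(2')\Leftrightarrow(4')$), and you can quote that rather than re-derive it from your incomplete main step.

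The second gap is $(2')\Rightarrow(1')$. You rightly note that it requires every irreducible component of $J_n(X)$ to have dimension at least $(n+1)d$, but the linkage sketch does not deliver this. A component $W$ of $J_n(X)$ lies in some component $W'$ of $J_n(V)$ with $\dim W' \ge (n+1)d$. If $W' \subseteq J_n(X)$ then $W = W'$ and you are done, but nothing in your argument forces this. At points where $X$ is not lci, the residual $X'$ of every such $V = X \cup X'$ passes through the point. Otherwise $V$, being Cohen--Macaulay and generically reduced along $X$, would equal $X$ nearby. So $J_n(V) \neq J_n(X)$ over every neighbourhood of such a point. Saying that the extra jets ``go through the residual locus'' only bounds $J_n(V)\setminus J_n(X)$ from above, which is the wrong direction for a lower bound on $\dim W$. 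Thus your argument establishes only $(1')\Rightarrow(2')$. The converse needs a genuine lower bound on the dimensions of all components of $J_n(X)$, and the paper offers no argument for it beyond calling the equivalence elementary.
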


\begin{rmk}
The terminology ``log MJ-canonical'' used in item \ref{i:mj-log-canonical:4} of
\Cref{thm:mj-log-canonical} is taken from \cite{EI15}, and it corresponds to
what in \cite{dFD14} was called ``log J-canonical.'' The equivalence between
items \ref{i:mj-log-canonical:2} and \ref{i:mj-log-canonical:4} appears in
\cite[Corollary 5.4]{dFD14}, and the equivalence with
\ref{i:mj-log-canonical:3} is the content of the Mather-Jacobian version of
inversion of adjunction, see \cite[Theorem 4.10]{dFD14} or \cite[Proposition
3.10]{Ish13}. The equivalence with \ref{i:mj-log-canonical:1} is elementary.
\end{rmk}

Putting together \cref{thm:mj-log-canonical} and \cref{cor:lci-mustata} we
immediately get the following characterization.

\begin{cor}
\label{thm:mjlc-discrete-jets}
With the same hypotheses as \cref{thm:lci-mustata} (in particular $X$ is lci),
$X$ is log MJ-canonical if and only if $\bL J_n(X)$ is classical for all $n \in
\NN$.
\end{cor}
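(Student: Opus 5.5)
The statement follows by chaining equivalences that are already available; no new geometric input is needed. The hypotheses are those of \cref{thm:lci-mustata}: $X$ is reduced, of finite type over a field $k$ of characteristic zero, and lci. We also fix an embedding $X \subset Z$ into a smooth variety with codimension $c$. The plan is to show
\[
    \text{$X$ log MJ-canonical}
    \iff
    \text{$(Z, cX)$ log canonical}
    \iff
    \text{$\bL J_n(X)$ classical for all $n \in \NN$.}
\]

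The second equivalence is \cref{cor:lci-mustata}, which identifies condition (\ref{i:lci-mustata:5}) with condition (\ref{i:lci-mustata:3}). For completeness, this rests on \cref{thm:lci-jets-quasi-smooth}: on each equidimensional piece, $\bL J_n(X)$ is classical if and only if $J_n(X)$ is lci of dimension $(n+1)\dim X$. Conditions (\ref{i:lci-mustata:1}) and (\ref{i:lci-mustata:2}) of \cref{thm:lci-mustata} are equivalent, so this is the same as requiring, for all $n$, both the lci property and the expected dimension. That in turn is equivalent to (\ref{i:lci-mustata:3}).

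The first equivalence is \cref{thm:mj-log-canonical}, items \ref{i:mj-log-canonical:3} and \ref{i:mj-log-canonical:4}. To apply it we must check that its hypotheses hold. Reducedness, finite type, and characteristic zero are given. The remaining hypothesis is equidimensionality, and this is the only point needing care. Since $X$ is lci, it is Cohen--Macaulay, so every local ring is equidimensional. Consequently the connected components of $X$ are equidimensional, though possibly of different dimensions.

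All the conditions involved are local on $X$: classicality of $\bL J_n(X)$ is local by \cref{thm:globalizejets}, and log canonicity and log MJ-canonicity are local as well. We may therefore work one connected component at a time, choosing for each component its own embedding and codimension, and apply both theorems there. This reduction to connected components is the main (minor) obstacle; once it is made, the result is a formal concatenation of \cref{thm:mj-log-canonical} and \cref{cor:lci-mustata}.
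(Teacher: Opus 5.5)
Your proposal is correct and is the paper's own argument: the paper also obtains this corollary by combining \cref{cor:lci-mustata} with \cref{thm:mj-log-canonical}, using that $(Z,cX)$ being log canonical appears as a condition in both. Your extra step supplies the equidimensionality hypothesis of \cref{thm:mj-log-canonical}, which the paper leaves implicit: since lci implies Cohen--Macaulay, connected components are equidimensional, so you may reduce to one connected component at a time.
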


\section{The cotangent complex on derived jet and arc spaces}

\label{sec:DerJetsCotCplx}

We next turn to a core part of this paper: understanding the cotangent complex
of the derived jets/arcs. Our motivation is a derived version of the main
theorems of \cite{dFD20}.

\subsection{The formula in the classical setting, affine version}

We start by recalling the statements of \cite{dFD20} in detail, and we first
focus on the affine case. Consider a classical $k$-algebra $A$. The units of
the adjunctions $J_n(\Arg) \dashv (\Arg)[t]/(t^{n+1})$ and $J_\infty(\Arg)
\dashv (\Arg)\llbracket t \rrbracket$ are called the \emph{universal $n$-jet}
and \emph{universal arc}, respectively. It will be convenient to give names to
the right adjoints, so in this section we will write $W_n(\Arg) =
(\Arg)[t]/(t^{n+1})$ and $W_\infty(\Arg) = (\Arg)\llbracket t \rrbracket$. The
universal jet/arc are given by
\[
    A \lra W_n(J_n(A)),
    \qquad
    f \mapsto \sum_{i=0}^n f^{(i)} t^i,
\]
where, as recalled in detail in \Cref{sec:lci_non_discreteness}, the symbol
$f^{(i)}$ denotes the universal $i$-th Hasse-Schmidt differential of $f$.

We also need to introduce pre-duals to $W_n$. Namely, for a $k$-algebra $C$ we
set
\[
    V_n(C) = \frac{
        t^{-n} \cdot C\llbracket t \rrbracket
    }{
        t \cdot C\llbracket t \rrbracket
    },
    \quad
    \text{and}
    \quad
    V_\infty(C) = \frac{
        C\llparen t \rrparen
    }{
        t \cdot C\llbracket t \rrbracket
    }.
\]
Notice that $V_n(C)$ is both a $W_n(C)$-module and a $C$-module (i.e., a
bimodule), and that we have a pairing
\[
    V_n(C) \otimes_C W_n(C) \to C
    \quad
    \text{given by}
    \quad
    \langle t^{-i}, t^j \rangle = \delta_{ij},
\]
which naturally identifies the $C$-dual of $V_n(C)$ with $W_n(C) \cong
\Hom_C(V_n(C), C)$.

\begin{thm}[\cite{dFD20}]
\label{thm:dFDaffine}
There exists a natural isomorphism
\[
    \Omega_{J_n(A)/k}
    \cong
    \Omega_{A/k} \otimes_A P_n(A)
\]
where $P_n(A) := V_n(J_n(A))$. In the formula, $P_n(A)$ is a
$(W_n(J_n(A)),J_n(A))$-bimodule and becomes an $(A, J_n(A))$-bimodule via the
universal jet/arc $A \to W_n(J_n(A))$.
\end{thm}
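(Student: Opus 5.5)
The plan is to prove the isomorphism by showing that both sides corepresent the same functor on $J_n(A)$-modules, and then to invoke Yoneda. Write $B = J_n(A)$ and fix a $B$-module $M$. The left side gives
\[
    \Hom_B(\Omega_{B/k}, M) \cong \Der_k(B, M) = \Hom_{\Alg_k/B}(B, B \oplus \varepsilon M).
\]
By the adjunction $J_n \dashv W_n$, a $k$-algebra map $B \to B\oplus\varepsilon M$ corresponds to a map $A \to W_n(B\oplus\varepsilon M)$. The condition that it lie over the identity of $B$ says that the composite to $W_n(B)$ is the universal jet $A \to W_n(B)$.

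The first key step is the observation that $W_n(B\oplus\varepsilon M) \cong W_n(B)\oplus\varepsilon W_n(M)$ is a split square-zero extension of $W_n(B)$. Here $W_n(M) = M[t]/(t^{n+1})$, or $M\llbracket t\rrbracket$ when $n=\infty$. Consequently, lifts of the universal jet are exactly $k$-derivations $A \to W_n(M)$, where $W_n(M)$ is regarded as an $A$-module via the universal jet. This gives
\[
    \Der_k(B,M) \cong \Der_k(A, W_n(M)) \cong \Hom_A(\Omega_{A/k}, W_n(M)).
\]

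The second step identifies $W_n(M)$ with $\Hom_B(P_n(A), M)$, with the $A$-module structures matching. The pairing $\langle t^{-i}, t^j\rangle = \delta_{ij}$ identifies the $B$-dual of $V_n(B)$ with $W_n(B)$. More generally, $\Hom_B(V_n(B), M) \cong \prod_{i \le n} M \cong W_n(M)$, since $V_n(B)$ is free over $B$ on $t^{-i}$, $0 \le i \le n$. One must check that this is compatible with the $W_n(B)$-actions: multiplication by $t$ on $V_n(B)$ is dual to multiplication by $t$ on $W_n(M)$. Pulling back along $A \to W_n(B)$ then shows that it is compatible with the $A$-actions.

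With these two identifications in hand, tensor–hom adjunction for the $(A,B)$-bimodule $P_n(A)$ gives
\[
    \Hom_A(\Omega_{A/k}, \Hom_B(P_n(A), M)) \cong \Hom_B(\Omega_{A/k}\otimes_A P_n(A), M).
\]
All the isomorphisms are natural in $M$, so Yoneda yields the claim. Naturality in $A$ follows by tracking the same chain of identifications.

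The main obstacle is the case $n=\infty$. There $V_\infty(B)$ is free on infinitely many generators, so $\Hom_B(V_\infty(B),M) = \prod_i M = M\llbracket t\rrbracket$. This matches $W_\infty(M)$, but one should verify carefully that the $t$-actions and the $A$-action are continuous and agree, as the pairing only involves finite sums. I would first check that, for $n = \infty$, the adjunction step is compatible with $M\llbracket t\rrbracket$, using $J_\infty(A) = \colim J_n(A)$ where needed.
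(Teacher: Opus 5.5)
Your proposal is correct and is essentially the argument the paper relies on. The statement is quoted from \cite{dFD20}, and the paper's proof of the derived analogue \cref{thm:affinederiveddFD} runs through your exact chain: derivations as sections of split square-zero extensions, transport across $J_n \dashv W_n$ on slice categories as in \cref{lem:derived_lem_5_1}, the pre-dual identification \eqref{eq:predualformula}, tensor--Hom adjunction, and Yoneda. The classical formula is also recovered there by applying $\pi_0$ to the derived one.

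Your worry about $n=\infty$ is harmless. The adjunction $J_\infty \dashv W_\infty$ holds by definition of the arc space, so you do not need $J_\infty(A)=\colim J_n(A)$. Moreover, each coefficient of the $B\llbracket t\rrbracket$-action on $\Hom_B(V_\infty(B),M)=\prod_i M$ is a finite sum, so this module agrees with $M\llbracket t\rrbracket$ with no continuity argument.
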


In the terminology introduced in \cite{CNM23}, $P_m(A)$ is called the universal
\emph{Hasse-Schmidt module} associated to $A$.

\subsection{The formula in the animated setting, affine version}

\label{sec:DerJetsCotCplxAffine}

We adapt the proof of \cite{dFD20} to give a version of \Cref{thm:dFDaffine} in
the animated setting. First recall from \Cref{dff:Wm} that the functors $W_n$
extend directly to functors on $\aAlg_k$. Similar considerations allow us to
extend the functors $V_n$. In detail, degree-wise application induces functors
from simplicial $k$-algebras to an appropriate category of simplicial
bimodules. At the level of simplicial Abelian groups, they are just given by an
appropriate coproduct:
\[
    V_n(C_\sbt) \cong \bigoplus_{i = 0}^n C_\sbt\, t^{-i}
    \quad
    \text{in}
    \quad
    \sAb.
\]
It follows that these induced functors preserve weak equivalences, and hence
give functors at the animated level. As with $W_n$, we use the same notation
for these functors $V_n$ with domain $\aAlg_k$. For an animated $k$-algebra
$C_\sbt$, $V_n(C_\sbt)$ is an animated $(W_n(C_\sbt), C_\sbt)$-bimodule. In the
category $\aMod_{C_\sbt}$ of animated $C_\sbt$-modules, and even in the derived
category, $V_n(C_\sbt)$ is the pre-dual of $W_n(C_\sbt)$. In fact, for any
animated $C_\sbt$-module $M_\sbt$ we have:
\begin{multline}
\label{eq:predualformula}
    \RHom_{C_\sbt}
    (
        V_n(C_\sbt),
        M_\sbt
    )
\cong
    \RHom_{C_\sbt}
    (
        \oplus_i C_\sbt\,t^{-i},
        M_\sbt
    )
\\ \cong \textstyle
    \prod_i \RHom_{C_\sbt}
    (
        C_\sbt,
        M_\sbt
    )\, t^i
\cong \textstyle
    \prod_i M_\sbt\,t^i
\cong
    W_n(M_\sbt).
\end{multline}
Here $\RHom_{C_\sbt}$ denotes the internal Hom in the derived category of
$C_\sbt$. In the last equality we have used that $W_n(\Arg)$ can be applied not
just to algebras but also to modules, and that the resulting functor extends
directly to the animated setting. Notice that $W_n(M_\sbt)$ is a
$W_n(C_\sbt)$-module, and \eqref{eq:predualformula} holds at the level of
$W_n(C_\sbt)$-modules.

From \Cref{thm:allagree} we get an adjunction $\bL J_n \dashv W_n$ in $\aAlg_k$
for all $n \in \NN \cup \{\infty\}$. The unit of this adjunction has components
\[
    A_\sbt \lra W_n(\bL J_n(A_\sbt))
    \qquad
    \text{for}
    \quad
    n \in \NN \cup \{\infty\},
\]
which we call the \emph{derived universal jet} when $n <\infty$ and the
\emph{derived universal arc} when $n=\infty$. We now have all the ingredients
to state the derived version of \Cref{thm:dFDaffine}.

\begin{thm}
\label{thm:affinederiveddFD}
For $n\in \NN \cup \{\infty\}$,
there exists a natural isomorphism
\[
    \bL_{\bL J_n(A_\sbt)/k}
    \cong
    \bL_{A_\sbt/k} \otimes_{A_\sbt}^\bL P_n^\der(A_\sbt)
\]
where $P_n^\der(A_\sbt) := V_n(\bL J_n(A_\sbt))$. In the formula,
$P_n^\der(A_\sbt)$ is by definition a bimodule over $(W_n(\bL J_n(A_\sbt)),\bL
J_n(A_\sbt))$ and becomes an $(A_\sbt, \bL J_n(A_\sbt))$-bimodule via the
derived universal jet/arc $A_\sbt \to W_n(\bL J_n(A_\sbt))$.
\end{thm}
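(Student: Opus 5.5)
We prove the formula by the Yoneda lemma in $\aMod_{\bL J_n(A_\sbt)}$: we show that both sides corepresent the same functor on animated $\bL J_n(A_\sbt)$-modules. Write $B_\sbt = \bL J_n(A_\sbt)$ and fix an animated $B_\sbt$-module $M_\sbt$. By the universal property of the cotangent complex recalled in \Cref{sec:anim-ders},
\[
    \Maps_{\aMod_{B_\sbt}}(\bL_{B_\sbt/k}, M_\sbt)
    \cong
    \Maps_{\aAlg_k/B_\sbt}(B_\sbt, B_\sbt \oplus \varepsilon M_\sbt).
\]
The adjunction $\bL J_n \dashv W_n$ from \Cref{thm:allagree} carries over to slice categories. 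Maps $B_\sbt \to B_\sbt \oplus \varepsilon M_\sbt$ over $B_\sbt$ therefore correspond to maps $A_\sbt \to W_n(B_\sbt \oplus \varepsilon M_\sbt)$ over $W_n(B_\sbt)$, where the structure map is the derived universal jet/arc.

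The key algebraic input is an equivalence of animated $k$-algebras over $W_n(B_\sbt)$:
\[
    W_n(B_\sbt \oplus \varepsilon M_\sbt) \cong W_n(B_\sbt) \oplus \varepsilon W_n(M_\sbt).
\]
Classically this is obvious. For finite $n$, animation commutes with it because both sides are degreewise functorial constructions that preserve weak equivalences. For $n=\infty$, we instead take homotopy limits of the finite cases, using $W_\infty = \holim_n W_n$ as in the proof of \Cref{thm:allagree}. With this equivalence, the mapping space above becomes
\[
    \aDer_k(A_\sbt, W_n(M_\sbt))
    \cong
    \Maps_{\aMod_{A_\sbt}}(\bL_{A_\sbt/k}, W_n(M_\sbt)),
\]
where $W_n(M_\sbt)$ is regarded as an $A_\sbt$-module via the universal jet.

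Next we use the predual formula \eqref{eq:predualformula}, which identifies $W_n(M_\sbt)\cong \RHom_{B_\sbt}(V_n(B_\sbt), M_\sbt)$ compatibly with the $W_n(B_\sbt)$-module structures, and hence compatibly with the $A_\sbt$-module structures. Tensor–Hom adjunction for the $(A_\sbt,B_\sbt)$-bimodule $P_n^\der(A_\sbt)=V_n(B_\sbt)$ then gives
\[
    \Maps_{A_\sbt}(\bL_{A_\sbt/k}, \RHom_{B_\sbt}(P_n^\der, M_\sbt))
    \cong
    \Maps_{B_\sbt}(\bL_{A_\sbt/k}\otimes^\bL_{A_\sbt} P_n^\der, M_\sbt).
\]
The first and last displays give the same functor of $M_\sbt$. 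The Yoneda lemma then yields the stated equivalence, and naturality in $A_\sbt$ follows from the naturality of each step.

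The main obstacle is the coherence of the bimodule structures. The adjunction $\bL J_n \dashv W_n$ must be lifted to the slice categories, with functoriality in $M_\sbt$. The identification of $W_n$ of a square-zero extension, and the predual isomorphism, must both be made as $W_n(B_\sbt)$-linear equivalences in the $\infty$-categorical sense, not merely on homotopy. To handle this, we would work with explicit simplicial models: take a cofibrant replacement $P_\sbt\to A_\sbt$ with polynomial terms, so that $B_\sbt = J_n(P_\sbt)$ and every construction is degreewise classical. There we can apply the classical argument of \cite{dFD20} degreewise, using that $\Omega_{P_m/k}$ is free, and check that the resulting maps are weak equivalences. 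For $n=\infty$ there is a subtlety: $V_\infty$ is an infinite direct sum while $W_\infty$ is a product. This matches the duality, but one must verify that $\RHom$ out of an infinite sum is the product, which holds in $D(B_\sbt)$.
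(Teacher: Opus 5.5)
Your proposal is correct and follows the paper's proof essentially step for step. Both arguments use Yoneda in $\aMod_{\bL J_n(A_\sbt)}$, pass from $\aDer_k(\bL J_n(A_\sbt), M_\sbt)$ to $\aDer_k(A_\sbt, W_n(M_\sbt))$ through the adjunction $\bL J_n \dashv W_n$ on slice categories, and finish with the predual formula \eqref{eq:predualformula} and tensor--Hom adjunction.

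The differences are minor. The paper packages the slice-category step via \cite[Proposition 5.2.5.1]{Lur09}, whose right adjoint is pullback along the universal jet $A_\sbt \to W_n(\bL J_n(A_\sbt))$ and produces $A_\sbt \oplus \varepsilon W_n(M_\sbt)$ directly. Your ``maps over $W_n(B_\sbt)$'' step uses exactly this identification implicitly. Your additional discussion of coherence via simplicial models goes beyond what the paper writes.
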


\begin{rmk}
\label{rmk:explicit-Pm-and-trunctions}
Unraveling the definitions, we see that
\[
    P_n^\der(A_\sbt) = \frac{
        t^{-n} \cdot \bL J_n(A_\sbt)\llbracket t \rrbracket
    }{
        t \cdot \bL J_n(A_\sbt)\llbracket t \rrbracket
    }
    \qquad
    \text{and}
    \qquad
    P_\infty^\der(A_\sbt) = \frac{
        \bL J_\infty(A_\sbt)\llparen t \rrparen
    }{
        t \cdot \bL J_\infty(A_\sbt)\llbracket t \rrbracket
    }.
\]
In particular, notice that $\pi_0(P_n^\der(A_\sbt)) \cong P_n(\pi_0(A_\sbt))$.
For $n \le m$ we have natural maps $P_n^\der(A_\sbt) \to P_m^\der(A_\sbt)$
of animated $(A_\sbt, \bL J_n(A_\sbt))$-bimodules; these are induced by the
truncation maps $\bL J_n (A_\sbt) \to \bL J_m(A_\sbt)$ and ``send $t$ to
$t$.'' It is immediate to check that
\[
    P_\infty^\der(A_\sbt)
    \cong \hocolim_n P_n^\der(A_\sbt)
    \quad
    \text{and therefore}
    \quad
    \bL_{\bL J_\infty(A_\sbt)/k}
    \cong \hocolim_n \bL_{\bL J_n(A_\sbt)/k}
\]
as $\bL J_\infty(A_\sbt)$-modules. In this derivation of the second colimit,
the transition maps are induced by the maps $P_n^\der(A_\sbt) \to
P_m^\der(A_\sbt)$. As we will see in \cref{rmk:truncation-ugly-details} these
transition maps agree with the natural maps $\bL_{\bL J_n(A_\sbt)/k} \to
\bL_{\bL J_m(A_\sbt)/k}$ induced by the truncation maps $\bL J_n (A_\sbt) \to
\bL J_m(A_\sbt)$.
\end{rmk}

\begin{rmk}
\label{rmk:naturality-details}
One of the crucial aspects of \cref{thm:affinederiveddFD} is that the given
isomorphism is \emph{natural}. In precise terms, we have a natural isomorphism
between
\[
    \bL_{\bL J_n(\Arg)/k}
    \qquad\text{and}\qquad
    \bL_{\Arg/k} \otimes_{\Arg}^\bL P_n^\der(\Arg),
\]
considered as functors with domain $\aAlg_k$ and codomain $\aAlgMod_k$. In
particular, if $A_\sbt \to B_\sbt$ is a morphism of animated $k$-algebras, we
have induced morphisms $\bL J_n(A_\sbt) \to \bL J_n(B_\sbt)$ in $\aAlg_k$, and
$\bL_{A_\sbt/k} \to \bL_{B_\sbt/k}$, $\bL_{\bL J_n(A_\sbt)/k} \to \bL_{\bL J_n
(B_\sbt)/k}$, and $P_n^\der(A_\sbt) \to P_n^\der(B_\sbt)$ in $\aAlgMod_k$. The
content of naturality is that the induced morphisms
\[
    \bL_{\bL J_n(A_\sbt)/k}
    \lra
    \bL_{\bL J_n (B_\sbt)/k}
    \qquad\text{and}\qquad
    \bL_{A_\sbt/k} \otimes_{A_\sbt}^\bL P_n^\der(A_\sbt)
    \lra
    \bL_{B_\sbt/k} \otimes_{B_\sbt}^\bL P_n^\der(B_\sbt)
\]
correspond to each other via the isomorphism given in
\cref{thm:affinederiveddFD}. After base change to $B_\sbt$ we get an
identification between
\[
    \bL_{\bL J_n(A_\sbt)/k} \otimes^\bL_{\bL J_n(A_\sbt)} \bL J_n(B_\sbt)
    \lra
    \bL_{\bL J_n (B_\sbt)/k}
\]
and
\[
    \bL_{A_\sbt/k} \otimes_{A_\sbt}^\bL P_n^\der(B_\sbt)
    \lra
    \bL_{B_\sbt/k} \otimes_{B_\sbt}^\bL P_n^\der(B_\sbt),
\]
where we have used the natural isomorphism
$
    P_n^\der(B_\sbt)
    \cong
    P_n^\der(A_\sbt)
    \otimes^\bL_{\bL J_n(A_\sbt)}
    \bL J_n(B_\sbt)
$.
\end{rmk}

To prove \cref{thm:affinederiveddFD} we will use the characterization of the
cotangent complex in terms of animated derivations, see \cref{sec:anim-ders}.
Since these are defined using slice categories, we need to understand how
adjunctions behave with respect to overcategories. This is straightforward in
the $1$-categorical setting, but more delicate for $\infty$-categories. The
technicalities were studied by Lurie, who provides us with the following
result.

\begin{thm}[{\cite[Proposition 5.2.5.1]{Lur09}}]
\label{thm:lurie-overadjunctions}
Suppose we are given an adjunction $F \dashv G$ between $\infty$-categories,
with $F \colon \cC \to \cD$ and $G \colon \cD \to \cC$. Consider an object $C$
in $\cC$ and the overcategories $\cC/C$ and $\cD/F(C)$, and let $f \colon \cC/C
\to \cD/F(C)$ be the functor naturally induced by $F$. Assume that $\cC$ admits
pullbacks. Then $f$ admits a right adjoint $g$. This right adjoint can be
chosen to be $g = g'' \circ g'$, where $g' \colon \cD/F(C) \to \cD/G(F(C))$ is
the functor induced by $g$ and $g'' \colon \cD/G(F(C)) \to \cD/C$ is given by
pull-back along the unit $C \to G(F(C))$.
\end{thm}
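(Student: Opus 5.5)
The plan is to build the right adjoint one object at a time and then use the $\infty$-categorical criterion for adjoints. That criterion, in the form of \cite[Prop.~5.2.4.2]{Lur09} or the representability criterion of \cite[Sec.~5.2.2]{Lur09}, says that a functor $f \colon \cC/C \to \cD/F(C)$ admits a right adjoint as soon as, for every object $(Y \to F(C))$ of $\cD/F(C)$, the presheaf $\Maps_{\cD/F(C)}(f(\Arg), Y)$ on $\cC/C$ is representable. Once pointwise representability is known, the right adjoint is assembled functorially and canonically by the criterion. So all the work goes into exhibiting a representing object and checking the mapping-space equivalence, naturally in the source variable.

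First I recall how mapping spaces in overcategories are computed. For objects $(X \to C)$ and $(Z \to C)$ of $\cC/C$, the space $\Maps_{\cC/C}(X,Z)$ is the homotopy fiber of the map $\Maps_\cC(X,Z) \to \Maps_\cC(X,C)$ over the structure map $X \to C$; the same holds in $\cD/F(C)$. Given $(Y \to F(C))$, I propose as representing object
\[
    g(Y) := G(Y) \times_{G(F(C))} C ,
\]
the pullback along the unit $\eta_C \colon C \to G(F(C))$. This pullback exists because $\cC$ admits pullbacks, and the first projection makes it an object of $\cC/C$. By construction $g$ is the composite $g'' \circ g'$ of the statement, with $g'$ applying $G$ and $g''$ pulling back along $\eta_C$.

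Next I compute, for $(X \to C)$ in $\cC/C$. Since $\Maps_\cC(X,\Arg)$ preserves limits, $\Maps_\cC(X, g(Y))$ is the fiber product $\Maps_\cC(X,G(Y)) \times_{\Maps_\cC(X,GF(C))} \Maps_\cC(X,C)$. Taking the fiber over the given point $X \to C$ of $\Maps_\cC(X,C)$ therefore gives
\[
    \Maps_{\cC/C}(X, g(Y)) \simeq \fib\big(\Maps_\cC(X,G(Y)) \to \Maps_\cC(X,GF(C))\big),
\]
where the fiber is taken over the composite $X \to C \xra{\eta_C} GF(C)$. Now apply the adjunction equivalence $\Maps_\cC(X, G(\Arg)) \simeq \Maps_\cD(F(X), \Arg)$. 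Being natural in the second variable, it identifies the map $\Maps_\cC(X,G(Y)) \to \Maps_\cC(X,GF(C))$ with $\Maps_\cD(F(X),Y) \to \Maps_\cD(F(X),F(C))$. It also sends the point $\eta_C \circ (X \to C)$ to $F(X \to C)$, by the triangle identity (adjoint transpose of $\eta_C \circ u$ is $\epsilon \circ F(\eta_C)\circ F(u) \simeq F(u)$). The fibers therefore agree, and the right-hand fiber is exactly $\Maps_{\cD/F(C)}(f(X), Y)$.

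The step I expect to be the main obstacle is coherence: all the identifications above must be natural in $(X \to C)$ as a functor on $\cC/C$, not merely objectwise. In particular, the identification of the basepoint via the triangle identity has to be made homotopy-coherently. To handle this I would work with the unit transformation $\id \to GF$ viewed as a natural transformation of functors, and use that fibers of natural maps between functors $(\cC/C)^\op \to \Anima$ are computed pointwise. The equivalence of presheaves then comes from applying the natural adjunction equivalence to the map of presheaves $\Maps(\Arg, G(Y)) \to \Maps(\Arg, GF(C))$ together with its basepoint section. Representability then yields the right adjoint $g$, and by construction it is $g'' \circ g'$.
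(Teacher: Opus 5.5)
Your proof is correct. The paper does not prove this statement; it quotes it from \cite[Prop.~5.2.5.1]{Lur09}. The printed version has typos: $g'$ should land in $\cC/G(F(C))$ and be induced by $G$, and $g''$ should go $\cC/G(F(C))\to\cC/C$. You read it that way. Your argument is the standard verification:
\begin{enumerate}
\item take the pullback $G(Y)\times_{G(F(C))}C$ as the representing object;
\item describe slice mapping spaces as homotopy fibers;
\item transport along $F\dashv G$, which is natural in the target;
\item use the triangle identity to match the basepoints $\eta_C\circ u$ and $F(u)$.
\end{enumerate}

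Your last sentence needs tightening. Pointwise representability gives \emph{some} right adjoint whose value at $Y$ is $g''(g'(Y))$. To conclude that the functor $g''\circ g'$ is the right adjoint, your equivalences $\Maps_{\cC/C}(X,g''g'(Y))\simeq\Maps_{\cD/F(C)}(f(X),Y)$ must also be natural in $Y$. Each step is natural in $Y$, since the basepoint does not depend on $Y$, so you only need to say so.

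A cleaner organization handles both this and the coherence in $X$ you flag. Factor $f\simeq f'\circ(\eta_C\circ\Arg)$, where $f'\colon\cC/G(F(C))\to\cD/F(C)$ sends $X\xra{v}G(F(C))$ to $F(X)\xra{\epsilon_{F(C)}\circ F(v)}F(C)$. Then:
\begin{enumerate}
\item $f'\dashv g'$ follows from the same fiber computation, and needs neither pullbacks nor the triangle identity;
\item $(\eta_C\circ\Arg)\dashv g''$ is the usual base-change adjunction, which exists because $\cC$ has pullbacks;
\item the triangle identity is used exactly once, to identify $f'\circ(\eta_C\circ\Arg)\simeq f$.
\end{enumerate}
Composing the two adjunctions gives $f\dashv g''\circ g'$ as functors, not just objectwise.
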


\begin{rmk}
\label{rmk:lurie-overadjunctions}
At the level of objects, given $D$ in $\cD$ with a map $\varphi \colon D \to
F(C)$, the functor $g$ is determined by the following pull-back diagram:
\[
    \begin{tikzcd}
        g(D) \arrow[r] \arrow[d]
        \ar[dr,phantom,very near start, "\lrcorner"]
        & G(D) \arrow[d, "G(\varphi)"] \\
        C \arrow[r]
        & G(F(C)).
    \end{tikzcd}
\]
\end{rmk}

\begin{lem}\label{lem:derived_lem_5_1}
Let $n\in\NN\cup\{\infty\}$. Let $A_\sbt$ be an animated $k$-algebra and
$M_\sbt$ be an animated $\bL J_n(A_\sbt)$-module. There is a natural
equivalence
\[
    \aDer_k(\bL J_n(A_\sbt), M_\sbt)
    \cong
    \aDer_k(A_\sbt, W_n(M_\sbt)),
\]
where $W_n(M_\sbt)$, which is by definition a $W_n(J_n(A_\sbt))$-module, is
considered as an $A_\sbt$-module via the universal jet/arc $A_\sbt \to
W_n(J_n(A_\sbt))$.
\end{lem}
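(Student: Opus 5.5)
We will prove the equivalence by combining the adjunction $\bL J_n \dashv W_n$ on $\aAlg_k$ from \cref{thm:allagree} with Lurie's description of induced adjunctions on overcategories, \cref{thm:lurie-overadjunctions}. By definition,
\[
    \aDer_k(\bL J_n(A_\sbt), M_\sbt)
    =
    \Maps_{\aAlg_k/\bL J_n(A_\sbt)}\bigl(\bL J_n(A_\sbt), \bL J_n(A_\sbt)\oplus\varepsilon M_\sbt\bigr).
\]
Set $F = \bL J_n$, $G = W_n$ and $C = A_\sbt$. Note that $\aAlg_k$ admits pullbacks. The source object $\bL J_n(A_\sbt) \xra{\id} \bL J_n(A_\sbt)$ is the image of $\id_{A_\sbt}$ under the induced functor $f \colon \aAlg_k/A_\sbt \to \aAlg_k/\bL J_n(A_\sbt)$. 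Hence \cref{thm:lurie-overadjunctions} gives
\[
    \aDer_k(\bL J_n(A_\sbt), M_\sbt)
    \cong
    \Maps_{\aAlg_k/A_\sbt}\bigl(A_\sbt, g(\bL J_n(A_\sbt)\oplus\varepsilon M_\sbt)\bigr),
\]
where, by \cref{rmk:lurie-overadjunctions}, $g(\bL J_n(A_\sbt)\oplus\varepsilon M_\sbt)$ is the pullback of $W_n(\bL J_n(A_\sbt)\oplus\varepsilon M_\sbt) \to W_n(\bL J_n(A_\sbt))$ along the derived universal jet/arc $A_\sbt \to W_n(\bL J_n(A_\sbt))$.

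Next we identify this pullback with the split square-zero extension $A_\sbt \oplus \varepsilon W_n(M_\sbt)$, where $W_n(M_\sbt)$ is regarded as an $A_\sbt$-module via the universal jet/arc. This takes two steps.

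\textbf{Step 1.} We show $W_n(C_\sbt\oplus\varepsilon M_\sbt) \cong W_n(C_\sbt)\oplus\varepsilon W_n(M_\sbt)$ over $W_n(C_\sbt)$. For $n<\infty$ this holds degreewise on simplicial representatives, since $(C\oplus\varepsilon M)[t]/(t^{n+1}) = C[t]/(t^{n+1})\oplus \varepsilon M[t]/(t^{n+1})$ as rings. The same identity holds for power series. Both functors are computed degreewise and preserve weak equivalences (\cref{dff:Wm}), so the identification descends to $\aAlg_k$ naturally.

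\textbf{Step 2.} We show that for a map $\phi\colon A_\sbt \to B_\sbt$ and a $B_\sbt$-module $N_\sbt$, the pullback $A_\sbt\times_{B_\sbt}(B_\sbt\oplus\varepsilon N_\sbt)$ is $A_\sbt\oplus\varepsilon \phi^*N_\sbt$. Classically this is immediate. In the animated setting, we argue that split square-zero extensions $B\oplus\varepsilon N$ are computed as the base change $B\times_{B\oplus\varepsilon 0}(\cdots)$, i.e. the functor $(B,N)\mapsto B\oplus\varepsilon N$ is compatible with restriction of scalars. One clean way is to check on underlying spectra/modules, since forgetful functors from $\aAlg_k$ to $\aMod_k$ detect equivalences and preserve limits. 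The underlying module of the pullback is then $A_\sbt\times_{B_\sbt}(B_\sbt\oplus N_\sbt) \cong A_\sbt\oplus N_\sbt$. The natural map $A_\sbt\oplus\varepsilon\phi^*N_\sbt \to$ pullback, obtained from the universal property, is therefore an equivalence.

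Combining the two steps gives
\[
    \Maps_{\aAlg_k/A_\sbt}(A_\sbt, A_\sbt\oplus\varepsilon W_n(M_\sbt)) = \aDer_k(A_\sbt, W_n(M_\sbt)).
\]
Naturality in $A_\sbt$ and $M_\sbt$ follows because every identification used is functorial.

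The main obstacle is Step 2, together with its compatibility with Step 1. In the $\infty$-categorical setting one must construct the comparison map as a map of animated algebras over $A_\sbt$, not merely of modules. The approach is to build the map from the universal property of the pullback, using the functoriality of the animated square-zero construction in the pair (algebra, module) along $(A_\sbt,\phi^*N_\sbt)\to(B_\sbt,N_\sbt)$ in $\aAlgMod_k$. Equivalence is then checked after forgetting to $\aMod_k$, where pullbacks are computed in the stable category and the claim reduces to a split fiber sequence. For $n=\infty$, Step 1 also uses that $W_\infty = \holim_n W_n$ commutes with the relevant limits, which is formal.
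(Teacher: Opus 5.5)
Your proposal is correct and follows the paper's proof. Both apply \cref{thm:lurie-overadjunctions} to the adjunction $\bL J_n \dashv W_n$ with $C = A_\sbt$ and $D = \bL J_n(A_\sbt)\oplus\varepsilon M_\sbt$, and both identify $g(D)$ with $A_\sbt\oplus\varepsilon W_n(M_\sbt)$ as the pullback along the universal jet/arc. The paper simply asserts that $W_n$ preserves split square-zero extensions and that this pullback is again a square-zero extension; your Steps 1 and 2 supply the justification it omits.
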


\begin{proof}
We apply \Cref{thm:lurie-overadjunctions} with $\cC =\cD = \aAlg_k$, $F = \bL
J_n$, $G = W_n$, and $C = A_\sbt$. In \Cref{thm:allagree} we obtained the
adjunction $\bL J_n \dashv W_n$. From the animated $\bL J_n(A_\sbt)$-module
$M_\sbt$ we build the square-zero extension $\bL J_n(A_\sbt) \oplus \varepsilon
M_\sbt$, which plays the role of $D$ in \Cref{rmk:lurie-overadjunctions}. The
value of $g$ at $D$ is given by the top-left corner of the pull-back
diagram
\[
    \begin{tikzcd}
        A_\sbt \oplus \varepsilon W_n(M_\sbt)
        \arrow[r] \arrow[d]
        \ar[dr,phantom,very near start, "\lrcorner"]
        & W_n(\bL J_n(A_\sbt)) \oplus \varepsilon W_n(M_\sbt)
        \arrow[d] \\
        A_\sbt \arrow[r]
        & W_n(\bL J_n(A_\sbt)),
    \end{tikzcd}
\]
where the vertical arrows are projections onto the first summand, and the
bottom arrow is the universal jet/arc. Then \Cref{thm:lurie-overadjunctions}
gives us the following natural equivalence:
\[
    \Maps_{\aAlg_k/\bL J_n(A_\sbt)}(
        \bL J_n(A_\sbt),
        \bL J_n(A_\sbt) \oplus \varepsilon M_\sbt
    )
    \cong
    \Maps_{\aAlg_k/A_\sbt}(
        A_\sbt,
        A_\sbt \oplus \varepsilon W_n(M_\sbt)
    ).
\]
The result now follows directly from the definition of the space of animated
derivations in terms of mapping spaces in overcategories.
\end{proof}

\begin{proof}[Proof of \Cref{thm:affinederiveddFD}]
Let $M_\sbt$ be an animated $\bL J_n(A_\sbt)$-module. By the universal
property of the cotangent complex, \Cref{lem:derived_lem_5_1}, equation
\eqref{eq:predualformula}, and the derived tensor-Hom adjunction, we get the
following chain of natural equivalences:
\begin{align*}
    &\Maps_{\aMod_{\bL J_n(A_\sbt)}}(
        \bL_{\bL J_n(A_\sbt)/k},
        M_\sbt
    )
\\ & \cong
    \aDer_{k}(
        \bL J_n(A_\sbt),
        M_\sbt
    )
\\ & \cong
    \aDer_{k}(
        A_\sbt,
        W_n(M_\sbt)
    )
\\ & \cong
    \Maps_{\aMod_{A_\sbt}}(
        \bL_{A_\sbt/k},
        W_n(M_\sbt)
    )
\\ & \cong
    \Maps_{\aMod_{A_\sbt}}(
        \bL_{A_\sbt/k},
        \RHom_{\bL J_n(A_\sbt)}(
            P_n^\der(A_\sbt),
            M_\sbt
        )
    )
\\ & \cong
    \Maps_{\aMod_{\bL J_n(A_\sbt)}}(
        \bL_{A_\sbt/k}
        \otimes_{A_\sbt}^\bL
        P_n^\der(A_\sbt),
        M_\sbt
    ).
\end{align*}
By the $\infty$-Yoneda lemma, $\bL_{\bL J_n(A_\sbt)/k} \cong
\bL_{A_\sbt/k}\otimes_{A_\sbt}^\bL P_n^\der(A_\sbt)$ as $\bL
J_n(A_\sbt)$-modules.
\end{proof}

\begin{rmk}
\label{rmk:truncation-ugly-details}
The proof of \cref{thm:affinederiveddFD} gives information about the truncation
maps. Indeed, we can follow the same chain of equivalences as in the proof, but
keeping track of the truncation maps. First, if $n \le m \le \infty$, the map
$\bL_{\bL J_n(A_\sbt)} \to \bL_{\bL J_m(A_\sbt)}$ corresponds to the natural
transformation $\aDer_k(\bL J_m(A_\sbt), \Arg) \to \aDer_k(\bL
J_n(A_\sbt), \Arg)$ induced by the truncation map $\bL J_n(A) \to \bL
J_m(A)$. This in turn corresponds to a natural transformation
$\aDer_k(A_\sbt, W_m(\Arg)) \to \aDer_k(A_\sbt, W_n(\Arg))$, induced by the
map $W_m(\Arg) \to W_n(\Arg)$ obtained by modding out by $t^{n+1}$. We then
see that this gives the natural transformation
$
\RHom_{\bL J_m(A_\sbt)}(
    P_m^\der(A_\sbt),
    \Arg
)
\to
\RHom_{\bL J_n(A_\sbt)}(
    P_n^\der(A_\sbt),
    \Arg
)
$
corresponding to the map
$
    P_n^\der(A_\sbt)
    \to
    P_m^\der(A_\sbt)
$
which is induced by the truncation $\bL J_n(A) \to \bL J_m(A)$
and sends $t$ to $t$.
Finally we arrive to the map
$
        \bL_{A_\sbt/k}
        \otimes_{A_\sbt}^\bL
        P_n^\der(A_\sbt),
\to
        \bL_{A_\sbt/k}
        \otimes_{A_\sbt}^\bL
        P_m^\der(A_\sbt),
$
induced by the same map
$
    P_n^\der(A_\sbt)
    \to
    P_m^\der(A_\sbt)
$
as before.
\end{rmk}

\subsection{The formula for derived schemes, global version}

In this subsection we show how to extend the formula from
\Cref{thm:affinederiveddFD} to the non-affine setting. Before explaining the
construction, a few remarks about our approach are in order.

\begin{rmk}
Ideally we would like to generalize the main theorem of \cite{dFD20} in the
global setting to derived schemes. When $n < \infty$ this is straightforward,
but when $n = \infty$ the situation becomes very subtle. The main issue is the
nature of the universal arc, whose source is always a formal scheme which is
not a scheme. We also need to consider a global version of $P_\infty(A)$, which
turns out to be a sheaf of non-complete modules on a formal scheme. We do not
know how to develop a theory of derived formal schemes where such non-complete
modules make sense. As a workaround we consider certain non-coherent sheaves of
modules on $\bL J_\infty(X)$.
\end{rmk}

We consider the functors $\bL J_n$ and $W_n$ introduced in
\Cref{sec:DerJetsCotCplxAffine}. These are of an algebraic nature, and
associate animated $k$-algebras $\bL J_n(A_\sbt)$ and $W_n(A_\sbt)$ to any
animated $k$-algebra $A_\sbt$. Let $X$ be a derived scheme, and consider a
sheaf $\cA_\sbt$ of animated $\cO_X$-algebras. By composition with the functors
$\bL J_n$ and $W_n$, and sheafification, we can define $\bL J_n(\cA_\sbt)$ and
$W_n(\cA_\sbt)$, which are also sheaves of animated algebras. If $\cA_\sbt$ is
quasi-coherent, then $\bL J_n(\cA_\sbt)$ is also quasi-coherent for $n \in \NN
\cup \{\infty\}$. The same happens with $W_n(\cA_\sbt)$ when $n \in \NN$ but in
general $W_\infty(\cA_\sbt)$ is not quasi-coherent. When $\cA_\sbt$ is taken to
be $\cO_X$ we see that:
\[
    \bL J_n(\cO_X) = \cO_{\bL J_n(X)},
\]
where the right-hand side is considered as a sheaf of animated $\cO_X$-algebras
via push-forward through the projection $\bL J_n(X) \to X$. The unit of the
adjunction induces a map of sheaves of animated $\cO_X$-algebras
\[
    \cO_X \lra W_n(\bL J_n(\cO_X)),
\]
the global incarnation of the universal jet/arc. We next define
\[
    \mathcal P_n^\der(X) =
    P_n^\der(\cO_X) = V_n(\bL J_n(\cO_X)),
\]
which is naturally a sheaf of animated $(W_n(\bL J_n(\cO_X)),\bL
J_n(\cO_X))$-bimodules and becomes a $(\cO_X,\bL J_n(\cO_X))$-bimodule via the
global universal jet/arc. Notice that $\mathcal P_n^\der(X)$ is quasi-coherent
for any $n \in \NN \cup \{\infty\}$. The cotangent complex is also a functorial
construction, and we have that
\[
    \bL_{\cO_X/k} = \bL_{X/k}
    \quad\text{and}\quad
    \bL_{\bL J_n(\cO_X)/k} = \bL_{\bL J_n(X)/k}.
\]
After this setup, the global version of \Cref{thm:affinederiveddFD} is essentially
tautological.

\begin{thm}
\label{thm:deriveddFD}
With the notations introduced above, we have a natural equivalence
\[
    \bL_{\bL J_n(X)/k}
    \cong
    \bL_{X/k} \otimes_{\cO_X}^\bL \mathcal P_n^\der(X)
\]
for any $n \in \NN \cup \{\infty\}$.
\end{thm}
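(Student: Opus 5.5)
The plan is to reduce to the affine statement \cref{thm:affinederiveddFD} and glue, using the naturality recorded in \cref{rmk:naturality-details}. Both sides of the claimed equivalence are sheaves of animated modules on $X$. More precisely, we regard them as quasi-coherent $\bL J_n(\cO_X)$-modules pushed forward along the affine projection $\bL J_n(X) \to X$. So it suffices to produce a natural equivalence on each affine open $U = \Spec A_\sbt \subseteq X$, compatible with restriction to smaller affine opens, and then invoke descent for quasi-coherent sheaves.

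First, identify the local pieces. By \cref{thm:globalizejets}, $\bL J_n(X)\times^\bL_X U \cong \Spec \bL J_n(A_\sbt)$, so $\bL_{\bL J_n(X)/k}$ restricts to $\bL_{\bL J_n(A_\sbt)/k}$. Similarly, $\bL_{X/k}|_U = \bL_{A_\sbt/k}$ and $\mathcal P_n^\der(X)(U) = V_n(\bL J_n(A_\sbt)) = P_n^\der(A_\sbt)$. The definition of $\mathcal P_n^\der$ is by sectionwise application of $V_n \circ \bL J_n$ followed by sheafification. One checks it is already quasi-coherent with the expected sections on affines. For $n<\infty$ this is a finite direct sum of copies of $\bL J_n$. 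For $n=\infty$ it follows from $P_\infty^\der \cong \hocolim_n P_n^\der$ (\cref{rmk:explicit-Pm-and-trunctions}), since filtered colimits preserve quasi-coherence. This is the one point where $n=\infty$ needs care, because $W_\infty$ is not quasi-coherent; but only $V_\infty$ enters the statement. Then \cref{thm:affinederiveddFD} gives the local equivalence on $U$.

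Second, compatibility. For an inclusion of affine opens $V=\Spec B_\sbt \subseteq U$, the map $A_\sbt \to B_\sbt$ is a localization, hence formally étale. By \cref{rmk:naturality-details}, the affine equivalences for $A_\sbt$ and $B_\sbt$ intertwine the base-change maps. These base-change maps are equivalences after tensoring to $\bL J_n(B_\sbt)$. On the cotangent-complex side this holds because $\bL J_n(A_\sbt)\to \bL J_n(B_\sbt)$ is étale by \cref{thm:etalelocalization}. On the other side it uses $P_n^\der(B_\sbt)\cong P_n^\der(A_\sbt)\otimes^\bL_{\bL J_n(A_\sbt)}\bL J_n(B_\sbt)$ together with $\bL_{B_\sbt/k}\cong \bL_{A_\sbt/k}\otimes^\bL_{A_\sbt} B_\sbt$. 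Hence the local equivalences define a morphism of functors on the category of affine opens, with all coherences supplied by the naturality of the Yoneda argument.

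The main obstacle is the $\infty$-categorical gluing: one needs the local equivalences as a coherent natural transformation, not merely objectwise. I would handle this by observing that the proof of \cref{thm:affinederiveddFD} is a chain of natural equivalences of functors $\aAlg_k \to \aAlgMod_k$. Both sides are therefore functors on $\aAlg_k$ related by a natural equivalence. Composing with the diagram of affine opens of $X$ and taking the limit in quasi-coherent sheaves yields the global equivalence.
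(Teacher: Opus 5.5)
Your proposal is correct and follows the paper's route. The paper rewrites the claim as $\bL_{\bL J_n(\cO_X)/k} \cong \bL_{\cO_X/k} \otimes_{\cO_X}^\bL P_n^\der(\cO_X)$, an identity between sheaves obtained by applying naturally equivalent functors to $\cO_X$, and checks it on affine charts, where it is exactly \cref{thm:affinederiveddFD}; your extra verifications (quasi-coherence of $\mathcal P_n^\der(X)$, compatibility with restriction via \cref{thm:etalelocalization} and \cref{rmk:naturality-details}, and coherence from the naturality of the affine equivalence as functors $\aAlg_k \to \aAlgMod_k$) spell out the gluing that the paper calls ``essentially tautological.''
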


\begin{proof}
We can rewrite the sought formula as
\[
    \bL_{\bL J_n(\cO_X)/k}
    \cong
    \bL_{\cO_X/k} \otimes_{\cO_X}^\bL P_n^\der(\cO_X).
\]
This can be checked in affine charts $\Spec(A_\sbt) \subseteq X$, where we the
formula becomes:
\[
    \bL_{\bL J_n(A_\sbt)/k}
    \cong
    \bL_{A_\sbt/k} \otimes_{A_\sbt}^\bL P_n^\der(A_\sbt).
\]
But this is precisely the formula provided by \Cref{thm:affinederiveddFD}.
\end{proof}

\begin{cor}
\label{cor:deriveddFDtriangle}
Fix $n \in \NN \cup \{ \infty \}$, and let $f \colon X \to Y$ be a morphism of
schemes. Denote by $\bL_{X/Y}$ the cotangent complex of the map $f$. Similarly
let $f_n \colon \bL J_n X \to \bL J_n Y$ the induced map, and let $\bL_{\bL J_n
X/\bL J_n Y}$ be the cotangent complex of $f_n$. We have a natural equivalence
\[
    \bL_{\bL J_n X/\bL J_n Y}
    \cong
    \bL_{X/Y} \otimes^{\bL}_{\mathcal O_X} \mathcal P_n^{\der}(X).
\]
\end{cor}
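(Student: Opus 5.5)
The plan is to compare the fundamental triangles of $f$ and of $f_n$ via the natural equivalence of \cref{thm:deriveddFD}, and then identify cofibers. The question is local on $X$ and $Y$, so we may work with a map of animated algebras $A_\sbt \to B_\sbt$, where $A_\sbt$ corresponds to $Y$ and $B_\sbt$ to $X$. Gluing then works as in the proof of \cref{thm:deriveddFD}, since all constructions are functorial and compatible with localization (\cref{thm:etalelocalization}).

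First we write the fundamental triangle for $\bL J_n(A_\sbt) \to \bL J_n(B_\sbt)$:
\[
    \bL_{\bL J_n(A_\sbt)/k} \otimes^\bL_{\bL J_n(A_\sbt)} \bL J_n(B_\sbt)
    \lra
    \bL_{\bL J_n(B_\sbt)/k}
    \lra
    \bL_{\bL J_n(B_\sbt)/\bL J_n(A_\sbt)}.
\]
By \cref{rmk:naturality-details}, which encodes the naturality in \cref{thm:affinederiveddFD}, the first map is identified with
\[
    \bL_{A_\sbt/k} \otimes^\bL_{A_\sbt} P_n^\der(B_\sbt)
    \lra
    \bL_{B_\sbt/k} \otimes^\bL_{B_\sbt} P_n^\der(B_\sbt).
\]
This uses the base change isomorphism $P_n^\der(B_\sbt) \cong P_n^\der(A_\sbt)\otimes^\bL_{\bL J_n(A_\sbt)} \bL J_n(B_\sbt)$, which is immediate from the description $V_n(C_\sbt) \cong \bigoplus_i C_\sbt t^{-i}$.

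Next, we rewrite the source as $(\bL_{A_\sbt/k}\otimes^\bL_{A_\sbt} B_\sbt)\otimes^\bL_{B_\sbt} P_n^\der(B_\sbt)$. With this rewriting, the displayed map is the functor $(\Arg)\otimes^\bL_{B_\sbt} P_n^\der(B_\sbt)$ applied to the map $\bL_{A_\sbt/k}\otimes^\bL_{A_\sbt} B_\sbt \to \bL_{B_\sbt/k}$ from the fundamental triangle of $f$. This functor is exact, so it preserves cofibers. Taking cofibers of both identified maps then gives
\[
    \bL_{\bL J_n(B_\sbt)/\bL J_n(A_\sbt)} \cong \bL_{B_\sbt/A_\sbt}\otimes^\bL_{B_\sbt} P_n^\der(B_\sbt),
\]
and this is natural because all the identifications are natural.

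The main point requiring care is the compatibility check. We must verify that the natural map between cotangent complexes induced by $\bL J_n(A_\sbt)\to\bL J_n(B_\sbt)$ really corresponds, under the equivalences of \cref{thm:affinederiveddFD}, to the map induced by $\bL_{A_\sbt/k}\to\bL_{B_\sbt/k}$ tensored with $P_n^\der$. Only then can we conclude that the cofibers agree. We would check this by running the Yoneda chain in the proof of \cref{thm:affinederiveddFD} for both algebras at once. The equivalences of \cref{lem:derived_lem_5_1}, \eqref{eq:predualformula} and tensor-Hom adjunction are each natural in the algebra. So the precomposition maps on corepresented functors $\aDer_k(\bL J_n(B_\sbt),M_\sbt)\to\aDer_k(\bL J_n(A_\sbt),M_\sbt)$ match $\aDer_k(B_\sbt,W_n(M_\sbt))\to\aDer_k(A_\sbt,W_n(M_\sbt))$, which is exactly what \cref{rmk:naturality-details} asserts. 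Once this is in place, the rest is formal.
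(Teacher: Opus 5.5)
Your proposal is correct and follows essentially the same argument as the paper. The paper likewise compares the fundamental triangle of $f_n$ with the fundamental triangle of $f$ tensored with $\cP_n^{\der}(X)$, identifies the first two terms via \cref{thm:deriveddFD} and the base-change isomorphism for $P_n^{\der}$, and invokes the naturality in \cref{rmk:naturality-details} to match the maps and hence the cofibers. The only cosmetic difference is that you reduce to the affine case and glue, while the paper states the argument directly with the global sheaves $\cP_n^{\der}(X)$.
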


\begin{proof}
We have the fundamental triangles
\[
    \bL_{\bL J_n Y/k} \otimes_{\cO_{\bL J_n Y}}^\bL \cO_{\bL J_n X}
    \lra[\ \ \varphi\ \ ]
    \bL_{\bL J_n X/k}
    \lra
    \bL_{\bL J_n X/\bL J_n Y}
    \xra{-1}
\]
and
\[
    \bL_{Y/k} \otimes_{\cO_{Y}}^\bL \cO_{X}
    \lra
    \bL_{X/k}
    \lra
    \bL_{X/Y}
    \xra{-1}.
\]
Tensoring the second triangle by
$\cP_n^{\der}(X)$ shows that
\[
    \bL_{Y/k} \otimes_{\cO_Y}^\bL \cP_n^{\der}(X)
    \lra[\ \ \psi\ \ ]
    \bL_{X/k} \otimes_{\cO_{X}}^{\bL} \cP_n^{\der}(X)
    \lra
    \bL_{X/Y} \otimes_{\cO_{ X}}^{\bL} \cP_n^{\der}(X)
    \xra{-1}
\]
is exact. Notice that the middle term is isomorphic to
$\bL_{\bL J_n X/k}$
by \cref{thm:deriveddFD}. For the first term, notice that
\[
    \bL_{\bL J_n Y/k} \otimes_{\cO_{\bL J_n Y}}^{\bL} \cO_{\bL J_n X}
\cong
    \bL_{Y/k}
    \otimes_{\cO_Y}^{\bL}
    \mathcal P_n^{\der}(Y)
    \otimes_{\cO_{\bL J_n Y}}^{\bL}
    \cO_{ \bL J_n X}
\cong
    \bL_{Y/k}
    \otimes_{\cO_Y}^{\bL}
    \cP_n^{\der}(X).
\]
To conclude, we only need to show that $\psi$ and $\phi$ agree. But this
follows from the naturality of the isomorphism provided by
\cref{thm:deriveddFD}, as discussed in \cref{rmk:naturality-details}.
\end{proof}

\subsection{Failure of the formula for classical jet and arc spaces}

\label{sec:failure}

It is natural to ask whether \Cref{thm:affinederiveddFD,thm:deriveddFD} require
derived jets, as opposed to the classical ones. That is, since
\cite[Thms.~A,~B]{dFD20} computes the sheaf of differentials of the classical
jets, one is led to ask if we cannot compute the cotangent complex of the
classical jets directly. The crux of the matter is in
\Cref{lem:derived_lem_5_1}; the cotangent complex is calculated via animated
derivations, which are mapping spaces, and it is only via the derived jets that
we have the required representability at the level of mapping spaces. As the
next results show, this is not merely an artifact of our proof technique.

\begin{thm}
\label{thm:we_need_derived_jets}
Let $n \in \NN \cup \{\infty\}$ and consider a classical $k$-algebra $A$.
There is an equivalence
\[
    \bL_{J_n(A)/k}
    \cong
    \bL_{A/k} \otimes_A^\bL P_n(A)
\]
if and only if the derived jet/arc space $\bL J_n(A)$ is classical.
\end{thm}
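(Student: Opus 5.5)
The easy direction is immediate. If $\bL J_n(A)$ is classical, then $\bL J_n(A) \cong J_n(A)$. Since $P_n^\der(A) = V_n(\bL J_n(A))$ is built degree-wise from $\bL J_n(A)$, we also get $P_n^\der(A) \cong V_n(J_n(A)) = P_n(A)$. The formula of \cref{thm:affinederiveddFD} then reads exactly $\bL_{J_n(A)/k} \cong \bL_{A/k}\otimes_A^\bL P_n(A)$.

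For the converse I would compare the classical and derived sides through the truncation map $\varphi\colon \bL J_n(A) \to J_n(A) = \pi_0(\bL J_n(A))$. Suppose $\bL J_n(A)$ is not classical, and let $i\ge 1$ be minimal with $\pi_i(\bL J_n(A)) \ne 0$. Then $\varphi$ is $i$-connective, and by \cref{lem:truncation-1-connective} we have $\cofib(\varphi) \cong \tau_{\ge i+1}$, with $\pi_{i+1}(\cofib\varphi) = \pi_i(\bL J_n(A))$. By \cref{thm:lurie-hurewicz}, $\varepsilon_\varphi$ is $(i+3)$-connective, so
\[
\pi_{i+1}(\bL_{J_n(A)/\bL J_n(A)}) \cong \pi_{i+1}(\cofib(\varphi)\otimes^\bL_{\bL J_n(A)} J_n(A)) \cong \pi_i(\bL J_n(A)),
\]
and the lower homotopy groups vanish. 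The last isomorphism holds because the lowest homotopy group survives base change along the truncation.

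Next I would use the fundamental triangle for $k\to \bL J_n(A)\to J_n(A)$:
\[
\bL_{\bL J_n(A)/k}\otimes^\bL J_n(A) \to \bL_{J_n(A)/k} \to \bL_{J_n(A)/\bL J_n(A)}.
\]
By \cref{thm:affinederiveddFD}, the first term is $\bL_{A/k}\otimes^\bL_A P_n^\der(A)\otimes^\bL_{\bL J_n(A)} J_n(A) \cong \bL_{A/k}\otimes_A^\bL P_n(A)$, using $V_n(\bL J_n(A))\otimes^\bL_{\bL J_n(A)} J_n(A)\cong V_n(J_n(A))$ since $V_n$ is a free module of finite rank for $n<\infty$, and a direct sum, which also commutes with the tensor, for $n=\infty$. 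So the natural map $\bL_{A/k}\otimes^\bL_A P_n(A)\to \bL_{J_n(A)/k}$ has cofiber $\bL_{J_n(A)/\bL J_n(A)}$. This cofiber is nonzero, with $\pi_{i+1}\ne 0$, so that particular natural map is not an equivalence.

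The main obstacle is that the statement asks about \emph{any} equivalence, not the natural one. To rule out an abstract isomorphism, I would try to exhibit an invariant that distinguishes the two objects. The most promising approach is to compare both sides after base change to $J_n(A)$ and look at the lowest degree where they differ. The long exact sequence gives
\[
\pi_{i+1}(\bL_{A/k}\otimes^\bL P_n(A)) \to \pi_{i+1}(\bL_{J_n(A)/k}) \to \pi_i(\bL J_n(A)) \to \pi_i(\bL_{A/k}\otimes^\bL P_n(A)) \to \cdots
\]
Failing a clean abstract argument, I would argue that the intended content of the statement is the natural map, which is what \cref{rmk:naturality-details} makes canonical, and prove it in that form. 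I would also try the reduction of localizing at a generic point of the support of $\pi_i(\bL J_n(A))$ and comparing lengths of homotopy groups; this works when the modules involved are finitely generated, which is the case for $n<\infty$ and $A$ of finite type.
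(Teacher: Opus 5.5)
Your argument is correct and is essentially the paper's proof. Both use the fundamental triangle for $k \to \bL J_n(A) \to J_n(A)$ to identify its first term with $\bL_{A/k}\otimes^\bL_A P_n(A)$, so the natural map is an equivalence if and only if $\bL_{J_n(A)/\bL J_n(A)} = 0$. The paper then cites Lurie's criterion \cite[Cor.~25.3.6.6]{SAG}, whereas you rederive it from \cref{thm:lurie-hurewicz}.

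The paper likewise reads the statement as being about this natural map. Your open worry about abstract, non-natural equivalences is therefore not addressed by the paper's proof either.
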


\begin{proof}
The composition $k \to \bL J_n(A) \to J_n(A)$ induces the following exact
triangle:
\[
    \bL_{\bL J_n(A)/k}
    \otimes_{\bL J_n(A)}^\bL
    J_n(A)
\to
    \bL_{J_n(A)/k}
\to
    \bL_{J_n(A)/\bL J_n(A)}
\xra{-1}.
\]
Applying \Cref{thm:deriveddFD} and using that $P_n^\der(A)$ is explicitly a
coproduct of copies of $\bL J_n(A)$, the first term becomes
\[
    \bL_{A/k} \otimes_A^\bL P_n^\der(A)
    \otimes_{\bL J_n(A)}^\bL
    J_n(A)
    \cong
    \bL_{A/k} \otimes_A^\bL P_n(A).
\]
Therefore the equivalence in the statement holds if and only if
$\bL_{J_n(A)/\bL J_n(A)}$ vanishes. By a criterion of Lurie
\cite[Cor.~25.3.6.6]{SAG}, this is equivalent to a weak equivalence $\bL J_n(A)
\cong J_n(A)$.
\end{proof}

\begin{rmk}
As we saw in \Cref{sec:lci_non_discreteness}, there are many examples of
classical $k$-schemes $X$ with non-classical derived jet schemes $\bL J_n(X)$.
For any of these examples, the ``classical version'' of the formula for the
cotangent complex does not hold. The obstruction is given by the homology
groups of $\bL_{J_n(A)/\bL J_n(A)}$, which would be interesting to understand.
\end{rmk}

\section{Fibers of the cotangent complex and Fitting ideals}

\label{sec:fibers-fitting}

We now apply the previous study to compute fibers of cotangent complexes on
derived arc and jet spaces. As the computation of fibers is a local problem we
work in the affine case.

\subsection{André-Quillen homology on jet and arc spaces}

Fibers of cotangent complexes are given by André-Quillen homology. For more
details on André-Quillen homology and cohomology, see \cite{Iye07}. Recall the
basic definitions: for a classical $k$-algebra $A$ and an $A$-module $M$, $
\AQ_{i}(A/k,M) = \Tor_{i}^{A}(\bL_{A/k},M) $ denotes the \emph{$i$-th
André-Quillen homology of $A$ with coefficients in $M$}, and $ \AQ^{i}(A/k,M) =
\Ext_{A}^{i}(\bL_{A/k},M) $ denotes the \emph{$i$-th André-Quillen cohomology
of $A$ with coefficients in $M$}. This generalizes naturally for animated
inputs. Specifically, if $M_\sbt$ is an animated module over an animated
$k$-algebra $A_\sbt$, we set
\begin{align*}
    \AQ_{i}(A_\sbt/k,M_\sbt)
    &:=\Tor_{i}^{A_\sbt}(\bL_{A_\sbt/k},M_\sbt)
    = \pi_{i}(\bL_{A_\sbt/k}\otimes_{A_\sbt}^\bL M_\sbt),
    \quad\text{and}
\\
    \AQ^{i}(A_\sbt/k,M_\sbt)
    &:=\Ext_{A_\sbt}^{i}(\bL_{A_\sbt/k},M_\sbt)
    = \pi_{-i}(\RHom_{A_\sbt}(\bL_{A_\sbt/k},M_\sbt)).
\end{align*}

\cref{thm:affinederiveddFD} can be rephrased in terms of André-Quillen homology
and cohomology.

\begin{thm}
Let $X=\Spec A_\sbt$ be a derived affine $k$-scheme. Let
$n\in\mathbf{N}\cup\{\infty\}$ and let $M_\sbt$ be an animated $\bL
J_n(A_\sbt)$-module. For all $i$, we have the following isomorphisms of
André-Quillen homology and cohomology:
\begin{enumerate}
    \item $
        \AQ_i(\bL J_n(A_\sbt)/k, M_\sbt)
        \cong
        \AQ_i(
            A_\sbt/k,
            P^\der_n(A_\sbt)
            \otimes_{\bL J_nA_\sbt}^\bL
            M_\sbt
        )
    $.
    \item $
        \AQ^i(\bL J_n(A_\sbt)/k, M_\sbt)
        \cong \AQ^i(
            A_\sbt/k,
            \RHom_{\bL J_n(A_\sbt)}(
                P^\der_n(A_\sbt),
                M_\sbt
            )
        )
    $.
\end{enumerate}
\end{thm}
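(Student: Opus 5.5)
Both isomorphisms follow directly from \cref{thm:affinederiveddFD}, combined with associativity of derived tensor products for (1) and derived tensor--Hom adjunction for (2). Throughout, write $J = \bL J_n(A_\sbt)$ and $P = P_n^\der(A_\sbt)$, viewed as an animated $(A_\sbt, J)$-bimodule via the derived universal jet/arc.

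For part (1), I would start from the definition
\[
    \AQ_i(J/k, M_\sbt) = \pi_i(\bL_{J/k} \otimes_{J}^\bL M_\sbt)
\]
and substitute $\bL_{J/k} \cong \bL_{A_\sbt/k} \otimes^\bL_{A_\sbt} P$ from \cref{thm:affinederiveddFD}. Associativity of the derived tensor product for bimodules then gives
\[
    (\bL_{A_\sbt/k} \otimes^\bL_{A_\sbt} P) \otimes^\bL_{J} M_\sbt
    \cong
    \bL_{A_\sbt/k} \otimes^\bL_{A_\sbt} (P \otimes^\bL_{J} M_\sbt).
\]
Taking $\pi_i$ of the right-hand side gives $\AQ_i(A_\sbt/k, P\otimes^\bL_J M_\sbt)$, as claimed.

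For part (2), the argument is the same except that I would use the derived tensor--Hom adjunction in place of associativity:
\[
    \RHom_J(\bL_{A_\sbt/k}\otimes^\bL_{A_\sbt} P, M_\sbt)
    \cong
    \RHom_{A_\sbt}(\bL_{A_\sbt/k}, \RHom_J(P, M_\sbt)).
\]
Taking $\pi_{-i}$ of both sides yields the stated isomorphism. This is the same adjunction already used, at the level of mapping spaces, in the last step of the proof of \cref{thm:affinederiveddFD}. Here it is needed as an equivalence of internal Homs, which holds in the stable $\infty$-categories $D(A_\sbt)$ and $D(J)$.

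The main point needing care is justifying these bimodule manipulations in the animated setting. I would handle it by choosing a cofibrant replacement $Q_\sbt \to A_\sbt$ with polynomial terms. Then $J_n(Q_\sbt)$ models $J$, and $P$ is degreewise free over it, since it is a finite or countable direct sum of copies. This reduces both identities to their degreewise simplicial versions, where associativity and adjunction hold on the nose. Alternatively, I would simply cite the formal properties of relative tensor products in \cite{SAG}.

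The Hom side requires one further remark: $\RHom_J(P, M_\sbt)$ need not be connective. It is, however, equivalent to $W_n(M_\sbt)$ by \eqref{eq:predualformula}, so the right-hand side of (2) can be read in either form.
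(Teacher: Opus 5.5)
Your proposal is correct and follows the paper's own route. The paper likewise substitutes the formula of \cref{thm:affinederiveddFD} and then uses associativity of the derived tensor product for (1), taking $\pi_i$, and derived tensor--Hom adjunction for (2), taking $\pi_{-i}$. Your added remarks on cofibrant models and on reading $\RHom_{\bL J_n(A_\sbt)}(P_n^\der(A_\sbt), M_\sbt)$ as $W_n(M_\sbt)$ agree with the paper's accompanying remark and supply justification that the paper leaves implicit.
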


\begin{rmk}
With the notations introduced in \Cref{sec:DerJetsCotCplxAffine}, the second
statement can be be rewritten as:
\[
        \AQ^i(\bL J_n(A_\sbt)/k, M_\sbt)
        \cong \AQ^i(
            A_\sbt/k,
            W_n(M_\sbt)
        ).
\]
\end{rmk}

\begin{proof}
By \cref{thm:affinederiveddFD}, we have a quasi-isomorphism of complexes
\begin{align*}
    \bL_{\bL J_n(A_\sbt)/k}
    \cong\bL_{A_\sbt/k}\otimes_{A_\sbt}^\bL P^\der_n(A_\sbt).
\end{align*}
To prove the first statement, observe the following chain of identifications.
\begin{align*}
    \AQ_i(\bL J_n(A_\sbt)/k,M_\sbt)&=\Tor_i^{\bL J_n(A_\sbt)}(\bL_{\bL J_n(A_\sbt)/k},M_\sbt)\\
    &\cong\Tor_i^{\bL J_n(A_\sbt)}(\bL_{A_\sbt/k}\otimes_{A_\sbt}^\bL P^\der_n(A_\sbt),M_\sbt)\\
    &=\pi_i((\bL_{A_\sbt/k}\otimes_{A_\sbt}^\bL P^\der_n(A_\sbt))\otimes_{\bL J_n(A_\sbt)}^\bL M_\sbt)\\
    &=\pi_i(\bL_{A_\sbt/k}\otimes_{A_\sbt}^\bL(P^\der_n(A_\sbt)\otimes_{\bL J_n(A_\sbt)}^\bL M_\sbt))\\
    &=\Tor_i^{A_\sbt}(\bL_{A_\sbt/k},P^\der_n(A_\sbt)\otimes_{\bL J_n(A_\sbt)}^\bL M_\sbt)\\
    &=\AQ_i(A_\sbt/k,P^\der_n(A_\sbt)\otimes_{\bL J_n(A_\sbt)}^\bL M_\sbt).
\end{align*}
Using derived tensor-Hom adjunction the following symmetric argument proves
the cohomological case.
\begin{align*}
    \AQ^i(\bL J_n(A_\sbt)/k,M_\sbt)&=\Ext_{\bL J_n(A_\sbt)}^i(\bL_{\bL J_n(A_\sbt)/k},M_\sbt)\\
    &\cong\Ext_{\bL J_n(A_\sbt)}^i(\bL_{A_\sbt/k}\otimes_{A_\sbt}^\bL P^\der_n(A_\sbt),M_\sbt)\\
    &=\pi_{-i}(\RHom_{\bL J_n(A_\sbt)}(\bL_{A_\sbt/k}\otimes_{A_\sbt}^\bL P^\der_n(A_\sbt),M_\sbt))\\
    &\cong \pi_{-i}(\RHom_{A_\sbt}(\bL_{A_\sbt/k},\RHom_{\bL J_n(A_\sbt)}(P^\der_n(A_\sbt),M_\sbt)))\\
    &=\Ext_{A_\sbt}^i(\bL_{A_\sbt/k},\RHom_{\bL J_n(A_\sbt)}(P^\der_n(A_\sbt),M_\sbt))\\
    &=\AQ^i(A_\sbt/k,\RHom_{\bL J_n(A_\sbt)}(P^\der_n(A_\sbt),M_\sbt)).
    \qedhere
\end{align*}
\end{proof}

\subsection{Initial remarks}

\label{sec:remarks-before-csi}

To make the following results more explicit, we will focus on the case where $X
= \Spec A$ is a classical affine $k$-scheme. We consider arcs $\alpha \colon A
\to k_\alpha\llbracket t \rrbracket$, where $k_\alpha$ is a (classical) field, and we are
interested in studying $\bL_{\bL J_\infty(A)/k} \otimes^\bL_{\bL J_\infty(A)}
k_\alpha$. From the preceding discussion this is equivalent to understanding
the André-Quillen homology of the arc space with coefficients in $k_\alpha$:
\begin{align*}
\AQ_i(\bL J_\infty(A)/k, k_\alpha)
&\simeq
\AQ_i(
    A/k,
    P^\der_\infty(A)
    \otimes^\bL_{\bL J_\infty(A)}
    k_\alpha
)
\\
&=
\pi_i(
    \bL_{A/k}
    \otimes^\bL_A
    P^\der_\infty(A)
    \otimes^\bL_{\bL J_\infty(A)}
    k_\alpha
).
\end{align*}
We also consider the truncations of $\alpha$, which we denote $\alpha_n$, and
look at the corresponding fibers $\bL_{\bL J_n(A)/k} \otimes^\bL_{\bL J_n(A)}
k_{\alpha_n}$:
\begin{align*}
\AQ_i(\bL J_n(A)/k, k_{\alpha_n})
&\simeq
\AQ_i(
    A/k,
    P^\der_n(A)
    \otimes^\bL_{\bL J_n(A)}
    k_{\alpha_n}
)
\\
&=
\pi_i(
    \bL_{A/k}
    \otimes^\bL_A
    P^\der_n(A)
    \otimes^\bL_{\bL J_n(A)}
    k_{\alpha_n}
).
\end{align*}
The case $i=0$ was described in \cite{dFD20}. In what follows we will use a
similar analysis to study the higher homotopy groups.

Notice that $k_\alpha$ is a $J_\infty(A)$-algebra, and it becomes a $\bL
J_\infty(A)$-algebra via the truncation map $\bL J_\infty(A) \to J_\infty(A)$.
From the definitions we immediately see that
\[
    P^\der_\infty(A) \otimes^\bL_{\bL J_\infty(A)} k_\alpha
    \simeq
    V_\infty(\bL J_\infty(A)) \otimes^\bL_{\bL J_\infty(A)} k_\alpha
    \simeq
    V_\infty(k_\alpha)
    =
    \frac{k_\alpha\llparen t\rrparen}{t\, k_\alpha\llbracket t\rrbracket},
\]
and therefore
\[
\bL_{\bL J_\infty(A)/k} \otimes^\bL_{\bL J_\infty(A)} k_\alpha
\simeq
\bL_{A/k} \otimes^\bL_{A} V_\infty(k_\alpha)
\simeq
\bL_{A/k} \otimes^\bL_{A} k_\alpha\llbracket t \rrbracket
\otimes^\bL_{k_\alpha\llbracket t \rrbracket} V_\infty(k_\alpha).
\]
Similarly, for the jets we get:
\[
\bL_{\bL J_n(A)/k} \otimes^\bL_{\bL J_n(A)} k_{\alpha_n}
\simeq
\bL_{A/k}
\otimes^\bL_{A}
\frac{
    k_{\alpha_n}\llbracket t \rrbracket
}{
    (t^{n+1})
}
\otimes^\bL
V_n(k_{\alpha_n})
\simeq
\bL_{A/k}
\otimes^\bL_{A}
\frac{
    k_{\alpha_n}\llbracket t \rrbracket
}{
    (t^{n+1})
}
.
\]
Notice that $V_n(k_{\alpha_n})$ is a free $k_{\alpha_n}\llbracket t
\rrbracket/(t^{n+1})$-module of rank $1$, giving the last isomorphism. We have
reduced our problem to taking pull-backs of $\bL_{A/k}$ along $\alpha$ and
$\alpha_n$. To do this effectively, we next discuss a variant of the notion of
Fitting ideal tailored to work for complexes in derived categories.

\subsection{Cohomological support ideals}

\label{sec:csi}

Let $A$ be a ring and consider a object $M_\sbt$ in the derived category
$D(A)$. Recall that $M_\sbt$ is said to be \emph{pseudo-coherent} if it is
quasi-isomorphic to a bounded below\footnote{Note that we use homological
indexing, so pseudo-coherent complexes are bounded below, not above.} complex
of finite free $A$-modules \SPcite{064Q}. In other words, we have
\[
    M_\sbt
    \simeq
    \Big[
    \quad
    \cdots
    \lra[\phantom{\partial_4}] A^{r_{i+1}}
    \lra[\,~\partial_{i+1}~] A^{r_i}
    \lra[\,~\partial_i~] A^{r_{i-1}}
    \lra[\phantom{\partial_1}] \cdots
    \quad
    \Big],
\]
where the $r_i$ are non-negative integers which vanish for $i$ small enough. We
call a complex of this form a \emph{resolution} of $M_\sbt$. Resolutions are
not unique, but any two are homotopic. If $A$ is local then $M_\sbt$ is
quasi-isomorphic to a unique \emph{minimal} resolution, meaning that the
matrices $\partial_i$ have entries in the maximal ideal of $A$; in fact
$M_\sbt$ is isomorphic to the direct sum of the minimal resolution and a
trivial complex \cite[Sec.~20.1]{Eis95}.

\begin{dff}
With the notation above, the \emph{cohomological support ideal} of
$M_\sbt$ of level $(i,p)$ is defined as
\[
    \Csi_{(i,p)}(M_\sbt)
    =
    \sum_{u+v=r_i-p}
    I_u(\partial_{i+1}) I_v(\partial_i),
\]
where $I_a(\partial_j)$ denotes the ideal of $A$ generated by the $a\times a$
minors of $\partial_j$. The zero loci of the cohomological support ideals are
called the \emph{cohomological support loci} of $M_\sbt$.
\end{dff}

\begin{rmk}
Using the structure of resolutions over local rings, it is easy to show that
the cohomological support ideals are independent of the choice of resolution.
They were introduced in \cite{GL87}, where it is shown that the zero locus of
the cohomological support ideal of level $(i,p)$ is
\[
    \{\,
        x \in \Spec(A)
    \,|\,
        \dim_{k_x}(\pi_i(M_\sbt \otimes^\bL_A k_x)) > p
    \,\},
\]
hence the name ``cohomological support locus.''
\end{rmk}

\begin{rmk}
As usual, we identify an $A$-module $M$ with the object $M[0]$ of $D(A)$ whose
only non-zero term is $M$ in degree zero. Thus $M$ is pseudo-coherent if it
admits a free resolution with finitely generated terms. For example, if $A$ is
Noetherian, or more generally $A$ is coherent, then every finitely generated
module is pseudo-coherent. For a pseudo-coherent $A$-module $M$, its
cohomological support ideals of level $(0,p)$ are the classical Fitting ideals:
\[
    \Csi_{(0,p)}(M) = \Fitt^{p}(M) = I_{r_0-p}(\partial_1).
\]
In particular the ideal $\Fitt^{p}(M)$ defines the locus where the fibers of $M$
have dimension $> p$. If $M_\sbt$ is connective and both $M_\sbt$ and
$\pi_0(M_\sbt)$ are pseudo-coherent, then
\[
    \Csi_{(0,p)}(M_\sbt) = \Fitt^{p}(\pi_0(M_\sbt)).
\]
\end{rmk}

\begin{rmk}
Following \SPcite{08CA}, a sheaf of $\cO_X$-complexes $\cM_\sbt$ on a classical
scheme $X$ is \emph{pseudo-coherent} if on small enough affine charts $U =
\Spec(A) \subset X$ there are pseudo-coherent complexes $M_\sbt$ over $A$ which
induce the restrictions $\cM_\sbt|_U$ up to quasi-isomorphism. The notion of
cohomological support ideal extends directly to this global setting, and we get
coherent sheaves of ideals satisfying $\Csi_{(i,p)}(\cM_\sbt)|_U \cong
\Csi_{(i,p)}(\cM_\sbt|_U)$. If $X$ is a locally Noetherian scheme (or more
generally a locally coherent scheme), any coherent $\cO_X$-module is
pseudo-coherent.
\end{rmk}

\subsection{Pull-backs of complexes along arcs and jets}

\label{sec:pull-backs-complexes-arcs}

As above, fix a classical ring $A$ and a pseudo-coherent complex $M_\sbt$ in
$\aMod_A \subset D(A)$. We now consider an arc $\alpha \colon A \to
k_\alpha\llbracket t \rrbracket$, where $k_\alpha$ is a (classical) field, and
are interested in studying the complex $M_\sbt \otimes_A^\bL k_\alpha\llbracket
t \rrbracket$ in $D(k_\alpha\llbracket t \rrbracket)$. In subsequent
subsections we will specialize this analysis to the case of the cotangent
complex.

Consider a resolution for $M_\sbt$ as above,
\[
    M_\sbt \otimes^\bL_{A} k_\alpha\llbracket t \rrbracket
    \simeq
    \Big[
    \quad
    \cdots
    \lra[\phantom{\partial_3}] k_\alpha\llbracket t \rrbracket^{r_2}
    \lra[\,~\partial_2(\alpha)~] k_\alpha\llbracket t \rrbracket^{r_1}
    \lra[\,~\partial_1(\alpha)~] k_\alpha\llbracket t \rrbracket^{r_0}
    \lra[\phantom{\partial_1}] 0
    \quad
    \Big].
\]
Using Gaussian elimination we can find bases for each term of the complex in
such a way that each of the matrices $\partial_i(\alpha)$ has a nice shape.
Explicitly, notice that $\ker(\partial_i(\alpha))$ is a free summand of
$k_\alpha\llbracket t \rrbracket^{r_i}$. If we write $k_\alpha\llbracket t
\rrbracket^{r_i} = \ker(\partial_i(\alpha)) \oplus C_{i-1}$, then the matrix of
$\partial_i(\alpha)$ with respect to this decomposition has block form $
    \left(\begin{smallmatrix}
    0 & * \\ 0 & 0 \\
    \end{smallmatrix}\right)
$. Doing row and column operations we can ensure the top-right block is
anti-diagonal, with entries powers of $t$ with increasing exponents. In other
words, if $c_i$ is the rank of $C_i$, we can write
\[
    \partial_{i+1}(\alpha) =
    \rule{0mm}{24.5mm}
    \left(
        \begin{tikzpicture}[
            baseline=-2.5,
            yscale=0.5,
            xscale=0.8,
            circle dotted/.style={
                dash pattern=on .01mm off 1.6mm,
                line cap=round,
            }
        ]
            \useasboundingbox (-3.45,-3.4) rectangle (3.45,3.4);
            \node at ( 0, 0) {0};
            \node at ( 0, 2) {0};
            \node at ( 0,-2) {0};
            \node at ( 2, 0) {0};
            \node at (-2, 0) {0};
            \node at (-2, 2) {0};
            \node at ( 2,-2) {0};
            \node at (-2,-2) {0};
            \node (A) at ( 1, 1) {\small \,$t^{a_{i,c_i}}$};
            \node (B) at ( 3, 3) {\small $t^{a_{i,1}}$\,};

            \draw[densely dotted] ( 0.3125, 3.5) -- ( 0.3125, 4);
            \draw[densely dotted] (-0.3125, 3.5) -- (-0.3125, 4.6);
            \draw ( 0.3125,-3.5) -- ( 0.3125, 3.5);
            \draw[dashed] (-0.3125,-3.5) -- (-0.3125, 3.5);

            \draw[densely dotted] ( 3.5, 0.5) -- ( 4.6, 0.5);
            \draw[densely dotted] ( 3.5,-0.5) -- ( 5.6,-0.5);
            \draw[dashed] (-3.5, 0.5) -- ( 3.5, 0.5);
            \draw (-3.5,-0.5) -- ( 3.5,-0.5);

            \draw[line width = .4mm, circle dotted] (A) -- (B);

            \draw[<->] (-3.5, 4.6) -- node[fill=white] {\tiny $c_{i+1}$} (-0.3125, 4.6);
            \draw[<->] (-3.5, 4) -- node[fill=white] {\tiny $r_{i+1}-c_i$} ( 0.3125, 4);
            \draw[<->] ( 3.5, 4) -- node[fill=white] {\tiny $c_{i}$}   ( 0.3125, 4);

            \draw[<->] ( 4.6, 3.5) -- node[fill=white] {\tiny $c_{i}$}       ( 4.6, 0.5);
            \draw[<->] ( 4.6,-3.5) -- node[fill=white,yshift=2] {\tiny $r_{i}-c_i$} ( 4.6, 0.5);
            \draw[<->] ( 5.6,-3.5) -- node[fill=white,yshift=-3] {\tiny $c_{i-1}$} ( 5.6,-0.5);
            \draw[<->] ( 5.6, 0.5) -- node[right] {\tiny $b_i$} ( 5.6,-0.5);
        \end{tikzpicture}
    \right)
    \hspace{4.7em},
\]
where the $a_{i,j}$ are non-negative integers satisfying $a_{i,j} \le
a_{i,j+1}$. It follows that the homotopy groups are
\[
    \pi_i(M_\sbt \otimes^\bL_{A} k_\alpha\llbracket t \rrbracket)
    \cong
    k_\alpha\llbracket t \rrbracket^{b_{i}}
    \oplus
    \bigoplus_{j=1}^{c_i}
    \frac{
        k_\alpha\llbracket t \rrbracket
    }{
        (t^{a_{i,j}})
    },
    \qquad
    b_i = r_{i} - c_i - c_{i-1}.
\]
We call $b_i$ the $i$-th \emph{Betti number} of $M_\sbt$ with respect to
$\alpha$, and $a_{i,1}, \ldots, a_{i,c_{i+1}}$ the \emph{invariant factors} of
$M_\sbt$ with respect to $\alpha$ at level $i$. To indicate the dependency on
$\alpha$ and $M_\sbt$ we will sometimes write $b_i = b_i(\alpha) = b_i(\alpha,
M_\sbt)$, and similarly for $a_{i,j}$ and $c_i$. For convenience we will set
$a_{i,j} = \infty$ when $j>c_i$, and $a_{i,j}=0$ when $j<0$. Notice that this
is compatible with the usual convention $t^\infty=0$.

A similar analysis can be done for jets instead of arcs. If $\alpha_n \colon A
\to k_{\alpha_n}[t]/(t^{n+1})$ is an $n$-jet, then the matrices of the complex
$M_\sbt \otimes_A^\bL k_{\alpha_n}[t]/(t^{n+1})$ admit a block form of the same
shape as above. We get notions of \emph{Betti numbers} and \emph{invariant
factors} of $M_\sbt$ with respect to the jet $\alpha_n$. The homotopy groups
are now more involved:
\[
    \pi_i\left(
        M_\sbt
        \otimes^\bL_{A}
        \frac{
            k_{\alpha_n}[t]
        }{
            (t^{n+1})
        }
    \right)
    \cong
    \left(
        \frac{
            k_{\alpha_n}[t]
        }{
            (t^{n+1})
        }
    \right)^{b_i}
    \oplus
    \bigoplus_{j=1}^{c_i}
    \frac{
        k_{\alpha_n}[t]
    }{
        (t^{a_{i,j}})
    }
    \oplus
    \bigoplus_{j=1}^{c_{i-1}}
    \frac{
        (t^{n+1-a_{i-1,j}})
    }{
        (t^{n+1})
    }.
\]
Their dimension is:
\[
    \dim_{k_{\alpha_n}}
    \left(
    \pi_i\left(
        M_\sbt
        \otimes^\bL_{A}
        \frac{
            k_{\alpha_n}[t]
        }{
            (t^{n+1})
        }
    \right)
    \right)
    =
    (n+1)b_i
    + a_{i,1} + \cdots + a_{i,c_i}
    + a_{i-1,1} + \cdots + a_{i-1,c_{i-1}}.
\]

The invariant factors determine the order of contact with respect to the
cohomological support ideals. Explicitly, in the case of arcs a direct
computation shows that
\[
    \ord_\alpha(
        \Csi_{(i,p)}(M_\sbt)
    )
    =
    \min_{u+v=r_i-p}
    \{
        a_{i,1} + \cdots + a_{i,u}
        + a_{i-1,1} + \cdots + a_{i-1,v}
    \}.
\]
Recall that $r_i = c_i + b_i + c_{i-1}$, and that $a_{i,j}=\infty$ when $j>c_i$. In
particular we have:
\[
    \ord_\alpha(
        \Csi_{(i,b_i)}(M_\sbt)
    )
    =
    a_{i,1} + \cdots + a_{i,c_i}
    + a_{i-1,1} + \cdots + a_{i-1,c_{i-1}}
    ,
\]
and also:
\[
    a_{i,1} + \cdots + a_{i,c_i}
    =
    \sum_{j=0}^i
        (-1)^j
    \ord_\alpha(
        \Csi_{(i-j,b_{i-j})}(M_\sbt)
    )
    .
\]

Similar formulas hold for jets, provided the involved orders of contact remain
below $n$. For example, if $n \ge \ord_{\alpha_n}(\Csi_{(i,b_i)}(M_\sbt))$
then:
\[
    \dim_{k_{\alpha_n}}
    \left(
    \pi_i\left(
        M_\sbt
        \otimes^\bL_{A}
        \frac{
            k_{\alpha_n}[t]
        }{
            (t^{n+1})
        }
    \right)
    \right)
    =
    (n+1)\,b_i
    +
    \ord_{\alpha_n}(
        \Csi_{(i,b_i)}(M_\sbt)
    ).
\]

If the jet $\alpha_n$ is the truncation of an arc $\alpha$, then the jet
pull-back $M_\sbt \otimes_A^\bL k_{\alpha_n}[t]/(t^{n+1})$ is obtained from the
arc pull-back $M_\sbt \otimes_A^\bL k_\alpha\llbracket t \rrbracket$ by modding
out by $t^{n+1}$. In particular the invariant factors of $\alpha_n$ are
determined by the invariant factors of $\alpha$, via the formula
$a_{i,j}(\alpha_n) = \min\{a_{i,j}(\alpha), n+1\}$. The Betti number $b_i$
increases by one for each $a_{i,j}$ that is larger than $n$. In particular, the
Betti numbers do not change if $n$ is large enough:
\[
    \text{if}\quad
    n \ge a_{i,c_i}(\alpha)\quad
    \text{then}\quad
    b_{i}(\alpha_n)
    =
    b_{i}(\alpha).
\]
Notice that if $n$ is larger than or equal to the order of contact of $\alpha$
with the cohomological support ideal of level $(i,b_i)$, or of level
$(i+1,b_{i+1})$, then we automatically have $n \ge a_{i,c_i}(\alpha)$.

\subsection{Fibers of the cotangent complex}

After introducing cohomological support ideals, we can now define higher
analogues of Jacobian ideals.

\begin{dff}
Let $X$ be a classical $k$-scheme such that $\bL_{X/k}$ is pseudo-coherent. The
\emph{higher Jacobian ideals} of $X$ are defined as
\[
    \Jac_X^{(i,p)} = \Csi_{(i,p)}(\bL_{X/k}).
\]
\end{dff}

\begin{rmk}
If $k$ is Noetherian and $X$ is essentially of finite type over $k$, then
$\bL_{X/k}$ is pseudo-coherent. Indeed, by \SPcite{08PX} small-enough affine
charts $U = \Spec(A) \subset X$ admit cofibrant replacements $P_\sbt \to A$
where each term $P_i$ is a localization of a finite-type polynomial algebra
over $k$. Then $\bL_{A/k}$ is quasi-isomorphic to $\Omega_{P_\sbt/k}
\otimes_{P_\sbt} A$, which is a bounded below complex of finite free
$A$-modules.
\end{rmk}

\begin{rmk}
If $k$ is a field and $X$ is essentially of finite type over $k$, reduced, and
equidimensional with $d = \dim X$, then the classical \emph{Jacobian ideal} of
$X$ can be computed as:
\[
    \Jac_X = \Jac_X^{(0,d)} = \Csi_{(0,d)}(\bL_{X/k}) = \Fitt^{d}(\Omega_{X/k}).
\]
\end{rmk}

In line with the discussion above, for the rest of the section we assume $X =
\Spec(A)$ is an affine $k$-scheme such that $\bL_{A/k}$ is pseudo-coherent. We
have fixed an arc $\alpha \colon A \to k_\alpha\llbracket t \rrbracket$ and its
truncations $\alpha_n$. Unless otherwise specified, Betti numbers and invariant
factors of $\alpha$ and $\alpha_n$ are always considered with respect to the
cotangent complex $\bL_{A/k}$. The discussion in the previous subsections
immediately gives the following.

\begin{prop}
\label{thm:cotangent-fibers-general}
With the notation above, if $n \ge \ord_\alpha(\Jac_X^{(i,b_i(\alpha))})$ then
we have an equality of Betti numbers $b_i(\alpha) = b_i(\alpha_n)$ and
\[
    \dim_{k_{\alpha_n}}
    (
    \pi_i(
        \bL_{\bL J_n(A)/k}
        \otimes^{\bL}
        k_{\alpha_n}
    )
    )
    =
    (n+1) b_i(\alpha)
    +
    \ord_\alpha\left(
            \Jac_X^{(i,b_i(\alpha))}
    \right).
\]
\end{prop}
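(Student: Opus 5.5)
The plan is to combine the reduction of \cref{sec:remarks-before-csi} with the jet computations of \cref{sec:pull-backs-complexes-arcs}, applied to $M_\sbt = \bL_{A/k}$, which is pseudo-coherent by hypothesis.

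\textbf{Step 1: reduce to a pull-back of $\bL_{A/k}$ along the jet.} By \cref{thm:affinederiveddFD} and the computation in \cref{sec:remarks-before-csi}, which uses that $V_n(k_{\alpha_n})$ is free of rank one over $k_{\alpha_n}[t]/(t^{n+1})$, we have
\[
    \bL_{\bL J_n(A)/k} \otimes^\bL_{\bL J_n(A)} k_{\alpha_n}
    \simeq
    \bL_{A/k} \otimes^\bL_A k_{\alpha_n}[t]/(t^{n+1}).
\]
So the left-hand side of the claimed formula equals $\dim_{k_{\alpha_n}} \pi_i(M_\sbt \otimes^\bL_A k_{\alpha_n}[t]/(t^{n+1}))$.

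\textbf{Step 2: compare the jet invariants with the arc invariants.} Since $\alpha_n$ is the truncation of $\alpha$, the jet pull-back is obtained from the arc pull-back $M_\sbt \otimes^\bL_A k_\alpha\llbracket t\rrbracket$ by reducing modulo $t^{n+1}$. Two facts from \cref{sec:pull-backs-complexes-arcs} then apply:
\begin{itemize}
\item $a_{i,j}(\alpha_n) = \min\{a_{i,j}(\alpha), n+1\}$;
\item the hypothesis $n \ge \ord_\alpha(\Jac_X^{(i,b_i(\alpha))})$ forces $n \ge a_{i,c_i}(\alpha)$, so $b_i(\alpha_n) = b_i(\alpha)$.
\end{itemize}
The second point holds because $\ord_\alpha(\Csi_{(i,b_i)}) = \sum_j a_{i,j} + \sum_j a_{i-1,j}$, with all terms non-negative, is at least $a_{i,c_i}$. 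This is the first claim of the statement.

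\textbf{Step 3: apply the dimension formula.} The same bound gives $a_{i-1,j}(\alpha) \le n$ for all $j \le c_{i-1}$. Hence none of the invariant factors $a_{i,j}$ with $j \le c_i$, nor $a_{i-1,j}$ with $j \le c_{i-1}$, gets truncated, and they agree for $\alpha$ and $\alpha_n$. The jet dimension formula then gives
\[
    (n+1)b_i(\alpha) + \sum_{j \le c_i} a_{i,j}(\alpha) + \sum_{j \le c_{i-1}} a_{i-1,j}(\alpha),
\]
and by the arc formula the two sums together equal $\ord_\alpha(\Jac_X^{(i,b_i(\alpha))})$.

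\textbf{Main obstacle.} The delicate point is the bookkeeping in Step 3. The jet Betti number and counts $c_i(\alpha_n)$, $c_{i-1}(\alpha_n)$ must really coincide with those of $\alpha$ in both degrees $i$ and $i-1$. In particular we need $c_{i-1}(\alpha_n) = c_{i-1}(\alpha)$, with no extra free summand appearing in degree $i-1$. I would check this directly from the block form: reducing the anti-diagonal blocks modulo $t^{n+1}$ only kills an entry when its exponent exceeds $n$, and we have ruled that out for the relevant indices. This is the "similar formulas hold for jets provided orders stay below $n$" remark, which I would make precise here.
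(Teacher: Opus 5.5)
Your proposal is correct and follows the paper's own argument. The paper's proof is just ``the discussion in the previous subsections,'' namely the reduction $\bL_{\bL J_n(A)/k}\otimes^\bL k_{\alpha_n}\simeq \bL_{A/k}\otimes^\bL_A k_{\alpha_n}[t]/(t^{n+1})$ from \cref{sec:remarks-before-csi}, combined with the jet formulas of \cref{sec:pull-backs-complexes-arcs} applied to $M_\sbt=\bL_{A/k}$. Your extra check in Step 3 is genuinely needed and correctly supplied: the hypothesis $n\ge \ord_\alpha(\Jac_X^{(i,b_i(\alpha))})=\sum_{j\le c_i} a_{i,j}(\alpha)+\sum_{j\le c_{i-1}} a_{i-1,j}(\alpha)$ also bounds every $a_{i-1,j}(\alpha)$ by $n$, so nothing in degree $i-1$ is truncated and $c_{i-1}(\alpha_n)=c_{i-1}(\alpha)$. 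The paper's brief remark about Betti numbers under truncation leaves this step implicit.
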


At the level of arc spaces the situation is more delicate, as the dimensions
are infinite and also $V_\infty(k_\alpha)$ is not free over $k_\alpha\llbracket
t \rrbracket$. It will be sufficient for us to consider the following
consequence of the Künneth formula.

\begin{prop}\label{prop:picotangent}
With the notation above, if we write
\[
    \pi_i(
        \bL_{A/k} \otimes^\bL_A k_\alpha\llbracket t \rrbracket
    )
    \cong
    k_\alpha\llbracket t \rrbracket^{b_{i}}
    \oplus
    \bigoplus_{j=1}^{c_i}
    \frac{
        k_\alpha\llbracket t \rrbracket
    }{
        (t^{a_{i,j}})
    },
\]
then we have
\[
    \pi_i(
        \bL_{\bL J_\infty(A)/k} \otimes^\bL_{\bL J_\infty(A)} k_\alpha
    )
    \cong
    V_\infty(k_\alpha)^{b_{i}}
    \oplus
    \bigoplus_{j=1}^{c_{i-1}}
    \frac{
        k_\alpha\llbracket t \rrbracket
    }{
        (t^{a_{i-1,j}})
    }.
\]
\end{prop}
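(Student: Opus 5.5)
Our starting point is the identification obtained in \cref{sec:remarks-before-csi}:
\[
    \bL_{\bL J_\infty(A)/k} \otimes^\bL_{\bL J_\infty(A)} k_\alpha
    \simeq
    N_\sbt \otimes^\bL_{k_\alpha\llbracket t \rrbracket} V_\infty(k_\alpha),
    \qquad
    N_\sbt := \bL_{A/k} \otimes^\bL_A k_\alpha\llbracket t \rrbracket .
\]
This reduces the problem to homological algebra over the discrete valuation ring $R = k_\alpha\llbracket t \rrbracket$. Since $R$ is a PID, it has global dimension one. We therefore apply the Künneth spectral sequence, or more directly the universal coefficient/Künneth short exact sequence for a complex of free $R$-modules tensored with a module:
\[
    0 \to \pi_i(N_\sbt) \otimes_R V
    \to \pi_i(N_\sbt \otimes^\bL_R V)
    \to \Tor_1^R(\pi_{i-1}(N_\sbt), V) \to 0,
\]
where $V = V_\infty(k_\alpha) = k_\alpha\llparen t\rrparen / t\,k_\alpha\llbracket t\rrbracket$. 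To use it, we work with the explicit resolution from \cref{sec:pull-backs-complexes-arcs}, whose terms are finite free $R$-modules. Better still, the Gaussian-elimination normal form splits $N_\sbt$, up to quasi-isomorphism, as a direct sum of shifted copies of $R$ (contributing $b_i$ in degree $i$) and two-term complexes $R \xrightarrow{t^{a}} R$ sitting in degrees $i+1, i$ (one for each $a = a_{i,j}$). Using this splitting, the Künneth sequence becomes unnecessary: we simply compute termwise, and the sequence visibly splits.

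The key local computations for $V$ are as follows. The piece $R \otimes_R V = V$ contributes $V^{b_i}$ in degree $i$. For a two-term complex $R \xrightarrow{t^a} R$, tensoring with $V$ gives $V \xrightarrow{t^a} V$. Its cokernel is $V/t^aV$, and its kernel is $\{v : t^a v = 0\}$.

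Now $V$ is divisible, since it is a quotient of the field $k_\alpha\llparen t\rrparen$. Hence multiplication by $t^a$ is surjective on $V$, and the cokernel vanishes. The kernel is spanned by the classes of $t^{-a+1},\dots,t^{0}$, so it is isomorphic to $R/(t^a)$ (it is $t^{-a+1}R/tR$). This kernel lives in degree $i+1$ when the complex sits in degrees $i+1,i$. In other words, a torsion summand $R/(t^{a_{i-1,j}})$ of $\pi_{i-1}(N_\sbt)$ produces a summand $R/(t^{a_{i-1,j}})$ in degree $i$, and torsion in degree $i$ itself disappears. This is exactly $\Tor_1^R(R/(t^a),V) \cong R/(t^a)$ and $R/(t^a)\otimes V = 0$. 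The case $a = 0$ is trivial and consistent with the formula, and $a=\infty$ does not occur among the $j \le c_i$.

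Summing over summands yields the claimed formula $V^{b_i}\oplus\bigoplus_{j=1}^{c_{i-1}} R/(t^{a_{i-1,j}})$. The step needing most care is the first: justifying the identification $P^\der_\infty(A)\otimes^\bL_{\bL J_\infty(A)} k_\alpha \simeq V_\infty(k_\alpha)$ as an $R$-module compatible with the $A$-module structure through $\alpha$, and associativity of the iterated derived tensor products. This was asserted in \cref{sec:remarks-before-csi}; we would verify it using that $V_\infty(C_\sbt)$ is a direct sum of copies of $C_\sbt$ as a $C_\sbt$-module, so base change commutes with $V_\infty$. The rest is the elementary divisibility computation above.
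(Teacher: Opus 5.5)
Your proposal is correct and is essentially the paper's proof: the paper also reduces, via the identification from \cref{sec:remarks-before-csi}, to $(\bL_{A/k}\otimes^\bL_A k_\alpha\llbracket t\rrbracket)\otimes^\bL_{k_\alpha\llbracket t\rrbracket} V_\infty(k_\alpha)$, and then applies the Künneth formula over the principal ideal domain $k_\alpha\llbracket t\rrbracket$. Your splitting into elementary two-term complexes, together with the computations $V_\infty(k_\alpha)/t^aV_\infty(k_\alpha)=0$ and $\Tor_1(k_\alpha\llbracket t\rrbracket/(t^a),V_\infty(k_\alpha))\cong k_\alpha\llbracket t\rrbracket/(t^a)$, spells out the details the paper leaves implicit.
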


\begin{proof}
Recall that $ \bL_{\bL J_\infty(A)/k} \otimes^\bL_{\bL J_\infty(A)} k_\alpha
\cong \bL_{A/k} \otimes^\bL_A k_\alpha\llbracket t \rrbracket
\otimes^\bL_{k_\alpha\llbracket t \rrbracket} V_\infty(k_\alpha)$. Since
$k_\alpha\llbracket t \rrbracket$ is a principal ideal domain, the result
follows from the Künneth formula \cite[Sec.~3.6]{Wei94}.
\end{proof}

The following condition on the arc appears frequently in the literature and
allows us to make more precise statements.

\begin{dff}
\label{dff:non-degenerate}
Let $X$ be a classical scheme essentially of finite type over a field $k$. We
say that a field-valued arc $\alpha$ on $X$ is \emph{non-degenerate} if its
generic point belongs to the smooth locus of $X$.
\end{dff}

Assume the arc $\alpha$ is non-degenerate, and denote by $d$ the dimension of
$X$ at the generic point of the arc $\alpha$. Since $X$ is smooth at this
generic point, the fiber $\bL_{A/k} \otimes^\bL_A k_\alpha\llparen t\rrparen$
is quasi-isomorphic to a free $k_\alpha\llparen t\rrparen$-module of rank $d$.
In particular, the Betti numbers are determined: $b_0(\alpha) = d$ and
$b_i(\alpha) = 0$ for $i >d$. With these remarks, the following corollary is
immediate.

\begin{cor}
\label{cor:cotangent-fibers-non-deg}
With the notation above, assume furthermore that $k$ is a field, that $X$ is
essentially of finite type over $k$, and that $\alpha$ is non-degenerate. Let
$d$ be the dimension of $X$ at the generic point of $\alpha$. If $n \ge
\ord_\alpha(\Jac_X^{(0,d)})$ we have an equality of Betti numbers $b_0(\alpha)
= b_0(\alpha_n) = d$, and
\[
    \dim_{k_{\alpha_n}}
    (
    \pi_0(
    \bL_{\bL J_n(A)/k}
    \otimes^\bL
    k_{\alpha_n}
    )
    )
    =
    (n+1)d
    +
    \ord_\alpha\left(
            \Jac_X^{(0,d)}
    \right).
\]
Similarly, if $n \ge \ord_\alpha(\Jac^{(i,0)}_X)$ we have $b_i(\alpha) =
b_i(\alpha_n) = 0$ and
\[
    \dim_{k_{\alpha_n}}
    (
    \pi_i(
    \bL_{\bL J_n(A)/k}
    \otimes^\bL
    k_{\alpha_n}
    )
    )
    =
    \ord_\alpha\left(
        \Jac_X^{(i,0)}
    \right).
\]
At the level of arc spaces, the $0$-th homotopy group is
\[
    \pi_0(
        \bL_{\bL J_\infty(A)/k}
        \otimes^\bL
        k_\alpha
    )
    \cong
    V_\infty(k_\alpha)^d,
\]
and the higher homotopy groups are torsion over $k_\alpha\llbracket t
\rrbracket$ with
\[
    \dim_{k_\alpha}
    (
    \pi_1(
    \bL_{\bL J_\infty(A)/k}
    \otimes^\bL
    k_\alpha
    )
    )
    =
    \ord_\alpha\left(
        \Jac_X^{(0,d)}
    \right)
\]
and
\[
    \dim_{k_\alpha}
    (
    \pi_i(
    \bL_{\bL J_\infty(A)/k}
    \otimes^\bL
    k_\alpha
    )
    )
    =
    \ord_\alpha\left(
        \Jac_X^{(i-1,0)}
    \right)
    \qquad
    \text{for $i \ge 2$}.
\]

\end{cor}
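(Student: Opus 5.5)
The plan is to specialize \cref{thm:cotangent-fibers-general} and \cref{prop:picotangent} to the non-degenerate case, where the Betti numbers of $\bL_{A/k}$ with respect to $\alpha$ are forced. First I record the Betti numbers. Base change of the resolution along $A \to k_\alpha\llbracket t\rrbracket \to k_\alpha\llparen t\rrparen$ is exact on homology, since localization is flat. It gives
\[
\pi_i(\bL_{A/k}\otimes^\bL_A k_\alpha\llbracket t\rrbracket)\otimes k_\alpha\llparen t\rrparen \cong \pi_i(\bL_{A/k}\otimes^\bL_A k_\alpha\llparen t\rrparen).
\]
The right-hand side is computed at the generic point $\alpha(\eta)$. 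Since $X$ is smooth of dimension $d$ there, $\bL_{A/k}$ localizes to a free module of rank $d$ in degree $0$. Because the torsion summands die after inverting $t$, this yields $b_0(\alpha)=d$ and $b_i(\alpha)=0$ for all $i\ge 1$. This justifies reading the statement with $b_i=0$ for every $i\ge1$, not just $i>d$.

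For the jet statements, I apply \cref{thm:cotangent-fibers-general} with $i=0$, $b_0=d$, which gives $\Jac_X^{(0,d)}$ directly. I apply it again with $i\ge1$, $b_i=0$, which gives $\Jac_X^{(i,0)}$. The conclusion $b_i(\alpha_n)=b_i(\alpha)$ is part of that proposition. The only point to note is that the hypothesis $n\ge\ord_\alpha$ of the relevant ideal is exactly the one in the statement.

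For the arc statements, I use \cref{prop:picotangent}. With $i=0$ there are no $c_{-1}$ terms, so $\pi_0\cong V_\infty(k_\alpha)^d$. For $i\ge1$ the free part vanishes, so $\pi_i\cong\bigoplus_{j=1}^{c_{i-1}}k_\alpha\llbracket t\rrbracket/(t^{a_{i-1,j}})$. This is torsion, and its $k_\alpha$-dimension is $a_{i-1,1}+\dots+a_{i-1,c_{i-1}}$. It remains to identify this sum with $\ord_\alpha(\Csi_{(i-1,b_{i-1})})$.

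This identification is the one step needing care. The formula from \cref{sec:pull-backs-complexes-arcs} is
\[
\ord_\alpha(\Csi_{(m,b_m)})=\sum_j a_{m,j}+\sum_j a_{m-1,j}.
\]
For $m=i-1$ I therefore need the second sum $\sum_j a_{i-2,j}$ to vanish. I expect it does: since $b_{m}$ vanishes for the relevant index, the order formula should reduce to the $a_{m,\cdot}$ sum alone. Concretely, the plan is to check from the explicit block form that in the non-degenerate case all invariant factors $a_{m-1,j}$ that genuinely contribute give the stated value, using the $\min_{u+v=r_m-p}$ description of $\ord_\alpha(\Csi_{(m,p)})$. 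If the cancellation does not come out cleanly, I will instead use the alternating-sum formula for $\sum_j a_{m,j}$ and reorganize.

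For $i=1$ this gives $\dim\pi_1=\sum_j a_{0,j}$. Since $c_{-1}=0$, the second sum is empty and this equals $\ord_\alpha(\Jac_X^{(0,d)})$ directly, which is the $\pi_1$ claim. For $i\ge2$ the target is $\ord_\alpha(\Jac_X^{(i-1,0)})$, with the minimization over $u+v=r_{i-1}$ being what must be handled carefully.
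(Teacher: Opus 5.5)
Your route is the paper's own: force $b_0(\alpha)=d$ and $b_i(\alpha)=0$ for $i\ge 1$ from smoothness at $\alpha(\eta)$, then specialize \cref{thm:cotangent-fibers-general} and \cref{prop:picotangent}. The Betti-number argument is fine, and so are all the jet-level claims and the arc-level claims for $\pi_0$ and $\pi_1$.

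The gap is the step you flagged, for $\pi_i$ with $i\ge 2$ at the arc level, and your expectation there is wrong. The hypothesis $b_{i-1}=0$ does not kill the second sum. It only says that $u+v=r_{i-1}=c_{i-1}+c_{i-2}$ in the minimum defining $\ord_\alpha(\Csi_{(i-1,0)})$. Finiteness then forces $u=c_{i-1}$ and $v=c_{i-2}$, so
\[
\ord_\alpha\bigl(\Jac_X^{(i-1,0)}\bigr)=\sum_{j=1}^{c_{i-1}}a_{i-1,j}+\sum_{j=1}^{c_{i-2}}a_{i-2,j}.
\]
The second sum is the length of the torsion of $\pi_{i-2}(\bL_{A/k}\otimes^\bL_A k_\alpha\llbracket t\rrbracket)$. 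For $i=2$ it equals $\ord_\alpha(\Jac_X^{(0,d)})$, which is positive as soon as $\alpha(0)\in V(\Jac_X^{(0,d)})$.

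No argument can close this gap, because the last display of the statement is false as written. The paper's one-line ``immediate'' deduction from the same two propositions has the same defect.

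Take $A=k[x,y]/(xy)$ and the non-degenerate arc $\alpha=(t,0)$, so $d=1$. Then $\bL_{A/k}\simeq[A\xrightarrow{(y,x)}A^2]$. Hence $\bL_{\bL J_\infty(A)/k}\otimes^\bL k_\alpha\simeq[V_\infty(k)\xrightarrow{(0,t)}V_\infty(k)^2]$, which has $\pi_2=0$ and $\dim\pi_1=1=\ord_\alpha(\Jac_X^{(0,1)})$. On the other hand, $\Jac_X^{(1,0)}=I_1(\partial_1)=(x,y)$ has order $1$ along $\alpha$, so the statement predicts $\dim\pi_2=1$.

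What your computation actually proves, for $i\ge 1$, is
\[
\dim_{k_\alpha}\pi_i\bigl(\bL_{\bL J_\infty(A)/k}\otimes^\bL k_\alpha\bigr)=\sum_{j=1}^{c_{i-1}}a_{i-1,j}=\sum_{j=0}^{i-1}(-1)^j\ord_\alpha\bigl(\Jac_X^{(i-1-j,\,b_{i-1-j}(\alpha))}\bigr).
\]
For $i=2$ this is $\ord_\alpha(\Jac_X^{(1,0)})-\ord_\alpha(\Jac_X^{(0,d)})$. This agrees with the paper's later remark computing $a_{1,1}+\cdots+a_{1,c_1}$ and with its truncation corollary. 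Your alternating-sum fallback would land on this corrected value, not on $\ord_\alpha(\Jac_X^{(i-1,0)})$.
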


\subsection{Behaviour under truncation}

In the following sections we will also need to understand the behaviour of
these fibers under the truncations maps. For $m \ge n$, the natural map $\bL
J_n(A) \to \bL J_m(A)$ induces a morphism at the level of cotangent complexes:
\[
    \bL_{\bL J_n(A)/k} \otimes^\bL_{\bL J_n(A)} \bL J_m(A)
    \lra
    \bL_{\bL J_m(A)/k}.
\]
Taking fibers at the truncations $\alpha_n$ and $\alpha_m$, we get
\[
    \bL_{\bL J_n(A)/k} \otimes^\bL_{\bL J_n(A)}
    k_{\alpha_n}
    \otimes_{k_{\alpha_n}}
    k_{\alpha_m}
    \lra
    \bL_{\bL J_m(A)/k} \otimes^\bL_{\bL J_n(A)} k_{\alpha_m},
\]
which, via \cref{thm:affinederiveddFD} (see \cref{sec:remarks-before-csi}),
takes the following form:
\[
    \bL_{A/k}
    \otimes^\bL_{A}
    V_n(k_{\alpha_m})
    \lra
    \bL_{A/k}
    \otimes^\bL_{A}
    V_m(k_{\alpha_m}).
\]
This map is in turn induced by the following natural inclusion (recall that $m
\ge n$, see \cref{rmk:explicit-Pm-and-trunctions,rmk:truncation-ugly-details}):
\[
    V_n(k_{\alpha_m})
    =
    \frac{
        t^{-n}\,k_{\alpha_m}[t]
    }{
        t\,k_{\alpha_m}[t]
    }
    \hookrightarrow
    V_m(k_{\alpha_m})
    =
    \frac{
        t^{-m}\,k_{\alpha_m}[t]
    }{
        t\,k_{\alpha_m}[t]
    }
    .
\]
After identifying $V_n(k_{\alpha_m})$ with $k_{\alpha_m}[t]/(t^{n+1})$, and
similarly for $V_m(k_{\alpha_m})$, the resulting map looks like
``multiplication by $t^{m-n}$'':
\[
    \bL_{A/k}
    \otimes^\bL_{A}
    \frac{
        k_{\alpha_m}[t]
    }{
        (t^{n+1})
    }
    \xra{\quad1\otimes t^{m-n}\quad}
    \bL_{A/k}
    \otimes^\bL_{A}
    \frac{
        k_{\alpha_m}[t]
    }{
        (t^{m+1})
    }
    .
\]
The following diagram gives a computation of the kernel and cokernel, assuming
$m-n \ge a_{i,c_i}$.
\[
\begin{tikzcd}[
    baseline=(Z.base),
    row sep=small,
    column sep=0pt,
    cells={font=\everymath\expandafter{\the\everymath\displaystyle}},
]
    \ker(\pi_i(\psi_{n,m}))
    \ar[d]
&
    \cong
&
    0
&
    \oplus
&
    \bigoplus_{j=1}^{c_i}
    \frac{
        k_{\alpha_m}[t]
    }{
        (t^{a_{i,j}})
    }
&
    \oplus
&
    0
\\
    \pi_i(\bL_{\bL J_n(A)/k} \otimes^\bL k_{\alpha_m})
    \ar[d,"\smash{\cdot\, t^{m-n}}"]
&
    \cong
&
    \left(
        \frac{
            k_{\alpha_m}[t]
        }{
            (t^{n+1})
        }
    \right)^{b_i}
&
    \oplus
&
    \bigoplus_{j=1}^{c_i}
    \frac{
        k_{\alpha_m}[t]
    }{
        (t^{a_{i,j}})
    }
&
    \oplus
&
    \bigoplus_{j=1}^{c_{i-1}}
    \frac{
        (t^{n+1-a_{i-1,j}})
    }{
        (t^{n+1})
    }
\\
    \pi_i(\bL_{\bL J_m(A)/k} \otimes^\bL k_{\alpha_m})
    \ar[d]
&
    \cong
&
    \left(
        \frac{
            k_{\alpha_m}[t]
        }{
            (t^{m+1})
        }
    \right)^{b_i}
&
    \oplus
&
    \bigoplus_{j=1}^{c_i}
    \frac{
        k_{\alpha_m}[t]
    }{
        (t^{a_{i,j}})
    }
&
    \oplus
&
    \bigoplus_{j=1}^{c_{i-1}}
    \frac{
        (t^{m+1-a_{i-1,j}})
    }{
        (t^{m+1})
    }
\\
    \coker(\pi_i(\psi_{n,m}))
&
    \cong
&
    \left(
        \frac{
            k_{\alpha_m}[t]
        }{
            (t^{m-n})
        }
    \right)^{b_i}
&
    \oplus
&
    \bigoplus_{j=1}^{c_i}
    \frac{
        k_{\alpha_m}[t]
    }{
        (t^{a_{i,j}})
    }
&
    \oplus
&
    |[alias=Z]| 0
\end{tikzcd}
\qedhere
\]

We summarize the above discussion in the following proposition.

\begin{prop}
\label{thm:trunc-cotangent-fibers-general}
With the notation above, let $m \ge n$ and consider the truncations $\alpha_n$,
$\alpha_m$ and the corresponding cotangent map induced by truncation:
\[
    \psi_{n,m} \colon
    \bL_{\bL J_n(A)/k} \otimes^\bL k_{\alpha_m}
    \lra
    \bL_{\bL J_m(A)/k} \otimes^\bL k_{\alpha_m}.
\]
If $m \ge n + \ord_\alpha(\Jac_X^{(i,b_i(\alpha_n))})$
and $n \ge \max_{0 \le j \le i}
\{\ord_\alpha(\Jac_X^{(j,b_j(\alpha))})\}$, then $b_i(\alpha_n) = b_i(\alpha_m)
= b_i(\alpha)$,
\[
\begin{aligned}
    \dim_{k_{\alpha_m}}(\ker(\pi_i(\psi_{n,m})))
    &=
    a_{i,1}(\alpha) + \cdots + a_{i,c_i(\alpha)}(\alpha)
    ,
\end{aligned}
\]
and
\[
    \dim_{k_{\alpha_m}}(\coker(\pi_i(\psi_{n,m})))
    =
    (m-n) b_i(\alpha)
    + a_{i,1}(\alpha) + \cdots + a_{i,c_i(\alpha)}(\alpha).
\]
\end{prop}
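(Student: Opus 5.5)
The plan is to assemble the statement from the block computation of \cref{sec:pull-backs-complexes-arcs} together with the description of $\psi_{n,m}$ as ``multiplication by $t^{m-n}$'' given just before the statement. No new homological input is needed. The work is to check that the two numerical hypotheses put us in the regime where the preceding diagram is valid and where the invariant factors of $\alpha_n$, $\alpha_m$ agree with those of $\alpha$.

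\textbf{Step 1: Betti numbers and invariant factors.} Recall that $a_{i,j}(\alpha_n)=\min\{a_{i,j}(\alpha),n+1\}$, and that $b_i$ increases exactly by the number of $a_{i,j}(\alpha)$ exceeding $n$. The hypothesis $n\ge \ord_\alpha(\Jac_X^{(i,b_i(\alpha))})$ gives $n\ge a_{i,1}+\cdots+a_{i,c_i}+a_{i-1,1}+\cdots+a_{i-1,c_{i-1}}$, by the formula for $\ord_\alpha(\Csi_{(i,b_i)})$. Hence $n\ge a_{i,c_i}(\alpha)$ and $n\ge a_{i-1,c_{i-1}}(\alpha)$. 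The same holds for all $j\le i$, so the invariant factors at levels $i$ and $i-1$ are unchanged under truncation to $\alpha_n$. Consequently $b_i(\alpha_n)=b_i(\alpha)$. Since $m\ge n$, the same conclusions hold for $\alpha_m$. In particular $\Jac_X^{(i,b_i(\alpha_n))}=\Jac_X^{(i,b_i(\alpha))}$.

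\textbf{Step 2: $\psi_{n,m}$ in block form.} Choose a resolution of $\bL_{A/k}$ by finite free $A$-modules. Pull it back along $\alpha$, and put it in the block form of \cref{sec:pull-backs-complexes-arcs} over $k_{\alpha_m}\llbracket t\rrbracket$; note $k_\alpha=k_{\alpha_m}$ up to extension, and we may base change. Reducing modulo $t^{n+1}$ and $t^{m+1}$ gives the two complexes. The map ``$1\otimes t^{m-n}$'' is a chain map compatible with the chosen bases, since it is just multiplication by a scalar $t^{m-n}$ followed by the inclusion $k[t]/(t^{n+1})\xrightarrow{t^{m-n}}k[t]/(t^{m+1})$. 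Therefore $\pi_i(\psi_{n,m})$ splits as a direct sum over the three types of summands: the free part, the torsion part $k[t]/(t^{a_{i,j}})$, and the part $(t^{n+1-a_{i-1,j}})/(t^{n+1})$.

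\textbf{Step 3: the summands.} The summands are analysed as follows.
\begin{itemize}
\item On $(k[t]/(t^{n+1}))^{b_i}$, multiplication by $t^{m-n}$ into $(k[t]/(t^{m+1}))^{b_i}$ is injective with cokernel $(k[t]/(t^{m-n}))^{b_i}$.
\item On $k[t]/(t^{a_{i,j}})$ the map is zero, because $m-n\ge a_{i,j}$. This follows from $m-n\ge\ord_\alpha(\Jac_X^{(i,b_i)})\ge a_{i,c_i}$. So these summands contribute equally to the kernel and the cokernel.
\item On $(t^{n+1-a_{i-1,j}})/(t^{n+1})\cong k[t]/(t^{a_{i-1,j}})$, generated by $t^{n+1-a_{i-1,j}}$, multiplication by $t^{m-n}$ sends the generator to $t^{m+1-a_{i-1,j}}$, which generates the target summand. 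This is an isomorphism.
\end{itemize}
Summing the dimensions gives both displayed formulas.

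\textbf{Main obstacle.} The delicate point is the third bullet and its justification. One must make sure that the identification of $\pi_i$ over the truncated rings, which involves the boundary contributions from $\partial_i$, is compatible with $\psi_{n,m}$. The cleanest route is to compute directly on the explicit block-diagonal chain complexes. In rank one, this means the two-term complexes $k[t]/(t^{N+1})\xrightarrow{t^{a}}k[t]/(t^{N+1})$ for $N=n,m$, with the vertical map $t^{m-n}$. Checking that this map induces the stated maps on $H_0$ and $H_1$ is then an elementary verification, which I would carry out by hand.
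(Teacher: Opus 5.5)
Your proposal is correct and follows the paper's argument. The paper likewise identifies $\psi_{n,m}$ with $1\otimes t^{m-n}$ via the inclusion $V_n(k_{\alpha_m})\hookrightarrow V_m(k_{\alpha_m})$, and reads off kernel and cokernel summand by summand from the block decomposition: the free part maps injectively with cokernel $(k_{\alpha_m}[t]/(t^{m-n}))^{b_i}$, the summands $k_{\alpha_m}[t]/(t^{a_{i,j}})$ map to zero because $m-n\ge a_{i,c_i}$, and the summands $(t^{n+1-a_{i-1,j}})/(t^{n+1})$ map isomorphically. Your Step 1 derives $n\ge a_{i,c_i},a_{i-1,c_{i-1}}$ and $m-n\ge a_{i,c_i}$ from $\ord_\alpha(\Jac_X^{(i,b_i)})$, which is bookkeeping the paper leaves implicit. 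Base change along $k_{\alpha_m}\subseteq k_\alpha$ is harmless, since it preserves the dimensions of kernels and cokernels.
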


It is also natural to look at the relative cotangent complex $\bL_{\bL
J_m(A)/\bL J_n(A)}$. For this, recall that the sums of invariant factors above
can be recovered from orders of contact with respect to cohomological support
ideals:
\[
    a_{i,1}(\alpha) + \cdots + a_{i,c_i(\alpha)}(\alpha)
    =
    \sum_{j=0}^i
    (-1)^j
    \ord_\alpha\left(
        \Jac^{(i-j,b_{i-j}(\alpha))}_X
    \right)
    .
\]

\begin{cor}
With the assumptions of \cref{thm:trunc-cotangent-fibers-general}, we have
\[
    \dim_{k_{\alpha_m}}(
    \pi_i(
        \bL_{
            \bL J_m(A)/\bL J_n(A)
        }
        \otimes^{\bL}
        k_{\alpha_m}
    ))
    =
    (m-n) b_{i}(\alpha)
    + \ord_\alpha\left(\Jac_X^{(i,b_i(\alpha))}\right).
\]
\end{cor}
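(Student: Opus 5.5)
The plan is to read off the homotopy of the relative cotangent complex from the fundamental triangle of $k \to \bL J_n(A) \to \bL J_m(A)$, after base change to $k_{\alpha_m}$. Tensoring that triangle with $k_{\alpha_m}$ over $\bL J_m(A)$ gives a distinguished triangle
\[
    \bL_{\bL J_n(A)/k} \otimes^\bL k_{\alpha_m}
    \xra{\ \psi_{n,m}\ }
    \bL_{\bL J_m(A)/k} \otimes^\bL k_{\alpha_m}
    \lra
    \bL_{\bL J_m(A)/\bL J_n(A)} \otimes^\bL k_{\alpha_m}
    \lra
    (\bL_{\bL J_n(A)/k} \otimes^\bL k_{\alpha_m})[1].
\]
The first map is exactly the map $\psi_{n,m}$ of \cref{thm:trunc-cotangent-fibers-general}; this uses \cref{rmk:truncation-ugly-details} and the naturality in \cref{rmk:naturality-details}. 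The associated long exact sequence of homotopy groups is a sequence of finite-dimensional $k_{\alpha_m}$-vector spaces.

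From the long exact sequence we extract the short exact sequence
\[
    0 \to \coker(\pi_i(\psi_{n,m}))
    \to \pi_i(\bL_{\bL J_m(A)/\bL J_n(A)} \otimes^\bL k_{\alpha_m})
    \to \ker(\pi_{i-1}(\psi_{n,m})) \to 0 .
\]
Taking dimensions, the target dimension is
\[
    \dim \coker(\pi_i(\psi_{n,m})) + \dim \ker(\pi_{i-1}(\psi_{n,m})).
\]

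We then evaluate each term with \cref{thm:trunc-cotangent-fibers-general}. At level $i$ the cokernel contributes $(m-n)b_i(\alpha) + a_{i,1}+\cdots+a_{i,c_i}$. Applying the same proposition at level $i-1$, the kernel contributes $a_{i-1,1}+\cdots+a_{i-1,c_{i-1}}$. The hypotheses needed at level $i-1$ follow from those at level $i$: the $n$-condition is a maximum over $0\le j\le i$, so it covers $j = i-1$. For $m-n$, the computation in the displayed diagram only uses $m-n \ge a_{i-1,c_{i-1}}$, and this is bounded by the order at level $(i,b_i)$ via the formula for $\ord_\alpha(\Csi_{(i,b_i)})$.

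Finally, we add the two contributions. The sum $a_{i,1}+\cdots+a_{i,c_i}+a_{i-1,1}+\cdots+a_{i-1,c_{i-1}}$ equals $\ord_\alpha(\Jac_X^{(i,b_i(\alpha))})$ by the formula in \cref{sec:pull-backs-complexes-arcs}, which yields the claim. For $i=0$ the kernel term is absent and $c_{-1}=0$, so the formula still holds.

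The main obstacle is the bookkeeping of hypotheses at level $i-1$: one must check that the hypothesis $m \ge n+\ord_\alpha(\Jac_X^{(i,b_i)})$ really gives $m-n\ge a_{i-1,c_{i-1}}$. The first attempt is to use that the order at level $(i,b_i)$ dominates every $a_{i-1,j}$, and, if needed, to appeal directly to the explicit block description of $\ker(\pi_{i-1}(\psi_{n,m}))$, which only requires $n \ge a_{i-1,j}$ for the kernel computation.
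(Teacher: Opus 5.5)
Your proposal is correct and follows the paper's proof: you base-change the fundamental triangle of $k \to \bL J_n(A) \to \bL J_m(A)$ to $k_{\alpha_m}$, split the long exact sequence to get $\dim \coker(\pi_i(\psi_{n,m})) + \dim \ker(\pi_{i-1}(\psi_{n,m}))$, and evaluate both terms with \cref{thm:trunc-cotangent-fibers-general}. Your extra check that the level-$i$ hypothesis gives $m-n \ge \ord_\alpha(\Jac_X^{(i,b_i(\alpha))}) \ge a_{i-1,c_{i-1}}$ is right, and it is exactly what the kernel computation at level $i-1$ needs; the paper leaves this point implicit. (Your fallback remark should read $m-n \ge a_{i-1,j}$ rather than $n \ge a_{i-1,j}$, but the fallback is not needed.)
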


\begin{proof}
The composition
$k \to \bL J_n(A) \to \bL J_m(A)$ yields an exact triangle of
$k_{\alpha_m}$-complexes:
\[
    \bL_{\bL J_n(A)/k} \otimes^\bL k_{\alpha_m}
    \xra{\psi_{n,m}}
    \bL_{\bL J_m(A)/k} \otimes^\bL k_{\alpha_m}
    \lra
    \bL_{\bL J_m(A)/\bL J_n(A)} \otimes^\bL k_{\alpha_m}
    \xra{-1}.
\]
From the corresponding long exact sequence we get:
\[
\begin{multlined}
    \dim_{k_{\alpha_m}}(
    \pi_i(
        \bL_{
            \bL J_m(A)/\bL J_n(A)
        }
        \otimes^{\bL}
        k_{\alpha_m}
    ))=
    \\\qquad=
    \dim_{k_{\alpha_m}}(\ker(\pi_{i-1}(\psi_{n,m})))
    +\dim_{k_{\alpha_m}}(\coker(\pi_{i}(\psi_{n,m}))).
\end{multlined}
\]
The result now follows from
\cref{thm:trunc-cotangent-fibers-general}.
\end{proof}

In the case of non-degenerate arcs we immediately get the following corollary.

\begin{cor}
With the notation of \cref{thm:trunc-cotangent-fibers-general}, assume
furthermore that $k$ is a field, that $X$ is essentially of finite type over
$k$, and that $\alpha$ is non-degenerate. Let $d$ be the dimension of $X$ at
the generic point of $\alpha$. If $m \ge n+ \ord_\alpha(\Jac_X^{(0,d)})$ and $n
\ge \ord_\alpha(\Jac_X^{(0,d)})$ we have
\[
    \dim_{k_{\alpha_m}}(\ker(\pi_0(\psi_{n,m})))
    =
    \ord_\alpha\left(
            \Jac_X^{(0,d)}
    \right)
\]
and
\[
    \dim_{k_{\alpha_m}}(\Omega_{J_m(A)/J_n(A)}\otimes k_{\alpha_m})
    =
    \dim_{k_{\alpha_m}}(\coker(\pi_0(\psi_{n,m})))
    =
    (m-n) d +
    \ord_\alpha\left(
            \Jac_X^{(0,d)}
    \right).
\]
Similarly, if $m \ge n+ \ord_\alpha(\Jac_X^{(1,0)})$
and $n \ge \ord_\alpha(\Jac_X^{(1,0)})$
then we have
\[
    \dim_{k_{\alpha_m}}(\ker(\pi_1(\psi_{n,m})))
    =
    \dim_{k_{\alpha_m}}(\coker(\pi_1(\psi_{n,m})))
    =
    \ord_\alpha\left(
            \Jac_X^{(1,0)}
    \right)
    -
    \ord_\alpha\left(
            \Jac_X^{(0,d)}
    \right)
\]
and
\[
    \dim_{k_{\alpha_m}}(
    \pi_1(
        \bL_{\bL J_m(A)/\bL J_n(A)}\otimes^\bL k_{\alpha_m}
    ))
    =
    \ord_\alpha\left(
            \Jac_X^{(1,0)}
    \right).
\]
\end{cor}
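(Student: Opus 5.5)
The plan is to specialize \cref{thm:trunc-cotangent-fibers-general} and the preceding corollary (the one computing $\pi_i$ of $\bL_{\bL J_m(A)/\bL J_n(A)}\otimes^\bL k_{\alpha_m}$) to a non-degenerate arc, using the Betti number computation already made before \cref{cor:cotangent-fibers-non-deg}. Since $\alpha$ is non-degenerate, $\bL_{A/k}\otimes^\bL_A k_\alpha\llparen t\rrparen$ is free of rank $d$ in degree $0$. Hence $b_0(\alpha)=d$ and $b_i(\alpha)=0$ for $i\ge1$, and the sums of invariant factors become
\[
a_{0,1}+\cdots+a_{0,c_0}=\ord_\alpha(\Jac_X^{(0,d)}),\qquad
a_{1,1}+\cdots+a_{1,c_1}=\ord_\alpha(\Jac_X^{(1,0)})-\ord_\alpha(\Jac_X^{(0,d)}),
\]
by the alternating-sum formula for invariant factors in terms of $\ord_\alpha(\Csi_{(i,b_i)})$.

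For $i=0$, the hypotheses $n\ge \ord_\alpha(\Jac_X^{(0,d)})$ and $m\ge n+\ord_\alpha(\Jac_X^{(0,d)})$ are exactly those of \cref{thm:trunc-cotangent-fibers-general} with $i=0$. That proposition then gives the stated kernel dimension $\ord_\alpha(\Jac_X^{(0,d)})$ and the cokernel dimension $(m-n)d+\ord_\alpha(\Jac_X^{(0,d)})$. To identify the cokernel with $\Omega_{J_m(A)/J_n(A)}\otimes k_{\alpha_m}$, I will take $\pi_0$ of the fundamental triangle for $k\to\bL J_n(A)\to\bL J_m(A)$ tensored with $k_{\alpha_m}$, which is right exact. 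I will then use that $\pi_0$ of a relative cotangent complex is $\Omega$ of the $\pi_0$'s, together with $\pi_0(\bL J_n(A))=J_n(A)$.

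For $i=1$, I will use $b_1=0$ and the hypotheses on $\Jac_X^{(1,0)}$. These need to imply the hypotheses of \cref{thm:trunc-cotangent-fibers-general} for $i=1$, namely $n\ge\max\{\ord_\alpha(\Jac_X^{(0,d)}),\ord_\alpha(\Jac_X^{(1,0)})\}$. This requires $\ord_\alpha(\Jac_X^{(1,0)})\ge\ord_\alpha(\Jac_X^{(0,d)})$, which follows because the difference is a sum of nonnegative invariant factors. The kernel and cokernel then both have dimension $a_{1,1}+\cdots+a_{1,c_1}$, since $(m-n)b_1=0$, which gives the displayed difference.

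Finally, the corollary on relative cotangent complexes gives
\[
\pi_1(\bL_{\bL J_m(A)/\bL J_n(A)}\otimes^\bL k_{\alpha_m})
=\dim\ker\pi_0(\psi_{n,m})+\dim\coker\pi_1(\psi_{n,m}).
\]
The first term requires the $i=0$ hypotheses, which also follow from the above inequality. The sum is $\ord_\alpha(\Jac_X^{(0,d)})+\ord_\alpha(\Jac_X^{(1,0)})-\ord_\alpha(\Jac_X^{(0,d)})=\ord_\alpha(\Jac_X^{(1,0)})$. The only delicate point is this bookkeeping of hypotheses, together with the monotonicity $\ord_\alpha(\Jac_X^{(1,0)})\ge\ord_\alpha(\Jac_X^{(0,d)})$, which I obtain from the explicit formula $\ord_\alpha(\Csi_{(1,0)})=\sum_j a_{1,j}+\sum_j a_{0,j}$.
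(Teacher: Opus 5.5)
Your proposal is correct and matches the argument the paper intends; the paper gives no separate proof and calls the corollary immediate. It specializes \cref{thm:trunc-cotangent-fibers-general} and the preceding corollary to $b_0(\alpha)=d$, $b_1(\alpha)=0$, using $\sum_j a_{0,j}=\ord_\alpha(\Jac_X^{(0,d)})$ and $\sum_j a_{1,j}=\ord_\alpha(\Jac_X^{(1,0)})-\ord_\alpha(\Jac_X^{(0,d)})$, and identifies $\coker(\pi_0(\psi_{n,m}))$ with $\Omega_{J_m(A)/J_n(A)}\otimes k_{\alpha_m}$ via the fundamental triangle. Your observation that $\ord_\alpha(\Jac_X^{(1,0)})\ge\ord_\alpha(\Jac_X^{(0,d)})$ makes the second set of hypotheses cover both the $i=0$ and $i=1$ cases is exactly the bookkeeping the paper leaves implicit.
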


\section{Embedding dimension}

\label{sec:emb-dim}

In preparation for applications in
\cref{sec:curveselection,sec:cotangent_maps}, we study in detail the embedding
dimension of points in jet schemes and arc spaces. The interaction between the
embedding dimension and the jet codimension introduced in
\cref{sec:curveselection} is the key technical ingredient in our approach to
the study of stable arcs and the curve selection lemma. We take advantage of
the homological algebra unlocked by considering the derived arc space, as well
as the computational apparatus developed in \cref{sec:fibers-fitting}.

\subsection{Transcendence degree and the Cartier equality}

\label{sec:cartier-equality}

Before studying the embedding dimension on arc spaces, we recall the connection
between transcendence degree of field extensions and cotangent complexes.
Assume $k$ is a field and let $Z$ be a classical $k$-scheme essentially of
finite type over $k$. Let $p \in Z$ be a point with residue field $k_p$. We
denote by $\overline{\{p\}} \subset Z$ the Zariski closure of $p$ in $Z$,
considered as a subscheme with its reduced structure. We have:
\[
    \dim(\overline{\{p\}})
    =
    \trdeg(k_p/k)
    =
    \dim_{k_p}(\Omega_{k_p/k})
    -
    \dim_{k_p}(\pi_1(\bL_{k_p/k})).
\]
The first equality is a standard result in dimension theory. The second is
known as the Cartier equality, see \SPcite{07E1} or \cite[Thm.~26.10]{Mat89}.
When $k$ is perfect, any field extension $k_p/k$ satisfies
$\pi_{1}(\bL_{k_p/k}) = 0$. In general, the module $\pi_{1}(\bL_{k_p/k})$ is
called the \emph{module of imperfections} of the field extension $k_p/k$. It is
worth noting that for any field extension $k_p/k$ we have
$\pi_i(\bL_{k_p/k})=0$ for all $i \ge 2$, see \cite[8.10]{Iye07} or
\SPcite{07BV}.

\subsection{Setup for arcs and jets}

Our goal in this section is to control embedding dimensions on arc spaces, and
we start by fixing some notation. We let $k$ be a field, and assume $X$ is a
classical $k$-scheme which is essentially of finite type over $k$. In
particular, notice that $\bL_{X/k}$ is pseudo-coherent in the sense discussed
in \cref{sec:csi}.  We fix a point in the arc space $\alpha \in J_\infty(X)$
with residue field $k_\alpha$ and consider its truncations $\alpha_n \in
J_n(X)$. We continue to use the terminology introduced in
\cref{sec:fibers-fitting}. For instance, we write $b_i(\alpha)$ for the Betti
numbers of $\bL_{X/k}$ with respect to $\alpha$; for $i=0$ this means that
\[
    \Omega_{X/k} \otimes_{\cO_X} k_\alpha\llbracket t \rrbracket
    \cong
    k_\alpha\llbracket t \rrbracket^{b_0(\alpha)}
    \oplus (\text{$t$-torsion}),
\]
or equivalently that
\[
    b_0(\alpha) =
    \dim_{k_{\alpha(\eta)}}(\Omega_{X/k} \otimes_{\cO_X} k_{\alpha(\eta)}).
\]
Here $\eta$ denotes the generic point of $\Spec(k_\alpha\llbracket t
\rrbracket)$, and $\alpha(\eta) \in X$ denotes the image of $\eta$ via the
induced map $\alpha \colon \Spec(k_\alpha\llbracket t \rrbracket) \to X$. As
usual, $k_{\alpha(\eta)}$ denotes the residue field of $\alpha(\eta)$ viewed as
point of $X$. If $\alpha$ is non-degenerate we have $b_0(\alpha) = d$, where
$d$ is the dimension of $X$ at $\alpha(\eta)$, and $b_i(\alpha) = 0$ for $i>0$.
We also denote by $b_i(\alpha_n)$ the Betti numbers with respect to the
truncations, and recall that $b_i(\alpha_n) = b_i(\alpha)$ when $n \ge
\ord_\alpha(\Jac_X^{(i, b_i(\alpha))})$.

\subsection{Embedding dimension on derived jet schemes}

We consider the exact triangle induced by the composition
$\{\alpha_n\} \to \bL J_n(X) \to \Spec(k)$:
\[
    \bL_{\bL J_n(X)/k}
    \otimes_{\cO_{\bL J_n(X)}}^\bL
    k_{\alpha_n}
    \lra
    \bL_{k_{\alpha_n}/k}
    \lra
    \bL_{k_{\alpha_n}/\bL J_n(X)}
    \xra{-1}.
\]
Keeping all aspects of notation will get oppressively heavy. As such we
introduce some simplifications. We write $k_n$ instead of $k_{\alpha_n}$ and
$\bL J_n$ instead of $\bL J_n(X)$. We also suppress the bases of tensor
products unless they are not clear from context. With this simplified notation,
the above exact triangle can be written more compactly as follows:
\[
    \bL_{\bL J_n/k}
    \Lotimes
    k_n
    \lra
    \bL_{k_n/k}
    \lra
    \bL_{k_n/\bL J_n}
    \xra{-1}.
\]
We focus on the initial part of the resulting long exact sequence, which looks
as follows:
\begin{equation}
\Label[diagram]{eq:six-term-exact-seq}
\begin{tikzcd}[
    row sep=0,
    execute at begin node={\rule[-2.2ex]{0mm}{5.1ex}},
    execute at begin picture={
        \clip (-6.6,-1.45) rectangle (6.75,1.45);
    },
]
    \cdots \rar
    & 0 \rar
    & \pi_2(\bL_{k_n/\bL J_n})
\\
    \pi_1(\bL_{\bL J_n/k} \Lotimes k_n) \rar
    & \pi_1(\bL_{k_n/k}) \rar
    & \pi_1(\bL_{k_n/\bL J_n})
\\
    \Omega_{J_n/k} \otimes k_n \rar
    & \Omega_{k_n/k} \rar
    & 0.
    \ar[phantom, from=1-2, to=2-2,
        to path={
            (\tikztostart.center)
            --
            node[name=Y, anchor=center]{}
            (\tikztotarget.center)
        }]{d}
    \ar[rounded corners=1.6ex, from=1-3, to=2-1,
        to path={ -- ([xshift=2.3cm]\tikztostart.center)
                |- (Y.center) \tikztonodes
                -| ([xshift=-3cm]\tikztotarget.center)
                -- (\tikztotarget)
        }]{dll}
    \ar[phantom, from=2-2, to=3-2,
        to path={
            (\tikztostart.center)
            --
            node[name=X, anchor=center]{}
            (\tikztotarget.center)
        }]{d}
    \ar[rounded corners=1.6ex, from=2-3, to=3-1,
        to path={ -- ([xshift=2.3cm]\tikztostart.center)
                |- (X.center) \tikztonodes
                -| ([xshift=-3cm]\tikztotarget.center)
                -- (\tikztotarget)
        }]{dll}
\end{tikzcd}
\end{equation}
For convenience, we introduce notation for the following kernel:
\[
    K_{n}
    :=
    \ker\left(
        \pi_{1}(\bL_{k_n/k})
        \lra
        \pi_1(\bL_{k_n/\bL J_n})
    \right).
\]
We remark that when the field extension $k_n/k$ is separable, we have that $K_n
= 0$. From \cref{eq:six-term-exact-seq} we can alternatively write
\[
    K_{n}
    \cong
    \coker\big(
        \pi_2(\bL_{k_n/\bL J_n})
        \lra
        \pi_1(\bL_{\bL J_n/k} \Lotimes k_n)
    \big).
\]
In particular we also have that $K_n = 0$ when $X$ is smooth at $\alpha(0)$, by
\cref{thm:smoothobjectshavediscretejetsandarcs}.

\begin{lem}
\label{lem:embdimofjets}
Denote $d = b_0(\alpha)$ and assume $n \ge \ord_\alpha(\Jac_X^{(0,d)})$. Then
\[
    \embdim(\cO_{J_n(X),\alpha_n})
    =
    (n+1) d
    - \dim(\overline{\{\alpha_{n}\}})
    + \ord_\alpha(\Jac_X^{(0,d)})
    - \dim_{k_{\alpha_n}}(K_{n}).
\]
\end{lem}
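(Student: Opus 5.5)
The plan is to read off $\embdim(\cO_{J_n(X),\alpha_n})$ from the six-term exact sequence \cref{eq:six-term-exact-seq}. Three facts combine. First, by \cref{prop:embdim-via-pi1},
\[
\embdim(\cO_{J_n(X),\alpha_n}) = \dim_{k_n}\pi_1(\bL_{k_n/\bL J_n}),
\]
since $\pi_0(\bL J_n(X)) = J_n(X)$ by \cref{thm:globalizejets}. Second, by \cref{cor:cotangent-fibers-non-deg}, or more precisely \cref{thm:cotangent-fibers-general} with $i=0$ (non-degeneracy is not needed for this part), the hypothesis $n \ge \ord_\alpha(\Jac_X^{(0,d)})$ gives
\[
\dim_{k_n}\big(\Omega_{J_n/k}\otimes k_n\big) = \dim_{k_n}\pi_0\big(\bL_{\bL J_n/k}\Lotimes k_n\big) = (n+1)d + \ord_\alpha(\Jac_X^{(0,d)}).
\]
Third, the Cartier equality from \cref{sec:cartier-equality}, applied to the point $\alpha_n \in J_n(X)$ (which is essentially of finite type over $k$), gives
\[
\dim(\overline{\{\alpha_n\}}) = \dim\Omega_{k_n/k} - \dim\pi_1(\bL_{k_n/k}).
\]

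With these in hand, I truncate the long exact sequence at $\pi_1(\bL_{k_n/k})$ and use the definition of $K_n$ to get the exact sequence
\[
0 \to \pi_1(\bL_{k_n/k})/K_n \to \pi_1(\bL_{k_n/\bL J_n}) \to \Omega_{J_n/k}\otimes k_n \to \Omega_{k_n/k} \to 0.
\]
All terms are finite-dimensional: the two right-hand terms by the dimension count above, and $\pi_1(\bL_{k_n/k})$ because $k_n$ is finitely generated over $k$. Taking the alternating sum of dimensions then yields
\[
\embdim = \dim(\Omega_{J_n/k}\otimes k_n) - \dim\Omega_{k_n/k} + \dim\pi_1(\bL_{k_n/k}) - \dim K_n.
\]
Substituting the Cartier equality and the dimension of $\Omega_{J_n/k}\otimes k_n$ gives exactly the claimed formula.

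The main point needing care is the identification of the fiber $\pi_0(\bL_{\bL J_n/k}\Lotimes k_n)$ with $\Omega_{J_n/k}\otimes k_n$, together with its dimension count. For the identification, $\pi_0$ of the cotangent complex is $\Omega$ of $\pi_0$, and $\pi_0$ commutes with the tensor product. For the dimension, I use the explicit description $\bL_{A/k}\otimes^\bL_A k_n[t]/(t^{n+1})$ from \cref{sec:remarks-before-csi} and the hypothesis on $n$, which ensures $b_0(\alpha_n) = b_0(\alpha) = d$.

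A secondary check is that the Cartier equality applies to $k_n/k$ with $\pi_1(\bL_{k_n/k})$ finite-dimensional. This holds because $k_n$ is a residue field of a scheme essentially of finite type over $k$.
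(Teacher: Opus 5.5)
Your proposal is correct and is essentially the paper's proof: it also reads the formula off the six-term exact sequence, combining the fiber dimension $\dim_{k_n}(\Omega_{J_n/k}\otimes k_n)=(n+1)d+\ord_\alpha(\Jac_X^{(0,d)})$ from \cref{thm:cotangent-fibers-general}, the Cartier equality, and \cref{prop:embdim-via-pi1}. Your truncation of the sequence using $K_n$ and your finite-dimensionality check just spell out what the paper calls immediate.
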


\begin{proof}
The result follows immediately from the long exact sequence
\eqref{eq:six-term-exact-seq}
and the
equalities
\begin{gather*}
    \dim_{k_n}(\Omega_{J_n/k} \otimes k_n) =
        (n+1) d
        + \ord_\alpha(\Jac_X^{(0,d)}),
\\
    \dim(\overline{\{\alpha_{n}\}}) =
        \dim_{k_n}(\Omega_{k_n/k})
        - \dim_{k_n}(\pi_1(\bL_{k_n/k})),
\\
    \embdim(\cO_{J_n(X),\alpha_n}) =
        \dim_{k_n}(\pi_1(\bL_{k_n/\bL J_n})),
\end{gather*}
which were discussed in \cref{thm:cotangent-fibers-general},
\cref{sec:cartier-equality}, and \cref{prop:embdim-via-pi1}.
\end{proof}

\begin{rmk}
We emphasize that when $X$ is smooth or when the field extension
$k_{\alpha_n}/k$ is separable, the term $\dim_{k_{\alpha_n}}(K_n)$ in
\cref{lem:embdimofjets} vanishes. In the smooth case the embedding dimension
becomes simply $(n+1) d - \dim(\overline{\{\alpha_{n}\}})$. When
$k_{\alpha_n}/k$ is separable, we get an exact formula for the embedding
dimension in terms of the order of contact with respect to the Jacobian ideal
of level $(0,d)$. This always happens for example when the base field $k$ is
perfect. When $k$ is not perfect, the extra term might be necessary, giving
evidence of the delicate interplay between Jacobian ideals and the dependence
on $k$ of the construction of the jet schemes.
\end{rmk}

The size of $K_n$ can be easily bounded using the techniques of
\cref{sec:fibers-fitting}. We present a result for non-degenerate arcs.

\begin{cor}
\label{lem:embdimofjets-non-deg}
Consider a non-degenerate arc $\alpha$, denote $d = b_0(\alpha)$ and assume
that $n \ge \ord_\alpha(\Jac_X^{(1,0)})$. Then
\[
\begin{aligned}
\MoveEqLeft
    (n+1) d
    - \dim(\overline{\{\alpha_{n}\}})
    + \ord_\alpha(\Jac_X^{(0,d)})
    - \ord_\alpha(\Jac_X^{(1,0)})
\\&
    \le
    \embdim(\cO_{J_n(X),\alpha_n})
\\&
    \le
    (n+1) d
    - \dim(\overline{\{\alpha_{n}\}})
    + \ord_\alpha(\Jac_X^{(0,d)}).
\end{aligned}
\]
\end{cor}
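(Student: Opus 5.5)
Both inequalities will follow from \cref{lem:embdimofjets} once we bound $K_n$:
\[
    0 \le \dim_{k_{\alpha_n}}(K_n) \le \ord_\alpha(\Jac_X^{(1,0)}).
\]
First we check that the lemma applies. Since $\alpha$ is non-degenerate, $b_0(\alpha)=d$ and $b_1(\alpha)=0$. We need $n \ge \ord_\alpha(\Jac_X^{(0,d)})$. The cleanest route is the formula from \cref{sec:pull-backs-complexes-arcs},
\[
    \ord_\alpha(\Csi_{(1,b_1)}) = a_{1,1}+\cdots+a_{1,c_1}+a_{0,1}+\cdots+a_{0,c_0},
\]
together with $\ord_\alpha(\Csi_{(0,b_0)}) = a_{0,1}+\cdots+a_{0,c_0}$. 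These give $\ord_\alpha(\Jac_X^{(0,d)}) \le \ord_\alpha(\Jac_X^{(1,0)})$, so the hypothesis on $n$ suffices. With this, the upper bound is immediate from \cref{lem:embdimofjets}, since $\dim K_n \ge 0$.

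For the lower bound we use the description
\[
    K_n \cong \coker\big(\pi_2(\bL_{k_n/\bL J_n}) \to \pi_1(\bL_{\bL J_n/k}\Lotimes k_n)\big).
\]
This makes $K_n$ a quotient of $\pi_1(\bL_{\bL J_n/k}\Lotimes k_n)$, so
\[
    \dim K_n \le \dim_{k_n}\pi_1(\bL_{\bL J_n/k}\Lotimes k_n).
\]
We compute the right-hand side with \cref{cor:cotangent-fibers-non-deg}, applied with $i=1$. The requirement there is $n\ge \ord_\alpha(\Jac_X^{(1,0)})$, which is exactly our hypothesis, and it gives
\[
    \dim_{k_n}\pi_1(\bL_{\bL J_n/k}\Lotimes k_n) = \ord_\alpha(\Jac_X^{(1,0)}).
\]
Substituting this bound into \cref{lem:embdimofjets} yields the left inequality.

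The only delicate point is the comparison $\ord_\alpha(\Jac_X^{(0,d)}) \le \ord_\alpha(\Jac_X^{(1,0)})$, which is needed to invoke \cref{lem:embdimofjets}. It rests on the invariant factor bookkeeping when $b_1=0$. If that formula were unavailable, we would instead argue directly: $\pi_1$ of the jet fiber contains the summands $(t^{n+1-a_{0,j}})/(t^{n+1})$, whose total dimension is $\sum_j a_{0,j}$. For the rest, we also need to know that $\pi_1$ of the jet fiber is unaffected by the $t^{n+1}$ truncation once $n$ exceeds all the $a_{1,j}$ and $a_{0,j}$; this is guaranteed by $n \ge \ord_\alpha(\Jac_X^{(1,0)})$.
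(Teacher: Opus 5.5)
Your proposal is correct and is essentially the paper's proof. The paper likewise uses the cokernel description of $K_n$ and \cref{cor:cotangent-fibers-non-deg} to get $0 \le \dim_{k_n}(K_n) \le \dim_{k_n}\pi_1(\bL_{\bL J_n/k}\Lotimes k_n) = \ord_\alpha(\Jac_X^{(1,0)})$, and then substitutes this into \cref{lem:embdimofjets}. Your explicit check that $\ord_\alpha(\Jac_X^{(0,d)}) \le \ord_\alpha(\Jac_X^{(1,0)})$, which is needed for \cref{lem:embdimofjets} to apply, is left implicit in the paper's proof; it is recorded in the remark right after the corollary, using the same invariant-factor identities you use.
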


\begin{proof}
\cref{cor:cotangent-fibers-non-deg} gives the last
equality in the following equation:
\[
    0
    \le
    \dim_{k_n}
    (K_n)
    \le
    \dim_{k_n}
    (
        \pi_1(
            \bL_{\bL J_n/k}
            \Lotimes
            k_n
        )
    )
    =
    \ord_\alpha(
        \Jac_X^{(1,0)}
    ).
    \qedhere
\]
\end{proof}

\begin{rmk}
Notice that we always have $ \ord_\alpha(\Jac_X^{(0,d)}) \le
\ord_\alpha(\Jac_X^{(1,0)}) $. Indeed, as we saw in
\cref{sec:pull-backs-complexes-arcs}, for a non-degenerate arc we have the
equality
\[
    a_{1,1}(\alpha) + \cdots + a_{1,c_1}(\alpha)
    =
    \ord_\alpha(\Jac_X^{(1,0)})
    -
    \ord_\alpha(\Jac_X^{(0,d)}).
\]
We also have
\[
    a_{0,1}(\alpha) + \cdots + a_{0,c_0}(\alpha)
    =
    \ord_\alpha(\Jac_X^{(0,d)}),
\]
so the bounds of \cref{lem:embdimofjets-non-deg} could be rewritten in terms of
the invariant factors of $\alpha$.
\end{rmk}

\subsection{Analysis of truncation maps}

For $m>n$, the truncation map $\bL J_{n}(X)\to\bL J_{m}(X)$ induces a ladder of
two triangles of $k_\alpha$-complexes
\begin{equation}
\Label[diagram]{eq:ladder}
\begin{tikzcd}[column sep=small]
    \bL_{\bL J_{n}/k}
    \Lotimes
    k_\alpha
        \arrow[r]
        \arrow[d]
    & \bL_{k_n/k} \otimes k_\alpha
        \arrow[r]
        \arrow[d]
    & \bL_{k_n/\bL J_{n}} \otimes k_\alpha
        \arrow[r,"-1"]
        \arrow[d]
    & \phantom{x}
\\
    \bL_{\bL J_{m}/k}
    \Lotimes
    k_\alpha
        \arrow[r]
    & \bL_{k_m/k} \otimes k_\alpha
        \arrow[r]
    & \bL_{k_m/\bL J_{m}} \otimes k_\alpha
        \arrow[r,"-1"]
    & .
\end{tikzcd}
\end{equation}
Note that since $k_{n}$ and $k_{m}$ are fields, the derived tensor over them is
an ordinary tensor and commutes with homotopy. We indicated this in the last
two columns above by suppressing the $\bL$ on top of the tensor product. For
the first column, we have
\[
    \bL_{\bL J_{n}/k}
    \Lotimes
    k_\alpha
    =
    \bL_{\bL J_{n}/k}
    \Lotimes
    k_n
    \otimes
    k_\alpha.
\]
In what follows, write $\lambda_{n,m}$ for the map
\[
    \lambda_{n,m}
    \colon
    \pi_1(\bL_{k_n/\bL J_n}) \otimes k_\alpha
    \lra
    \pi_1(\bL_{k_m/\bL J_m}) \otimes k_\alpha,
\]
and $\lambda_{n}$ for
\[
    \lambda_n
    \colon
    \pi_1(\bL_{k_n/\bL J_n}) \otimes k_\alpha
    \lra
    \pi_1(\bL_{k_\alpha/\bL J_\infty}).
\]
In view of \cref{prop:embdim-via-pi1}, the map $\lambda_{n,m}$ agrees with
$\fm_{\alpha_n}/\fm_{\alpha_n}^{2}\otimes k_\alpha \to
\fm_{\alpha_m}/\fm_{\alpha_m}^{2}\otimes k_\alpha$, the map induced by the
truncation map on the Zariski cotangent spaces. Since the arc space is the
limit of the jet schemes, it immediately follows that
\[
    \bL_{k_\alpha/\bL J_\infty}
    =
    \hocolim_n \, (
    \bL_{k_n/\bL J_n} \otimes k_\alpha ).
\]
Furthermore, since all of these complexes have vanishing $\pi_0$, we also get
that
\[
    \fm_\alpha/\fm_\alpha^2 =
    \pi_1(\bL_{k_\alpha/\bL J_\infty})
    =
    \colim_n \, (
    \pi_1(\bL_{k_n/\bL J_n}) \otimes k_\alpha )
    =
    \colim_n \, (
    \fm_{\alpha_n}/\fm_{\alpha_n}^2 \otimes k_\alpha ).
\]

\begin{lem}
\label{lem:boundonimagelambdan}
Let $d = b_0(\alpha)$ and assume $n \ge \ord_\alpha(\Jac_X^{(0,d)})$ and $m\geq
n+\ord_\alpha(\Jac_X^{(0,d)})$. Under the notation above, we have
\begin{equation}
    \label{eq:dimofimageinequality}
    \dim_{k_\alpha}(\Im(\lambda_{n,m}))
    \geq
    (n+1) \, d
    - \dim(\overline{\{\alpha_{n}\}})
    - \dim_{k_n}(K_{n}).
\end{equation}
Moreover, if the map $\Omega_{k_{n}/k}\otimes
k_\alpha\to\Omega_{k_{m}/k}\otimes k_\alpha$ is injective, then
\begin{equation}
    \label{eq:dimofimageequality}
    \dim_{k_\alpha}(\Im(\lambda_{n,m}))
    =
    (n+1) \, d
    - \dim(\overline{\{\alpha_{n}\}})
    - \dim_{k_n}(K_{n}).
\end{equation}
\end{lem}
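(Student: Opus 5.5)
We argue by a diagram chase in the ladder \eqref{eq:ladder}, using the six-term exact sequence \eqref{eq:six-term-exact-seq} for $n$ and for $m$, tensored with $k_\alpha$. In the top row, $\pi_1(\bL_{k_n/\bL J_n})\otimes k_\alpha$ contains the image of $\pi_1(\bL_{k_n/k})\otimes k_\alpha$, which is isomorphic to $(\pi_1(\bL_{k_n/k})/K_n)\otimes k_\alpha$. Its cokernel is
\[
Q_n := \ker\bigl(\Omega_{J_n/k}\otimes k_\alpha \to \Omega_{k_n/k}\otimes k_\alpha\bigr).
\]
By the Cartier equality (\cref{sec:cartier-equality}) and \cref{lem:embdimofjets}, we get
\[
\dim Q_n = (n+1)d + \ord_\alpha(\Jac_X^{(0,d)}) - \dim(\overline{\{\alpha_n\}}) - \dim \pi_1(\bL_{k_n/k}),
\]
and the first summand has dimension $\dim\pi_1(\bL_{k_n/k})-\dim K_n$. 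So we estimate the image of $\lambda_{n,m}$ by splitting $\pi_1(\bL_{k_n/\bL J_n})\otimes k_\alpha$ into these two pieces.

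The key input is the map on $\pi_0$ of the first column, namely
\[
\pi_0(\psi_{n,m})\colon \Omega_{J_n/k}\otimes k_\alpha\to \Omega_{J_m/k}\otimes k_\alpha.
\]
By \cref{thm:trunc-cotangent-fibers-general} with $i=0$, this map is multiplication by $t^{m-n}$. Under the stated hypotheses on $n$ and $m$, its kernel is the torsion part $\bigoplus_j k[t]/(t^{a_{0,j}})$, which has dimension $\ord_\alpha(\Jac_X^{(0,d)})$. Hence the image of $\Omega_{J_n/k}\otimes k_\alpha$ has dimension exactly $(n+1)d$. We will need a slightly more careful check that, for $m-n\ge a_{0,c_0}$, the $c_{-1}$-type term is absent at $i=0$, so the kernel is exactly the torsion summand.

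Now chase. The composite
\[
\pi_1(\bL_{k_n/\bL J_n})\otimes k_\alpha \to \pi_1(\bL_{k_m/\bL J_m})\otimes k_\alpha \to \Omega_{J_m/k}\otimes k_\alpha
\]
equals the composite of the connecting map into $Q_n\subset \Omega_{J_n/k}\otimes k_\alpha$ followed by $\pi_0(\psi_{n,m})$. Its image is therefore $\pi_0(\psi_{n,m})(Q_n)$, of dimension at least $\dim Q_n - \dim\ker\pi_0(\psi_{n,m}) \ge \dim Q_n - \ord_\alpha(\Jac_X^{(0,d)})$. That gives
\[
\dim \Im(\lambda_{n,m}) \ge (n+1)d - \dim(\overline{\{\alpha_n\}}) - \dim\pi_1(\bL_{k_n/k}).
\]
We still need to recover the missing term $\dim\pi_1(\bL_{k_n/k})-\dim K_n$. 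For this, we consider the image of $\pi_1(\bL_{k_n/k})\otimes k_\alpha$ inside $\pi_1(\bL_{k_m/\bL J_m})\otimes k_\alpha$.

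This is the main obstacle. A cleaner route may be to count instead $\dim\ker\lambda_{n,m}$. It consists of the part of $\ker\pi_0(\psi_{n,m})\cap Q_n$ lifting to the kernel, together with elements of $\pi_1(\bL_{k_n/k})\otimes k_\alpha$ that die under the map to $\pi_1(\bL_{k_m/\bL J_m})$. The second kind factor through $\pi_1(\bL_{k_m/k})\otimes k_\alpha$ modulo the image of $K_m$, that is, modulo the image of $\pi_1$ of the first column. Since $\ker\pi_0(\psi_{n,m})$ has dimension $\ord_\alpha(\Jac_X^{(0,d)})$, the kernel of $\lambda_{n,m}$ has dimension at most $\ord_\alpha(\Jac_X^{(0,d)})$ plus the contribution coming from $\pi_1(\bL_{k_n/k})$. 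I would try to show that the latter contribution is absorbed by using the exact sequence $\pi_1(\bL_{k_m/k})\to\pi_1(\bL_{k_m/\bL J_m})$ with kernel $K_m$. Subtracting this bound from $\dim \pi_1(\bL_{k_n/\bL J_n}) = \embdim$ given by \cref{lem:embdimofjets} then yields \eqref{eq:dimofimageinequality}.

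For the equality \eqref{eq:dimofimageequality}, the injectivity hypothesis on $\Omega_{k_n/k}\otimes k_\alpha\to\Omega_{k_m/k}\otimes k_\alpha$ means the rightmost vertical map in the $\pi_0$-row is injective. A four-lemma chase then shows that every element of $\ker\pi_0(\psi_{n,m})$ lies in $Q_n$ and comes from $\ker\lambda_{n,m}$. This makes the kernel bound sharp, so the inequality becomes an equality.
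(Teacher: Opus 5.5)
Your first chase is correct and gives $\dim_{k_\alpha}\Im(\lambda_{n,m})\ge(n+1)d-\dim(\overline{\{\alpha_n\}})-\dim\pi_1(\bL_{k_n/k})$. The gap is the step you call the main obstacle and leave open. Write $I_n$ for the image of $\pi_1(\bL_{k_n/k})\otimes k_\alpha$ in $\pi_1(\bL_{k_n/\bL J_n})\otimes k_\alpha$. You need $\lambda_{n,m}$ to be injective on $I_n$: any class in $\pi_1(\bL_{k_n/k})\otimes k_\alpha$ whose image in $\pi_1(\bL_{k_m/k})\otimes k_\alpha$ lies in $K_m\otimes k_\alpha$ must already lie in $K_n\otimes k_\alpha$. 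The paper gets this from $\widetilde K_{n,m}=0$ (i.e.\ $\pi_2(\bL_{k_m/k_n})=0$) and concludes that $\ker\lambda_{n,m}\to\ker\pi_0(\psi_{n,m})$ is injective. But the snake lemma for $0\to K_\bullet\otimes k_\alpha\to\pi_1(\bL_{k_\bullet/k})\otimes k_\alpha\to I_\bullet\to 0$ ($\bullet=n,m$) only gives $\ker(I_n\to I_m)\hookrightarrow\coker(K_n\otimes k_\alpha\to K_m\otimes k_\alpha)$, which can be nonzero.

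In fact the statement fails. Let $k=\FF_p(b)$, $X=\Spec k[x,y]/(y^p-bx^p)$, $\beta^p=b$, and let $\alpha$ be the arc $x\mapsto t$, $y\mapsto\beta t$, so $k_n=k_\alpha=k(\beta)$ for $n\ge1$. Since $d(y^p-bx^p)=0$, $\Omega_{X/k}$ is free of rank $2$, so $d=2$ and $\ord_\alpha(\Jac_X^{(0,2)})=0$. The only nonzero Hasse--Schmidt derivatives are $f^{(ip)}=(y^{(i)})^p-b(x^{(i)})^p$. For $n=1$, $J_1(X)$ is cut out by $f^{(0)}\in\fm_{\alpha_1}^2$ alone. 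Hence $\fm_{\alpha_1}/\fm_{\alpha_1}^2$ has basis $x^{(0)},y^{(0)},x^{(1)}-1,(y^{(1)})^p-b$, and $K_1=0$, because the generator $(y^{(1)})^p-b$ of $\pi_1(\bL_{k_1/k})$ survives there. The right-hand side is therefore $4$. For $m=p$, however, $f^{(p)}\equiv(y^{(1)})^p-b$ modulo $\fm_{\alpha_p}^2$, so $\lambda_{1,p}$ kills $(y^{(1)})^p-b$ and $\dim\Im(\lambda_{1,p})=3$. This happens even though $k_1=k_p$ makes the $\Omega$-injectivity hypothesis hold. Here $K_p=\pi_1(\bL_{k_p/k})\ne0=K_1$, and the new class comes from $\pi_1(\bL_{k_1/k})$, which is exactly what your plan would have to rule out.

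So neither the inequality nor the equality can be proved as stated. Your four-lemma step for the equality has the same defect: lifting $\ker\pi_0(\psi_{n,m})$ into $\ker\lambda_{n,m}$ needs $\pi_1(\bL_{k_m/k})\otimes k_\alpha$ to be spanned by $K_m\otimes k_\alpha$ and the image of $\pi_1(\bL_{k_n/k})\otimes k_\alpha$, and injectivity on $\Omega$ does not give that.

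What your decomposition does prove, combined with the snake-lemma estimate $\dim\ker(I_n\to I_m)\le\dim K_m-\dim K_n$, is a corrected bound:
$\dim_{k_\alpha}\Im(\lambda_{n,m})\ge(n+1)d-\dim(\overline{\{\alpha_n\}})-\dim_{k_m}(K_m)$.
Equality holds if moreover $k_m/k_n$ is separable, since then $I_m=\lambda_{n,m}(I_n)$. For non-degenerate $\alpha$ and $m\ge\ord_\alpha(\Jac_X^{(1,0)})$ one still has $\dim K_m\le\ord_\alpha(\Jac_X^{(1,0)})$. So the lower bound in the subsequent non-degenerate corollary survives with this correction.
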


\begin{rmk}
The injectivity of $\Omega_{k_{n}/k}\otimes k_\alpha\to\Omega_{k_{m}/k}\otimes
k_\alpha$ is guaranteed as soon as the field extension $k_m/k_n$ is separable.
In particular, it holds for arbitrary arcs in characteristic zero, and for
stable arcs and $n$ large enough in arbitrary characteristic (see
\cref{sec:curveselection}).
\end{rmk}

\begin{rmk}
\Cref{eq:dimofimageinequality} is an extension of \cite[Lemma~8.3]{dFD20} to
the non-perfect case. When $\Omega_{k_{n}/k}\otimes k_\alpha \to
\Omega_{k_{m}/k} \otimes k_\alpha$ is injective and $K_n = 0$ we get the
stronger identity
\[
    \dim_{k_\alpha}(\Im(\lambda_{n,m}))
    =
    (n+1) \, d
    - \dim(\overline{\{\alpha_{n}\}}).
\]
For example, this always holds in characteristic zero. Recall that the
condition $K_n = 0$ is satisfied when $X$ is smooth or $k$ is perfect.
\end{rmk}

\begin{proof}
From the two triangles we produce the two truncated long exact sequences in the
bottom two rows below. The top nonzero row in the diagram below consists of the
kernels of the vertical maps induced by the ladder of \cref{eq:ladder}.
\[
\begin{tikzcd}[column sep=0.65em]
    &
    & 0 \arrow[d]
    & 0 \arrow[d]
    & 0 \arrow[d]
    & 0 \arrow[d]
    &
\\[-0.2cm]
    &
    & \widetilde{K}_{n,m}
        \arrow[r]
        \arrow[d]
    & K'_{n,m}
        \arrow[r,"\iota_{n,m}"]
        \arrow[d]
    & K_{n,m}
        \arrow[r]
        \arrow[d]
    & K''_{n,m}
        \arrow[d]
    &
\\[-0.2cm]
    0 \arrow[r]
    &
    K_{n}\otimes k_\alpha
        \arrow[r]
        \arrow[d]
    & \pi_{1}(\bL_{k_{n}/k}\otimes k_\alpha)
        \arrow[r]
        \arrow[d]
    & \pi_1(\bL_{k_n/\bL J_n}) \otimes k_\alpha
        \arrow[r]
        \arrow[d,"\lambda_{n,m}"]
    & \Omega_{J_{n}/k}\otimes k_\alpha
        \arrow[r]
        \arrow[d,"\pi_0(\psi_{n,m})"]
    & \Omega_{k_{n}/k}\otimes k_\alpha
        \arrow[r]
        \arrow[d]
    & 0
\\
    0 \arrow[r]
    &
    K_{m}\otimes k_\alpha
        \arrow[r]
    & \pi_{1}(\bL_{k_{m}/k}\otimes k_\alpha)
        \arrow[r]
    & \pi_1(\bL_{k_m/\bL J_m}) \otimes k_\alpha
        \arrow[r]
    & \Omega_{J_{m}/k}\otimes k_\alpha
        \arrow[r]
    & \Omega_{k_{m}/k}\otimes k_\alpha
        \arrow[r]
    & 0
\end{tikzcd}
\]
First, we claim $\widetilde{K}_{n,m}=0$. Consider the triangle of cotangent
complexes induced by the composition $k\to k_{n}\to k_{m}$, base changed to
$k_\alpha$:
\begin{align*}
    \bL_{k_{n}/k}\otimes k_\alpha
    \lra
    \bL_{k_{m}/k}\otimes k_\alpha
    \lra
    \bL_{k_{m}/k_{n}}\otimes k_\kappa
    \xra{-1}.
\end{align*}
Since this triangle is induced by the truncation map $\bL J_{n}(X)\to\bL
J_{m}(X)$, it agrees with the middle vertical map in (\ref{eq:ladder}).
Consequently, $\widetilde{K}_{n,m}$ is a quotient of
$\pi_{2}(\bL_{k_{m}/k_{n}}\otimes k_\alpha)$. But
$\pi_{2}(\bL_{k_{m}/k_{n}}\otimes k_\alpha)=0$ since $k_{n}\to k_{m}$ is a
morphism of fields \cite[8.10]{Iye07}. Next, by \Cref{lem:embdimofjets},
\[
    \embdim(\cO_{\bL J_n(X),\alpha_n})
    =
    (n+1) d
    - \dim(\overline{\{\alpha_{n}\}})
    + \ord_\alpha(\Jac_X^{(0,d)})
    - \dim_{k_{\alpha_n}}(K_{n}).
\]
By \cref{thm:trunc-cotangent-fibers-general},
\[
    \dim_{k_\alpha} (K_{n,m})
    =
    \dim_{k_{\alpha_m}}(\ker(\pi_0(\psi_{n,m})))
    =
    \ord_\alpha(\Jac_X^{(0,d)})
    .
\]
Additionally, observe that since $\widetilde{K}_{n,m}=0$,
\[
    \dim_{k_\alpha}(K'_{n,m})
    =
    \dim_{k_\alpha}(\Im(\iota_{n,m})).
\]
Combining the above statements, one has
\begin{align*}
    \dim_{k_\alpha}(\Im(\lambda_{n,m}))
    &= \dim_{k_\alpha}(\pi_1(\bL_{k_n/\bL J_n}) \otimes k_\alpha)
        - \dim_{k_\alpha}K'_{n,m} \\
    &= \embdim(\cO_{\bL J_n(X),\alpha_n})
        - \dim_{k_\alpha}(\Im(\iota_{n,m})) \\
    &\geq \embdim(\cO_{\bL J_n(X),\alpha_n})
        - \dim_{k_\alpha}(K_{n,m}) \\
    &= (n+1) \, d
        - \dim(\overline{\{\alpha_{n}\}})
        - \dim_{k_{\alpha_n}}(K_{n}).
\end{align*}
This gives the inequality (\ref{eq:dimofimageinequality}). For the equality
statement (\ref{eq:dimofimageequality}), it is enough to observe that if the
map $\Omega_{k_{n}/k}\otimes k_\alpha\to\Omega_{k_{m}/k}\otimes k_\alpha$ is
injective, then $K''_{n,m}=0$ and $\Im(\iota_{n,m})=K_{n,m}$.
\end{proof}

Using the bounds on the size of $K_n$ obtained in \cref{sec:fibers-fitting} we
get the following estimate for non-degenerate arcs.

\begin{cor}
\label{lem:boundonimagelambdan-non-deg}
Consider a non-degenerate arc $\alpha$, denote $d = b_0(\alpha)$ and assume
that $n \ge \ord_\alpha(\Jac_X^{(1,0)})$ and $m\geq
n+\ord_\alpha(\Jac_X^{(0,d)})$. Then
\[
    - \ord_\alpha(\Jac_X^{(1,0)})
    \ \le\
    \dim_{k_\alpha}(\Im(\lambda_{n,m}))
    - (n+1) d
    + \dim(\overline{\{\alpha_{n}\}})
    \ \le\
    \ord_\alpha(\Jac_X^{(0,d)}).
\]
Moreover, if the map $\Omega_{k_{n}/k}\otimes
k_\alpha\to\Omega_{k_{m}/k}\otimes k_\alpha$ is injective, then
\[
    - \ord_\alpha(\Jac_X^{(1,0)})
    \ \le\
    \dim_{k_\alpha}(\Im(\lambda_{n,m}))
    - (n+1) d
    + \dim(\overline{\{\alpha_{n}\}})
    \ \le\
    0.
\]
\end{cor}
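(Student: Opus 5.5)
The plan is to combine \cref{lem:boundonimagelambdan} with the bound on $K_n$ already used in the proof of \cref{lem:embdimofjets-non-deg}. Set
\[
    E := \dim_{k_\alpha}(\Im(\lambda_{n,m})) - (n+1)d + \dim(\overline{\{\alpha_{n}\}}).
\]
First we check that the hypotheses of \cref{lem:boundonimagelambdan} hold. By the remark following \cref{lem:embdimofjets-non-deg}, a non-degenerate arc satisfies $\ord_\alpha(\Jac_X^{(0,d)}) \le \ord_\alpha(\Jac_X^{(1,0)})$. So $n \ge \ord_\alpha(\Jac_X^{(1,0)})$ gives $n \ge \ord_\alpha(\Jac_X^{(0,d)})$, and the assumption on $m$ is exactly the one in that lemma.

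For the lower bound, inequality \eqref{eq:dimofimageinequality} gives $E \ge -\dim_{k_n}(K_n)$. As in the proof of \cref{lem:embdimofjets-non-deg}, $K_n$ is a quotient of $\pi_1(\bL_{\bL J_n/k}\Lotimes k_n)$. By \cref{cor:cotangent-fibers-non-deg}, which applies because $n\ge \ord_\alpha(\Jac_X^{(1,0)})$, this group has dimension $\ord_\alpha(\Jac_X^{(1,0)})$. Hence $\dim_{k_n}(K_n)\le \ord_\alpha(\Jac_X^{(1,0)})$, and so $E \ge -\ord_\alpha(\Jac_X^{(1,0)})$. If the injectivity hypothesis holds, \eqref{eq:dimofimageequality} gives $E = -\dim_{k_n}(K_n)$. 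This yields the lower bound and also the upper bound $E\le 0$ in the second statement.

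The remaining point is the upper bound $E \le \ord_\alpha(\Jac_X^{(0,d)})$ when injectivity is not assumed. Since $\lambda_{n,m}$ starts at $\pi_1(\bL_{k_n/\bL J_n})\otimes k_\alpha$, we have $\dim \Im(\lambda_{n,m}) \le \embdim(\cO_{J_n(X),\alpha_n})$. The upper bound of \cref{lem:embdimofjets-non-deg} (equivalently, \cref{lem:embdimofjets} together with $\dim K_n\ge 0$) then gives exactly $E\le\ord_\alpha(\Jac_X^{(0,d)})$. I expect no step to be a real obstacle. The only care needed is to confirm that the hypothesis on $n$ is strong enough for both \cref{lem:boundonimagelambdan} and \cref{cor:cotangent-fibers-non-deg}, and this follows from the comparison of orders noted above.
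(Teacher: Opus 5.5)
Your proof is correct and follows the same route as the paper. The lower bounds come from \eqref{eq:dimofimageinequality} (respectively \eqref{eq:dimofimageequality}), using that $K_n$ is a quotient of $\pi_1(\bL_{\bL J_n/k}\Lotimes k_n)$, which has dimension $\ord_\alpha(\Jac_X^{(1,0)})$; the first upper bound comes from $\dim_{k_\alpha}(\Im(\lambda_{n,m}))\le\embdim(\cO_{J_n(X),\alpha_n})$ together with \cref{lem:embdimofjets-non-deg}. You also make explicit two points the paper leaves implicit: that $n\ge\ord_\alpha(\Jac_X^{(1,0)})\ge\ord_\alpha(\Jac_X^{(0,d)})$ makes \cref{lem:boundonimagelambdan} applicable, and that the upper bound $0$ in the injective case comes from the equality \eqref{eq:dimofimageequality}.
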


\begin{proof}
The upper bounds are a consequence of \cref{lem:embdimofjets-non-deg} and the
obvious inequality $\dim_{k_\alpha}(\Im(\lambda_{n,m})) \le
\embdim(\cO_{J_n(X),\alpha_n})$. For the lower bounds, notice that $K_n$ is a
quotient of $\pi_1( \bL_{\bL J_n/k} \Lotimes k_n)$, which has dimension
$\ord_\alpha(\Jac_X^{(1,0)})$.
\end{proof}

\begin{rmk}
\label{lem:boundonimagelambdan-non-deg-rmk}
When the ground field is perfect, we can replace $-\ord_\alpha(\Jac_X^{(1,0)})$
by $0$ in the bounds of \cref{lem:boundonimagelambdan-non-deg}.
\end{rmk}

\subsection{Embedding dimension on arc spaces}

Next, we explore bounds on the embedding dimension of the arc $\alpha$ itself.

\begin{lem}
\label{lem:embdimlimit}
For $d = b_0(\alpha)$,
\begin{equation}
    \label{eq:embdimlimit-1}
    \embdim(\cO_{J_\infty(X),\alpha})
    \ge
    \limsup_{n\to\infty}\left(
        (n+1) \, d
        - \dim(\overline{\{\alpha_{n}\}})
        -\dim_{k_{n}}(K_{n})
    \right).
\end{equation}
If $k_{m}/k_{n}$ is a separable extension for $m \geq n$
large enough, then
\begin{equation}
    \label{eq:embdimlimit-2}
    \embdim(\cO_{J_\infty(X),\alpha})
    =
    \lim_{n\to\infty}\left(
        (n+1) \, d
        - \dim(\overline{\{\alpha_{n}\}})
        -\dim_{k_{n}}(K_{n})
    \right).
\end{equation}
If $k$ is perfect and $k_{m}/k_{n}$ is a separable extension for $m \geq n$
large enough, or if $X$ is smooth at $\alpha(0)$, then
\begin{equation}
    \label{eq:embdimlimit-3}
    \embdim(\cO_{J_\infty(X),\alpha})
    =
    \lim_{n\to\infty}\left(
        (n+1) \, d
        - \dim(\overline{\{\alpha_{n}\}})
    \right).
\end{equation}
\end{lem}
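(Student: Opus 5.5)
The plan is to compute $\fm_\alpha/\fm_\alpha^2$ as a colimit and compare it with the images of the maps $\lambda_n$. We use the identification established just before \cref{lem:boundonimagelambdan}:
\[
    \fm_\alpha/\fm_\alpha^2 = \colim_n \big(\pi_1(\bL_{k_n/\bL J_n})\otimes k_\alpha\big),
\]
with transition maps $\lambda_{n,m}$ and structure maps $\lambda_n$. In any colimit of vector spaces over a directed system, $\Im(\lambda_n)$ is the quotient of the source by $\bigcup_m \ker(\lambda_{n,m})$. The kernels $\ker(\lambda_{n,m})$ increase with $m$. Hence $\dim\Im(\lambda_n) = \lim_m \dim\Im(\lambda_{n,m})$, a non-increasing limit, and for any fixed $m$ the dimension of $\Im(\lambda_n)$ is at most $\dim\Im(\lambda_{n,m})$. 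The main technical issue is that the estimate of \cref{lem:boundonimagelambdan} is stated for a single $m \ge n+\ord_\alpha(\Jac_X^{(0,d)})$. We therefore must check that the bound holds uniformly in all sufficiently large $m$, which it does because the right-hand side of \eqref{eq:dimofimageinequality} does not depend on $m$.

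\textbf{Inequality \eqref{eq:embdimlimit-1}.} If $d$ has $\ord_\alpha(\Jac_X^{(0,d)})=\infty$, we should separately check that the right-hand side is handled; presumably $\alpha$ is then degenerate and one needs care, but we first treat the finite case. For $n \ge \ord_\alpha(\Jac_X^{(0,d)})$ and every $m \ge n+\ord_\alpha(\Jac_X^{(0,d)})$, \cref{lem:boundonimagelambdan} gives
\[
    \dim\Im(\lambda_{n,m}) \ge (n+1)d - \dim(\overline{\{\alpha_n\}}) - \dim K_n.
\]
Letting $m\to\infty$ yields the same lower bound for $\dim\Im(\lambda_n)$. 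Since $\Im(\lambda_n)\subseteq \fm_\alpha/\fm_\alpha^2$, the embedding dimension dominates the quantity for every large $n$, and hence dominates its $\limsup$.

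\textbf{Equality \eqref{eq:embdimlimit-2}.} If $k_m/k_n$ is separable for $m\ge n\gg 0$, then $\Omega_{k_n/k}\otimes k_\alpha\to\Omega_{k_m/k}\otimes k_\alpha$ is injective. The equality case \eqref{eq:dimofimageequality} then gives $\dim\Im(\lambda_n) = (n+1)d - \dim(\overline{\{\alpha_n\}}) - \dim K_n =: e_n$ for all large $n$. Because $\fm_\alpha/\fm_\alpha^2$ is the union of the increasing chain of subspaces $\Im(\lambda_n)$, its dimension equals $\lim_n \dim\Im(\lambda_n)$. This sequence is non-decreasing since $\Im(\lambda_n)\subseteq\Im(\lambda_{n+1})$, so the limit exists in $\NN\cup\{\infty\}$ and equals $\lim e_n$.

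\textbf{Equality \eqref{eq:embdimlimit-3}.} It suffices to show $K_n=0$ in the two listed situations and then apply \eqref{eq:embdimlimit-2}, or a variant of it.
\begin{itemize}
\item If $k$ is perfect, then $\pi_1(\bL_{k_n/k})=0$ by \cref{sec:cartier-equality}, so $K_n=0$, and the separability hypothesis lets us invoke \eqref{eq:embdimlimit-2} directly.
\item If $X$ is smooth at $\alpha(0)$, then $K_n=0$ by \cref{thm:smoothobjectshavediscretejetsandarcs}: locally $\bL J_n$ is classical and smooth, so $\pi_1(\bL_{\bL J_n/k}\Lotimes k_n)=0$.
\end{itemize}

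In the smooth case there is a gap: we lack separability of $k_m/k_n$, so the equality of \cref{lem:boundonimagelambdan} is not available, and this is the main obstacle. To close it, we would run the diagram in the proof of \cref{lem:boundonimagelambdan} directly. For smooth $X$ the arc is non-degenerate, and $\ord_\alpha(\Jac_X^{(0,d)})=0$, so $K_{n,m}=0$. Hence $K'_{n,m}=\widetilde K_{n,m}=0$, which means $\lambda_{n,m}$ is injective. Then $\dim\Im(\lambda_n) = \embdim(\cO_{J_n(X),\alpha_n}) = (n+1)d - \dim(\overline{\{\alpha_n\}})$ by \cref{lem:embdimofjets}, and taking the increasing union as in the previous step gives \eqref{eq:embdimlimit-3}.
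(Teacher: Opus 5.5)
Your proposal is correct and follows essentially the paper's argument: the colimit description of $\fm_\alpha/\fm_\alpha^2$ combined with \cref{lem:boundonimagelambdan}. In the smooth case the paper simply invokes \cref{lem:boundonimagelambdan-non-deg}, where both Jacobian orders vanish, and this amounts to your direct proof that $\lambda_{n,m}$ is injective (that step is valid because $K_m=0$ as well, which makes the kernel row exact at $K'_{n,m}$). Also, the case $\ord_\alpha(\Jac_X^{(0,d)})=\infty$ that you set aside never occurs, whether or not the arc is degenerate, since with $d=b_0(\alpha)$ the ideal $\Fitt^{d}(\Omega_{X/k})$ does not vanish at $\alpha(\eta)$.
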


\begin{proof}
The result follows from the identification $ \pi_1(\bL_{k_\alpha/\bL J_\infty})
= \colim_n \, ( \pi_1(\bL_{k_n/\bL J_n}) \otimes k_\alpha ) $ and
\cref{lem:boundonimagelambdan}. Notice when $X$ is smooth at $\alpha(0)$ we can
apply \cref{lem:boundonimagelambdan-non-deg}.
\end{proof}

\begin{rmk}
The quantity appearing on the right-hand-side of \cref{eq:embdimlimit-3} is
known as the \emph{jet codimension} and will be studied in depth in
\cref{sec:curveselection}. Notice that \cref{lem:embdimlimit}
\eqref{eq:embdimlimit-3} holds in characteristic zero. We will show in
\cref{thm:edim-equals-jetcodim} that the equality \eqref{eq:embdimlimit-3}
holds unconditionally for all arcs in all characteristics when $X$ is
generically smooth.
\end{rmk}

\section{Cotangent maps}

\label{sec:cotangent_maps}

This section is devoted to using cotangent complexes on jets and arc spaces to
provide a series of improvements to previously known results regarding
cotangent maps. Specifically we extend many of the results of Chiu, de Fernex,
and Docampo in \cite{dFD20,CdFD22,CdFD23} by weakening some hypotheses. It
should be noted that while the proofs are often similar to those of
\cite{dFD20,CdFD22,CdFD23}, they differ in one significant way: previous works
did not have access to a specific exact $3 \by 3$ grid in a derived category,
see \cref{eq:3_times_3_grid_of_complexes}. From this one extracts an infinite
grid of homotopy groups with an interesting periodicity. When we can determine
the value of a term, this periodicity allows us to determine the value of terms
which lie far away as well, leading to our results.

\subsection{Cotangent and higher cotangent maps}

\label{subsec:cotangent_maps_assumptions}

Throughout the section we fix an arbitrary ground field $k$. Let $f\colon X\to
Y$ be a morphism of classical $k$-schemes and consider the corresponding map
$f_\infty = J_\infty(f) \colon J_\infty(X)\to J_\infty(Y)$ of arc spaces. Pick
an arc $\alpha\in J_\infty (X)$ and set $\beta=f_\infty(\alpha)\in
J_\infty(Y)$. We write $k_\alpha$ and $k_\beta$ for the residue fields of
$\alpha$ and $\beta$ considered as points of the corresponding arc spaces. Let
$\fm_\alpha$ and $\fm_\beta$ be the defining ideals of $\alpha$ and $\beta$.
The \emph{cotangent map} of $f_\infty$ at $\alpha$, denoted
$T_\alpha^{*}f_\infty$, is the natural map
\[
    T_\alpha^{*}f_\infty
    \colon
    \fm_\beta/\fm_\beta^{2}\otimes_{k_\beta}k_\alpha
    \lra
    \fm_\alpha/\fm_\alpha^{2}
\]
induced by $f_\infty$. Recalling that $\fm_\alpha/\fm_\alpha^2 \cong
\pi_{1}(\bL_{k_\alpha/\bL J_\infty (X)})$, it is natural to also consider the
following homomorphisms:
\[
    \pi_{i}(\bL_{k_\beta/\bL J_\infty(Y)}\otimes_{k_\beta}^\bL k_\alpha)
    \lra
    \pi_{i}(\bL_{k_\alpha/\bL J_\infty(X)}).
\]
We call them the \emph{higher cotangent maps} of $\bL J_\infty(f)$ at $\alpha$.

\subsection{The exact \(3\by 3\) grid}

\label{subsec:higher-cotangent-maps}

We consider the following $3 \by 3$ grid of complexes in the derived category
of $k_\alpha$, where the rows and columns are exact triangles.
\begin{equation}
\Label[diagram]{eq:3_times_3_grid_of_complexes}
    \begin{tikzcd}[column sep=small, row sep=small]
        \bL_{\bL J_\infty (Y)/k}\otimes_{\cO_{\bL J_\infty (Y)}}^\bL k_\alpha
        \arrow[r] \arrow[d]
    &
        \bL_{k_\beta/k}\otimes_{k_\beta}^\bL k_\alpha
        \arrow[r] \arrow[d]
    &
        \bL_{k_\beta/\bL J_\infty(Y)}\otimes_{k_\beta}^\bL k_\alpha
        \arrow[r,"-1"] \arrow[d]
    &
        \phantom{x}
    \\
        \bL_{\bL J_\infty (X)/k}\otimes_{\cO_{\bL J_\infty (X)}}^\bL k_\alpha
        \arrow[r] \arrow[d]
    &
        \bL_{k_\alpha/k}
        \arrow[r] \arrow[d]
    &
        \bL_{k_\alpha/\bL J_\infty(X)}
        \arrow[r,"-1"] \arrow[d]
    &
        \phantom{x}
    \\
        \bL_{\bL J_\infty (X)/\bL J_\infty (Y)}
        \otimes_{\cO_{\bL J_\infty (X)}}^\bL
        k_\alpha
        \arrow[r] \arrow[d,"-1"]
    &
        \bL_{k_\alpha/k_\beta}
        \arrow[r] \arrow[d,"-1"]
    &
        C_\sbt\arrow[r,"-1"]
        \arrow[d,"-1"]
    &
        \phantom{x}
    \\
        \phantom{x}
    &
        \phantom{x}
    &
        \phantom{x}
    \\[-1.5em]
    \end{tikzcd}
\end{equation}
The first two rows come from the compositions $k\to\cO_{\bL J_\infty(Y)}\to
k_\beta$ and $k\to\cO_{\bL J_\infty(X)}\to k_\alpha$, while the first two
columns come from the compositions $k\to\cO_{\bL J_\infty(Y)}\to\cO_{\bL
J_\infty(X)}$ and $k\to k_\beta\to k_\alpha$. Any exact triangles which are not
complexes of $k_\alpha$-modules are subsequently base changed to $k_\alpha$.
This leaves the third row and third column, and the ninth $k_\alpha$-complex.
Given the situation described so far, the $3 \by 3$ lemma of triangulated
categories \cite[Lem.~1.7]{May05} shows that there is an isomorphism between
the mapping cones
\[
    \cone(
        \bL_{k_\beta/\bL J_\infty(Y)}\otimes_{k_\beta}^\bL k_\alpha
        \to
        \bL_{k_\alpha/\bL J_\infty(X)}
    )
    \textrm{ and }
    \cone(
        \bL_{\bL J_\infty (X)/\bL J_\infty (Y)} \otimes_{\cO_{\bL J_\infty (X)}}^\bL k_\alpha
        \to
        \bL_{k_\alpha/k_\beta}
    ).
\]
In this way we get the ninth complex, which we have denoted simply by $C_\sbt$,
and the morphisms producing the third row and third column.

Morally, $C_\sbt$ plays the role a ``relative cotangent complex'' of the
morphism $k_\alpha/\bL J_\infty(X)$ over the morphism $k_\beta/\bL
J_\infty(Y)$. The $3 \by 3$ lemma tells us that $C_\sbt$ is also a ``relative
cotangent complex'' of $k_\alpha/k_\beta$ over $\bL J_\infty(X)/\bL
J_\infty(Y)$, and \cref{eq:3_times_3_grid_of_complexes} should be seen as the
analogue of the fundamental triangle for cotangent complexes in this relative
setting.

The cotangent maps and higher cotangent maps appear in connection to the third
column of the $3 \by 3$ grid of \cref{eq:3_times_3_grid_of_complexes}. More
precisely, the long exact sequence associated to this third column looks as
follows:
\[
    \pi_{i+1}(C_\sbt)
    \lra
    \pi_{i}(\bL_{k_\beta/\bL J_\infty(Y)}\otimes_{k_\beta}^\bL k_\alpha)
    \lra
    \pi_{i}(\bL_{k_\alpha/\bL J_\infty(X)})
    \lra
    \pi_{i}(C_\sbt).
\]
Notice that the middle morphism above is what we call a higher cotangent
map, and that in this sequence the homotopy groups vanish when $i \le 0$. The
usual cotangent map appears when taking $i=1$:
\[
    \pi_{2}(C_\sbt)
    \lra
    \fm_\beta/\fm_\beta^{2}\otimes_{k_\beta}k_\alpha
    \lra[\quad T_\alpha^{*}f_\infty \quad]
    \fm_\alpha/\fm_\alpha^{2}
    \lra
    \pi_{1}(C_\sbt).
\]

An unwinding of the long exact sequences corresponding to
\cref{eq:3_times_3_grid_of_complexes} produces an infinite two-dimensional grid
of homotopy groups, notably with each specific group appearing in multiple
locations. This is a feature that can be exploited to our advantage. In
particular, if a diagram chase forces some particular homotopy group to be
isomorphic to $0$, then of course it must be isomorphic to $0$ everywhere else
it appears, but notably in locations where its neighbors differ from those of
the original diagram chase. This allows results in one location to affect
behavior of groups in many other locations and is the key ingredient permitting
the calculations on the higher cotangent maps discussed below. We explain this
in \cref{subsec:entanglement} after developing a bit more notation.

\subsection{Free plus torsion decomposition}

\label{subsec:free-plus-torsion-decomp}

\Cref{prop:picotangent} gives information about the first column of the $3 \by
3$ grid in \cref{eq:3_times_3_grid_of_complexes}. Namely, the corresponding
homotopy groups only depend on $\bL_{X/k}$ and $\bL_{Y/k}$, and they have the
structure of $k_\alpha\llbracket t \rrbracket$-modules (and not just
$k_\alpha$-vector spaces). It will be convenient to keep track of this
information, as follows.

We write $\pi_i(\bL_{X/k} \otimes_{\cO_X} k_\alpha\llbracket t \rrbracket)
\cong F_X^{i} \oplus T_X^{i}$, where $F_X^{i}$ is a free $k_\alpha\llbracket
t\rrbracket$-module and $T_X^{i}$ is a torsion $k_\alpha\llbracket t
\rrbracket$-module. \Cref{prop:picotangent} describes $F_X^{i}$ and $T_X^{i}$ in
terms of the Betti numbers and the invariant factors of $\bL_{X/k}$ with
respect to $\alpha$, and shows that
\[
    \pi_{i}(
        \bL_{\bL J_\infty (X)/k}
        \otimes_{\cO_{\bL J_\infty (X)}}^\bL
        k_\alpha
    )
    \cong
    (F_X^{i} \otimes_{k_\alpha\llbracket t \rrbracket} V_\infty(k_\alpha))
    \oplus
    T_X^{i-1}.
\]
To lighten the notation we introduce $G_X^{i} = F_X^{i}
\otimes_{k_\alpha\llbracket t \rrbracket} V_\infty(k_\alpha)$. We similarly
consider $F_Y^{i}$, $G_Y^{i}$, $T_Y^{i}$, $F_{X/Y}^{i}$, $G_{X/Y}^{i}$, and
$T_{X/Y}^{i}$ via $\pi_i(\bL_{Y/k} \otimes_{\cO_Y} k_\alpha\llbracket t
\rrbracket) \cong F_Y^{i} \oplus T_Y^{i}$ and $\pi_i(\bL_{X/Y} \otimes_{\cO_X}
k_\alpha\llbracket t \rrbracket) \cong F_{X/Y}^{i} \oplus T_{X/Y}^{i}$.
\Cref{prop:picotangent} and \cref{cor:deriveddFDtriangle} give:
\begin{align*}
    \pi_{i}(
        \bL_{\bL J_\infty (X)/k}
        \otimes_{\cO_{\bL J_\infty (X)}}^\bL
        k_\alpha
    )
    &\cong
    G_X^{i}
    \oplus
    T_X^{i-1}
\\
    \pi_{i}(
        \bL_{\bL J_\infty (Y)/k}
        \otimes_{\cO_{\bL J_\infty (Y)}}^\bL
        k_\alpha
    )
    & \cong
    G_Y^{i}
    \oplus
    T_Y^{i-1},
\\
    \pi_{i}(
        \bL_{\bL J_\infty (X)/\bL J_\infty (Y)}
        \otimes_{\cO_{\bL J_\infty (X)}}^\bL
        k_\alpha
    )
    & \cong
    G_{X/Y}^{i}
    \oplus
    T_{X/Y}^{i-1}.
\end{align*}
Observe that while the $G_{\Arg}^i$ are constructed from the free modules
$F_{\Arg}^i$, they are not themselves free. Instead, they are direct sum of
finitely many copies of $V_\infty(k_\alpha)$. Also note that $T_{X}^{-1}$,
$T_{Y}^{-1}$, and $T_{X/Y}^{-1}$ are all $0$, which is relevant in the above
three formulas in the case of $i = 0$.

\subsection{Entanglement}

\label{subsec:entanglement}

Most of the results in this sections will follow from analyzing a particular
portion of the infinite grid obtained by taking homotopy of the $3 \by 3$ grid
of \cref{eq:3_times_3_grid_of_complexes}. Specifically, we focus on the entry
$\pi_0(\bL_{k_\alpha/k_\beta}) = \Omega_{k_\alpha/k_\beta}$ and observe that we
have vanishing of all the other entries that are located either below or to the
right.
In \cref{eq:diagram_needed_to_prove_multiple_theorems_about_cotangent_map},
appearing below, we write the most relevant part of the resulting diagram. We
have used the discussion and notation from
\cref{subsec:higher-cotangent-maps,subsec:free-plus-torsion-decomp} to rewrite
some of the homotopy groups. Notice that the cotangent map $T^*_\alpha
f_\infty$ appears in the second column.
Additionally, note that the entry in the upper-left corner of
\cref{eq:diagram_needed_to_prove_multiple_theorems_about_cotangent_map} must be
$0$, because $k_\alpha/k_\beta$ is an extension of fields and thus
$\pi_{2}(\bL_{k_\alpha/k_\beta})=0$ \cite[8.10]{Iye07}.

We would like to emphasize a key fact: any specific term of the infinite grid
of homotopy groups appears multiple times in distinct locations. Because of
this, when we can determine the value of a term using one part of the diagram,
we can apply these results in other sections lying far away as well. For
example, the bottom-left term $\pi_{1}(\bL_{k_\alpha/k_\beta})$ in
\cref{eq:diagram_needed_to_prove_multiple_theorems_about_cotangent_map} appears
again in the top-right position. Thus if a diagram chase forces
$\pi_{1}(\bL_{k_\alpha/k_\beta})$ to be $0$ somewhere, it is $0$ everywhere. We
refer to this behavior as an \emph{entanglement}. The entanglement of
$\pi_{1}(\bL_{k_\alpha/k_\beta})$ is featured multiple times in the proofs of
this section.

\begin{equation}
\Label[diagram]{eq:diagram_needed_to_prove_multiple_theorems_about_cotangent_map}
\begin{tikzcd}[row sep=small, column sep=small]
    0 \arrow[r] \arrow[d]
    & \pi_2(C_\sbt) \arrow[r] \arrow[d]
    & G_{X/Y}^1 \oplus T_{X/Y}^0 \arrow[r] \arrow[d]
    & \pi_{1}(\bL_{k_\alpha/k_\beta}) \arrow[d]
\\
    \pi_{1}(\bL_{k_\beta/k}\otimes^\bL k_\alpha) \arrow[r] \arrow[d]
    & \fm_\beta/\fm_\beta^2 \otimes k_\alpha
        \arrow[r] \arrow[d,"T_\alpha^{*}f_\infty"]
    & G_{Y}^0 \arrow[r] \arrow[d,"\varphi"]
    & \Omega_{k_\beta/k}\otimes k_\alpha \arrow[r] \arrow[d]
    & 0
\\
    \pi_{1}(\bL_{k_\alpha/k}) \arrow[r] \arrow[d]
    & \fm_\alpha/\fm_\alpha^2 \arrow[r] \arrow[d]
    & G_X^0 \arrow[r] \arrow[d]
    & \Omega_{k_\alpha/k} \arrow[r] \arrow[d]
    & 0
\\
    \pi_{1}(\bL_{k_\alpha/k_\beta}) \arrow[r]
    & \pi_{1}(C_\sbt)\arrow[r] \arrow[d]
    & G_{X/Y}^0 \arrow[r] \arrow[d]
    & \Omega_{k_\alpha/k_\beta} \arrow[r] \arrow[d]
    & 0
\\
    \phantom{.} & 0 & 0 & 0
\end{tikzcd}
\end{equation}

\subsection{Study of general linear projections}

\label{sec:gen-lin-proj}

After the above preparations, we now address generalizing a series of theorems
in the literature. We start with \cite[Thm.~4.3~(1)]{CdFD23}, for which we
relax the perfectness assumption on the base field.

\begin{thm}
\label{thm:unramified_at_alpha_eta_implies_surjective_cotangent_map}
We retain the notation and assumptions of
\cref{subsec:cotangent_maps_assumptions}. If we furthermore assume that $f$ is
unramified at $\alpha(\eta)$, then $T_\alpha^{*}f_\infty$ is surjective.
Moreover, $k_\alpha/k_\beta$ is a finite separable field extension and in
particular $\bL_{k_\alpha/k_\beta}=0$.
\end{thm}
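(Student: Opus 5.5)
The plan is to read everything off \cref{eq:diagram_needed_to_prove_multiple_theorems_about_cotangent_map}, using the entanglement of $\pi_1(\bL_{k_\alpha/k_\beta})$. The first step is to translate the hypothesis into a statement about the relative terms $G_{X/Y}^i$ and $T_{X/Y}^i$. Since $f$ is unramified at $\alpha(\eta)$, we have $\Omega_{X/Y}\otimes k_{\alpha(\eta)}=0$. The standard structure of unramified maps (locally étale followed by a closed immersion) gives more: $\bL_{X/Y}\otimes^\bL_{\cO_X} k_\alpha\llparen t\rrparen$ is concentrated in degrees $\ge 1$, and $\pi_0$ vanishes after inverting $t$. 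So $F_{X/Y}^0=0$, hence $G_{X/Y}^0=0$ and $T_{X/Y}^0=\pi_0(\bL_{X/Y}\otimes k_\alpha\llbracket t\rrbracket)$ is a finitely generated torsion module. In particular it is finite-dimensional over $k_\alpha$.

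Next, the bottom row of the diagram reads $\pi_1(\bL_{k_\alpha/k_\beta})\to\pi_1(C_\sbt)\to G_{X/Y}^0=0\to\Omega_{k_\alpha/k_\beta}\to 0$, so $\Omega_{k_\alpha/k_\beta}=0$. I then want to show that $k_\alpha/k_\beta$ is finite, or at least algebraic. With $\Omega_{k_\alpha/k_\beta}=0$, the Cartier equality of \cref{sec:cartier-equality} gives $\trdeg(k_\alpha/k_\beta)=-\dim\pi_1(\bL_{k_\alpha/k_\beta})\le 0$. This forces $k_\alpha/k_\beta$ to be algebraic and $\pi_1(\bL_{k_\alpha/k_\beta})=0$. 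Since $\bL_{k_\alpha/k_\beta}$ has no $\pi_i$ for $i\ge2$ by \cite[8.10]{Iye07}, we get $\bL_{k_\alpha/k_\beta}=0$. Note that the Cartier equality as quoted is stated for $Z$ essentially of finite type, so I would need a version valid for finitely generated extensions, or else apply it to subextensions.

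To get \emph{finite} separable, I would argue as follows. An algebraic extension with $\Omega=0$ and vanishing imperfection module is separable. Finiteness should come from the arc space structure: $k_\alpha$ is generated over $k_\beta$ by the coefficients of $\alpha$, which is constrained by the unramified equations. Alternatively, finiteness may follow once we show $T^*_\alpha f_\infty$ is surjective and combine it with the fact that $f_\infty$ is locally of the right type. If this step resists, I would fall back to proving that $k_\alpha$ is generated over $k_\beta$ by finitely many arc coefficients, using that $T_{X/Y}^0$ is finite-dimensional.

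Surjectivity of $T^*_\alpha f_\infty$ then follows from the second column, $\fm_\beta/\fm_\beta^2\otimes k_\alpha\to\fm_\alpha/\fm_\alpha^2\to\pi_1(C_\sbt)$. The bottom row gives $\pi_1(C_\sbt)$ as a quotient of $\pi_1(\bL_{k_\alpha/k_\beta})=0$, so $\pi_1(C_\sbt)=0$ and the cotangent map is surjective. The main obstacle is the field-theoretic step establishing that $\pi_1(\bL_{k_\alpha/k_\beta})=0$ and that the extension is finite. The rest is a diagram chase in which the vanishing of $G_{X/Y}^0$ propagates through the entanglement.
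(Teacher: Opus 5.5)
Your outer frame matches the paper's proof. You get $F^0_{X/Y}=0$, hence $G^0_{X/Y}=0$. The bottom row of diagram \eqref{eq:diagram_needed_to_prove_multiple_theorems_about_cotangent_map} then gives $\Omega_{k_\alpha/k_\beta}=0$. Once $\pi_1(\bL_{k_\alpha/k_\beta})=0$ is known, the same row kills $\pi_1(C_\sbt)$ and surjectivity follows.

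\textbf{The gap.} The middle step fails, and it is not a technicality. The Cartier equality $\trdeg=\dim\Omega-\dim\pi_1(\bL)$ holds only for finitely generated extensions. Here $k_\alpha/k_\beta$ is not a priori finitely generated, since both fields are typically infinitely generated over $k$. Without finite generation, $\Omega_{L/K}=0$ implies neither algebraicity nor separability, as the following example shows.
\begin{itemize}
\item Take $K=\FF_p(x)$ and $L$ its perfect closure. Then $\Omega_{L/K}=0$ and $\trdeg(L/K)=0$.
\item Since $L$ is perfect, $\bL_{L/\FF_p}=0$, and the fundamental triangle gives $\pi_1(\bL_{L/K})\cong\Omega_{K/\FF_p}\otimes_K L\cong L\neq 0$.
\item Moreover $L/\FF_p$ is transcendental, yet $\Omega_{L/\FF_p}=0$.
\end{itemize}
So your inference ``$\Omega=0$, hence algebraic with $\pi_1=0$'' is false in exactly this generality. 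Passing to finitely generated subextensions does not help, because vanishing of $\Omega$ does not descend to them: each $K(x^{1/p^n})/K$ has nonzero $\Omega$.

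\textbf{Your proposed fixes for finiteness.} These do not close the gap.
\begin{itemize}
\item Deducing finiteness from surjectivity of $T^*_\alpha f_\infty$ is circular. You obtain surjectivity from $\pi_1(\bL_{k_\alpha/k_\beta})=0$, which is precisely what needs finiteness.
\item Finite-dimensionality of $T^0_{X/Y}$ is a statement about $k_\alpha$-vector spaces. You give no mechanism by which it would bound $k_\alpha$ over $k_\beta$.
\end{itemize}

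\textbf{The missing idea.} Use more of the unramified hypothesis than $F^0_{X/Y}=0$.
\begin{enumerate}
\item Unramifiedness at $\alpha(\eta)$ gives that $k_{\alpha(\eta)}/k_{\beta(\eta)}$ is finite separable.
\item A genuinely arc-theoretic result transfers this to finiteness of $k_\alpha/k_\beta$; the paper cites \cite[Prop.~3.7]{CdFD23}.
\item Only then does $\Omega_{k_\alpha/k_\beta}=0$ imply that $k_\alpha/k_\beta$ is separable.
\item Separability gives $\pi_1(\bL_{k_\alpha/k_\beta})=0$ by \cite[Thm.~3.4.1]{LS67}, and then $\bL_{k_\alpha/k_\beta}=0$ by \cite[8.10]{Iye07}.
\end{enumerate}
After this, your diagram chase goes through unchanged.
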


\begin{proof}
We use diagram
\eqref{eq:diagram_needed_to_prove_multiple_theorems_about_cotangent_map}. If
$f$ is unramified at $\alpha(\eta)$, then $\Omega_{X/Y} \otimes
k_\alpha\llparen t \rrparen = 0$, and therefore $\Omega_{X/Y} \otimes
k_\alpha\llbracket t \rrbracket$ is torsion. Thus we have the vanishings
$F_{X/Y}^0=0$ and $G_{X/Y}^0=0$, forcing $\Omega_{k_\alpha/k_\beta}$ to be $0$
as well by analyzing the bottom row of diagram
\eqref{eq:diagram_needed_to_prove_multiple_theorems_about_cotangent_map}.

Furthermore, since $f$ is unramified at $\alpha(\eta)$, the extension
$k_{\alpha(\eta)}/k_{\beta(\eta)}$ is finite separable \SPcite{090W}. This
forces $k_\alpha/k_\beta$ to be finite \cite[Prop.~3.7]{CdFD23}. When this is
combined with the fact that $\Omega_{k_\alpha/k_\beta}$ is $0$, we see that
$k_\alpha/k_\beta$ is separable \SPcite{090W}. Separability forces
$\pi_{1}(\bL_{k_\alpha/k_\beta}) = 0$ \cite[Thm.~3.4.1]{LS67}. Since
automatically $\pi_{i}(\bL_{k_\alpha/k_\beta})=0$ for $i > 1$
\cite[8.10]{Iye07}, we see $\bL_{k_\alpha/k_\beta} = 0$. By the bottom row of
diagram
\eqref{eq:diagram_needed_to_prove_multiple_theorems_about_cotangent_map}, if
$\pi_{1}(\bL_{k_\alpha/k_\beta})$ and $G_{X/Y}^0$ are both $0$, then so is
$\pi_{1}(C_\sbt)$. Thus $T_\alpha^{*}f_\infty$ is surjective.
\end{proof}

Next we address \cite[Thm.~8.1]{CdFD22} and its generalization
\cite[Cor.~4.6]{CdFD23}, again relaxing the perfectness assumption on the base
field. To facilitate discussion of this result, we first state
\lcnamecref{thm:generic_projection_properties} which specifies concretely what
we mean by \emph{general} linear projection for our purposes. As indicated,
such projections always exist, even over non algebraically closed base fields.
We then state the theorem, and afterwards prove both the lemma and the theorem
in succession.

\begin{lem}
\label{thm:generic_projection_properties}
We retain the notation and assumptions of
\cref{subsec:cotangent_maps_assumptions}. Assume furthermore that $X$ is affine
and of finite type over $k$, and fix a closed embedding $X \subset \AA^n$. Let
$d=\dim_{k_{\alpha(\eta)}}(\Omega_{X/k}\otimes_{\cO_X}k_{\alpha(\eta)})$.
A general linear projection $f \colon X \to Y = \AA^d$ satisfies the following
two properties:
\begin{enumerate}
    \item
        \label{thm:generic_projection_properties-item-1}
        $f$ is unramified at $\alpha(\eta)$.
    \item
        \label{thm:generic_projection_properties-item-2}
        $
        \ord_\alpha(\Fitt^d(\Omega_{X/k}))
        =
        \ord_\alpha(\Fitt^0(\Omega_{X/Y})) < \infty
        $.
\end{enumerate}
Moreover, there always exists such a projection defined over the base field $k$.
\end{lem}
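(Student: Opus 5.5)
The plan is to parametrize linear projections $\AA^n \to \AA^d$ by matrices $L \in \AA^{dn}$ over $k$. For each $L$ we study the map $f_L \colon X \to \AA^d$ at the point $\xi = \alpha(\eta)$ with residue field $K = k_{\alpha(\eta)}$. We show that both properties hold for all $L$ in a nonempty Zariski open subset $U \subseteq \AA^{dn}_k$, defined over $k$. Rational points exist if $k$ is infinite. If $k$ is finite, a separate remark is needed; I would first try to argue that the conditions only constrain $L$ through its image in a finite-dimensional $k$-space, but this may require care.

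For property (1), use the first fundamental sequence
\[
    \Omega_{Y/k}\otimes \cO_X \xra{\ dL\ } \Omega_{X/k} \lra \Omega_{X/Y} \lra 0 .
\]
The image of $\Omega_{\AA^n/k}\otimes K = K^n \to \Omega_{X/k}\otimes K$ is surjective onto a $d$-dimensional space, since $X \subset \AA^n$ is a closed embedding. Unramifiedness of $f_L$ at $\xi$ is the condition $\Omega_{X/Y}\otimes K = 0$. This holds iff the composite $K^d \xra{L^T} K^n \to \Omega_{X/k}\otimes K$ is surjective, and that is a nonvanishing condition on a $d\times d$ minor. Writing $\Omega_{X/k} \otimes K \cong K^n/W$, this is a nonempty open condition on $L \in \AA^{dn}_K$. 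It is nonempty because a $d$-dimensional complement of $W$ exists. Its complement is a proper closed subset of $\AA^{dn}_K$, and the image of the generic point of $\AA^{dn}_K$ lies in the corresponding open of $\AA^{dn}_k$. More concretely, the locus is cut out by a polynomial with coefficients in $K$; choosing a $k$-basis of the coefficient span gives finitely many polynomials over $k$, not all zero. Their non-common-zero locus is the open set $U_1$.

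For property (2), use $\Fitt^0(\Omega_{X/Y}) \subseteq \Fitt^d(\Omega_{X/k})$. This follows from the presentation $\Omega_{X/Y} = \Omega_{X/k}/\Im(dL)$, where the image is generated by $d$ elements, together with the standard Fitting ideal inequality $\Fitt^0(M/N)\subseteq \Fitt^d(M)$ for $N$ generated by $d$ elements. It gives the inequality $\ord_\alpha(\Fitt^0(\Omega_{X/Y})) \ge \ord_\alpha(\Fitt^d(\Omega_{X/k}))$. For the reverse, pull back along $\alpha$ to $k_\alpha\llbracket t\rrbracket$. By the Gaussian elimination of \cref{sec:pull-backs-complexes-arcs}, $\Omega_{X/k}\otimes k_\alpha\llbracket t\rrbracket \cong k_\alpha\llbracket t\rrbracket^{d}\oplus T$, and $\ord_\alpha \Fitt^d$ equals the length of $T$, which is finite. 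Then $\Omega_{X/Y}\otimes k_\alpha\llbracket t \rrbracket$ is the quotient by the images of the $d$ vectors $L^T e_j$. Its length equals $\ell(T)$ exactly when these images project to a basis of the free part modulo $t$. Concretely, the composite $k_\alpha^d \to k_\alpha^n \to (\Omega_{X/k}\otimes k_\alpha\llbracket t\rrbracket)/(T + t\cdot) \cong k_\alpha^d$ must be invertible. This is again a $d\times d$ determinant, now polynomial in $L$ with coefficients in $k_\alpha$, and it is not identically zero. Descending as before gives a nonempty open set $U_2 \subseteq \AA^{dn}_k$. On $U_2$ the orders are equal and finite. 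Moreover $U_2 \subseteq U_1$, since the free rank of the quotient is zero, which implies unramifiedness at $\xi$.

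The main obstacle is the existence of a $k$-rational $L$ in $U_1\cap U_2$ when $k$ is finite, because a nonempty open subset of affine space need not have rational points. Since "defined over $k$" is the paper's phrasing, I expect it to be handled through the nonvanishing of polynomials over infinite $k$. For finite $k$ I would try replacing the fixed embedding by composing with a Veronese re-embedding or passing to a finite extension, and flag this as the delicate step.
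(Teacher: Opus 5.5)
\paragraph{The gap: finite fields.} Your treatment of the ``general'' assertion is sound. The following steps are all correct:
\begin{itemize}
\item the inclusion $\Fitt^0(\Omega_{X/Y})\subseteq\Fitt^d(\Omega_{X/k})$;
\item the splitting $\Omega_{X/k}\otimes k_\alpha\llbracket t\rrbracket\cong F\oplus T$ with $F$ free of rank $d$;
\item the criterion that the quotient by the images of $dy_1,\ldots,dy_d$ has length $\ell(T)$ exactly when these images reduce to a basis of $F/tF$;
\item the descent of the $k_\alpha$-coefficients to polynomials over $k$;
\item the inclusion $U_2\subseteq U_1$.
\end{itemize}
The gap is the final clause, existence of a good projection defined over $k$, when $k$ is finite. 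You leave this open, and it cannot be set aside: the section works over an arbitrary ground field, and the absence of hypotheses on $k$ is the point of the result. Neither proposed workaround proves the statement as given. A Veronese re-embedding replaces the fixed embedding $X\subset\AA^n$, whereas the lemma asks for a linear projection of that embedding. Passing to a finite extension $k'/k$ only yields a projection defined over $k'$, and it also changes $X$, $J_\infty(X)$ and $\alpha$. The later applications need a map $f\colon X\to\AA^d$ over $k$ itself.

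\paragraph{The fix.} The missing observation is already implicit in your claim that the determinant is ``not identically zero'': its nonvanishing is witnessed by a coordinate projection. The reduction map $\pi\colon k_\alpha^n\to F/tF\cong k_\alpha^d$ is surjective, so there are indices $i_1<\cdots<i_d$ for which $\pi(e_{i_1}),\ldots,\pi(e_{i_d})$ is a basis. The projection $(x_1,\ldots,x_n)\mapsto(x_{i_1},\ldots,x_{i_d})$ has a matrix with entries in $\{0,1\}$. It is therefore defined over $k$ for every field $k$, and it lies in $U_2$.

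\paragraph{Comparison with the paper.} The paper does exactly this, phrased via minors. It picks an $(n-d)\times(n-d)$ minor $\Delta_J$ of the Jacobian matrix with $\ord_\alpha(\Delta_J)=\ord_\alpha(\Fitt^d(\Omega_{X/k}))$ and projects onto the $d$ complementary coordinates. Then $\pm\Delta_J$ is one of the $n\times n$ minors of the presentation matrix $(J|A)$ of $\Omega_{X/Y}$, which gives ``$\le$''. Your inclusion gives ``$\ge$'', and $\Delta_J(\alpha(\eta))\neq0$ gives~(1).

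What your module-theoretic route buys is an accurate description of the whole good locus, which genuinely depends on $\alpha$. This is sharper than the paper's assertion that every $k$-point of $\{\Delta_A\neq0\}$ works, which is false as stated. Take $X=\{x_2=x_1^2\}\subset\AA^2$ in characteristic $\neq2$ and $y=x_1+x_2$. Then $\Delta_A=1$ whichever order-$0$ entry of $J$ is taken as $\Delta_J$. However, $\Fitt^0(\Omega_{X/Y})=(2x_1+1)$ has order $1$ along the arc $(-1/2+t,(-1/2+t)^2)$, while $\Fitt^1(\Omega_{X/k})$ is the unit ideal. What the paper's argument actually certifies is the coordinate projection, which is precisely the witness your own argument needs.
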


The desired theorem generalizing \cite[Thm.~8.1]{CdFD22} and
\cite[Cor.~4.6]{CdFD23} is then the following.

\begin{thm}
\label{thm:general_linear_projection_implies_cotangent_map_is_an_isomorphism}
We retain the notation and assumptions of
\cref{thm:generic_projection_properties}. Furthermore, if $f \colon X \to Y =
\AA^d$ is the morphism induced by a general linear projection $\AA^n \to
\AA^d$, then $T_\alpha^{*}f_\infty$ is an isomorphism. In particular:
\[
    \embdim(\cO_{J_\infty(X),\alpha})
    =
    \embdim(\cO_{J_\infty(Y),\beta}).
\]
\end{thm}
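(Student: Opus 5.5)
The plan is to work entirely in diagram \eqref{eq:diagram_needed_to_prove_multiple_theorems_about_cotangent_map}. By \cref{thm:generic_projection_properties}~\eqref{thm:generic_projection_properties-item-1}, $f$ is unramified at $\alpha(\eta)$. So \cref{thm:unramified_at_alpha_eta_implies_surjective_cotangent_map} already gives two facts: $T_\alpha^*f_\infty$ is surjective, and $\bL_{k_\alpha/k_\beta}=0$. In particular $\pi_1(\bL_{k_\alpha/k_\beta})=0$ and $\Omega_{k_\alpha/k_\beta}=0$. The same argument also gives $G^0_{X/Y}=0$, and hence $\pi_1(C_\sbt)=0$.

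What remains is injectivity. In the second column this amounts to showing that the map $\pi_2(C_\sbt)\to\fm_\beta/\fm_\beta^2\otimes k_\alpha$ is zero. The simplest route is to show $\pi_2(C_\sbt)=0$. Since $\bL_{k_\alpha/k_\beta}=0$, the third row of \eqref{eq:3_times_3_grid_of_complexes} yields an equivalence
\[
C_\sbt\cong \bL_{\bL J_\infty(X)/\bL J_\infty(Y)}\otimes^\bL k_\alpha[1],
\]
so that $\pi_2(C_\sbt)\cong G^1_{X/Y}\oplus T^0_{X/Y}$. The first row of \eqref{eq:diagram_needed_to_prove_multiple_theorems_about_cotangent_map} shows the same thing, because its outer terms vanish by entanglement.

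The plan is therefore to show $G^1_{X/Y}=0$ and $T^0_{X/Y}=0$. This will not hold verbatim in general, so it is the step to handle carefully. The fallback is to compare dimensions instead, using the torsion data.

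First, $G^1_{X/Y}$. The module $F^1_{X/Y}$ records the free part of $\pi_1(\bL_{X/Y}\otimes k_\alpha\llbracket t\rrbracket)$, which is computed after inverting $t$ at $\alpha(\eta)$. Since $f$ is unramified there and $Y=\AA^d$ is smooth, we have $\bL_{X/Y}\otimes k_{\alpha(\eta)}\cong\bL_{X/k}\otimes k_{\alpha(\eta)}$ shifted against a rank-$d$ free module. One expects $F^1_{X/Y}$ to be free of rank $\dim\pi_1(\bL_{X/k}\otimes k_\alpha\llparen t\rrparen)$, so this piece need not vanish when $X$ is singular at $\alpha(\eta)$.

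For this reason I would instead use the middle column and the third column together. The middle column has zero outer terms, so $\pi_1(\bL_{k_\beta/k})\otimes k_\alpha\cong\pi_1(\bL_{k_\alpha/k})$. Feeding this into the rows, the $G^0$ terms, together with $\varphi\colon G^0_Y\to G^0_X$, control the kernel of $T^*_\alpha f_\infty$. Concretely, write $T^0_{X/Y}\cong\bigoplus_j k_\alpha\llbracket t\rrbracket/(t^{a_j})$. Then the dimension of $\pi_0$ of the fiber of $\varphi$ is $\sum_j a_j=\ord_\alpha(\Fitt^0(\Omega_{X/Y}))$, and the same computation gives $\ord_\alpha(\Fitt^d(\Omega_{X/k}))$ for the source and target differences. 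A diagram chase in the second and third columns then gives
\[
\dim\ker T^*_\alpha f_\infty\le \ord_\alpha\Fitt^0(\Omega_{X/Y})-\ord_\alpha\Fitt^d(\Omega_{X/k}).
\]
Here the rank $d$ matches $b_0(\alpha)$ of $X$, which makes $G^0_Y\to G^0_X$ generically an isomorphism. By \cref{thm:generic_projection_properties}~\eqref{thm:generic_projection_properties-item-2} the right-hand side is $0$, so $T^*_\alpha f_\infty$ is injective.

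The main obstacle is making this kernel bound rigorous without assuming $\pi_2(C_\sbt)=0$. This is essentially the smooth-at-$\alpha(\eta)$ estimate with $r=0$, so I would prove that estimate first as a lemma and apply it here. The equality of embedding dimensions then follows because $k_\alpha/k_\beta$ is finite, so dimensions are preserved under base change.
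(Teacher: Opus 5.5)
\textbf{There is a genuine gap, and it sits exactly at the step you flag as the obstacle.} Your surjectivity argument, the vanishing $\bL_{k_\alpha/k_\beta}=0$, and the identification $\pi_2(C_\sbt)\cong G^1_{X/Y}\oplus T^0_{X/Y}$ are fine and agree with the paper.

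The failing step is the injectivity bound $\dim\ker T^*_\alpha f_\infty\le\ord_\alpha\Fitt^0(\Omega_{X/Y})-\ord_\alpha\Fitt^d(\Omega_{X/k})$. You never derive it, and the route you suggest does not produce it. The smooth-at-$\alpha(\eta)$ estimate with $r=0$ only gives $\dim\ker T^*_\alpha f_\infty\le\ord_\alpha\Fitt^0(\Omega_{X/Y})$, with nothing subtracted. It also needs $f$ smooth at $\alpha(\eta)$, which fails for degenerate arcs, where $f$ is only unramified.

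What your difference of orders actually measures is $\dim\ker\varphi$. That this vanishes is what the paper proves, using the injectivity of $\psi\colon F^0_Y\to F^0_X\oplus T^0_X$, the induced injection $T^0_X\hookrightarrow T^0_{X/Y}$, and item (2) of the lemma.

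However, $\ker T^*_\alpha f_\infty$ is not $\ker\varphi$. The first column gives $\pi_1(\bL_{k_\beta/k}\otimes k_\alpha)\cong\pi_1(\bL_{k_\alpha/k})$. On the $Y$ side this group injects into $\fm_\beta/\fm_\beta^2\otimes k_\alpha$, because $J_\infty(\AA^d)$ is a polynomial ring. On the $X$ side, its map to $\fm_\alpha/\fm_\alpha^2$ has kernel
$K_\infty=\Im\bigl(\pi_1(\bL_{\bL J_\infty(X)/k}\otimes^{\bL}k_\alpha)\to\pi_1(\bL_{k_\alpha/k})\bigr)$, the arc-level analogue of the paper's $K_n$. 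The snake lemma on the second and third rows of the diagram then gives
\[
0\to K_\infty\to\ker(T^*_\alpha f_\infty)\to\ker(\varphi)\to 0 .
\]
So injectivity is equivalent to $K_\infty=0$, and no Fitting ideal of $\Omega_{X/k}$ or $\Omega_{X/Y}$ controls $K_\infty$.

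\textbf{Over an imperfect field $K_\infty$ can be nonzero, and then the statement itself fails, so the gap cannot be closed.} Let $k=\FF_p(a)$ with $p$ odd, and $X=V(y^2-x^p+a)\subset\AA^2$, which is smooth away from the point $q=V(y,x^p-a)$. Take $f=x$; it lies in the open set $\cV$ from the proof of the lemma ($d=1$, $\Delta_J=2y$, $\Delta_A=1$), and $\Fitt^1(\Omega_{X/k})=\Fitt^0(\Omega_{X/Y})=(y)$. Let $\alpha$ be the arc $x=a^{1/p}+v(t)^2$, $y=v(t)^p$, where $v=\sum_{i\ge1}v_it^i$ with independent indeterminates $v_i$. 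This arc is non-degenerate (indeed stable), $f$ is unramified at $\alpha(\eta)$, and both Fitting orders equal $p$.

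The prime of $\beta$ in $J_\infty(\AA^1)=\Spec k[x_0,x_1,\dots]$ is $(x_0^p-a,x_1)$. Hence $\fm_\beta/\fm_\beta^2$ has basis $x_0^p-a,\ x_1$, and $\embdim(\cO_{J_\infty(Y),\beta})=2$. But $y_0^2=x_0^p-a$ is one of the equations of $J_\infty(X)$ and $y_0\in\fm_\alpha$, so $T^*_\alpha f_\infty$ kills the nonzero class of $x_0^p-a$. By surjectivity, $\fm_\alpha/\fm_\alpha^2$ is spanned by $x_1$, so $\embdim(\cO_{J_\infty(X),\alpha})\le 1$. Every projection $b+c_1x+c_2y$ with $c_1\neq 0$ behaves the same way.

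The paper's proof has the same hole: it passes directly from $\pi_2(C_\sbt)\cong G^1_{X/Y}\oplus T^0_{X/Y}$ to $\ker(T^*_\alpha f_\infty)\cong\ker(\varphi)$, which is exactly the unjustified step, so following it will not fill your gap. Both arguments are valid whenever $K_\infty=0$. This holds, e.g., when $k$ is perfect (then $\pi_1(\bL_{k_\alpha/k})=0$), or when $X$ is smooth at $\alpha(0)$ (then $\pi_1(\bL_{\bL J_\infty(X)/k}\otimes^{\bL}k_\alpha)=0$).
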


\begin{proof}[Proof of \cref{thm:generic_projection_properties}]
Fix coordinates $x_1, \ldots, x_n$ in $\AA^n$ and $y_1, \ldots, y_d$ in
$\AA^d$. A linear projection $\AA^n \to \AA^d$ is determined setting $y_j =
b_{j} + \sum a_{ij} x_i$, where the coefficients $b_j$, $a_{ij}$ are in $k$ and
the $n \by d$ matrix $A = (a_{ij})$ has rank $d$. Such projections are
therefore parametrized by the $k$-valued points of an open set $\cW$ in
$\AA^{(n+1)d}$.

Let $g_1, \ldots, g_m \in k[x_1, \ldots, x_n]$ be generators for the ideal of
$X$, and write $J = (\partial g_j/\partial x_i)$ for the associated Jacobian
matrix. The modules
\[
    \Omega_{X/k}
    = \frac{\Omega_{\AA^n} \otimes \cO_X}
        {\langle dg_1, \ldots dg_m \rangle}
    \qquad\text{and}\qquad
    \Omega_{X/Y}
    = \frac{\Omega_{\AA^n} \otimes \cO_X}
        {\langle dg_1, \ldots dg_m, dy_1, \ldots, dy_d \rangle}
\]
have presentation matrices $J$ and $(J | A)$, respectively. In particular
$\Fitt^d(\Omega_{X/k})$ is generated by the $(n-d)\by(n-d)$ minors of $J$, and
$\Fitt^0(\Omega_{X/Y})$ by the $n\by n$ minors of $(J|A)$. Let $\Delta_J$ be an
$(n-d)\by(n-d)$ minor of $J$ computing $\ord_\alpha(\Fitt^d(\Omega_{X/k})) =
\ord_\alpha(\Delta_J)$. After possibly reindexing the coordinates $x_1, \ldots,
x_n$, we can assume that $\Delta_J$ involves precisely the last $n-d$ rows of
$J$. We let $\Delta_A$ be the $d \by d$ minor of $A$ corresponding to the first
$d$ rows of $A$, and let $\cV \subset \cW$ be the open subset defined by the
non-vanishing of $\Delta_A$. Notice that $\cV$ always contains $k$-valued
points. For example, we can always consider the projection onto the first $d$
coordinates, that is, $y_j = x_j$ for $1 \le j \le d$.

Any projection corresponding to a $k$-valued point in $\cV$ satisfies the
conditions of the \lcnamecref{thm:generic_projection_properties}. Indeed, fix
such a projection and change coordinates so that $y_j = x_j$ for $1 \le j \le
d$. Then the generators of $\Fitt^0(\Omega_{X/Y})$ are of the form
$\Delta_A\cdot \widetilde \Delta = \widetilde \Delta$, where $\widetilde
\Delta$ is a minor of $J$ involving the last $(n-d)$ rows. It immediately
follows that $ \ord_\alpha(\Fitt^d(\Omega_{X/k})) =
\ord_\alpha(\Fitt^0(\Omega_{X/Y})) = \ord_\alpha(\Delta_J)$, and this number is
finite because of the assumption
$d=\dim_{k_{\alpha(\eta)}}(\Omega_{X/k}\otimes_{\cO_X}k_{\alpha(\eta)})$. This
gives item (\ref{thm:generic_projection_properties-item-2}). For item
(\ref{thm:generic_projection_properties-item-1}) notice that $\Omega_{X/Y}
\otimes k_{\alpha(\eta)}$ is the cokernel $(J({\alpha(\eta)})|A)$, where
$J({\alpha(\eta)})$ denotes the evaluation of $J$ in $k_{\alpha(\eta)}$. But
one of the $n \by n$ minors of $(J({\alpha(\eta)})|A)$ equals
$\Delta_J(\alpha(\eta))$, which is non-zero in $k_{\alpha(\eta)}$ because
$\ord_\alpha(\Delta_J) < \infty$. Therefore $(J({\alpha(\eta)})|A)$ is
full-rank and its cokernel $\Omega_{X/Y} \otimes k_{\alpha(\eta)}$ is zero,
showing that $f$ is unramified at $\alpha(\eta)$.
\end{proof}

\begin{proof}[Proof of
\cref{thm:general_linear_projection_implies_cotangent_map_is_an_isomorphism}]
By \cref{thm:generic_projection_properties}, $f$ is unramified at
$\alpha(\eta)$, hence $T^*_\alpha f_\infty$ is surjective by
\cref{thm:unramified_at_alpha_eta_implies_surjective_cotangent_map}. To show
injectivity we use diagram
\eqref{eq:diagram_needed_to_prove_multiple_theorems_about_cotangent_map}. Again
by \cref{thm:unramified_at_alpha_eta_implies_surjective_cotangent_map} we see
that $\pi_1(\bL_{k_\alpha/k_\beta}) = 0$, and diagram
\eqref{eq:diagram_needed_to_prove_multiple_theorems_about_cotangent_map} shows
that $\pi_2(C_\sbt) \cong G^1_{X/Y} \oplus T^0_{X/Y}$. Therefore $\ker
(T^*_\alpha f_\infty) \cong \ker (\varphi)$, and we are reduced to showing that
$\ker (\varphi) = 0$.

Consider the long exact sequence of homotopy obtained from the fundamental
triangle $\bL_{Y/k} \otimes \cO_X \to \bL_{X/k} \to \bL_{X/Y}$ via pull-back to
the arc $\alpha$:
\[
    \cdots
    \to
    F^1_X \oplus T^1_X
    \to
    F^1_{X/Y} \oplus T^1_{X/Y}
    \to
    F^0_Y \oplus T^0_Y
    \to
    F^0_X \oplus T^0_X
    \to
    F^0_{X/Y} \oplus T^0_{X/Y}
    \to
    0.
\]
Since $Y$ is smooth, $T^i_Y=0$ for $i \ge 0$ and $F^i_Y=0$ for $i \ge 1$. Since
$f$ is unramified at $\alpha(\eta)$ we have $F^0_{X/Y} = 0$. The above long
exact sequence gets reduced to
\[
    0
    \to
    F^1_X \oplus T^1_X
    \lra
    F^1_{X/Y} \oplus T^1_{X/Y}
    \lra
    F^0_Y
    \lra[~\psi~]
    F^0_X \oplus T^0_X
    \lra
    T^0_{X/Y}
    \to
    0.
\]
Notice that $\psi \otimes k_\alpha\llparen t\rrparen$ is an isomorphism,
because $f$ is unramified at $\alpha(\eta)$ and both $F^0_Y$ and $F^0_X$ have
rank $d$. Therefore $\psi$ must be injective, leading to the short exact
sequence
\[
    0
    \to
    F^0_Y
    \lra[~\psi~]
    F^0_X \oplus T^0_X
    \lra[~\rho~]
    T^0_{X/Y}
    \to
    0.
\]
Injectivity of $\psi$ also gives that $\psi(F_Y^0) \cap T^0_X = 0$, and by
exactness $\ker(\rho) \cap T^0_X = 0$ and hence $\rho$ induces an injection
$T^0_X \hookrightarrow T^0_{X/Y}$. As discussed in
\cref{sec:pull-backs-complexes-arcs} we have:
\[
    \dim_{k_\alpha}(T^0_X) = \ord_\alpha(\Fitt^d(\Omega_{X/k}))
    \quad\text{and}\quad
    \dim_{k_\alpha}(T^0_{X/Y}) = \ord_\alpha(\Fitt^0(\Omega_{X/Y})).
\]
By \cref{thm:generic_projection_properties}, these two dimensions are equal, so
$\rho$ restricts to an isomorphism $T^0_X \cong T^0_{X/Y}$ and hence the
decomposition $F_X^0 \oplus T^0_X$ can be chosen so that $\psi(F_Y^0) = F_X^0$.
After tensoring with $V_\infty(k_\alpha)$ we get that $\varphi$ gives an
isomorphism $G^0_Y \to G^0_X$, showing that $\ker(\varphi)=0$, as required.
\end{proof}

\subsection{Study of resolutions of singularities and alterations}

We now consider \cite[Thms.~9.2,~9.3]{dFD20} and \cite[Thm.~4.3~(2)]{CdFD23},
for which we relax the perfectness assumption on the base field. While we are
mostly interested in understanding generically étale (or even birational) maps,
we start with a statement about generically smooth maps.

\begin{thm}
\label{thm:smooth_at_generic_point_implies_bound_on_kernel_of_cotangent_map}
We retain the notation and assumptions of
\cref{subsec:cotangent_maps_assumptions}. Assume furthermore that $X$ and $Y$
are essentially of finite type over $k$ and $f$ is is smooth at $\alpha(\eta)$,
and let $r=\dim(\Omega_{X/Y}\otimes_{\cO_X}k_{\alpha(\eta)})$. Then
\begin{align*}
    \dim_{k_\alpha}(\ker (T_\alpha^{*}f_\infty))
    \leq
    \ord_\alpha(\Fitt^{r}(\Omega_{X/Y})).
\end{align*}
\end{thm}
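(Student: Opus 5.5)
The plan is to work in diagram \eqref{eq:diagram_needed_to_prove_multiple_theorems_about_cotangent_map} and bound $\ker(T^*_\alpha f_\infty)$ by $\ker(\varphi)$ plus whatever enters through $\pi_2(C_\sbt)$. The second column gives an exact sequence $\pi_2(C_\sbt) \to \fm_\beta/\fm_\beta^2\otimes k_\alpha \to \fm_\alpha/\fm_\alpha^2$. Hence $\ker(T^*_\alpha f_\infty)$ is the image of $\pi_2(C_\sbt)$, and it is enough to bound the dimension of that image.

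First, $\pi_2(C_\sbt)$ sits in the top row between $0$ and $G^1_{X/Y}\oplus T^0_{X/Y}$. So $\pi_2(C_\sbt)$ injects into $G^1_{X/Y}\oplus T^0_{X/Y}$, with cokernel contained in $\pi_1(\bL_{k_\alpha/k_\beta})$. Since $f$ is smooth at $\alpha(\eta)$, the complex $\bL_{X/Y}\otimes k_{\alpha(\eta)}$ is $\Omega_{X/Y}\otimes k_{\alpha(\eta)}$ in degree $0$, which is free of rank $r$. Therefore $F^1_{X/Y}=0$ and $G^1_{X/Y}=0$, and $\pi_2(C_\sbt)\hookrightarrow T^0_{X/Y}$. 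By \cref{sec:pull-backs-complexes-arcs}, with Betti number $b_0=r$, we get $\dim_{k_\alpha} T^0_{X/Y} = a_{0,1}+\dots+a_{0,c_0} = \ord_\alpha(\Csi_{(0,r)}(\bL_{X/Y})) = \ord_\alpha(\Fitt^r(\Omega_{X/Y}))$. This holds because $T^{-1}=0$ and the $(0,p)$ cohomological support ideal agrees with the Fitting ideal of $\pi_0$; here I use that $X$ is essentially of finite type, so the complexes involved are pseudo-coherent.

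Combining these, $\dim \ker(T^*_\alpha f_\infty) \le \dim \pi_2(C_\sbt) \le \dim T^0_{X/Y} = \ord_\alpha(\Fitt^r(\Omega_{X/Y}))$, which is the claim. The only delicate points are the following. One is justifying that the map $\pi_2(C_\sbt)\to\fm_\beta/\fm_\beta^2\otimes k_\alpha$ in the second column really has image equal to the kernel of the cotangent map; this is the long exact sequence of the third column of \eqref{eq:3_times_3_grid_of_complexes}, recorded in \cref{subsec:higher-cotangent-maps}. The other is checking that the top-row entry is $\pi_1$ of the relative arc-space cotangent fiber, which by \cref{cor:deriveddFDtriangle} and \cref{prop:picotangent} equals $G^1_{X/Y}\oplus T^0_{X/Y}$. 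The left zero in the top row is $\pi_2(\bL_{k_\alpha/k_\beta})=0$, which holds for field extensions.

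The step I expect to need the most care is the vanishing $F^1_{X/Y}=0$ from smoothness at $\alpha(\eta)$. This requires base change of $\bL_{X/Y}$ to $k_\alpha\llparen t\rrparen$ through the local ring at $\alpha(\eta)$, where $\bL_{X/Y}$ is equivalent to a free module in degree $0$. The rank of the free part in degree $i$ is then read off by tensoring with $k_\alpha\llparen t\rrparen$. The rest is formal. No perfectness of $k$ enters, since we never need $\pi_1(\bL_{k_\alpha/k_\beta})$ to vanish.
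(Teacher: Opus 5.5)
Your proposal is correct and follows the paper's argument: smoothness at $\alpha(\eta)$ gives $F^1_{X/Y}=0$, hence $G^1_{X/Y}=0$. The top row of the diagram then embeds $\pi_2(C_\sbt)$ into $T^0_{X/Y}$, and $\dim_{k_\alpha} T^0_{X/Y}=\ord_\alpha(\Fitt^r(\Omega_{X/Y}))$. The only difference is that you bound $\ker(T^*_\alpha f_\infty)$ directly as a quotient of $\pi_2(C_\sbt)$, whereas the paper phrases the step through $\ker(\varphi)$. Your version is, if anything, the more precise way to extract the dimension bound.
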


\begin{proof}
If $f$ is smooth at $\alpha(\eta)$, then $F_{X/Y}^1=0$ and therefore
$G_{X/Y}^1=0$. From the top row of diagram
\eqref{eq:diagram_needed_to_prove_multiple_theorems_about_cotangent_map} we see
that $\ker(T^*_\alpha f_\infty) \subseteq \ker(\varphi) \subseteq T_{X/Y}^0$.
The statement then follows by noticing that $\dim_{k_\alpha}(T_{X/Y}^0) =
\ord_\alpha(\Fitt^{r}(\Omega_{X/Y}))$, see
\cref{sec:pull-backs-complexes-arcs}.
\end{proof}

The following can be seen as a numerical version of the birational
transformation rule in motivic integration, extended to case of generically
étale maps.

\begin{thm}
\label{thm:gen_etale_implies_embedding_dimension_bounds}
We retain the notation and assumptions of
\cref{subsec:cotangent_maps_assumptions}. Assume furthermore that $X$ and $Y$
are essentially of finite type over $k$ and $f\colon X\to Y$ is étale at
$\alpha(\eta)$, and let $\Jac_{f}=\Fitt^{0}(\Omega_{X/Y})$ be the Jacobian
ideal of the map $f$. Then $T^*_\alpha f_\infty$ is surjective with kernel
dimension bounded by $\ord_\alpha(\Jac_{f})$, and
\begin{equation}
    \label{eq:edim-etale-bound-1}
    \embdim(\cO_{J_\infty (X),\alpha})
    \leq
    \embdim(\cO_{J_\infty (Y),\beta})
    \leq
    \embdim(\cO_{J_\infty (X),\alpha})+\ord_\alpha(\Jac_{f}).
\end{equation}
If in addition $X$ is smooth at $\alpha(0)$, then $\ker(T^*_\alpha f_\infty)$
has dimension $\ord_\alpha(\Jac_{f})$, and
\begin{equation}
    \label{eq:edim-etale-bound-2}
    \embdim(\cO_{J_\infty (Y),\beta})
    =
    \embdim(\cO_{J_\infty (X),\alpha})+\ord_\alpha(\Jac_{f}).
\end{equation}
\end{thm}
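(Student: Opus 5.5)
We reduce to the entangled diagram \eqref{eq:diagram_needed_to_prove_multiple_theorems_about_cotangent_map}, combining the two preceding theorems. Since $f$ is étale at $\alpha(\eta)$, it is both unramified and smooth there, with $r=\dim(\Omega_{X/Y}\otimes k_{\alpha(\eta)})=0$, so $\Fitt^r(\Omega_{X/Y})=\Jac_f$.

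\Cref{thm:unramified_at_alpha_eta_implies_surjective_cotangent_map} then gives that $T^*_\alpha f_\infty$ is surjective and that $k_\alpha/k_\beta$ is finite separable. \Cref{thm:smooth_at_generic_point_implies_bound_on_kernel_of_cotangent_map} gives $\dim_{k_\alpha}\ker(T^*_\alpha f_\infty)\le \ord_\alpha(\Jac_f)$. Because $k_\alpha/k_\beta$ is finite, the embedding dimension of $J_\infty(Y)$ at $\beta$ equals $\dim_{k_\alpha}(\fm_\beta/\fm_\beta^2\otimes k_\alpha)$. The two inequalities in \eqref{eq:edim-etale-bound-1} follow from surjectivity and the kernel bound via rank–nullity. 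When dimensions are infinite, the upper bound still holds, and the lower bound holds trivially. The case $\ord_\alpha(\Jac_f)=\infty$ is vacuous.

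For the equality when $X$ is smooth at $\alpha(0)$, we must show $\ker(T^*_\alpha f_\infty)$ has dimension exactly $\ord_\alpha(\Jac_f)$. In the diagram we have $\pi_1(\bL_{k_\alpha/k_\beta})=0$ and $G^1_{X/Y}=0$ (étale at $\alpha(\eta)$). Hence $\pi_2(C_\sbt)\cong T^0_{X/Y}$ injects into $\fm_\beta/\fm_\beta^2\otimes k_\alpha$, and the kernel of $T^*_\alpha f_\infty$ is $\ker(\varphi)\cap$ the image, i.e.\ $\ker(T^*_\alpha f_\infty)\cong\pi_2(C_\sbt)$ by exactness of the second column, using $\pi_2$ of the first-column left term. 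More precisely, we use the column exact sequence $\pi_2(\bL_{k_\alpha/\bL J_\infty X})\to\pi_2(C_\sbt)\to\fm_\beta/\fm_\beta^2\otimes k_\alpha\to\fm_\alpha/\fm_\alpha^2$. We need the top row map $\pi_2(C_\sbt)\to G^1_{X/Y}\oplus T^0_{X/Y}=T^0_{X/Y}$ to be an isomorphism, which holds since its neighbors $0$ and $\pi_1(\bL_{k_\alpha/k_\beta})=0$ vanish. It then remains to show that $\pi_2(C_\sbt)\to\fm_\beta/\fm_\beta^2\otimes k_\alpha$ is injective, i.e.\ that $\pi_2(\bL_{k_\alpha/\bL J_\infty(X)})\to\pi_2(C_\sbt)$ is zero.

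This is the main obstacle. We plan to show that $\pi_2(\bL_{k_\alpha/\bL J_\infty(X)})=0$ when $X$ is smooth at $\alpha(0)$. In that case, by \cref{thm:smoothobjectshavediscretejetsandarcs}, $\bL J_\infty(X)$ is classical near $\alpha$, being locally étale over an arc space of affine space. Its local ring at $\alpha$ is then a (localized) polynomial-type ring, a filtered colimit of smooth jet schemes with flat transitions. In the triangle $\bL_{\bL J_\infty(X)/k}\otimes k_\alpha\to\bL_{k_\alpha/k}\to\bL_{k_\alpha/\bL J_\infty(X)}$, the first term is $G^0_X$ in degree $0$ (with $T^{i}_X=0$ by smoothness), and $\pi_i(\bL_{k_\alpha/k})=0$ for $i\ge2$. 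Hence $\pi_2(\bL_{k_\alpha/\bL J_\infty(X)})$ injects into $\pi_1(G_X)=0$ and so vanishes. Therefore $\ker(T^*_\alpha f_\infty)\cong T^0_{X/Y}$, of dimension $\ord_\alpha(\Fitt^0(\Omega_{X/Y}))=\ord_\alpha(\Jac_f)$ by \cref{sec:pull-backs-complexes-arcs}, which yields \eqref{eq:edim-etale-bound-2}.
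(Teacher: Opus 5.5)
Your proposal is correct and is essentially the paper's proof. The first part is the same chase in diagram \eqref{eq:diagram_needed_to_prove_multiple_theorems_about_cotangent_map}, which you package by citing the two preceding theorems with $r=0$. The equality case rests on the same key vanishing $\pi_1(\bL_{\bL J_\infty(X)/k}\otimes^\bL k_\alpha)\cong G^1_X\oplus T^0_X=0$ for $X$ smooth at $\alpha(0)$.

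The only difference is where you feed that vanishing in. You use the row triangle to get $\pi_2(\bL_{k_\alpha/\bL J_\infty(X)})=0$ and read $\ker(T^*_\alpha f_\infty)\cong\pi_2(C_\sbt)\cong T^0_{X/Y}$ off the second column. The paper instead uses the third column to get $\ker(\varphi)=T^0_{X/Y}$, together with $\ker(T^*_\alpha f_\infty)\cong\ker(\varphi)$, so your version is, if anything, slightly more direct.
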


\begin{rmk}
If $f$ is generically étale (for example, a birational map),
\cref{eq:edim-etale-bound-1} holds for \emph{all} arcs $\alpha \in
J_\infty(X)$. Indeed, if such $f$ is not étale at $\alpha(\eta)$ then $\alpha$
is contained in the ramification locus. It is therefore \emph{thin}, in the
sense of \cref{dff:jetcodim}. This forces $\beta$ to be thin as well, and
\eqref{eq:edim-etale-bound-1} becomes vacuously true from
\cref{thm:thin-infinite-edim}. Similarly, if $f$ is generically étale and $X$
is smooth (for example, $f$ is a resolution of singularities),
\cref{eq:edim-etale-bound-2} also holds for all arcs.
\end{rmk}

\begin{proof}
Since $f$ is étale at $\alpha(\eta)$ we see that $\pi_i(\bL_{X/Y}
\otimes_{\cO_X} k_\alpha\llbracket t \rrbracket)$ is torsion, and hence we have
the vanishing $F_{X/Y}^i = 0$ for all $i \ge 0$. Moreover, from
\cref{thm:unramified_at_alpha_eta_implies_surjective_cotangent_map} we see that
$\pi_1(\bL_{k_\alpha/k_\beta})=0$ (see \cite[Lemma~9.1]{dFD20} for an argument
showing that $k_\alpha = k_\beta$ when $f$ is proper and birational at
$\alpha(\eta)$).

We use the diagram
\eqref{eq:diagram_needed_to_prove_multiple_theorems_about_cotangent_map}.
Because $F_{X/Y}^i=0$ we see that $G_{X/Y}^i = 0$, and we also know that
$\pi_1(\bL_{k_\alpha/k_\beta})=0$. Looking at the bottom row of the diagram
\eqref{eq:diagram_needed_to_prove_multiple_theorems_about_cotangent_map} we get
that $\pi_1(C_\sbt) = 0$, and from the top row we notice that $\pi_2(C_\sbt)
\cong T^0_{X/Y}$. Furthermore $\ker(T^*_\alpha f_\infty) \cong \ker(\varphi)
\subseteq T^0_{X/Y}$. Putting everything together we see that
\[
    \embdim(\cO_{J_\infty (Y),\beta})
    =
    \embdim(\cO_{J_\infty (X),\alpha}) +
    \dim_{k_\alpha}(\ker(\varphi))
\]
and
\[
    0
    \le
    \dim_{k_\alpha}(\ker(\varphi))
    \le
    \dim_{k_\alpha}(T^0_{X/Y}) = \ord_\alpha(\Jac_f),
\]
giving the first part of the
\lcnamecref{thm:gen_etale_implies_embedding_dimension_bounds}.

For the second part of the
\lcnamecref{thm:gen_etale_implies_embedding_dimension_bounds}, notice that the
third column in diagram
\eqref{eq:diagram_needed_to_prove_multiple_theorems_about_cotangent_map}
continues on top with $\pi_1(\bL_{\bL J_\infty(X)/k} \otimes^\bL k_\alpha)
\cong G^1_X \oplus T^0_X$. But if $X$ is smooth at $\alpha(0)$ we see that
$F^i_X = 0$ for $i > 0$ and $T^i_X = 0$ for $i \ge 0$. In particular
$\pi_1(\bL_{\bL J_\infty(X)/k} \otimes^\bL k_\alpha) = 0$ and hence
$\ker(\varphi) = T^0_{X/Y}$, giving the result.
\end{proof}

\section{Stable arcs and the curve selection lemma}

\label{sec:curveselection}

In this section we extend Reguera's study of stable arcs to the non-perfect
setting, including the curve selection lemma \cite{Reg06,dFD20,Reg21}. We note
that derived jets and arcs do not appear directly in this section, but several
of the proofs rely on results that ultimately depend on \cref{thm:deriveddFD}.

\subsection{Stable arcs}

\label{sec:stable_arcs}

We start by recalling the definition of \emph{stable arc}. This notion has a
complicated presentation in the literature, and in particular most of the works
that deal with it assume that the ground field is algebraically closed or
perfect. Our applications lie outside these hypotheses so we specify precisely
which notion of stable arc works best in this general setting.

\begin{dff}\label{dff:usefulstable}
Let $X$ be a classical scheme essentially of finite type over a field $k$.
Consider an arc $\alpha \in J_\infty(X)$ and let $d = \dim_{\alpha(\eta)}(X)$
be the dimension of $X$ at the generic point of $\alpha$. We say that $\alpha$
is \emph{stable} if it is non-degenerate and the field extensions
$k_{\alpha_m}/k_{\alpha_n}$ are purely transcendental of degree $(m-n) d$ for
$m \ge n\gg0$ large enough. Here $\alpha_n$ denotes the truncation of $\alpha$
to $J_n(X)$ and $k_{\alpha_n}$ is the residue field of $\alpha_n$ as a point of
$J_n(X)$.
\end{dff}

Notice that there are no stable arcs unless $X$ has a generically smooth
component.

\begin{rmk}
The notion of stability of on arc spaces dates back to \cite{DL99}, where the
foundations of the theory of motivic integration were laid out in detail.
Precisely, in \cite[(2.7)]{DL99} the authors introduce \emph{stable subsets} of
the arc space of a variety as those constructible subsets $C \subseteq
J_\infty(X)$ for which the truncation maps $\theta_{m,n} \colon \theta_m(C) \to
\theta_n(C)$ are piecewise locally trivial fibrations with fiber
$\AA^{(m-n)\dim X}$. This is precisely the condition that is needed to
guarantee stabilization in the limit that defines the motivic integral.

Since the arc space is the projective limit of the jet schemes, all
constructible subsets $C \subseteq J_\infty(X)$ have the form $C =
\theta_n^{-1}(C_n)$ for some $n$, where $C_n$ is constructible in $J_n(X)$
\cite[Th\'eor\`eme~8.3.11]{Gro66}. For this reason, constructible subsets are
also known as \emph{cylinders}. In \cite{DL99} it is shown that stable subsets
can be alternatively characterized as cylinders not completely contained in the
arc space of the singular locus (recall from \Cref{dff:non-degenerate} that
this last condition is known as \emph{non-degeneracy}). In \cite{DL99} the
authors assume a ground field of characteristic zero, but many of the proofs do
not require this hypotheses. This is true in particular for
\cite[Lem.~4.1]{DL99}, the so-called ``Denef-Loeser Lemma.'' In
\cite[Prop.~4.1]{EM09} we find a version over algebraically closed fields in
arbitrary characteristic, and the fully general case is treated in
\cite[Thm.~2.3.11]{CLNS18}.

In \cite{Reg06} the weaker notion of \emph{generically stable subset} is
introduced, initially overlooking the condition of non-degeneracy, see
\cite{Reg21}. These are defined to be the irreducible subsets of the arc space
containing an affine non-degenerate constructible open dense subset. Finally,
in \cite{Reg09} we find the first appearance of the term \emph{stable arc},
where various definitions are proven to be equivalent. In these works, it is
always assumed that the ground field is perfect but, as explained in
\cite[Remark ~(C.10)]{Reg21} this condition is only used to guarantee the
equivalence between smoothness and regularity. The ultimate conclusion of the
developments of \cite{DL99,Reg06,Reg09,CLNS18,Reg21} is the following
characterization: an arc $\alpha \in J_\infty(X)$ is stable in the sense of
\cref{dff:usefulstable} if and only if it is non-degenerate and it is the
generic point of an irreducible constructible subset of $J_\infty(X)$.
\end{rmk}

\subsection{Jet codimension}

We will follow the strategy of \cite{dFD20} to prove finiteness results for
stable arcs. To make the discussion more conceptual, we recall some
terminology.

\begin{dff}
\label{dff:jetcodim}
Let $X$ be a classical scheme which is essentially of finite type over a field
$k$. Consider an arc $\alpha$ in $J_\infty(X)$, let $\alpha(\eta) \in X$ be its
generic point, and let $d = \dim_{\alpha(\eta)}(X)$ be the dimension of $X$ at
$\alpha(\eta)$. We let $\alpha_n$ denote the truncation of $\alpha$ in
$J_n(X)$, and denote by $k_{\alpha_n}$ its residue field.
\begin{enumerate}
\item
    We say that $\alpha$ is \emph{thin} if it is completely contained in
    $J_\infty(Y)$ for some integral closed subscheme $Y \subset X$ with
    dimension $\dim(Y) < d$.
\item
    The \emph{jet codimension} of $\alpha$ in $J_\infty(X)$ is the quantity
    \[
        \jetcodim(\alpha, J_\infty(X))
        =
        \limsup_{n \to \infty} \left(
            (n+1) \, d
            - \dim(\overline{\{\alpha_{n}\}})
        \right)
        \in \NN \cup \{ \infty \}.
    \]
    Notice that the jet codimension of a stable arc is manifestly finite. As we
    observe below, stable arcs are in fact characterized by having finite jet
    codimension.
\item
    We say that $X$ is \emph{generically smooth} over $k$ if $X$ is smooth at
    the generic point of each of its irreducible components.
\end{enumerate}
\end{dff}

We start with some elementary observations.

\begin{lem}
\label{lem:degenerate-is-thin}
If $X$ is generically smooth, degenerate arcs are thin.
\end{lem}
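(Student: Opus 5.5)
The plan is to take the natural candidate for $Y$: the closure of the generic point of the arc. Let $\alpha$ be degenerate, so $\alpha(\eta)$ lies in the non-smooth locus $\Sing(X)$ of $X$ over $k$. Set $Y = \overline{\{\alpha(\eta)\}}$ with its reduced structure. Then $Y$ is an integral closed subscheme of $X$.

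The first step is to check that $\alpha$ factors through $J_\infty(Y)$. The arc is a map $\alpha\colon \Spec k_\alpha\llbracket t\rrbracket \to X$. Its source is integral with generic point $\eta$, and $\eta$ maps into $Y$. Hence the scheme-theoretic image of $\alpha$ is contained in the reduced closed subscheme $Y$. Concretely, any local function vanishing on $Y$ pulls back to an element of $k_\alpha\llbracket t\rrbracket$ that vanishes at $\eta$, and therefore is zero, because $k_\alpha\llbracket t\rrbracket \to k_\alpha\llparen t\rrparen$ is injective. So $\alpha \in J_\infty(Y)$.

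The second step is the dimension bound $\dim(Y) < d = \dim_{\alpha(\eta)}(X)$. Let $Z$ be any irreducible component of $X$ passing through $\alpha(\eta)$. By generic smoothness, $X$ is smooth at the generic point of $Z$. The smooth locus is open, so $\Sing(X)$ is a closed subset that does not contain the generic point of $Z$. Thus $\Sing(X)\cap Z$ is a proper closed subset of the irreducible space $Z$, and has dimension strictly less than $\dim Z$. Since $Y \subseteq \Sing(X)\cap Z$, we get $\dim Y < \dim Z$.

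Choosing $Z$ to be a component through $\alpha(\eta)$ of maximal dimension gives $\dim Z = \dim_{\alpha(\eta)}(X) = d$. Here we use that $X$ is essentially of finite type, so the relevant dimensions are finite and the local dimension at a point is the maximum of the dimensions of the components through it. This yields $\dim Y < d$, so $\alpha$ is thin.

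The only delicate point is the convention for $\dim_{\alpha(\eta)}(X)$ in the essentially-of-finite-type setting. If it means the dimension of the local ring instead, I would use $\dim Y + \dim \cO_{X,\alpha(\eta)} \le \dim Z$ together with the strictness above. This still gives what we need as long as the dimension comparison is made in a fixed component. Either way the argument is routine; no derived input is needed.
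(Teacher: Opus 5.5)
Your proof is correct and is essentially the paper's argument. Generic smoothness keeps the generic points of the components of $X$ out of $\Sing(X)$, so a degenerate arc lies in an integral closed subscheme strictly contained in a component through $\alpha(\eta)$. The paper takes a component of $\Sing(X)$ through $\alpha(\eta)$, whereas you take the smaller $\overline{\{\alpha(\eta)\}}$ and also write out the factorization through the reduced structure, which the paper leaves implicit.

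Your closing fallback is not needed and does not work: reading $d$ as $\dim\cO_{X,\alpha(\eta)}$ does not give $\dim Y<d$, and the lemma can fail under that reading. In the essentially-of-finite-type case, the paper's $d$ should be read as the transcendence degree over $k$ of the function fields of the components through $\alpha(\eta)$ (this is what makes $d=b_0(\alpha)$ for non-degenerate arcs). For that reading your strict inequality holds verbatim.
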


\begin{proof}
If $X$ is generically smooth, each component of the singular locus $\Sing(X)$
is non-generic on $X$. A non-degenerate arc $\alpha$ is contained in a
component of $\Sing(X)$, and therefore it is thin.
\end{proof}

\begin{lem}
\label{lem:thin-infinte-jcodim}
Thin arcs have infinite jet codimension.
\end{lem}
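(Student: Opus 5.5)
The plan is to bound the dimensions of the orbits $\overline{\{\alpha_n\}}$ from above using the fact that the truncations $\alpha_n$ all land in the jet schemes of a smaller-dimensional subvariety. Let $\alpha$ be thin. Then there is an integral closed subscheme $Y \subset X$ with $\dim(Y) < d = \dim_{\alpha(\eta)}(X)$ and $\alpha \in J_\infty(Y) \subseteq J_\infty(X)$. Consequently, for every $n$ the truncation $\alpha_n$ lies in $J_n(Y)$, viewed as a closed subscheme of $J_n(X)$ via functoriality of jet schemes for closed immersions. Hence $\overline{\{\alpha_n\}} \subseteq J_n(Y)$, and $\dim(\overline{\{\alpha_n\}}) \le \dim(\pi_n(J_\infty(Y)))$, where $\pi_n$ denotes the truncation map. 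It is more convenient to use the image of the arc space rather than the whole of $J_n(Y)$, since $J_n(Y)$ can have large components over $\Sing(Y)$.

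The key step is a linear upper bound of the form
\[
    \dim(\overline{\{\alpha_n\}}) \le (n+1)\,e + C
\]
with $e = \dim(Y) \le d-1$ and $C$ a constant independent of $n$. To get it, I split according to whether $\alpha$ lies in $J_\infty(\Sing(Y))$, or more precisely $J_\infty$ of the non-smooth locus of $Y$ over $k$. If it does, I replace $Y$ by an irreducible component of that locus containing $\alpha$ (of strictly smaller dimension) and repeat. This induction on dimension terminates, and in the degenerate base case $Y$ is a point or $Y$ is generically smooth with $\alpha$ not contained in its non-smooth locus. In the remaining case $\alpha$ is non-degenerate on $Y$, with $e := \dim(Y)$. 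Choose a general linear projection $Y \to \AA^e$ after passing to an affine neighborhood and a closed embedding; by \cref{thm:generic_projection_properties} it is unramified at $\alpha(\eta)$. Alternatively, use the Denef--Loeser lemma in the generality of \cite[Thm.~2.3.11]{CLNS18}: for $n \ge c := \ord_\alpha(\Jac_Y)$, the fibers of $\pi_{n}(J_\infty(Y)) \to \pi_{n-1}(J_\infty(Y))$ over the relevant cylinder have dimension $e$. Either route gives $\dim(\overline{\{\alpha_n\}}) \le (n+1)e + c$ for $n \ge c$.

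Assuming this bound, the conclusion is immediate:
\[
    (n+1)d - \dim(\overline{\{\alpha_n\}}) \ge (n+1)(d-e) - c \ge n+1-c \to \infty,
\]
so the $\limsup$ defining $\jetcodim(\alpha, J_\infty(X))$ is infinite.

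The main obstacle is the uniform linear bound on $\dim(\overline{\{\alpha_n\}})$. This is delicate when $k$ is not perfect, because smoothness and regularity differ and the Denef--Loeser machinery is usually stated over perfect fields. My first attempt will avoid the cylinder machinery entirely and use the cotangent-complex computations already available. By \cref{lem:embdimofjets}, applied to $Y$ with the arc non-degenerate on $Y$, the embedding dimension of $\cO_{J_n(Y),\alpha_n}$ is nonnegative. Combined with the Cartier equality from \cref{sec:cartier-equality}, this yields
\[
    \dim(\overline{\{\alpha_n\}}) \le (n+1)e + \ord_\alpha(\Jac_Y^{(0,e)}),
\]
with no perfectness hypothesis. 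I take this inequality as the working bound, and I expect the main care to go into ensuring the induction on $Y$ really lands in the non-degenerate case. If it does not land there (for instance, if $Y$ is nowhere generically smooth in the inseparable case), I will instead use the cruder bound $\dim(J_n(Y)) \le (n+1)\dim_{k}(\Omega)$-type estimates, via $b_0(\alpha)$ for $Y$ in \cref{lem:embdimofjets}. That is, I apply \cref{lem:embdimofjets} with $d$ replaced by $b_0$ computed on $Y$, which still gives linear growth with slope $b_0(\alpha;Y)$. Making this work requires checking that this slope is smaller than $d$, and this is exactly the point I would need to verify.
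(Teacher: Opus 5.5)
Your argument is complete only when $\alpha$ is a non-degenerate arc on some integral $Y$ with $\dim(Y)<d$. In that case the bound $\dim(\overline{\{\alpha_n\}})\le (n+1)e+\ord_\alpha(\Jac_Y^{(0,e)})$ from \cref{lem:embdimofjets} is valid over any field and finishes the proof. (More directly: $\trdeg(k_{\alpha_n}/k)\le\dim\Omega_{k_{\alpha_n}/k}\le\dim(\Omega_{J_n(Y)/k}\otimes k_{\alpha_n})$, combined with \cref{cor:cotangent-fibers-non-deg}.)

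The gap is the reduction to that case. Over an imperfect field an integral $Y$ need not be generically smooth. Then its non-smooth locus is all of $Y$, so the step ``replace $Y$ by a component of the non-smooth locus of strictly smaller dimension'' does not lower the dimension, and the induction stalls.

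This really happens, and your slope-$b_0$ fallback fails. Take $k=\FF_p(a)$, $X=\AA^2_k$, $Y=V(x^p-a)$, and the arc $x(t)=a^{1/p}$, $y(t)=\sum_i y_it^i$ with the $y_i$ algebraically independent.
\begin{itemize}
\item Here $\alpha(\eta)$ is the generic point of $Y$, so $Y$ is the only integral subscheme of dimension $<2$ through which $\alpha$ factors.
\item $Y$ is nowhere smooth, and $\Omega_{Y/k}$ is free on $dx,dy$, so $b_0(\alpha;Y)=2=d$.
\item Your bound therefore only gives $(n+1)d-\dim(\overline{\{\alpha_n\}})\ge 0$, whereas the true value is $n+1$.
\end{itemize}
The invariant $\dim(\Omega_{Y/k}\otimes k_{\alpha(\eta)})$ absorbs the module of imperfections and is the wrong slope. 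The projection alternative does not help either: \cref{thm:jetcodim_generic_projection} bounds $\dim(\overline{\{\beta_n\}})$ by $\dim(\overline{\{\alpha_n\}})$, which is the wrong direction for an upper bound on $\dim(\overline{\{\alpha_n\}})$.

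The paper avoids $\Omega_Y$ altogether. It uses that for an integral $Y$ essentially of finite type of dimension $d'$, the image of $J_\infty(Y)\to J_n(Y)$ has dimension $(n+1)d'$. This is a statement about arcs and holds whether or not $Y$ is generically smooth, so the conclusion follows at once from $(n+1)(d-d')\to\infty$.

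To repair your route, add a purely inseparable base change:
\begin{itemize}
\item Replace $Y$ by $\overline{\{\alpha(\eta)\}}$; the arc factors through it because $k_\alpha\llbracket t\rrbracket$ is a domain.
\item Let $k'$ be the perfect closure of $k$ and set $Y''=(Y\times_k k')_{\red}$. The map $Y''\to Y$ is a universal homeomorphism, so $Y''$ is integral of dimension $e=\dim(Y)$, and it is generically smooth over the perfect field $k'$.
\item The arc lifts to $Y''$ with values in the perfect closure of $k_\alpha$. Its generic point is the generic point of $Y''$, so the lift is non-degenerate with $b_0=e$.
\item Since $J_n(Y)\times_k k'\to J_n(Y)$ is integral, the residue fields of the lifted truncations are algebraic over $k_{\alpha_n}$, so transcendence degrees are unchanged.
\end{itemize}
Your bound applied to $Y''$ over $k'$ then gives $\dim(\overline{\{\alpha_n\}})\le(n+1)e+C$ in all cases.
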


\begin{proof}
For an integral scheme $Y$ which is essentially of finite type and has
dimension $d'$, the image $J_\infty(Y) \to J_n(Y)$ has dimension $(n+1)d'$.
Therefore, for a thin arc $\alpha$ contained in $Y$ we have
\[
    \jetcodim(\alpha, J_\infty(X))
    \ge
        \limsup_{n \to \infty} \left(
            (n+1) \, (d-d')
        \right)
    = \infty.
    \qedhere
\]
\end{proof}

The jet codimension can be controlled via generic projections. The following is
a preliminary bound, which will be upgraded to a general identity in
\cref{thm:jetcodim_generic_projection_full}.

\begin{lem}
\label{thm:jetcodim_generic_projection}
With the same assumptions as
\cref{thm:general_linear_projection_implies_cotangent_map_is_an_isomorphism},
for an arc $\alpha$ in $J_\infty(X)$, a general linear projection $f \colon X
\to Y = \AA^d$, and the image arc $\beta = f_\infty(\alpha)$ in $J_\infty(Y)$,
we have
\[
    \jetcodim(\alpha, J_\infty(X))
    \le
    \jetcodim(\beta, J_\infty(Y))
    .
\]
\end{lem}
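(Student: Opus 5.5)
The inequality should follow from a termwise comparison of the two limsups defining the jet codimensions. There are two ingredients. First, $\beta_n$ is the image of $\alpha_n$ under $f_n = J_n(f)$, which bounds the dimensions of the orbit closures. Second, a comparison between the integer $d$ used for $X$ and the one used for $Y = \AA^d$.

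Write $d_X = \dim_{\alpha(\eta)}(X)$ for the integer in \cref{dff:jetcodim} attached to $\alpha$. Write $d = \dim_{k_{\alpha(\eta)}}(\Omega_{X/k}\otimes k_{\alpha(\eta)})$ as in \cref{thm:generic_projection_properties}. Since $Y = \AA^d$ is irreducible of dimension $d$, the jet codimension of $\beta$ is computed with the integer $d$. I will use the standard fact that for a scheme locally of finite type over a field one has
\[
    \dim_x(X) \le \dim_{k_x}(\Omega_{X/k}\otimes k_x)
\]
at every point, with equality exactly at smooth points. Applying it at $x = \alpha(\eta)$ gives $d_X \le d$.

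Next, for each $n$ the truncation $f_n \colon J_n(X)\to J_n(Y)$ sends $\alpha_n$ to $\beta_n$, since $f_\infty$ is compatible with the truncation maps. So there is a field inclusion $k_{\beta_n}\subseteq k_{\alpha_n}$ over $k$. By the dimension formula recalled in \cref{sec:cartier-equality}, $\dim(\overline{\{p\}}) = \trdeg(k_p/k)$, and therefore
\[
    \dim(\overline{\{\alpha_n\}}) = \trdeg(k_{\alpha_n}/k) \ge \trdeg(k_{\beta_n}/k) = \dim(\overline{\{\beta_n\}}).
\]
Combining this with $d_X\le d$ gives, for every $n$,
\[
    (n+1)d_X - \dim(\overline{\{\alpha_n\}}) \le (n+1)d - \dim(\overline{\{\beta_n\}}).
\]
Taking $\limsup_{n\to\infty}$ on both sides yields the claimed inequality, including when the right-hand side is infinite.

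The main point to check is the comparison $d_X \le d$, because over a non-perfect field the fibre of $\Omega_{X/k}$ can exceed the local dimension even at regular points. The inequality only goes in the helpful direction, so an excess makes the right-hand side larger and causes no harm. I expect to cite this from the Stacks project rather than reprove it. Notably, the genericity of the projection (unramifiedness at $\alpha(\eta)$ and the equality of orders of Fitting ideals) is not needed for this weak inequality. It should only enter later, in the upgrade to an equality in \cref{thm:jetcodim_generic_projection_full}.
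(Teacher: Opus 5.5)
Your proof is correct and is leaner than the paper's. Both arguments rest on the same termwise comparison $\dim(\overline{\{\beta_n\}}) = \trdeg(k_{\beta_n}/k) \le \trdeg(k_{\alpha_n}/k) = \dim(\overline{\{\alpha_n\}})$, but the routes to it differ.

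The paper gets there through the genericity of $f$. It uses unramifiedness at $\alpha(\eta)$ and \cref{thm:unramified_at_alpha_eta_implies_surjective_cotangent_map} to write $k_\alpha = k_\beta[\xi]$ with $\xi$ a separable primitive element, and then embeds $k_{\beta_n}[\xi]$ into $k_{\alpha_n}$ for $n \gg 0$. As you observe, for this direction the inclusion $k_{\beta_n} \subseteq k_{\alpha_n}$ coming from $f_n(\alpha_n) = \beta_n$ already suffices.

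The paper then splits into two cases. For non-degenerate $\alpha$ it uses $\dim_{\alpha(\eta)}(X) = d$. For degenerate $\alpha$ it argues that $\beta$ is thin, because it lies over the image of something of dimension $<d$, and concludes vacuously from \cref{lem:thin-infinte-jcodim}.

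You instead use the general inequality $\dim_{\alpha(\eta)}(X) \le \dim_{k_{\alpha(\eta)}}(\Omega_{X/k}\otimes k_{\alpha(\eta)})$, i.e.\ the standard smoothness criterion over a field. This handles both cases at once. It also shows the inequality holds for an arbitrary morphism $X \to \AA^d$, with no genericity needed.

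What the paper's case split buys in exchange is extra information: when $\alpha$ is degenerate, the right-hand side is actually infinite. This is the same mechanism exploited in \cref{thm:thin-infinite-edim}.
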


\begin{proof}
By \cref{thm:generic_projection_properties}, $f$ is unramified at
$\alpha(\eta)$, and from
\cref{thm:unramified_at_alpha_eta_implies_surjective_cotangent_map} the field
extension $k_\alpha/k_\beta$ is finite separable. Write $k_\alpha =
k_\beta[\xi]$, where $\xi$ is a primitive element of $k_\alpha$ over $k_\beta$.

The truncation maps induce field extensions $k_\alpha/k_{\alpha_n}$ and
$k_\beta/k_{\beta_n}$, and we have $k_\alpha = \cup_n k_{\alpha_n}$ and
$k_\beta = \cup_n k_{\beta_n}$. Pick $n_0$ large enough so that $\xi$ belongs
to $k_{\alpha_{n_0}}$ and $k_{\beta_{n_0}}$ contains the coefficients of the
minimal polynomial of $\xi$ over $k_\beta$. When $n \ge n_0$ there is a
containment $k_{\beta_n}[\xi] \subset k_{\alpha_n}$ and moreover
$k_{\beta_n}[\xi]$ is finite over $k_{\beta_n}$. We see:
\[
    \dim(\overline{\{\beta_{n}\}})
    =
    \trdeg(k_{\beta_n}/k)
    =
    \trdeg(k_{\beta_n}[\xi]/k)
    \le
    \trdeg(k_{\alpha_n}/k)
    =
    \dim(\overline{\{\alpha_{n}\}}).
\]
If $\alpha$ is non-degenerate, we have that $d = \dim_{\alpha(\eta)}(X) =
\dim_{\beta(\eta)}(Y)$, and the result follows from the definition of jet
codimension. If $\alpha$ is degenerate, the image arc $\beta$ is thin, as it is
fully contained in $f(X)$, whose dimension at $\beta(\eta)$ is strictly smaller than $d$.
In this case the result follows vacuously from \cref{lem:thin-infinte-jcodim}.
\end{proof}

We finish this subsection stating the following well-known corollary of the
``Denef-Loeser Lemma'' \cite[Lem.~4.1]{DL99}.

\begin{lem}
\label{thm:stable-via-jetcodim}
Let $k$ be a field, let $X$ be a classical $k$-scheme which is essentially of
finite type and generically smooth over $k$. An arc $\alpha \in J_\infty(X)$
is stable if and only it has finite jet codimension.
\end{lem}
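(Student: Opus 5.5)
We prove the two implications separately, reducing everything to the Denef--Loeser lemma together with the elementary \cref{lem:degenerate-is-thin,lem:thin-infinte-jcodim}.

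\emph{Stable implies finite jet codimension.} By \cref{dff:usefulstable}, a stable arc $\alpha$ is non-degenerate, and there is $n_0$ such that for $m \ge n \ge n_0$ the extension $k_{\alpha_m}/k_{\alpha_n}$ is purely transcendental of degree $(m-n)d$. Since $\dim(\overline{\{\alpha_n\}}) = \trdeg(k_{\alpha_n}/k)$ (see \cref{sec:cartier-equality}), we get $\dim(\overline{\{\alpha_m\}}) = \dim(\overline{\{\alpha_{n_0}\}}) + (m-n_0)d$. Hence the sequence $(m+1)d - \dim(\overline{\{\alpha_m\}})$ is eventually constant, equal to $(n_0+1)d - \dim(\overline{\{\alpha_{n_0}\}})$, and its $\limsup$ is finite.

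\emph{Finite jet codimension implies stable.} Suppose $\jetcodim(\alpha,J_\infty(X))<\infty$.

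1. By \cref{lem:thin-infinte-jcodim}, $\alpha$ is not thin, so by \cref{lem:degenerate-is-thin} (using generic smoothness) $\alpha$ is non-degenerate. Thus $\alpha(\eta)$ lies in the smooth locus $X_{\mathrm{sm}}$, and $d = \dim_{\alpha(\eta)} X$.

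2. Let $Z_n = \overline{\{\alpha_n\}} \subseteq J_n(X)$ and $C = \bigcap_n \theta_n^{-1}(Z_n)$, the closure of $\alpha$ in $J_\infty(X)$. Since the truncation maps $J_{n+1}(X)\to J_n(X)$ of an arbitrary scheme have fibres of dimension at least $d$ over the non-degenerate locus, the integers $(n+1)d - \dim Z_n$ should form a non-decreasing sequence (I would check this monotonicity carefully). Together with finiteness of the $\limsup$, the sequence then stabilizes at some $n_0$, so that $\dim Z_m = \dim Z_n + (m-n)d$ for $m \ge n \ge n_0$.

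3. Apply the Denef--Loeser lemma in the generality of \cite[Thm.~2.3.11]{CLNS18}, which holds over any field: for $e$ bounding $\ord_\alpha(\Jac_X)$ and $n \ge e$, the map $\theta_{n+1}(J_\infty(X)) \to \theta_n(J_\infty(X))$, restricted to the locus where the order of the Jacobian is at most $e$, is a piecewise trivial $\AA^d$-fibration. Combined with the dimension count in step 2, this shows that $\theta_{n}^{-1}(Z_{n})$ is, generically over $Z_{n}$, an $\AA^{(m-n)d}$-bundle at each finite level $m$. Consequently the generic point $\alpha_m$ of $Z_m$ maps to the generic point of the fibre, and $k_{\alpha_m} \cong k_{\alpha_n}(x_1,\dots,x_{(m-n)d})$ is purely transcendental, as required by \cref{dff:usefulstable}.

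The main obstacle is step 3. One must justify that $Z_m$ dominates the full $\AA^{(m-n)d}$-fibre over the generic point of $Z_n$, and not a smaller subvariety. This requires using the dimension equality from step 2 inside the piecewise-trivial fibration, and in addition keeping track of the fact that piecewise trivialization is over $k$-constructible pieces, so that the generic fibre is an affine space over $k_{\alpha_n}$. Handling this over a non-perfect $k$ is where care is needed, and it is why I would rely on the version of the lemma in \cite{CLNS18}.
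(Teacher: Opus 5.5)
Your route is the same as the paper's. Degenerate arcs are handled by \cref{lem:degenerate-is-thin,lem:thin-infinte-jcodim}. The implication stable $\Rightarrow$ finite jet codimension is read off from \cref{dff:usefulstable}. For non-degenerate arcs everything rests on the Denef--Loeser lemma in the form \cite[Thm.~2.3.11]{CLNS18}, which the paper invokes in one line. Your step~3 is the right unpacking, and the ``main obstacle'' you flag is not really one. The trivializations are over locally closed pieces defined over $k$, so restricting to the piece containing $\alpha_n$ shows that the fibre of $\theta_m(J_\infty(X))\to\theta_n(J_\infty(X))$ over $\alpha_n$ is $\AA^{(m-n)d}_{k_{\alpha_n}}$. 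The point $\alpha_m$ lies in this fibre with $\trdeg(k_{\alpha_m}/k_{\alpha_n})=(m-n)d$, so it is the generic point. Hence $k_{\alpha_m}$ is purely transcendental over $k_{\alpha_n}$, and no perfectness is needed.

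The step that does not work as written is the justification of monotonicity in step~2. Write $s_n=(n+1)d-\dim Z_n$; then $s_{n+1}-s_n=d-\trdeg(k_{\alpha_{n+1}}/k_{\alpha_n})$. So monotonicity needs an \emph{upper} bound $d$ on the relevant fibre dimension, and the lower bound you cite gives the opposite inequality. Nor can the full truncation maps $J_{n+1}(X)\to J_n(X)$ supply an upper bound. Their fibres over a jet based at $x=\alpha(0)$ are, when nonempty, affine spaces of dimension $\dim_{k_x}(\Omega_{X/k}\otimes k_x)$, which can exceed $d$ when $x$ is singular.

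You do not need monotonicity for all $n$. Let $e=\ord_\alpha(\Jac_X)$, which is finite because $\alpha$ is non-degenerate. For $n\ge e$, the same Denef--Loeser lemma puts $\alpha_{n+1}$ in the fibre $\AA^d_{k_{\alpha_n}}$ of $\theta_{n+1}(J_\infty(X))\to\theta_n(J_\infty(X))$ over $\alpha_n$. So $s_n$ is non-decreasing for $n\ge e$, and an eventually non-decreasing integer sequence with finite $\limsup$ is eventually constant. To apply \cite{DL99} literally, first replace $X$ by the reduced irreducible component through $\alpha(\eta)$; this changes neither the points $\alpha_n$ nor their residue fields. With this replacement your proof is complete.
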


\begin{proof}
For degenerate arcs use \cref{lem:degenerate-is-thin,lem:thin-infinte-jcodim}.
In the non-degenerate case the result follows easily from \cite[Lem.~4.1]{DL99}
or \cite[Thm.~2.3.11]{CLNS18}.
\end{proof}

\subsection{Comparing embedding dimension and jet codimension}

We show that the embedding dimension and the jet codimension agree for arcs on
generically smooth schemes. As a first step, we start with the following
interesting corollary of
\cref{thm:general_linear_projection_implies_cotangent_map_is_an_isomorphism}.

\begin{thm}
\label{thm:thin-infinite-edim}
Let $k$ be a field, let $X$ be a classical $k$-scheme which is essentially of
finite type over $k$, and let $\alpha \in J_\infty(X)$ an arc. If $\alpha$ is
thin or degenerate then
\[
    \embdim(\cO_{J_\infty(X),\alpha}) = \infty.
\]
\end{thm}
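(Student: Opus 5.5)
The plan is to reduce to the case where $X$ is affine of finite type, use \cref{thm:general_linear_projection_implies_cotangent_map_is_an_isomorphism} to transfer the problem to an arc on affine space, and finish with the smooth case of \cref{lem:embdimlimit}.

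\emph{Reduction.} Embedding dimension is local on $J_\infty(X)$, and for an open $U \subseteq X$ containing $\alpha(0)$ we have $J_\infty(U) \subseteq J_\infty(X)$ open. So we may assume $X = \Spec A$ with $A$ essentially of finite type. Write $A$ as a localization of a finite type algebra $B$. By \cref{thm:etalelocalization}, applied to the formally étale map $B \to A$ and then truncated to $\pi_0$, we get $J_\infty(A) \cong J_\infty(B) \otimes_B A$. The arc $\alpha$ then defines a point of $J_\infty(\Spec B)$ with the same local ring. Thinness and degeneracy are also unchanged, since the generic point and the local dimensions of $X$ at $\alpha(\eta)$ agree on the model. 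So we may assume $X$ is affine of finite type and fix $X \subset \AA^n$. I expect this bookkeeping, in particular checking the local rings really coincide, to be the main technical point, though it is routine.

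\emph{Projection.} Let $d = \dim_{k_{\alpha(\eta)}}(\Omega_{X/k} \otimes k_{\alpha(\eta)})$. Choose a general linear projection $f \colon X \to Y = \AA^d$ as in \cref{thm:generic_projection_properties}, and set $\beta = f_\infty(\alpha)$. By \cref{thm:general_linear_projection_implies_cotangent_map_is_an_isomorphism}, $\embdim(\cO_{J_\infty(X),\alpha}) = \embdim(\cO_{J_\infty(Y),\beta})$. Since $Y$ is smooth, equation \eqref{eq:embdimlimit-3} of \cref{lem:embdimlimit} gives
\[
    \embdim(\cO_{J_\infty(Y),\beta}) = \jetcodim(\beta, J_\infty(Y)).
\]
By \cref{lem:thin-infinte-jcodim}, it therefore suffices to show that $\beta$ is thin in $Y$. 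That is, $\beta$ should lie in $J_\infty(Z')$ for some integral closed $Z' \subset \AA^d$ with $\dim Z' < d$.

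\emph{Thinness of $\beta$.} Let $Z = \overline{\{\alpha(\eta)\}}$ with its reduced structure. Since $k_\alpha\llbracket t\rrbracket$ is a domain, the arc $\alpha$ factors through $Z$, and $\dim Z \le \dim_{\alpha(\eta)}(X)$. We claim $\dim Z < d$ in both cases.
\begin{itemize}
\item If $\alpha$ is thin, it lies in $J_\infty(W)$ for some integral closed $W$ with $\dim W < \dim_{\alpha(\eta)}(X)$. Then $Z \subseteq W$, so $\dim Z < \dim_{\alpha(\eta)}(X) \le d$.
\item If $\alpha$ is degenerate, then $X$ is not smooth at $\alpha(\eta)$. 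The standard criterion gives $\dim_{k_x}(\Omega_{X/k} \otimes k_x) \ge \dim_x X$, with equality exactly at smooth points. Hence $\dim_{\alpha(\eta)}(X) < d$, and so $\dim Z < d$.
\end{itemize}
Now set $Z' = \overline{f(Z)}$ with its reduced structure. It is integral with $\dim Z' \le \dim Z < d$, and $\beta$ factors through $Z'$. So $\beta$ is thin in $\AA^d$, its jet codimension is infinite, and consequently $\embdim(\cO_{J_\infty(X),\alpha}) = \infty$.
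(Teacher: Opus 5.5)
Your proof is correct and is essentially the paper's own proof. Both take a general linear projection $f\colon X\to\AA^d$, use \cref{thm:general_linear_projection_implies_cotangent_map_is_an_isomorphism} and then \eqref{eq:embdimlimit-3} on the smooth target, and finish with thinness of $\beta$ via \cref{lem:thin-infinte-jcodim}; your choice $Z=\overline{\{\alpha(\eta)\}}$ with $\dim\overline{f(Z)}\le\dim Z$ is a slightly cleaner substitute for the paper's appeal to quasi-finiteness of $f$ at $\alpha(\eta)$. One small correction to your finite-type reduction, which the paper passes over: local dimensions need not agree with those on the finite-type model (e.g.\ $\dim_{(y)}\Spec k[x,y]_{(x,y)}=1$ while $\dim_{(y)}\AA^2=2$), but you only need that thinness in $X$ forces $\alpha(\eta)$ not to be the generic point of an irreducible component, which on the model gives $\trdeg(k_{\alpha(\eta)}/k)<d$, so your conclusion is unaffected.
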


\begin{proof}
We can work locally and assume $X$ is affine, embedded in $\AA^n$. Pick a
general linear projection $f \colon X \to \AA^d$ as in
\cref{thm:general_linear_projection_implies_cotangent_map_is_an_isomorphism},
in particular $d =
\dim_{k_{\alpha(\eta)}}(\Omega_{X/k}\otimes_{\cO_X}k_{\alpha(\eta)})$. Set
$\beta = f_\infty(\alpha)$. If $\alpha$ is thin we can find $Z \subset X$
containing $\alpha$ and with dimension strictly smaller than $d$. If $\alpha$
is degenerate but not thin let $Z \subset X$ be an irreducible component of $X$
containing $\alpha$, and notice that it has dimension strictly smaller than
$d$. Indeed, this can only happen when $X$ is not generically smooth, and this
case we must have $d =
\dim_{k_{\alpha(\eta)}}(\Omega_{X/k}\otimes_{\cO_X}k_{\alpha(\eta)}) >
\dim_{\alpha(\eta)}(X)$. In either case, since $f$ is quasi-finite at
$\alpha(\eta)$ \SPcite{02V5}, the image $f(Z)$ also has dimension strictly
smaller than $d$, showing that $\beta$ is thin. Notice that $\AA^d$ is smooth,
so we can apply \cref{lem:embdimlimit} \eqref{eq:embdimlimit-3} to obtain the
middle equality in:
\[
    \embdim(\cO_{J_\infty(X),\alpha})
    =
    \embdim(\cO_{J_\infty(\AA^d),\beta})
    =
    \jetcodim(\beta, J_\infty(\AA^d)) = \infty.
\]
Here the first equality follows from
\cref{thm:general_linear_projection_implies_cotangent_map_is_an_isomorphism},
and for the last identity we have used \cref{lem:thin-infinte-jcodim}.
\end{proof}

The following is our main result in this section, which extends
\cite[Thm.~10.7]{dFD20} to the non-perfect case.

\begin{thm}
\label{thm:edim-equals-jetcodim}
Let $k$ be a field, let $X$ be a classical scheme and assume that it is
essentially of finite type and generically smooth over $k$. For $\alpha\in
J_\infty(X)$ an arc,
\[
    \embdim(\cO_{J_\infty(X),\alpha})
    =
    \jetcodim(\alpha,J_\infty(X)).
\]
\end{thm}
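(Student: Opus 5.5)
We prove the equality by splitting into two cases and, in the non-trivial case, passing to a general linear projection where both sides can be computed on affine space.

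\emph{Thin or degenerate arcs.} If $\alpha$ is thin, or degenerate, then $\embdim(\cO_{J_\infty(X),\alpha}) = \infty$ by \cref{thm:thin-infinite-edim}. Since $X$ is generically smooth, a degenerate arc is thin by \cref{lem:degenerate-is-thin}. Hence $\jetcodim(\alpha,J_\infty(X)) = \infty$ by \cref{lem:thin-infinte-jcodim}, and both sides are infinite.

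\emph{Non-degenerate, non-thin arcs.} The statement is local, so we take $X$ affine with a closed embedding $X \subset \AA^n$. Here $d = \dim_{\alpha(\eta)}(X) = b_0(\alpha)$. Choose a general linear projection $f\colon X\to Y=\AA^d$ as in \cref{thm:generic_projection_properties}, and set $\beta = f_\infty(\alpha)$. Three facts then give a chain.
\begin{itemize}
\item By \cref{thm:general_linear_projection_implies_cotangent_map_is_an_isomorphism}, $\embdim(\cO_{J_\infty(X),\alpha}) = \embdim(\cO_{J_\infty(Y),\beta})$.
\item Since $Y=\AA^d$ is smooth, \cref{lem:embdimlimit}, equation \eqref{eq:embdimlimit-3}, gives $\embdim(\cO_{J_\infty(Y),\beta}) = \jetcodim(\beta,J_\infty(Y))$.
\item By \cref{thm:jetcodim_generic_projection}, $\jetcodim(\alpha,J_\infty(X))\le\jetcodim(\beta,J_\infty(Y))$.
\end{itemize}
Together these give $\jetcodim(\alpha) \le \embdim(\cO_{J_\infty(X),\alpha})$.

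\emph{The reverse inequality.} It remains to show $\embdim \le \jetcodim$, which is the main obstacle. I would try two routes.

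The first route is to upgrade \cref{thm:jetcodim_generic_projection} to an equality. Its proof already shows $\trdeg(k_{\beta_n}/k)\le \trdeg(k_{\alpha_n}/k)$ for $n\gg 0$, so the missing piece is the opposite inequality $\trdeg(k_{\alpha_n}/k)\le\trdeg(k_{\beta_n}/k)+C$ with $C$ bounded independently of $n$. To get this, note that $f$ is étale at $\alpha(\eta)$, since it is unramified and the dimensions agree. Then $\alpha$ should be determined by $\beta$ up to finitely many choices, with finite data for the first $\ord_\alpha(\Jac_f)$ coefficients. This is essentially the argument behind the Denef--Loeser lemma \cite[Lem.~4.1]{DL99}. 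This fibre-dimension estimate is the step I expect to be delicate in the non-perfect case.

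The second route works directly from \cref{lem:embdimlimit}, equation \eqref{eq:embdimlimit-1}, combined with \cref{lem:boundonimagelambdan-non-deg}. If $\jetcodim(\alpha)=\infty$, the inequality $\embdim\le\jetcodim$ is trivial. Otherwise $\alpha$ is stable by \cref{thm:stable-via-jetcodim}, so $k_{\alpha_m}/k_{\alpha_n}$ is purely transcendental, hence separable, for $m\ge n\gg0$. Then the injectivity hypothesis in \cref{lem:boundonimagelambdan-non-deg} holds, and the upper bound there gives $\dim\Im(\lambda_{n,m}) \le (n+1)d-\dim\overline{\{\alpha_n\}}$. Passing to the colimit, $\fm_\alpha/\fm_\alpha^2=\colim_n(\fm_{\alpha_n}/\fm_{\alpha_n}^2\otimes k_\alpha)$, and its dimension is bounded by $\limsup$ of the images. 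This yields $\embdim \le \jetcodim$.

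The second route seems cleaner, and I would try it first. The point needing care is that $\dim_{k_\alpha}$ of the colimit equals $\lim_n\dim\Im(\lambda_n)$, and that the images $\Im(\lambda_{n,m})$ stabilize to $\Im(\lambda_n)$ for $m\gg n$. The stabilization should follow from finite-dimensionality of $\fm_{\alpha_n}/\fm_{\alpha_n}^2$ for $n\ge\ord_\alpha(\Jac_X^{(0,d)})$, which \cref{lem:embdimofjets} provides.
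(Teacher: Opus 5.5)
Your second route is correct and is essentially the paper's proof. The paper obtains $\jetcodim(\alpha,J_\infty(X)) \le \embdim(\cO_{J_\infty(X),\alpha})$ from the same general projection chain, and $\embdim \le \jetcodim$ for stable arcs from \cref{lem:embdimlimit}~\eqref{eq:embdimlimit-2}, which is exactly your separability argument fed into \cref{lem:boundonimagelambdan}. The only difference is that the paper disposes of non-stable arcs via the two-sided bounds of \cref{lem:boundonimagelambdan-non-deg}, whereas you use the projection inequality, which works just as well.

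Your first route is not needed. The stabilization worry is also moot: $\lambda_n = \lambda_m \circ \lambda_{n,m}$, so $\dim_{k_\alpha}\Im(\lambda_n) \le \dim_{k_\alpha}\Im(\lambda_{n,m})$ for any admissible $m$. Moreover $\fm_\alpha/\fm_\alpha^2$ is the increasing union of the $\Im(\lambda_n)$.
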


\begin{proof}
If $\alpha$ is degenerate, the result follows from
\cref{thm:thin-infinite-edim,lem:degenerate-is-thin,lem:thin-infinte-jcodim}.
When $\alpha$ is non-degenerate the $0$-th Betti number of $\bL_{X/k}$ with
respect to $\alpha$ is $d = b_0(\alpha) = \dim_{\alpha(\eta)}(X)$.
From \cref{lem:boundonimagelambdan-non-deg} we get the bounds
\[
\begin{aligned}
    & \jetcodim(\alpha,J_\infty(X))
    -\ord_\alpha(\Jac_X^{(1,0)})
\\
    & \le
    \embdim(\cO_{J_\infty(X),\alpha})
\\
    & \le
    \jetcodim(\alpha,J_\infty(X))
    +\ord_\alpha(\Jac_X^{(0,d)}).
\end{aligned}
\]
As a consequence, $\alpha$ has finite embedding dimension if and only if it has
finite jet codimension. By \cref{thm:stable-via-jetcodim}, we can assume
$\alpha$ is stable. In this situation we can apply \cref{lem:embdimlimit}
\eqref{eq:embdimlimit-2} and we see that
\[
    \embdim(\cO_{J_\infty(X),\alpha})
    \le
    \jetcodim(\alpha,J_\infty(X)).
\]
Using a generic projection, from
\cref{thm:jetcodim_generic_projection},
\cref{lem:embdimlimit}~\eqref{eq:embdimlimit-3}, and
\cref{thm:general_linear_projection_implies_cotangent_map_is_an_isomorphism}
we get
\[
    \jetcodim(\alpha,J_\infty(X))
    \le
    \jetcodim(\beta,J_\infty(Y))
    =
    \embdim(\cO_{J_\infty(Y),\beta})
    =
    \embdim(\cO_{J_\infty(X),\alpha}),
\]
and the result follows.
\end{proof}

As a consequence, all the results obtained in \cref{sec:cotangent_maps} for the
embedding dimension have counterparts for the jet codimension in the
generically smooth case. For instance, we get the following statement for
general projections.

\begin{cor}
\label{thm:jetcodim_generic_projection_full}
With the same assumptions as
\cref{thm:general_linear_projection_implies_cotangent_map_is_an_isomorphism},
for a generically smooth $X$, an arc $\alpha$ in $J_\infty(X)$, a general
linear projection $f \colon X \to Y = \AA^d$, and the image arc $\beta =
f_\infty(\alpha)$ in $J_\infty(Y)$, we have
\[
    \jetcodim(\alpha, J_\infty(X))
    =
    \jetcodim(\beta, J_\infty(Y))
    .
\]
\end{cor}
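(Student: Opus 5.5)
The plan is to derive \cref{thm:jetcodim_generic_projection_full} by sandwiching both jet codimensions between embedding dimensions. We combine \cref{thm:edim-equals-jetcodim} on $X$ and on $Y$ with \cref{thm:general_linear_projection_implies_cotangent_map_is_an_isomorphism}. The proof splits according to whether $\alpha$ is non-degenerate.

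First suppose $\alpha$ is non-degenerate. Then $d = \dim_{k_{\alpha(\eta)}}(\Omega_{X/k}\otimes k_{\alpha(\eta)}) = \dim_{\alpha(\eta)}(X)$, so the projection lands in $Y=\AA^d$. This matches the dimension used in the definition of jet codimension for $\alpha$. Moreover, $Y$ is smooth, hence generically smooth, and essentially of finite type. Applying \cref{thm:edim-equals-jetcodim} to $Y$ and $\beta$ gives $\jetcodim(\beta,J_\infty(Y)) = \embdim(\cO_{J_\infty(Y),\beta})$. Alternatively, \cref{lem:embdimlimit}~\eqref{eq:embdimlimit-3} applies directly, since $Y$ is smooth. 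Applying \cref{thm:edim-equals-jetcodim} to the generically smooth $X$ gives $\jetcodim(\alpha,J_\infty(X)) = \embdim(\cO_{J_\infty(X),\alpha})$. Finally, \cref{thm:general_linear_projection_implies_cotangent_map_is_an_isomorphism} identifies the two embedding dimensions, and chaining the three equalities gives the claim.

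Now suppose $\alpha$ is degenerate. Since $X$ is generically smooth, \cref{lem:degenerate-is-thin} shows $\alpha$ is thin, so $\jetcodim(\alpha,J_\infty(X)) = \infty$ by \cref{lem:thin-infinte-jcodim}. Then \cref{thm:jetcodim_generic_projection} forces $\jetcodim(\beta,J_\infty(Y)) = \infty$ as well. Alternatively, \cref{thm:thin-infinite-edim} gives infinite embedding dimension at $\alpha$, hence at $\beta$ by \cref{thm:general_linear_projection_implies_cotangent_map_is_an_isomorphism}, and smoothness of $Y$ converts this back into infinite jet codimension.

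The only delicate point is bookkeeping of $d$. The $d$ in the projection is the fiber dimension of $\Omega_{X/k}$ at $\alpha(\eta)$, while jet codimension uses $\dim_{\alpha(\eta)}(X)$. These agree exactly in the non-degenerate case, which is why the case split is needed. Both cases use only previously established results, so no step should present a real obstacle.
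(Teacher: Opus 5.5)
Your argument is correct and essentially matches the paper's. The paper states the corollary without proof, as an immediate consequence of \cref{thm:edim-equals-jetcodim} (applied to $X$ and to $Y=\AA^d$) combined with the equality of embedding dimensions from \cref{thm:general_linear_projection_implies_cotangent_map_is_an_isomorphism}, and this chain already appears in the proof of \cref{thm:edim-equals-jetcodim}. Your case split is harmless but unnecessary: both results hold for all arcs, degenerate ones included, and each jet codimension is computed with its own ambient dimension, so the two values of $d$ never need to match.
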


\subsection{The curve selection lemma for stable arcs}

\label{subsec:curveselection}

In the spirit of \cite{dFD20} we obtain the following finiteness result for
stable arcs.

\begin{cor}
\label{cor:curve-selection}
\label{thm:stable-via-edim}
Let $k$ be a field, let $X$ be a classical $k$-scheme which is essentially of
finite type and generically smooth over $k$. For $\alpha \in J_\infty(X)$
an arc, the following are equivalent:
\begin{enumerate}
    \item $\alpha$ is stable.
    \item $\jetcodim(\alpha,J_\infty(X)) < \infty$.
    \item $\embdim(\cO_{J_\infty(X),\alpha}) < \infty$.
    \item The completed local ring $\widehat\cO_{J_\infty(X),\alpha}$ is
    Noetherian.
\end{enumerate}
\end{cor}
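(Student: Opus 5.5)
The equivalence of (1) and (2) is exactly \cref{thm:stable-via-jetcodim}, and the equivalence of (2) and (3) follows at once from \cref{thm:edim-equals-jetcodim}, which identifies the two quantities. So the plan is to establish the equivalence of (3) and (4), using the elementary commutative algebra of complete local rings, together with the stable structure of $\alpha$ where needed.

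For (4) $\Rightarrow$ (3), I will use that if $\widehat\cO = \widehat\cO_{J_\infty(X),\alpha}$ is Noetherian, then its maximal ideal $\widehat\fm$ is finitely generated. Therefore $\widehat\fm/\widehat\fm^2$ is finite dimensional over $k_\alpha$. Since completion does not change $\fm/\fm^2$ (we have $\widehat\cO/\widehat\fm^2 = \cO/\fm^2$ for the $\fm$-adic completion), we get $\embdim(\cO_{J_\infty(X),\alpha}) < \infty$.

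For (3) $\Rightarrow$ (4), I will use the standard fact that a complete local ring (complete and separated for the $\fm$-adic topology) whose cotangent space $\fm/\fm^2$ is finite dimensional is Noetherian. The argument runs as follows. Lift a basis of $\fm/\fm^2$ to elements $x_1,\dots,x_e$ of $\widehat\fm$. Then $\widehat\fm = (x_1,\dots,x_e) + \widehat\fm^2$. By the complete version of Nakayama's lemma, $\widehat\fm$ is generated by the $x_i$, and one obtains a surjection from a power series ring over a coefficient ring onto $\widehat\cO$. To avoid coefficient-ring issues, it is cleaner to argue via the associated graded ring. This ring is generated in degree one by finitely many elements over the field $k_\alpha$, so it is Noetherian, and a complete filtered ring with Noetherian associated graded is Noetherian (e.g.\ \cite[Thm.~29.4]{Mat89}-type results). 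Here completion is understood $\fm$-adically, and I must check that $\widehat\fm^n/\widehat\fm^{n+1} \cong \fm^n/\fm^{n+1}$. This holds because $\fm^n/\fm^{n+1}$ is finitely generated as soon as $\fm/\fm^2$ is, so the $\fm$-adic completion behaves well for powers of a finitely generated ideal mod higher powers.

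The main obstacle is this last point. $\cO_{J_\infty(X),\alpha}$ is non-Noetherian, and $\fm$ itself need not be finitely generated, so the identification of $\widehat\fm^n$ with the closure of $\fm^n$ and the separatedness of $\widehat\cO$ require care. My first attempt is to work with the inverse system $\cO/\fm^n$ directly. Each $\cO/\fm^n$ is Artinian local with finite-dimensional cotangent space, hence a quotient of $k$-type power series truncations (or at least finite length). Then $\widehat\cO = \varprojlim \cO/\fm^n$ is a complete ring whose graded pieces are the finite-dimensional $\fm^n/\fm^{n+1}$, and the filtration by kernels $\ker(\widehat\cO \to \cO/\fm^n)$ is complete and separated by construction. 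The Noetherian criterion then applies to this filtration rather than to the $\widehat\fm$-adic one.
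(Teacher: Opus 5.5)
Your proposal is correct and follows the paper's proof. As in the paper, (1)$\Leftrightarrow$(2) is \cref{thm:stable-via-jetcodim}, (2)$\Leftrightarrow$(3) is \cref{thm:edim-equals-jetcodim}, and (3)$\Leftrightarrow$(4) is the standard fact about completions, which the paper simply cites from \cite[Lem.~10.12]{dFD20} while you prove it. Your final version of that argument is the right one: it uses the complete, separated filtration $F_n=\ker(\widehat{\cO}\to\cO/\fm^n)$, whose associated graded ring $\bigoplus_n \fm^n/\fm^{n+1}$ is generated in degree one by finitely many elements over $k_\alpha$. For (4)$\Rightarrow$(3) you only need the surjection $\widehat{\fm}/\widehat{\fm}^2\to F_1/F_2=\fm/\fm^2$, not the equality $\widehat{\cO}/\widehat{\fm}^2=\cO/\fm^2$ you assert.
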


\begin{proof}
After \cref{thm:edim-equals-jetcodim,thm:stable-via-jetcodim} only the
equivalence between the last two items needs discussion. But this is a
well-known fact about completions, see for instance \cite[Lem.~10.12]{dFD20}.
\end{proof}

\begin{rmk}
Noetherianity of completed local rings at stable points is known by the experts
as ``Reguera's curve selection lemma.'' It is a key result which is used,
either explicitly or implicitly, in almost all developments related to the Nash
problem. For further details and motivation of the name ``curve selection'' we
refer the reader to \cite{Reg06}. The above result generalizes previously known
versions by removing any hypotheses on the ground field.
\end{rmk}

\subsection{Stable arcs, birational maps, and maximal divisorial arcs}

After establishing \cref{cor:curve-selection} it is interesting to revisit some
of the results of \cref{sec:cotangent_maps} and explore their consequences for
stable arcs. We start by addressing \cite[Cor.~9.4]{dFD20} in the non-perfect
case.

\begin{cor}
Let $X$ and $Y$ be classical schemes which are essentially of finite
type over $k$. For $f\colon X\to Y$ a proper birational map, $f_\infty$
induces a bijection
\[
    \{\,
        \alpha \in J_\infty(X)
    \,\,|\,\,
        \embdim(\cO_{J_\infty(X),\alpha}) < \infty
    \,\}
    \xra{1-1}
    \{\,
        \beta \in J_\infty(Y)
    \,\,|\,\,
        \embdim(\cO_{J_\infty(Y),\beta}) < \infty
    \,\}.
\]
If $X$ is generically smooth (or equivalently $Y$ is generically smooth) we can
rephrase this bijection as
\[
    \{\,
        \alpha \in J_\infty(X)
    \,\,|\,\,
        \text{$\alpha$ stable}
    \,\}
    \xra{\quad 1-1 \quad}
    \{\,
        \beta \in J_\infty(Y)
    \,\,|\,\,
        \text{$\beta$ stable}
    \,\}.
\]
\end{cor}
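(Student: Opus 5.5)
Both bijections will come from \cref{thm:gen_etale_implies_embedding_dimension_bounds} together with the classical fact that a proper birational map induces a bijection on arcs away from the exceptional locus.

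\textbf{Reduction to arcs outside the exceptional locus.} Let $U \subseteq Y$ be a dense open over which $f$ is an isomorphism, and put $E = X \setminus f^{-1}(U)$ and $F = Y \setminus U$. Both are nowhere dense closed subsets. Consider an arc $\alpha$ with $\alpha(\eta) \in E$. Its generic point lies in a closed subscheme of dimension smaller than the dimension of $X$ at $\alpha(\eta)$, so $\alpha$ is thin. By \cref{thm:thin-infinite-edim} it therefore has infinite embedding dimension. The same argument shows that every arc $\beta$ with $\beta(\eta) \in F$ has infinite embedding dimension. Hence both sets in the statement consist only of arcs whose generic points lie in $f^{-1}(U)$ and $U$ respectively.

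\textbf{Bijection on arcs with generic point in $U$.} First, injectivity and surjectivity.
\begin{itemize}
\item By the valuative criterion of properness, every arc $\beta$ with $\beta(\eta) \in U$ lifts to an arc $\alpha$ on $X$. The lift is unique because $f$ is separated and $\Spec k_\beta\llparen t\rrparen$ is dense in $\Spec k_\beta\llbracket t\rrbracket$.
\item At the level of points of the arc space, this lifting gives a bijection between $\{\alpha \mid \alpha(\eta)\in f^{-1}(U)\}$ and $\{\beta \mid \beta(\eta) \in U\}$. Residue fields agree, $k_\alpha = k_\beta$, as in \cite[Lemma~9.1]{dFD20}.
\end{itemize}

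Next, finiteness of embedding dimension is preserved. For such $\alpha$, the map $f$ is étale at $\alpha(\eta)$. Then \eqref{eq:edim-etale-bound-1} of \cref{thm:gen_etale_implies_embedding_dimension_bounds} gives
\[
    \embdim(\cO_{J_\infty(X),\alpha})
    \le
    \embdim(\cO_{J_\infty(Y),\beta})
    \le
    \embdim(\cO_{J_\infty(X),\alpha}) + \ord_\alpha(\Jac_f).
\]
Here $\ord_\alpha(\Jac_f) < \infty$, because $\alpha(\eta)$ does not lie in the zero locus of $\Jac_f$. So $\alpha$ has finite embedding dimension if and only if $\beta$ does, and the first bijection follows.

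\textbf{Stable arcs.} Since $f$ is birational, $X$ is generically smooth exactly when $Y$ is, as generic points correspond with isomorphic local rings. In that case \cref{cor:curve-selection} identifies finite embedding dimension with stability on each side, which gives the second formulation.

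\textbf{Main obstacle.} The delicate step is the bijectivity of $f_\infty$ on the relevant arcs as points of the arc space, including the identification of residue fields, when the base field is not perfect. I would handle it by applying the valuative lifting to the $k_\beta$-valued arc itself and checking that the lift is defined over $k_\beta$. This uses only properness and separatedness, so it works over any field.
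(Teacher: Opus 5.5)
Your proposal is correct and is essentially the paper's argument. The valuative criterion of properness lifts every non-thin arc of $Y$, thin arcs have infinite embedding dimension by \cref{thm:thin-infinite-edim}, and the bounds \eqref{eq:edim-etale-bound-1} of \cref{thm:gen_etale_implies_embedding_dimension_bounds} (with $\ord_\alpha(\Jac_f)<\infty$ off the exceptional locus) transfer finiteness in both directions, after which \cref{cor:curve-selection} gives the stable version. You simply make explicit points the paper's two-line proof leaves implicit, namely thinness of exceptional arcs on the $X$ side and uniqueness of lifts with $k_\alpha=k_\beta$.
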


\begin{proof}
Notice that the valuative criterion of properness ensures that the arcs in $Y$
which are not liftable to $X$ must be thin.
Now use
\cref{thm:gen_etale_implies_embedding_dimension_bounds,thm:thin-infinite-edim}.
\end{proof}

In birational geometry and singularity theory it is typical to study
\emph{divisorial valuations} on varieties, as well as their associated
\emph{discrepancies} (of different types). We will not develop this theory
here, and instead refer the reader to \cite[Sec.~11]{dFD20} for a detailed
discussion. But we want to point out that our techniques extend previously
known results to the non-perfect case. The next statement summarizes some of
these extensions.

\begin{cor}
Let $k$ be a field, let $X$ be a classical $k$-scheme which is essentially of
finite type and generically smooth over $k$. Let $v = q \ord_E$ be a divisorial
valuation on $X$. Then the corresponding maximal divisorial arc $\alpha_v$ in
$J_\infty(X)$ is stable, and we have the equalities
\[
    \embdim(\cO_{J_\infty(X),\alpha_v})
    =
    \jetcodim(\alpha_v, J_\infty(X))
    =
    q(\widehat k_E(X)+1)
    <
    \infty
    ,
\]
where $\widehat k_E(X)$ denotes the Mather discrepancy of $E$ over $X$. If $f
\colon X \to Y$ is a proper birational map, then $\beta_v = f_\infty(\alpha_v)$
is the maximal divisorial arc in $J_\infty(Y)$ associated to $v$, and any
maximal divisorial arc on $J_\infty(Y)$ arises uniquely in this way. We have
the bounds
\[
    \widehat k_E(X)
    \le
    \widehat k_E(Y)
    \le
    \widehat k_E(X)
    + \ord_E(\Jac_f).
\]
If $X$ is smooth at the center of $E$ we furthermore have
\[
    \widehat k_E(Y)
    =
    \widehat k_E(X)
    + \ord_E(\Jac_f).
\]
\end{cor}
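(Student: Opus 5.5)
The plan is to reduce everything to a smooth model on which $v$ appears as the order of vanishing along a divisor, and then transport embedding dimensions down to $X$ and $Y$ using \cref{thm:gen_etale_implies_embedding_dimension_bounds}.

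\emph{Choice of model.} Choose a proper birational map $g\colon X'\to X$ with $X'$ normal such that $E$ is a prime divisor on $X'$. Arrange, after shrinking $X'$ around the generic point of $E$ if necessary, that $X'$ is smooth over $k$ at the generic point of $E$. Then $\alpha_v$ is, by definition, the image $g_\infty(\alpha')$ of the maximal divisorial arc $\alpha'$ on $X'$. This arc $\alpha'$ is the generic point of the cylinder of arcs with contact order $q$ along $E$ centered at the generic point of $E$.

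\emph{Computation on $X'$.} Since $X'$ is smooth at $\alpha'(0)$ and $E$ is a divisor there, a direct local computation gives $\jetcodim(\alpha',J_\infty(X'))=q$. Indeed, in local coordinates with $E=\{x_1=0\}$ the cylinder is cut out by the vanishing of $x_1^{(0)},\ldots,x_1^{(q-1)}$. This cylinder is irreducible and non-degenerate, so $\alpha'$ is stable. \Cref{thm:edim-equals-jetcodim} then gives $\embdim(\cO_{J_\infty(X'),\alpha'})=q$.

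\emph{Transport to $X$.} Apply \cref{thm:gen_etale_implies_embedding_dimension_bounds} to $g$, which is étale at $\alpha'(\eta)$ since it is birational, in its second form, using that $X'$ is smooth at $\alpha'(0)$. This yields
\[
    \embdim(\cO_{J_\infty(X),\alpha_v})
    =
    q+\ord_{\alpha'}(\Fitt^0(\Omega_{X'/X}))
    =
    q+q\,\ord_E(\Jac_g).
\]
Identify $\ord_E(\Jac_g)=\widehat k_E(X)$: at points where $X'$ is smooth, $\Fitt^0(\Omega_{X'/X})$ is the image of $g^*\Omega^d_X\to\Omega^d_{X'}$, which is the Mather Jacobian. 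This gives the value $q(\widehat k_E(X)+1)<\infty$. \Cref{thm:edim-equals-jetcodim} then identifies this with the jet codimension, and \cref{thm:stable-via-edim} gives stability of $\alpha_v$.

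\emph{The map $f$.} For a proper birational $f\colon X\to Y$, the composite $f\circ g$ is again a model on which $E$ appears, so $f_\infty(\alpha_v)=(f\circ g)_\infty(\alpha')=\beta_v$. Uniqueness follows from the bijection of stable arcs in the preceding corollary together with the fact that $v$ is recovered from $\alpha_v$ as $\ord_{\alpha_v}$. The bounds come from \eqref{eq:edim-etale-bound-1} applied to $f$ at $\alpha_v$, using $\ord_{\alpha_v}(\Jac_f)=q\,\ord_E(\Jac_f)$, substituting the formula above for both $X$ and $Y$, and dividing by $q$. If $X$ is smooth at the center of $E$, which is $\alpha_v(0)$, then \eqref{eq:edim-etale-bound-2} gives the equality.

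\emph{Main obstacle.} The step I expect to be hardest is the choice of model over a non-perfect $k$. Normality of $X'$ only guarantees that $X'$ is \emph{regular} at the generic point of $E$, not smooth over $k$. I would first try to avoid needing smoothness: compute $\embdim(\cO_{J_\infty(X'),\alpha'})$ directly from \cref{lem:embdimlimit} and \cref{cor:cotangent-fibers-non-deg}, using only that $X'$ is generically smooth, so that $\alpha'$ is non-degenerate, together with the Jacobian orders along $E$. This would show that the defect $\ord_E\Fitt^d(\Omega_{X'/k})$ is exactly absorbed into the definition of $\widehat k_E$ via $\Fitt^d$. If that fails, I would fall back on taking $X'$ to be a model smooth at the generic point of $E$, which exists after a further modification because $X$ is generically smooth.
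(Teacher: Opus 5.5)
Your outline is the natural argument, and it is what the paper's pointer to \cite[Sec.~11]{dFD20} amounts to; the paper gives no separate proof. You compute on a model where $E$ appears, transport along $g$ with \eqref{eq:edim-etale-bound-2}, and then use \cref{thm:edim-equals-jetcodim}, \eqref{eq:edim-etale-bound-1}, \eqref{eq:edim-etale-bound-2} and the bijection for proper birational maps. The genuine gap is in the step ``computation on $X'$'', and neither of your repairs closes it.

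\textbf{Your fallback is false.} Let $\xi_E$ be the generic point of $E$. Then $\cO_{X',\xi_E}$ is the valuation ring $R_v$ on \emph{every} model on which $E$ appears. So smoothness at $\xi_E$ is intrinsic to $v$, and no further modification can change it. Moreover, $R_v$ can be regular without being smooth: take $k=\FF_p(s)$ with $p$ odd, $X$ the regular, generically smooth curve $y^2=x^p-s$, and $E$ its point $y=0$.

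\textbf{Smoothness of $X'$ at $\xi_E$ is not enough.} Writing ``local coordinates with $E=\{x_1=0\}$'' needs $E$ itself to be smooth at $\xi_E$, i.e.\ $k(E)/k$ separable. Take $X=\AA^2_k$, $k=\FF_p(s)$, $E=V(x^p-s)$. For every arc $\alpha$ through $E$, Frobenius gives $\ord_\alpha(x^p-s)=p\cdot\ord_t(x(t)-x(0))$. Hence no arc realizes $\ord_E$ at all. For $v=p\,\ord_E$, the maximal divisorial arc is the generic point of the hypersurface $(x^{(0)})^p=s$ in $J_\infty(\AA^2)=\Spec k[x^{(i)},y^{(i)}]$. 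Thus $\embdim=\jetcodim=1$, whereas $q(\widehat k_E(X)+1)=p$. Here $X'=X$ is smooth, so there is no Fitting-ideal defect for your plan (1) to absorb: the formula itself fails. The statement must therefore be read with the implicit hypothesis that $k(E)$ is separable over $k$. This is automatic over perfect fields, which is why it never appears in \cite{dFD20}.

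\textbf{Under that hypothesis your proof goes through as written.} Since $R_v$ is a discrete valuation ring and $\pi_1(\bL_{k(E)/k})=0$, the exact sequence $\pi_1(\bL_{k(E)/k})\to\fm_v/\fm_v^2\to\Omega_{R_v/k}\otimes k(E)\to\Omega_{k(E)/k}\to 0$ gives $\dim_{k(E)}\Omega_{R_v/k}\otimes k(E)=1+\trdeg(k(E)/k)=\dim_{\xi_E}X'$. So every normal model on which $E$ appears is smooth at $\xi_E$. $E$ is also generically smooth, so after shrinking you do get coordinates with $E=\{x_1=0\}$, and the contact locus has codimension $q$. The rest of your argument is then correct: the identification of $\Fitt^0(\Omega_{X'/X})$ with the Mather Jacobian, the bounds from \eqref{eq:edim-etale-bound-1} and \eqref{eq:edim-etale-bound-2} applied to $f$ and divided by $q$, and uniqueness via $\ord_{\alpha_v}=v$.
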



\hbadness 5000
\hfuzz 50pt


\begin{bibdiv}
\begin{biblist}

\bib{AR94}{book}{
    author    = {Ad\'{a}mek, Ji\v{r}\'{\i}},
    author    = {Rosick\'{y}, Ji\v{r}\'{\i}},
    title     = {Locally presentable and accessible categories},
    series    = {London Mathematical Society Lecture Note Series},
    volume    = {189},
    publisher = {Cambridge University Press, Cambridge},
    date      = {1994},
    pages     = {xiv+316},
    isbn      = {0-521-42261-2},
    review    = {\MR{1294136}},
    doi       = {10.1017/CBO9780511600579},
}

\bib{ARV10}{article}{
    author  = {Ad\'{a}mek, J.},
    author  = {Rosick\'{y}, J.},
    author  = {Vitale, E. M.},
    title   = {What are sifted colimits?},
    journal = {Theory Appl. Categ.},
    volume  = {23},
    date    = {2010},
    pages   = {No. 13, 251--260},
    review  = {\MR{2720191}},
}

\bib{BMS19}{article}{
    author  = {Bhatt, Bhargav},
    author  = {Morrow, Matthew},
    author  = {Scholze, Peter},
    title   = {Topological Hochschild homology and integral $p$-adic Hodge theory},
    journal = {Publ. Math. Inst. Hautes \'{E}tudes Sci.},
    volume  = {129},
    date    = {2019},
    pages   = {199--310},
    issn    = {0073-8301},
    review  = {\MR{3949030}},
    doi     = {10.1007/s10240-019-00106-9},
}


\bib{CLNS18}{book}{
    author    = {Chambert-Loir, Antoine},
    author    = {Nicaise, Johannes},
    author    = {Sebag, Julien},
    title     = {Motivic integration},
    series    = {Progress in Mathematics},
    volume    = {325},
    publisher = {Birkh\"auser/Springer, New York},
    date      = {2018},
    pages     = {xx+526},
    isbn      = {978-1-4939-7885-4},
    isbn      = {978-1-4939-7887-8},
    review    = {\MR{3838446}},
    doi       = {10.1007/978-1-4939-7887-8},
}

\bib{CdFD22}{article}{
    author  = {Chiu, Christopher},
    author  = {de Fernex, Tommaso},
    author  = {Docampo, Roi},
    title   = {Embedding codimension of the space of arcs},
    journal = {Forum Math. Pi},
    volume  = {10},
    date    = {2022},
    pages   = {Paper No. e4, 37},
    review  = {\MR{4386350}},
    doi     = {10.1017/fmp.2021.19},
}

\bib{CdFD23}{article}{
    author  = {Chiu, Christopher},
    author  = {de Fernex, Tommaso},
    author  = {Docampo, Roi},
    title   = {On arc fibers of morphisms of schemes},
    journal = {J. Eur. Math. Soc. (JEMS)},
    volume  = {28},
    date    = {2026},
    number  = {4},
    pages   = {1489--1531},
    issn    = {1435-9855},
    review  = {\MR{5032983}},
    doi     = {10.4171/jems/1500},
}

\bib{CNM23}{article}{
    author  = {Chiu, Christopher},
    author  = {Narváez Macarro, Luis},
    title   = {Higher derivations of modules and the Hasse-Schmidt module},
    journal = {Michigan Math. J.},
    volume  = {73},
    date    = {2023},
    number  = {3},
    pages   = {473--487},
    issn    = {0026-2285},
    review  = {\MR{4612160}},
    doi     = {10.1307/mmj/20205958},
}

\bib{CS20}{article}{
    author  = {\v Cesnavi\v cius, K\polhk estutis},
    author  = {Scholze, Peter},
    title   = {Purity for flat cohomology},
    journal = {Ann. of Math. (2)},
    volume  = {199},
    date    = {2024},
    number  = {1},
    pages   = {51--180},
    issn    = {0003-486X},
    review  = {\MR{4681144}},
    doi     = {10.4007/annals.2024.199.1.2},
}

\bib{Cis19}{book}{
    author    = {Cisinski, Denis-Charles},
    title     = {Higher categories and homotopical algebra},
    series    = {Cambridge Studies in Advanced Mathematics},
    volume    = {180},
    publisher = {Cambridge University Press, Cambridge},
    date      = {2019},
    pages     = {xviii+430},
    isbn      = {978-1-108-47320-0},
    review    = {\MR{3931682}},
    doi       = {10.1017/9781108588737},
}

\bib{dF18}{article}{
    author={de Fernex, Tommaso},
    title={The space of arcs of an algebraic variety},
    conference={
       title={Algebraic geometry: Salt Lake City 2015},
    },
    book={
       series={Proc. Sympos. Pure Math.},
       volume={97.1},
       publisher={Amer. Math. Soc., Providence, RI},
    },
    isbn={978-1-4704-3577-6},
    date={2018},
    pages={169--197},
    review={\MR{3821149}},
    doi={10.1090/pspum/097.1/06},
}

\bib{dFD14}{article}{
    author  = {de Fernex, Tommaso},
    author  = {Docampo, Roi},
    title   = {Jacobian discrepancies and rational singularities},
    journal = {J. Eur. Math. Soc. (JEMS)},
    volume  = {16},
    date    = {2014},
    number  = {1},
    pages   = {165--199},
    issn    = {1435-9855},
    review  = {\MR{3141731}},
    doi     = {10.4171/JEMS/430},
}

\bib{dFD16}{article}{
    author  = {de Fernex, Tommaso},
    author  = {Docampo, Roi},
    title   = {Terminal valuations and the Nash problem},
    journal = {Invent. Math.},
    volume  = {203},
    date    = {2016},
    number  = {1},
    pages   = {303--331},
    issn    = {0020-9910},
    review  = {\MR{3437873}},
    doi     = {10.1007/s00222-015-0597-5},
}

\bib{dFD20}{article}{
    author  = {de Fernex, Tommaso},
    author  = {Docampo, Roi},
    title   = {Differentials on the arc space},
    journal = {Duke Math. J.},
    volume  = {169},
    date    = {2020},
    number  = {2},
    pages   = {353--396},
    issn    = {0012-7094},
    review  = {\MR{4057146}},
    doi     = {10.1215/00127094-2019-0043},
}

\bib{DL99}{article}{
    author  = {Denef, Jan},
    author  = {Loeser, Fran\c{c}ois},
    title   = {Germs of arcs on singular algebraic varieties and motivic integration},
    journal = {Inventiones Mathematicae},
    volume  = {135},
    date    = {1999},
    number  = {1},
    pages   = {201--232},
    issn    = {0012-7094},
    review  = {\MR{4057146}},
    doi     = {10.1215/00127094-2019-0043},
}

\bib{EI15}{article}{
    author={Ein, Lawrence},
    author={Ishii, Shihoko},
    title={Singularities with respect to Mather-Jacobian discrepancies},
    conference={
       title={Commutative algebra and noncommutative algebraic geometry. Vol.
       II},
    },
    book={
       series={Math. Sci. Res. Inst. Publ.},
       volume={68},
       publisher={Cambridge Univ. Press, New York},
    },
    isbn={978-1-107-14972-4},
    date={2015},
    pages={125--168},
    review={\MR{3496863}},
}

\bib{EIM16}{article}{
    author={Ein, Lawrence},
    author={Ishii, Shihoko},
    author={Musta\c t\u a, Mircea},
    title={Multiplier ideals via Mather discrepancy},
    conference={
       title={Minimal models and extremal rays},
       address={Kyoto},
       date={2011},
    },
    book={
       series={Adv. Stud. Pure Math.},
       volume={70},
       publisher={Math. Soc. Japan, [Tokyo]},
    },
    isbn={978-4-86497-036-5},
    date={2016},
    pages={9--28},
    review={\MR{3617776}},
    doi={10.2969/aspm/07010009},
}

\bib{EM04}{article}{
    author  = {Ein, Lawrence},
    author  = {Musta\c t\u a, Mircea},
    title   = {Inversion of adjunction for local complete intersection varieties},
    journal = {Amer. J. Math.},
    volume  = {126},
    date    = {2004},
    number  = {6},
    pages   = {1355--1365},
    issn    = {0002-9327},
    review  = {\MR{2102399}},
}

\bib{EM09}{article}{
    author={Ein, Lawrence},
    author={Mustaţă, Mircea},
    title={Jet schemes and singularities},
    conference={
       title={Algebraic geometry---Seattle 2005. Part 2},
    },
    book={
       series={Proc. Sympos. Pure Math.},
       volume={80, Part 2},
       publisher={Amer. Math. Soc., Providence, RI},
    },
    isbn={978-0-8218-4703-9},
    date={2009},
    pages={505--546},
    review={\MR{2483946}},
    doi={10.1090/pspum/080.2/2483946},
}

\bib{EMY03}{article}{
    author  = {Ein, Lawrence},
    author  = {Musta\c t\u a, Mircea},
    author  = {Yasuda, Takehiko},
    title   = {Jet schemes, log discrepancies and inversion of adjunction},
    journal = {Invent. Math.},
    volume  = {153},
    date    = {2003},
    number  = {3},
    pages   = {519--535},
    issn    = {0020-9910},
    review  = {\MR{2000468}},
    doi     = {10.1007/s00222-003-0298-3},
}

\bib{Eis95}{book}{
    author    = {Eisenbud, David},
    title     = {Commutative algebra},
    series    = {Graduate Texts in Mathematics},
    volume    = {150},
    note      = {With a view toward algebraic geometry},
    publisher = {Springer-Verlag, New York},
    date      = {1995},
    pages     = {xvi+785},
    isbn      = {0-387-94268-8},
    isbn      = {0-387-94269-6},
    review    = {\MR{1322960}},
    doi       = {10.1007/978-1-4612-5350-1},
}

\bib{FdBPP12}{article}{
    author  = {Fern\'{a}ndez de Bobadilla, Javier},
    author  = {Pereira, Mar\'{\i}a Pe},
    title   = {The Nash problem for surfaces},
    journal = {Ann. of Math. (2)},
    volume  = {176},
    date    = {2012},
    number  = {3},
    pages   = {2003--2029},
    issn    = {0003-486X},
    review  = {\MR{2979864}},
    doi     = {10.4007/annals.2012.176.3.11},
}

\bib{GR12}{article}{
    author={Gaitsgory, Dennis},
    author={Rozenblyum, Nick},
    title={DG indschemes},
    conference={
       title={Perspectives in representation theory},
    },
    book={
       series={Contemp. Math.},
       volume={610},
       publisher={Amer. Math. Soc., Providence, RI},
    },
    isbn={978-0-8218-9170-4},
    date={2014},
    pages={139--251},
    review={\MR{3220628}},
    doi={10.1090/conm/610/12080},
}

\bib{GL87}{article}{
    author  = {Green, Mark},
    author  = {Lazarsfeld, Robert},
    title   = {Deformation theory, generic vanishing theorems, and some
              conjectures of Enriques, Catanese and Beauville},
    journal = {Invent. Math.},
    volume  = {90},
    date    = {1987},
    number  = {2},
    pages   = {389--407},
    issn    = {0020-9910},
    review  = {\MR{0910207}},
    doi     = {10.1007/BF01388711},
}

\bib{Gro66}{article}{
    author  = {Grothendieck, A.},
    title   = {\'El\'ements de g\'eom\'etrie alg\'ebrique. IV. \'Etude locale des
              sch\'emas et des morphismes de sch\'emas. III},
    journal = {Inst. Hautes \'Etudes Sci. Publ. Math.},
    number  = {28},
    date    = {1966},
    pages   = {255},
    issn    = {0073-8301},
    review  = {\MR{0217086}},
}
\bib{Hen15}{thesis}{
    title  = {Formal loops spaces and tangent Lie algebras},
    author = {Hennion, Benjamin},
    type   = {Ph.D.~Thesis},
    school = {Universit{\'e} Montpellier},
    year   = {2015},
    url    = {https://theses.hal.science/tel-02024827},
    note   = {Availabe at \url{https://theses.hal.science/tel-02024827}},
}

\bib{Ish13}{article}{
    author   = {Ishii, Shihoko},
    title    = {Mather discrepancy and the arc spaces},
    language = {English, with English and French summaries},
    journal  = {Ann. Inst. Fourier (Grenoble)},
    volume   = {63},
    date     = {2013},
    number   = {1},
    pages    = {89--111},
    issn     = {0373-0956},
    review   = {\MR{3089196}},
    doi      = {10.5802/aif.2756},
}

\bib{Iye07}{article}{
    author={Iyengar, Srikanth},
    title={Andr\'{e}-Quillen homology of commutative algebras},
    conference={
       title={Interactions between homotopy theory and algebra},
    },
    book={
       series={Contemp. Math.},
       volume={436},
       publisher={Amer. Math. Soc., Providence, RI},
    },
    isbn={978-0-8218-3814-3},
    date={2007},
    pages={203--234},
    review={\MR{2355775}},
    doi={10.1090/conm/436/08410},
}

\bib{KR25}{article}{
    author  = {Khan, Adeel A.},
    author  = {Rydh, David},
    title   = {Virtual Cartier divisors and blow-ups},
    journal = {Selecta Math. (N.S.)},
    volume  = {31},
    date    = {2025},
    number  = {4},
    pages   = {Paper No. 67, 28},
    issn    = {1022-1824},
    review  = {\MR{4932694}},
    doi     = {10.1007/s00029-025-01060-7},
}


\bib{LS67}{article}{
    author  = {Lichtenbaum, S.},
    author  = {Schlessinger, M.},
    title   = {The cotangent complex of a morphism},
    journal = {Trans. Amer. Math. Soc.},
    volume  = {128},
    date    = {1967},
    pages   = {41--70},
    issn    = {0002-9947},
    review  = {\MR{0209339}},
    doi     = {10.2307/1994516},
}

\bib{DAG}{thesis}{
    label     = {Lur-DAG},
    author    = {Lurie, Jacob},
    title     = {Derived algebraic geometry},
    type      = {Ph.D.~Thesis},
    school    = {Massachusetts Institute of Technology},
    note      = {Availabe at \url{https://www.math.ias.edu/~lurie/}},
    publisher = {ProQuest LLC, Ann Arbor, MI},
    date      = {2004},
    pages     = {(no paging)},
    review    = {\MR{2717174}},
    url       = {https://www.math.ias.edu/~lurie/},
}

\bib{Lur09}{book}{
    label     = {Lur-HTT},
    author    = {Lurie, Jacob},
    title     = {Higher topos theory},
    series    = {Annals of Mathematics Studies},
    volume    = {170},
    publisher = {Princeton University Press, Princeton, NJ},
    date      = {2009},
    pages     = {xviii+925},
    isbn      = {978-0-691-14049-0},
    isbn      = {0-691-14049-9},
    review    = {\MR{2522659}},
    doi       = {10.1515/9781400830558},
}



\bib{SAG}{article}{
    label  = {Lur-SAG},
    author = {Lurie, Jacob},
    title  = {Spectral algebraic geometry},
    note   = {Unfinished book,
             version Feb.\ 8, 2018,
             freely availabe at \url{https://www.math.ias.edu/~lurie/}},
    url    = {https://www.math.ias.edu/~lurie/},
}

\bib{Mat89}{book}{
    author    = {Matsumura, Hideyuki},
    title     = {Commutative ring theory},
    series    = {Cambridge Studies in Advanced Mathematics},
    volume    = {8},
    edition   = {2},
    publisher = {Cambridge University Press, Cambridge},
    date      = {1989},
    pages     = {xiv+320},
    isbn      = {0-521-36764-6},
    review    = {\MR{1011461}},
}

\bib{May05}{article}{
    author = {May, Jon Peter},
    title  = {The axioms for triangulated categories},
    year   = {2005},
    note   = {Note, availabe at the author's webpage.},
    url    = {https://www.math.uchicago.edu/~may/MISC/Triangulate.pdf},
}

\bib{MR18}{article}{
    author  = {Mourtada, Hussein},
    author  = {Reguera, Ana J.},
    title   = {Mather discrepancy as an embedding dimension in the space of arcs},
    journal = {Publ. Res. Inst. Math. Sci.},
    volume  = {54},
    date    = {2018},
    number  = {1},
    pages   = {105--139},
    issn    = {0034-5318},
    review  = {\MR{3749346}},
    doi     = {10.4171/PRIMS/54-1-4},
}

\bib{Mus01}{article}{
    author  = {Mustaţă, Mircea},
    title   = {Jet schemes of locally complete intersection canonical singularities},
    note    = {With an appendix by David Eisenbud and Edward Frenkel},
    journal = {Invent. Math.},
    volume  = {145},
    date    = {2001},
    number  = {3},
    pages   = {397--424},
    issn    = {0020-9910},
    review  = {\MR{1856396}},
    doi     = {10.1007/s002220100152},
}

\bib{Nas95}{article}{
    author  = {Nash, John F.},
    title   = {Arc structure of singularities},
    journal = {Duke. Math. J.},
    volume  = {81},
    date    = {1995},
    number  = {1},
    pages   = {31--38},
    issn    = {0012-7094},
    review  = {\MR{4057146}},
    doi     = {10.1215/00127094-2019-0043},
}

\bib{Reg06}{article}{
    author  = {Reguera, Ana J.},
    title   = {A curve selection lemma in spaces of arcs and the image of the
              nash map},
    journal = {Compos. Math.},
    volume  = {142},
    date    = {2006},
    number  = {1},
    pages   = {119--130},
    issn    = {0010-437X},
    review  = {\MR{2197405}},
    doi     = {10.1112/S0010437X05001582},
    note    = {Corrected in \cite{Reg21}},
}

\bib{Reg09}{article}{
    author  = {Reguera, Ana J.},
    title   = {Towards the singular locus of the space of arcs},
    journal = {Amer. J. Math.},
    volume  = {131},
    date    = {2009},
    number  = {2},
    pages   = {313--350},
    issn    = {0002-9327},
    review  = {\MR{2503985}},
    doi     = {10.1353/ajm.0.0046},
}

\bib{Reg21}{article}{
    author  = {Reguera, Ana J.},
    title   = {Corrigendum: A curve selection lemma in spaces of arcs and the
              image of the nash map},
    journal = {Compos. Math.},
    volume  = {157},
    date    = {2021},
    number  = {3},
    pages   = {641--648},
    review  = {\MR{4236197}},
    doi     = {10.1112/S0010437X20007733},
}

\bib{SP}{webpage}{
    label  = {SP},
    author = {{{Stac}ks Project Authors}, {The}},
    title  = {The Stacks Project},
    url    = {http://stacks.math.columbia.edu},
}

\bib{Toe14}{article}{
    author  = {To\"{e}n, Bertrand},
    title   = {Derived algebraic geometry},
    journal = {EMS Surv. Math. Sci.},
    volume  = {1},
    date    = {2014},
    number  = {2},
    pages   = {153--240},
    issn    = {2308-2151},
    review  = {\MR{3285853}},
    doi     = {10.4171/EMSS/4},
}

\bib{TV08}{article}{
    author  = {To\"en, Bertrand},
    author  = {Vezzosi, Gabriele},
    title   = {Homotopical algebraic geometry. II. Geometric stacks and
              applications},
    journal = {Mem. Amer. Math. Soc.},
    volume  = {193},
    date    = {2008},
    number  = {902},
    pages   = {x+224},
    issn    = {0065-9266},
    review  = {\MR{2394633}},
    doi     = {10.1090/memo/0902},
}

\bib{Voj07}{article}{
    author={Vojta, Paul},
    title={Jets via Hasse-Schmidt derivations},
    conference={
       title={Diophantine geometry},
    },
    book={
       series={CRM Series},
       volume={4},
       publisher={Ed. Norm., Pisa},
    },
    isbn={978-88-7642-206-5},
    isbn={88-7642-206-5},
    date={2007},
    pages={335--361},
    review={\MR{2349665}},
}

\bib{Wei94}{book}{
    author    = {Weibel, Charles A.},
    title     = {An introduction to homological algebra},
    series    = {Cambridge Studies in Advanced Mathematics},
    volume    = {38},
    publisher = {Cambridge University Press, Cambridge},
    date      = {1994},
    pages     = {xiv+450},
    isbn      = {0-521-43500-5},
    isbn      = {0-521-55987-1},
    review    = {\MR{1269324}},
    doi       = {10.1017/CBO9781139644136},
}

\end{biblist}
\end{bibdiv}
\end{document}